%% file: integration.tex
\documentclass{amsart}
\usepackage[utf8]{inputenc}
\title{Uniform Mordell--Lang conjecture for Semiabelian Varieties}
\author{Zhaobo Han}
\email{hanzbtom@math.ucla.edu}
\address{Department of Mathematics, UCLA, Los Angeles, CA 90095, USA}
\author{Wenbin Luo}
\email{wbluo@math.ecnu.edu.cn}
\address{School of Mathematical Sciences, Shanghai Key Laboratory of PMMP, East China Normal University, 500 Dongchuan Road, Shanghai, 200241, China}
\author{Jiawei Yu}
\email{2201110030@pku.edu.cn}
\address{School of Mathematical Sciences, Peking University, Road Yiheyuan No.5, 100871, Beijing, China}
\date{\today}
\usepackage{amssymb}
\usepackage{amsmath}
\usepackage{amsthm}
\usepackage{bbm}
\usepackage{hyperref}
\usepackage{quiver}
\usepackage[all]{xy}
\usepackage{tikz-cd}
\usepackage{tikz}
\usetikzlibrary{shapes.geometric, arrows}
\tikzstyle{start} = [rectangle, minimum width=2cm, minimum height=1cm, text centered, draw=black]
\tikzstyle{process} = [rectangle, minimum width=2cm, minimum height=1cm, text centered, draw=black]
\tikzstyle{arrow} = [thick,->,>=stealth]

\usepackage{mathtools}
\usepackage[toc,page]{appendix}

\input{macros.tex}

\newtheorem{theorem}{Theorem}[section]
\newtheorem{corollary}[theorem]{Corollary}
\newtheorem{lemma}[theorem]{Lemma}
\newtheorem{definition}[theorem]{Definition}

\newtheorem{definitionproposition}[theorem]{Definition-Proposition}
\newtheorem{remark}[theorem]{Remark}
\newtheorem{proposition}[theorem]{Proposition}

\makeatletter
\@addtoreset{equation}{section}
\makeatother

\begin{document}
\maketitle
\begin{abstract}
    We prove the uniform Mordell--Lang conjecture for semiabelian varieites.
\end{abstract}
\section{Introduction}
The Mordell conjecture, proved by Faltings \cite{Faltings1983Mordell}, asserts the finiteness of rational points on smooth projective curves of genus at least $2$ over a number field. Vojta gives a new proof \cite{vojta} via Diophantine approximation. His method was developed to prove the Mordell--Lang conjecture \cite{Faltings1991Mordell-Lang}, which is a higher dimensional generalization of both the Mordell conjecture and the Manin-Mumford conjecture.

Recently, there has been breakthrough on the uniformity problem of the aforementioned conjectures. In \cite{DGH,kühne2024equi}, Dimitrov--Gao--Habegger and Kühne prove the uniform Mordell--Lang conjecture for curves, namely, the number of rational points can be bounded from above in terms of the curve's genus ($\geq 2$) and the Mordell-Weil rank, which answers a question of Mazur \cite[pp. 223]{Mazur00uniform}. In the case of genus $2$, this was previously obtained by DeMarco--Krieger--Ye \cite{DKY2020genus2} in some cases. In a rather new paper of Yu--Yuan--Zhou \cite{yu2026quantitativitynumberrationalpoints}, an explicit bound is given via a different approach from \cite{DGH}. 

The general uniform Mordell--Lang conjecture, which concerns arbitrary subvarieties in abelian varieties, was proved by Gao--Ge--Kühne \cite{GGK}.

\vskip 0.3em

{\it The goal of the current paper} is to generalize the results of \cite{GGK} to semiabelian varieties. 
Here we give a brief description. Let $K$ be an algebraically closed field of characteristic $0$. A \emph{semiabelian variety} $G$ over $K$ is a torus extension of an abelian variety, that is, we have an exact sequence of group varieties:
$$0\rightarrow \mathbb{G}_{m}^t\rightarrow G\xrightarrow{} A\rightarrow0,$$
where $A$ is an abelian variety of dimension $g$ over $K$. Fix a toric compactification $\mathbb G_m^t\hookrightarrow V$ of $\mathbb G_m^t$, which induces a compactification $G\hookrightarrow\ovl G:=G\times V/\mathbb G_m^t$.
Let $M$  be the line bundle associated with the boundary divisor $\overline G\setminus G$, and $L$ be the pull-back of an ample line bundle on $A$ to $\ovl G$. The main result of the paper is as follows:
\begin{theorem}[Uniform Mordell--Lang for semiabelian varieties]\label{thm_UML}
    There exists a constant $c(g+t,d)$ satisfying the following property. Let $\Gamma\subset G(K)$ be a subgroup of finite rank. Let $X\subset G$ be a closed subvariety with $d=\deg_{L+M}(\ovl X)$. Then there are at most $c(g+t,d)^{\rk(\Gamma)+1}$ cosets $H_1,H_2,\dots$ contained in $X$ such that
$$X(K)\cap\Gamma=\bigcup H_i(K)\cap\Gamma.$$
\end{theorem}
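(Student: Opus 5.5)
We follow the strategy of Gao--Ge--Kühne for abelian varieties and adapt each ingredient to the semiabelian setting. First we reduce to $K=\overline{\mathbb Q}$ by a specialization argument: $G$, $X$ and $\Gamma$ are defined over a finitely generated field, and we specialize to a number field in a way that preserves the rank of $\Gamma$ and the degree $\deg_{L+M}(\ovl X)$. We also want the specialization to preserve the coset structure of $X\cap\Gamma$. This is standard, using Masser's specialization theorem for semiabelian varieties or a Néron--Silverman--Tate type argument. By induction on $\dim X$ together with the Ueno--Kawamata structure theory of subvarieties of semiabelian varieties (Abramovich/Vojta), it then suffices to bound the points of $\Gamma$ on $X^\circ := X\setminus Z(X)$. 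Here $Z(X)$ is the union of positive-dimensional cosets contained in $X$. The maximal such cosets are controlled in number and in degree by a bound depending only on $(g+t,d)$, by a Bogomolov--Ullmo--Zhang-type degree argument. This lets us recurse on finitely many components of bounded degree.

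For $X^\circ\cap\Gamma$ we fix a height $\hat h$ on $G(\overline{\mathbb Q})$. We take it to be the Néron--Tate height of $L$ pulled back from $A$ plus the canonical toric height associated with $M$; the latter is the Weil height of the torus coordinates, made canonical by the $[n]$-action. The quadratic-plus-linear structure of $\hat h$ on $\Gamma\otimes\mathbb R$ is replaced by a norm-like function, with a quadratic part from $A$ and an $\ell^1$-type part from the torus. We then split the points into large and small height relative to a threshold $c_1\max(1,h(G))$, where $h(G)$ is a suitable height of the semiabelian variety, e.g.\ Faltings height of $A$ plus height of the extension class in $\widehat A(\overline{\mathbb Q})^t$.

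\textbf{Large points.} We prove a uniform Vojta inequality and a uniform Mumford gap principle on $X^\circ$ for semiabelian varieties. This gives: for points of $X^\circ(\overline{\mathbb Q})\cap\Gamma$ with $\hat h(x)\ge c_1\max(1,h(G))$, any two in the same small cone in $\Gamma\otimes\mathbb R$ with comparable heights are at most $c_2(g+t,d)$ in number. Covering $\Gamma\otimes\mathbb R\cong\mathbb R^{r}$ by $c^{r}$ cones and dyadic shells, and using the Vojta inequality to limit the number of shells, yields $c^{r+1}$ points. This step can be imported from Rémond's explicit Vojta inequality, which is known for semiabelian varieties (Rémond treats $\mathbb G_m^t$ and abelian varieties; Vojta's semiabelian version gives the shape). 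We expect only bookkeeping to make the constants depend on $(g+t,d)$ and $h(G)$ polynomially.

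\textbf{Small points.} This is the main obstacle. We need a uniform Bogomolov statement (``New Gap Principle''): there exist $c_3,c_4>0$ depending on $(g+t,d)$ such that $\{x\in X^\circ(\overline{\mathbb Q}) : \hat h(x)\le c_3\max(1,h(G))\}$ is contained in a proper Zariski closed subset of degree at most $c_4$. Iterating this on components gives at most $c(g+t,d)$ points per ball, and packing argument in $\Gamma\otimes\mathbb R$ gives $c^{r}$ balls. To prove the New Gap Principle we would follow Dimitrov--Gao--Habegger and Kühne's equidistribution. We form the universal family of semiabelian varieties of dimension $g+t$ with torus rank $t$; this is a $\mathbb G_m^t$-torsor over $\widehat{\mathcal A}^t$ over $\mathcal A_g$. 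On it we consider the Faltings--Zhang maps $\mathcal D_m:X^{m+1}\to G^m$ and the Betti form. The key step is non-degeneracy of the pulled-back Betti form on $\mathcal D_m(X^{m+1})$ for $X$ not containing cosets. We would derive this from the Ax--Schanuel theorem for mixed Shimura varieties of Kuga type (Gao), via Gao's criterion for non-degeneracy. The torus part needs the Betti form for $\mathbb G_m$, i.e.\ $d\arg$-type forms, which are not positive. So the height inequality $\hat h\ge c\,h(G)-c'$ on an open dense subset has to be established separately for the toric directions. We plan to handle this using Bombieri--Zannier/Bilu equidistribution on tori, combined with the abelian estimate via a fibration argument over $A$. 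We expect the uniformity of this toric contribution, with $d$ controlling the torus degrees, to be the hardest point.
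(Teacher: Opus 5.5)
\textbf{Main gap: you assume a Mumford inequality that is not available.} Your large-point step assumes a ``uniform Mumford gap principle on $X^\circ$ for semiabelian varieties'' and treats it as importable alongside R\'emond's Vojta inequality. Only the Vojta half exists in the semiabelian setting; it is item (3) in the proof of Proposition~\ref{ML1}. No Mumford inequality is known there, and the paper is organised around getting by without one.

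Without it your scheme does not close. Vojta only says that, inside a thin cone of $\Gamma\otimes\mathbb R$, the points of $X^\circ\cap\Gamma$ above the Vojta threshold lie in at most $n=\dim X$ annuli $\{r\le\hat h\le\kappa r\}$. You still have to bound the points in each annulus, for arbitrarily large $r$. Your small-point principle cannot do this. Its balls have fixed radius $c_3\max(1,h(G))$, so covering an annulus at scale $r$ needs a number of balls growing with $r/\max(1,h(G))$. The final count is then no longer of the form $c^{\rk(\Gamma)+1}$. This is exactly the paper's remark that even the semiabelian analogue of the GGK new gap principle does not imply the theorem.

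Note also that the Vojta threshold the paper uses is $c_6h([X])$. It depends on the position of $X$, not only on $G$. So splitting at $c_1\max(1,h(G))$ leaves an uncontrolled middle range when $X$ is a far translate.

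\textbf{Missing idea: a gap principle whose radius scales with the centre.} The paper uses Theorem~\ref{mumford}: for $X$ generating $A$ with finite stabilizer, there are $g_0$ and $Z\subset X$ with $\deg Z\le c_1(g+t,d)$ such that every $x\in X^\circ$ with $\hat h(x-g_1)\le c_3\,\hat h(g_1-g_0)$ lies in $Z$, for all $g_1$. It is obtained from two ingredients:
\begin{enumerate}
\item uniform Bogomolov with threshold $c_2\,h([X])$ (Theorem~\ref{bogomolov}), where $h([X])$ is the modular height of the Hilbert point of $X$ and so sees translations;
\item the inequality $h([X+g_1])\ge c_0\,\hat h(g_1-g_0)$ for $X$ with finite stabilizer (Proposition~\ref{prop_height_inequality_translation}).
\end{enumerate}
After normalizing $g_0=0$, each annulus is covered by boundedly many balls of radius $c_3r$ (via Lemma~\ref{lemma_point_counting}), each contained in a bounded-degree subvariety. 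The Vojta threshold $c_6h([X])$ is then matched by the Bogomolov threshold $c_2h([X])$.

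\textbf{Secondary gap: the reductions rest on unproved claims.} These are a ``Bogomolov--Ullmo--Zhang-type'' bound on the number and degrees of maximal cosets, and a specialization ``preserving the coset structure''.
\begin{itemize}
\item The degree estimates of Bogomolov, Masser--W\"ustholz and R\'emond used by GGK rely on Poincar\'e reducibility and are not available here.
\item The paper instead proves that the relative Ueno locus of a family is a finite union of families of cosets of semiabelian subschemes (Proposition~\ref{prop_relative_ueno}). This uses geometric Zilber--Pink on $\mc P_{g,t}^\times$ together with a fibre-power argument.
\item That result gives the bounded-degree decomposition driving the induction on $g+t$ (Lemma~\ref{lemma_reduction_ML}).
\item It also gives that points of $X^\circ(K)\cap\Gamma$ specialize into $X_s^\circ$ (Corollary~\ref{coro_MLforpointsgeneralpol}). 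This is the property your Masser-type specialization does not provide.
\item Rank preservation itself is unnecessary, since the bound is monotone in $\rk(\Gamma)$.
\end{itemize}
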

In this paper we only consider the compactification $\mathbb G_m^t\hookrightarrow (\mathbb P^1)^t$, and the degrees induced by different compactification are comparable.

In the case that $t=0$, our result is exactly \cite[Theorem 1.1]{GGK}, which removes the dependence on the Faltings height $h_{\mathrm{Fal}}(A)$ of the abelian variety in a previous work of R\'emond \cite{Remond00UML}. In the case of algebraic tori, the known uniformity result of this kind is due to Bombieri--Zannier \cite{BZ95Tori}, R\'emond \cite{Remond02tori}, and Evertse--Schlickewei \cite{ESSLinear2002} for hyperplanes with explicit constants. Note that in contrast to \cite{GGK} and our paper, \cite{Remond00UML} and \cite{ESSLinear2002} provided explicit formula for the constant $c$.

\subsection{Outline of the proof and new ingredients}
To obtain the uniformity in Theorem \ref{thm_UML}, a general philosophy is that one may consider the problem on the moduli space, which is also the main idea of \cite{DGH2021Umlpencil,DGH,GGK}. We roughly follow the same strategy summarized as follows in the workflow:
\begin{equation}\label{eq_workflow}\begin{tikzcd}
     &\boxed{\parbox{7em}{\centering\text{Non-degeneracy}\\\text{of the universal}\\\text{ subscheme}}} \arrow[rd] & & \boxed{\parbox{7em}{\centering\text{Equidistribution}\\\text{on families}}}\arrow{ld}\\
     & &\boxed{\parbox{9em}{\centering\text{Uniform Bogomolov}\\\text{with ``extra term"}}} \arrow[d]& \boxed{\parbox{7em}{\centering\text{Diophantine}\\\text{Approximation}}}\arrow{ld}\\ & &\boxed{\parbox{10em}{\centering\text{Uniform Mordell--Lang}\\ \text{(points version)}}}\arrow[d]& \boxed{\parbox{5em}{\centering\text{Relative}\\\text{Ueno locus}}}\arrow{ld}\\ & &\boxed{\text{Uniform Mordell--Lang}}\\
    \end{tikzcd}
\end{equation}
In order to demonstrate how we work on the moduli space, we consider the situation as follows. Let $\mc G\rightarrow S$ be a family of semiabelian varieties parametrized a quasi-projective variety over $\C$, given by the exact sequence 
$$0\rightarrow \mathbb G_m^t\times S\rightarrow \mc G\rightarrow \mc A\rightarrow 0$$
where $\mc A$ is an abelian scheme over $S$ with a polarization $L$. 
Assume that $\mc X\subset \mc G$ is a closed subvariety such that for every $s\in S(\C)$, the triple $(\mc G_s, L_s, \mc X_s)$ is as in Theorem \ref{thm_UML}. We will obtain the result after applying our argument to suitable such $(\mc G,L,\mc X)$'s (in practice we will consider $L\otimes [-1]^*L$).

We now explain the diagram \eqref{eq_workflow} in reverse order. 

Recall that for a closed subvariety $X\subset G$. The union of positive dimensional cosets contained in $X$ is called the \emph{Ueno locus}, whose complement in $X$ is denoted as $X^\circ.$ 

In \cite{GGK}, by a recursive argument, Theorem \ref{thm_UML} can be reduced to a uniform estimate of $\#(X^\circ\cap \Gamma)$, namely, Corollary \ref{coro_MLforpointsgeneralpol} in the abelian case. Their argument is based on the Poincar\'e reducibility and sophisticated degree estimates in \cite{Bogomolv1980Ueno,MW93Minimal,Remond00UML}, which are absent in our case.

Instead, we give a novel treatment by applying the \emph{geometric Zilber--Pink conjecture} for connected mixed Shimura varieties \cite{BU} to a sufficiently large fiber product $\mc X^{[m]}:=\underbrace{\mc X\times_S\cdots\times_S\mc X}_{m\text{ times}}$ of $\mc X$. In Proposition \ref{prop_relative_ueno}, we prove a finiteness result for what we call the \emph{relative Ueno locus}, namely, the union $\displaystyle\bigcup_{s\in S(\C)}(\mc X\setminus\mc X_s^\circ)$. 

Combining with the discussion in \S \ref{subsection_proof_main_theorem}, we reduce Theorem \ref{thm_UML} to its point counting version, i.e., the following result:
\begin{theorem}[Uniform Mordell--Lang conjecture for points]\label{thm_UML_pts}
There exists a constant $c'(g+t,d)$ satisfying the following property. Let $G,X$ and $\Gamma$ be as in Theorem \ref{thm_UML}. Then
$$\#(X^\circ(\ovl\Q)\cap\Gamma)\le c'(g+t,d)^{\rk(\Gamma)+1}.$$
\end{theorem}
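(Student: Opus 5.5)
We follow the workflow \eqref{eq_workflow}: we combine a Vojta-type Diophantine approximation (which handles large points) with a uniform Bogomolov-type lower bound (which handles small points). First we reduce to a family. Fix $g,t,d$. The triples $(G,L,X)$ with $\deg_{L+M}(\ovl X)=d$ are parametrized by finitely many families $\mc X\subset\mc G\to S$ over quasi-projective bases. This comes from a Hilbert scheme over a fine moduli space of polarized abelian varieties with level structure, together with the parameter space of $\mathbb G_m^t$-extensions, which is the dual abelian scheme to the power $t$. It therefore suffices to prove the bound for a single family, over $\ovl\Q$ first; a specialization argument then passes to arbitrary $K$. We also reduce to the case where $\Gamma$ contains enough torsion, or is divisible hull-saturated, which does not change the rank.

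We fix a height $\hat h$ on $G(\ovl\Q)$, namely the sum of the Néron--Tate height for $L\otimes[-1]^*L$ on $A$ and the standard Weil height on the toric part, made canonical via the splitting with respect to $[n]$. Set $\Gamma$-points in $X^\circ$ into two classes, separated by the threshold $R=c_1\max\{1,h(s)\}$, where $h(s)$ is a height of the parameter point on the (compactified) moduli base.

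For large points, $\hat h(x)> R$, we use the Mumford gap principle together with the Vojta inequality for $X^\circ$ in semiabelian varieties (the Faltings--Vojta method as made effective by Rémond). Points of $X^\circ\cap\Gamma$ whose images in $\Gamma\otimes\R$ lie in a narrow cone and have comparable large heights are few: there are at most $c^{\,}$ of them for each cone. Covering $\Gamma\otimes\R\cong\R^{\rk\Gamma}$ by $c^{\rk\Gamma}$ cones, and the height range by dyadic shells, bounds the number of large points by $c^{\rk\Gamma+1}$. The crucial point is that the Vojta constants depend only on $g+t$ and $d$, plus a term linear in $h(s)$; that term is absorbed precisely because we only treat points with $\hat h\gg h(s)$. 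For the toric directions, the semiabelian Vojta inequality (Vojta's proof of Lang for semiabelian varieties, with the compactification $(\mathbb P^1)^t$) is used; we need its explicit dependence, which we would track through the Rémond-style bookkeeping.

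For small points, $\hat h(x)\le R$, we need a uniform Bogomolov statement with the extra term. There exist $c_2,c_3>0$, depending only on $(g+t,d)$, such that
$$\#\{x\in X^\circ(\ovl\Q):\hat h(x)\le c_2\max\{1,h(s)\}\}\le c_3 .$$
Given this, a ball-packing argument in $\Gamma\otimes\R$ with the Euclidean norm $\hat h^{1/2}$ bounds the small points in $\Gamma$ by $c_3\cdot(1+2\sqrt{c_1/c_2})^{\rk\Gamma}$. Translating by differences of points lies inside $X-X$, which is handled by the same argument applied to difference families.

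The main obstacle is proving the uniform Bogomolov statement. Following Kühne and Gao--Ge--Kühne, we would prove non-degeneracy of $\mc X^{[m]}$ for large $m$ in the family. We would try to deduce this from the Ax--Schanuel / geometric Zilber--Pink results for the mixed Shimura variety that underlies the universal semiabelian scheme, namely \cite{BU}. This is the same input that yields the relative Ueno locus result, Proposition \ref{prop_relative_ueno}. Non-degeneracy gives, via the height inequality of Dimitrov--Gao--Habegger type, the term $\hat h\ge c\,h(s)$ off a proper subvariety. Equidistribution in families then controls the bounded-$h(s)$ range, and an induction on dimension over the exceptional subvariety finishes the argument.
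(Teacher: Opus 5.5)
\textbf{Gap: large points.} You invoke a Mumford-type gap principle for $X^\circ$ in a semiabelian variety to claim that points of $X^\circ\cap\Gamma$ in a narrow cone with comparable heights are few. No such inequality is available. R\'emond's semiabelian work gives the Vojta inequality with explicit constants, but not Mumford's inequality. This is exactly the obstruction the paper is organized around: it points out that the semiabelian analogue of the Gao--Ge--K\"uhne new gap principle, which is essentially your small-point statement, cannot give the theorem without Mumford.

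Your small-point bound does not repair this. A bound of the shape $\widehat h(x)\le c_2\max\{1,h(s)\}$ gives repulsion only at a fixed scale governed by $h(s)$. Inside a shell $r\le\widehat h\le\kappa r$ with $r\gg h(s)$ you need repulsion at a scale proportional to $r$. Your appeal to ``difference families'' for translates also does not address the fact that the modular height of $X-x$ is not $h(s)$.

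\textbf{The missing idea} is a Bogomolov statement with a genuine extra term, Theorem~\ref{mumford}: there is $g_0\in G(\ovl\Q)$ such that for every $g_1\in G(\ovl\Q)$, the set $\{x\in X^\circ(\ovl\Q):\widehat h(x-g_1)\le c_3\widehat h(g_1-g_0)\}$ lies in a subscheme of degree at most $c_1$. It follows by applying the modular-height version, Theorem~\ref{bogomolov}, to translates of $X$. The key input is that the modular height of the translate of $X$ by $g_1$ grows at least like $c_0\widehat h(g_1-g_0)$ (Proposition~\ref{prop_height_inequality_translation}). That proposition is proved via a translation-stable stratification of the Hilbert scheme and a quasi-finite map $\mc G_W\to S'$.

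After moving $g_0$ to $0$, at the cost of one extra generator of $\Gamma$, this covers $X^\circ\cap\Gamma$ in each annulus $r\le\widehat h\le\kappa r$ by at most $c^{\rk(\Gamma)+1}$ subvarieties of bounded degree. That is the substitute for Mumford. Vojta then limits each cone to $n$ annuli when $\kappa=c_5^4$ (Proposition~\ref{ML1}).

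\textbf{Two further corrections.} First, the output of this argument is a covering by bounded-degree subvarieties, not a count. The count comes from induction on $\dim X$ applied to the point-counting statement itself, via $X^\circ\cap Z\subset Z^\circ$ (Corollary~\ref{MLforpoints}). So the non-degeneracy and equidistribution argument should not be expected to deliver your cardinality form of small-point Bogomolov.

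Second, $\widehat h$ is not a quadratic form. Since $[n]^*\ovl M=n\ovl M$, the height $h_{\ovl M}$ is homogeneous of degree one, so $\widehat h^{1/2}$ is not a norm. All packings must be done with $h_{\ovl M}+\sqrt{h_{\ovl L}}$ and polyballs, as in Lemma~\ref{lemma_point_counting}.
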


We remark that another merit of the relative Ueno locus is that we only need to prove our result for $K=\ovl {\Q}$ by a specialization argument (see the proof of Corollary \ref{coro_MLforpointsgeneralpol}).

Theorem \ref{thm_UML_pts} is a generalization of \cite[Theorem 1.1]{DGH} and \cite[Theorem 1.1']{GGK}. The proof of \cite[Theorem 1.1']{GGK} follows the ball and cone packing argument as in Vojta's proof \cite{vojta}. After implementing the quantitative version of Vojta's inequality and Mumford's inequality obtained in \cite{Remond00UML,Remond00Inquality}, along with a reformulation by David--Philippon \cite{DP07Minoration}, \cite[Theorem 1.1']{GGK} reduces to the so-called \emph{new gap principle} \cite[Theorem 1.2]{GGK}.

In the semiabelian case, while the Vojta's inequality with explicit constants is proven by R\'emond \cite[Theorem 4.1]{remond}, the Mumford inequality is not available. To overcome this, we adapt a variant of Vojta's proof to the Mordell--Lang conjecture, which was first introduced by Yuan \cite{yuan2025vojtasproofmordellconjecture} in the curve case. One may also find the treatment in \cite{yu2026quantitativitynumberrationalpoints}, and \cite{Yu2026uniform} in the function fields case. We generalize the argument to the higher dimensional case, which provide
an alternative to Mumford's inequality. 

As a trade-off, we need to provide a much more delicate estimate for points of small canonical height $\widehat h(\cdot)$ (see \eqref{eq_canonical_height} for the definition of the canonical height). Namely, we prove the \emph{uniform Bogomolov conjecture with extra term} as follows. Let $X\subset G$ be a closed subvariety. We say $X$ \emph{generates} $A$ if $A$ itself is the smallest abelian subvariety containing $\pi_{G/A}(X)-\pi_{G/A}(X).$
\begin{theorem}[Uniform Bogomolov conjecture with extra term]\label{SUBC}
There exists a positive constant $c_3=c_3(g+t,d)>0$ satisfying the following property. Let $G,L$ and $X$ be as in Theorem \ref{thm_UML}, with $L$ being a pull-back of a principal polarization on $A$. Assume $X$ generates $A$, and $X$ has finite stabilizer. Then there exists a reduced subscheme $Z\subset X$ of degree $$\deg Z\le c_1(g+t,d),$$ and a point $g_0\in G(\ovl\QQ)$ such that for any $g_1\in G(\ovl\QQ)$, we have
$$\#\{x\in X^\circ(\ovl\Q):\widehat{h}(x-g_1)\le c_3(g+t,d)\widehat h(g_1-g_0)\}\subset Z(\ovl\Q).$$
Here $c_1(g+t,d)$ is the constant in Theorem \ref{bogomolov}.
\end{theorem}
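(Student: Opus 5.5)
We argue as in the proof of the uniform Bogomolov conjecture in \cite{GGK,kühne2024equi}, using the Bogomolov-type estimate of Theorem~\ref{bogomolov} (with its constants $c_1,c_2$) together with a height-difference argument on a suitable fiber power of $X$. The key device is the difference map
\[
D_m\colon X^{m+1}\to G^m,\qquad (x_0,\dots,x_m)\mapsto (x_1-x_0,\dots,x_m-x_0),
\]
for $m$ large, depending only on $g+t$ and $d$.

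\emph{Step 1: choice of $g_0$ and $Z$.} Since $X$ generates $A$ and has finite stabilizer, the non-degeneracy result for the universal family guarantees the following: for $m$ sufficiently large, $D_m$ restricted to $(X^\circ)^{m+1}$ is generically finite onto a non-degenerate subvariety. We take $Z$ to be the exceptional set from Theorem~\ref{bogomolov}, so $\deg Z\le c_1$. Outside $Z$ we then have
\[
\#\{x\in X^\circ\setminus Z:\ \widehat h(x-a)\le c_2\max(1,h_{\mathrm{Fal}})\}
\]
bounded for every $a$. We choose $g_0$ as an approximate ``center'' of $X$, namely a point minimizing, up to a factor $2$, the quantity $\inf_{x\in X^\circ\setminus Z}\widehat h(x-g_0)$. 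Concretely, $g_0$ is a point for which small points of $X^\circ-g_0$ lie outside $Z$ if they exist.

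\emph{Step 2: parallelogram-type comparison.} Let $x\in X^\circ\setminus Z$ with $\widehat h(x-g_1)\le c_3\widehat h(g_1-g_0)$. Write $x-g_0=(x-g_1)+(g_1-g_0)$. For the toric part, the canonical height is a sum of Weil heights of characters and satisfies only a triangle inequality, not a parallelogram law. So we work with the inequality
\[
\widehat h(u+v)\ge \tfrac12\widehat h(v)-C\,\widehat h(u),
\]
obtained by combining the quadratic behaviour on $A$ with the subadditivity $h(\chi(u)\chi(v))\le h(\chi(u))+h(\chi(v))$ on $\mathbb G_m^t$ applied to $v=(v+u)-u$. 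This gives
\[
\widehat h(x-g_0)\ge \bigl(\tfrac12-Cc_3\bigr)\widehat h(g_1-g_0).
\]
Hence all such $x$ have $x-g_0$ of height at least comparable to $\widehat h(g_1-g_0)$, while pairwise differences $x-x'$ have height at most $4c_3\widehat h(g_1-g_0)$. So the tuples $(x_0,\dots,x_m)$ of such points map under $D_m$ to points of height at most $O(c_3)$ times the height of $x_0-g_0$.

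\emph{Step 3: contradiction via the height inequality.} On the non-degenerate image we apply the height inequality of \cite{DGH}, extended to the semiabelian setting, on the Zariski open set where $D_m$ is finite. This inequality has the form $\widehat h(D_m(\mathbf x))\ge c\,\widehat h(\mathbf x)-c'$. Together with the choice of $g_0$, which absorbs the Faltings-height term through Step 1, it contradicts Step 2 once $c_3<c/(8m)$, unless the tuple lies in the degeneracy locus. The degeneracy locus is handled by induction on $m$, or placed into $Z$ at the cost of enlarging $\deg Z$. We must keep this enlargement inside the bound $c_1$, for instance by having Theorem~\ref{bogomolov} already produce this $Z$.

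The main obstacle is Step 2 together with the choice of $g_0$. In the abelian case the N\'eron--Tate height is a quadratic form, and one can take $g_0$ to be the ``barycenter'' minimizing $\widehat h$ over $X$. For the toric component, $\widehat h$ is only a norm-like function: it is piecewise linear on $\mathbb G_m^t(\ovl\QQ)\otimes\mathbb R$ in archimedean and non-archimedean logarithmic coordinates. I would first try to handle the torus part separately, applying a Bombieri--Zannier-type argument \cite{BZ95Tori} to bound the toric contribution linearly. I would then combine it with the abelian barycenter through the extension class, accepting a weaker constant $c_3$.
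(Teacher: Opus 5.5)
Your proposal has a genuine gap, and it sits exactly where the extra term $\widehat h(g_1-g_0)$ has to come from.

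\textbf{The difference map cannot see $g_1$.} The map $D_m$ is invariant under translating all coordinates by a common point. A DGH-type height inequality on $D_m(X^{m+1})$ has the shape $\widehat h(\mathbf y)\ge c\,h(s)-c'$, with $s$ the base point of the family. Such an inequality only sees $X$ up to translation, so it can never give a lower bound that grows with $\widehat h(g_1-g_0)$.

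The inequality $\widehat h(D_m(\mathbf x))\ge c\,\widehat h(\mathbf x)-c'$ you use in Step~3 is not of that shape. As written, it cannot hold with constants depending only on $g+t$ and $d$: $D_m((X+a)^{m+1})=D_m(X^{m+1})$, while $\widehat h(\mathbf x+a)$ is unbounded as $a$ varies. What you actually need is a version centred at a point $g_0$ that moves equivariantly with translations of $X$, and that is essentially the statement being proved. So Step~2, which is fine in itself, has nothing to feed into.

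\textbf{Step 1 does not supply such a $g_0$.} The quantity $\inf_{x\in X^\circ\setminus Z}\widehat h(x-g_0)$ is $0$ for every $g_0\in(X^\circ\setminus Z)(\ovl\QQ)$, so minimizing it selects nothing. Also, Theorem~\ref{bogomolov} only controls points with $\widehat h(x)\le c_2h([X])$, where $h([X])$ is the height of the Hilbert point of $X$. It says nothing about points near an arbitrary $a$ with a Faltings-height bound. Finally, keeping $\deg Z\le c_1$ after adding degeneracy loci is left unresolved.

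\textbf{The missing idea.} The modular height $h([X])$ is \emph{not} translation invariant, and the paper uses exactly this. It applies Theorem~\ref{bogomolov} to the translate of $X$ by $g_1$, whose small points are the $x-g_1$. Translating that exceptional set back gives $Z$ with $\deg Z\le c_1$ for free. This $Z$ depends on $g_1$, which is how the result is used in item (2) of the proof of Proposition~\ref{ML1}.

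What remains is Proposition~\ref{prop_height_inequality_translation}: there is $g_0$ such that the modular height of the translate of $X$ by $g_1$ is at least $c_0\widehat h(g_1-g_0)$, with $c_0=c_0(g+t,d)$. Then $c_3$ is obtained from $c_0$ and $c_2$. The proof takes place on the Hilbert scheme:
\begin{enumerate}
\item $\mathcal P_{g,t}^\times$ acts on $S$ by translation. The finite-stabilizer locus is open and translation-stable (Lemma~\ref{lemma_maximal_stable_under_translation}), so orbits have dimension $g+t$.
\item A slice $W$ of complementary dimension makes $f_0\colon\mathcal G_W\to S_1$ dominant and generically quasi-finite.
\item Hence $f_0^*\ovl H$ is big on every subvariety over a translation-stable open $S'$, and dominates $c_0\widehat h$ there.
\item Induction on $\dim S_1$ over translation-stable strata gives uniformity. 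The point $g_0$ is read off from $s=g_0\cdot w$ with $w\in W$.
\end{enumerate}
No barycenter, toric substitute for the parallelogram law, or Bombieri--Zannier input is needed.

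To rescue your strategy, you would have to replace $D_m$ by fibered powers of a family whose base records the translation parameter, and compare its base height with $\widehat h$. Done uniformly over the Hilbert scheme, that is exactly this proposition.
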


In the case of curves embedded in their Jacobians, Yuan proved a similar result in \cite{Yuan2026bigness}, via Yuan-Zhang's theory of adelic line bundles \cite{YuanZhang} and previous works \cite{Zhang93admissible,zhang2010Gross,Cinkir2011,deJong2018} due to Zhang, Cinkir and de Jong. 
\begin{remark}
Let us clarify some terminologies.

Classically, the \emph{uniform Bogomolov conjecture} is  the height inequality in Theorem \ref{SUBC} by $\widehat h(x)\leq c_3'(g+t,d)$ for some other constant $c_3'$. 
In the abelian case, this is precisely \cite[Theorem 1.3]{GGK}. Before their result, \cite{DP07Minoration,DKY2020genus2,Kuhne} proved some cases. Notice that this version cannot imply uniform Mordell--Lang, even for curves; see \cite[below Theorem~1.3]{GGK}. Indeed, to achieve uniform Mordell--Lang, one needs a stronger version of {\it uniform Bogomolov}, called the {\it new gap principle} in \cite[Theorem~4.1]{ZG_Survey} and \cite[Theorem 1.2]{GGK}. Yuan's \cite[Theorem 1.1]{Yuan2026bigness} further strengthens this uniform Bogomolov type result by introducing an extra term.

In our case, the direct generalization of \cite[Theorem 1.2]{GGK} to semiabelian varieties can {\bf not} deduce Theorem~\ref{thm_UML} due to the absence of Mumford's inequality. The version with extra term is hence necessary.

During the preparation of the current paper, we were told by Hultberg that a generalization of \cite[Theorem 1.2, Theorem 1.3]{GGK} to semiabelian varieties has been proven \cite{hultberg2026newgap}. The key to the proof is a reduction to Gao--Ge--Kühne \cite{GGK}, via a combination of GVF theory \cite{yaacov2022GVF} and the relative Faltings-Zhang method established by Luo--Yu \cite{luo2025gbc} on geometric Bogomolov conjecture for semiabelian varieties.
\end{remark}
In the case of algebraic tori, these results are known much earlier due to Bombieri--Zannier \cite{BZ95Tori} (see also \cite{DP99tori,AD06tori}). 

To prove our uniform Bogomolov conjecture with extra term, we implement Yuan--Zhang's theory of adelic line bundles on quasi-projective varieties, along with the following two main ingredients:
\begin{enumerate}
    \item A study on the rank of Betti maps for a family of semiabelian varieties, which is a generalization of \cite{ZG1};
    \item An equidistribution theorem for a family of semiabelian varieties, generalizing both \cite{kühne2024equi} and \cite{Kuhne}.
\end{enumerate} 
Note that the notion of \emph{non-degeneracy}, namely, the fullness of the rank of the Betti map, is equivalent to the nonvanishing of the measure associated with the equidistribution. Hence (2) relies on (1).

The study of Betti maps started from \cite{ACZ2020Betti}. Let $\mathcal A_g$ be the universal abelian variety over the moduli space $\mathbb A_g$ of abelian varieties of a certain polarization type and level structure. Gao gives a group scheme theoretic description of the rank of the Betti map restricted on a subvariety $\mc X\subset \mc A_g$ \cite[Theorem 1.1]{ZG1}. This time we work on the family of semiabelian varieties: $$\mc P_{g,t}^\times :=\mc P_{g,1}^\times \times_{\mc A_g} \cdots\times_{\mc A_g}\mc P_{g,1}^\times\rightarrow (\mc A_g^{\vee})^{[t]},$$ where $\mc A_g^\vee$ is the dual of $\mc A_g$ and $\mc P_{g,1}^\times \rightarrow \mc A_g\times_{\mathbb A_g}\mc A_g^\vee$ is the universal Poincaré torsor. We provide a lightweight but almost self-contained proof for a generalization of Gao's result to $\mc P_{g,t}^\times$ (see \S \ref{subsection_degeneracy_criterion}), which may have further applications in the forthcoming paper of Zhaobo Han on the Relative Manin-Mumford conjecture for semiabelian varieties, whose abelian case is proved by Gao-Habegger \cite{GH2026RMM}.

Moreover, let $\mc X\subset \mc P_{g,t}^\times$ be a subvariety, which maps dominantly to some $S\subset(\mc A_g^\vee)^{[t]}$. We obtain a non-degeneracy result of the self product $\mc X^{[m]}$ for $m\gg 0,$ generalizing \cite[Theorem 1.3]{ZG1}.
The actual result is stronger, see Theorem \ref{theo_nondeg_gen} which works for families over Hilbert scheme. Here we give a weaker version to demonstrate.
\begin{theorem}
    Let $\ovl\eta$ be the geometric generic point of $S$.
    Assume that 
    \begin{enumerate}
        \item The projection of $\mc X_s$ to $(\mc A_g)_s$ generates $(\mc A_g)_s$ for each $s\in S;$
        \item $\mc X_{\ovl \eta}$ is irreducible and has finite stabilizer.
    \end{enumerate}
    Then $\mc X^{[m]}:=\underbrace{\mc X\times_S\cdots\times_S\mc X}_{m\text{ times}}$ is non-degenerate for every $m\gg 0.$
\end{theorem}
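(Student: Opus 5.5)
We will follow Gao's strategy for the abelian case \cite[Theorem 1.3]{ZG1}, replacing his criterion by the degeneracy criterion for subvarieties of $\mc P_{g,t}^\times$ proved in \S\ref{subsection_degeneracy_criterion}. Recall the shape of that criterion, modelled on \cite[Theorem 1.1]{ZG1}: a subvariety $\mc Y\subset\mc P_{g,t}^\times\times_{\mathbb A_g}\cdots$ dominating its image $S'$ is degenerate exactly when there is a proper ``weakly special'' obstruction. Concretely, after a finite base change, there is a subgroup scheme $\mc H\subset\mc G_{S'}$, a translate of a constant part, and a quotient $\mc G\to\mc G/\mc H$ such that $\dim\mc Y_{\ovl\eta}-\dim(\mc Y\text{'s image in }\mc G/\mc H)$ fails to compensate for $\dim S'$ minus the dimension of the image of $S'$ in the moduli of the quotient. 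Equivalently, $\mc Y$ is degenerate iff
\[
\dim \mc Y < \dim \iota(\mc Y)
\]
for some such quotient $\iota$, measuring the relative dimension plus the moduli dimension. We will apply this to $\mc Y=\mc X^{[m]}\subset\mc G^{[m]}$.

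The plan proceeds in three steps. \emph{Step 1.} Enumerate the possible obstructions for $\mc X^{[m]}$. The family $\mc G^{[m]}\to S$ is again of the form considered, with abelian part $\mc A^{[m]}$ and torus $\mathbb G_m^{tm}$. A subgroup $\mc H$ of $\mc G^{[m]}_{\ovl\eta}$ is cut out, up to finite index, by an $m$-tuple-linear relation: a sub-$\mathrm{End}$-module of $\mc G_{\ovl\eta}^m$. Since $\mc X_{\ovl\eta}$ has finite stabilizer and is irreducible, the stabilizer of $\mc X^{[m]}_{\ovl\eta}=\mc X_{\ovl\eta}^m$ is finite, and the key point is a dimension count. \emph{Step 2.} Show the left side grows linearly: $\dim\mc X^{[m]}=\dim S+m\,r$ with $r=\dim\mc X_{\ovl\eta}\ge1$. (If $r=0$, generation in hypothesis (1) forces $g=0$ and the statement is trivial.) The obstruction $\iota$ contributes at most $\dim S+$ (dimension of the image of $\mc X^{[m]}_{\ovl\eta}$ in the quotient), and the Betti-rank target is $2\dim(\text{abelian part})+\dim(\text{torus part})$. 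This is the real dimension bound, so non-degeneracy means $\operatorname{rank}\ge 2\dim_{\C}\mc X^{[m]}$. We need $\dim_\C\mc X^{[m]}\le m(g+t)$ eventually, plus control of the moduli directions: $\dim S$ is fixed while $m(g+t)-mr$ is the dimension of the fibre complement.

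\emph{Step 3, the main obstacle.} Show that for $m\gg0$ the Betti map on $\mc X^{[m]}$ is generically of maximal rank $2\dim\mc X^{[m]}$. We will try Gao's trick. The rank of $b|_{\mc X^{[m]}}$ at $(x_1,\dots,x_m)$ equals $\dim$ of the sum over $i$ of the images $db(T_{x_i}\mc X)$ inside the fixed real vector space $\R^{2g+t}$ (Betti coordinates), fibred over $s$. So the rank is at least $\dim S$ plus the fibre contribution, once the moduli direction is detected by a single factor. Detecting it needs hypothesis (1): the abelian projection generates $(\mc A_g)_s$. The argument is the one of \cite{ZG1}. If the moduli directions of $S$ were killed for every $m$, one would obtain an isotrivial quotient of $\mc G$ along which $\mc X$ is constant. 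That forces either a positive-dimensional stabilizer, contradicting (2), or $\mc X_s$ lying in a translate of a proper subgroup, contradicting (1).

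The torus part is the new difficulty. The extension classes in $(\mc A_g^\vee)^{[t]}$ vary, and the Betti map in torus directions involves the logarithm of the Poincaré torsor section. One must show that directions of $S$ moving only the extension data are seen by $\mc X^{[m]}$. We plan to handle this by applying the criterion of \S\ref{subsection_degeneracy_criterion} to the quotient $\mathbb G_m^t$-torsor. Finite stabilizer of $\mc X_{\ovl\eta}$ rules out the obstruction coming from a subtorus $T'\subset\mathbb G_m^t$ with $\mc X$ invariant under $T'$. Since $S$ has bounded dimension, a stabilization argument (the rank is nondecreasing in $m$ and bounded by $2\dim\mc X^{[m]}$ plus the defect) then gives maximal rank for all $m\gg0$.
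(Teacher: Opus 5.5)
Your high-level plan (a degeneracy criterion followed by an analysis of fibre powers, as in Gao) is the one the paper follows. However, the proposal omits the argument that actually proves the statement, and several steps you do write down are wrong. The criterion ``degenerate iff $\dim\mc Y<\dim\iota(\mc Y)$'' cannot hold, since an image never has larger dimension than its source. What is needed (Proposition~\ref{prop_bettirank_deg}, Theorem~\ref{theorem_quotient_subgrp_closure}, Corollary~\ref{coro_semiabelian_subscheme}) is the following: if $\mc X^{[m]}$ is degenerate, there are a normal subgroup $N$ and a semiabelian subscheme $\mc H\subset\mc G^{[m]}$ with $h_N=\dim\mc H_s$ and $\dim Y-\dim\rho_{/N}(Y)>\dim\mc H_s$. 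Here $\rho_{/N}$ is the quotient by $\mc H$ followed by contracting the base directions along which $\mc G^{[m]}/\mc H$ is isotrivial. Reaching this geometric form is exactly where hypothesis (1) is used: it forces the smallest special subvariety containing $Y$ to satisfy $V_Q=V^{\ab}_Q$, hence to be a semiabelian subscheme (Theorem~\ref{theorem_special_subvariety}). One must also enlarge $N$ to $(U_N\oplus V^{\ab}_N)\cdot N$. Your own use of (1) --- ``$\mc X_s$ lying in a translate of a proper subgroup contradicts (1)'' --- is not available. Hypothesis (1) only concerns the abelian projection, and $\mc X_s$ may well lie in a translate of a semiabelian subvariety with smaller toric part.

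The decisive missing step is to rule out every such $\mc H$ once $m\ge\dim S$. This is Theorem~\ref{theorem_nondeg_group_quotient}: for each semiabelian subscheme $\mc H\subset\mc G^{[m]}$, either $\dim(\mc X^m_s/\mc H_s)\ge\dim\mc X^m_s-\dim\mc H_s+\dim S$, or $\mr{Mod}.\dim_{\mc G^{[m]}/\mc H}(S)=\mr{Mod}.\dim_{\mc G}(S)$. The relevant $\mc H$ are arbitrary semiabelian subschemes of the $m$-fold product: diagonal-type subtori of $(\mathbb G_m^t)^m$, graphs of isogenies between abelian factors, and mixtures of both. They are not only subtori of a single $\mathbb G_m^t$, and finite stabilizer of $\mc X_{\ovl\eta}$ by itself yields a dimension gain of $1$, not $\dim S$.

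The paper obtains the full gain in two stages. First, splitting isogenies (Proposition~\ref{prop_isogeny_trick}), applied to the toric part and then to the abelian part of what remains, turn $h(\mc H)$ into a product. Second, an inductive fibration count (Proposition~\ref{prop_isogeny_induction}) gives $+1$ for every factor on which $h(\mc H)$ is nontrivial; this is where finite stabilizer enters, and the total is $+m\ge\dim S$. If some factor is untouched, $\mc G^{[m]}/\mc H$ contains a copy of $\mc G$ and keeps full modular dimension.

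Your ``stabilization argument'' cannot substitute for this. The defect $2\dim\mc X^{[m]}-\mr{rank}$ is indeed non-increasing in $m$, because the Betti map is a local diffeomorphism on each fibre $\mc G_s$. But knowing that it stabilizes says nothing about the stable value being $0$, which is the entire content of the theorem.

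Your rank formula is also wrong. The Betti map of $\mc G^{[m]}$ takes values in $(\R^{2g}\times\C^t)^m$; the toric part has real dimension $2t$, not $t$. Moreover, $T\mc X^{[m]}$ is a fibre product over $T_sS$, so the rank is not the dimension of a sum of images inside one fixed space.
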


Now let $(\mc G/S,L,\mc X)$ be as in the beginning of the subsection. This time we assume that $\mc G, S$ and $\mc X$ are defined over $\ovl {\Q}$. We first construct an adelic line bundle $\ovl M$ on the compactification $\ovl{\mc G}:=(\mc G\times (\mathbb P^1)^t)/\mathbb G_m^t$, with the underlying line bundle $M:=\mc O_{\ovl {\mc G}}({\ovl{\mc G}/\mc G}).$ Let $\ovl L$ be the adelic line bundle constructed in \cite[\S 6]{YuanZhang}. Then we have the associated height function $\widehat h_{\ovl L}$ and $\widehat h_{\ovl M}:\mc G(\ovl {\Q})\rightarrow \R_{\geq 0}$. Denote that $\widehat h:=\widehat h_{\ovl M}+\widehat h_{\ovl L}.$
\begin{theorem} Assume that the abelian quotient of $\mc G$ is principally polarized, and $\mc X$ is non-degenerate.
Let $x_i\in\XXX\ (n=1,2\dots)$ be a generic sequence of algebraic points. If
$$\lim_{i\to\infty}\widehat{h}(x_i)=0,$$
then the average Galois orbit of the sequence $(x_i)$ equidistributes to
a probability measure $\mu$ on $\mc X(\C).$
\end{theorem}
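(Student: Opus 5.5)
We will follow the Yuan--Zhang approach to equidistribution on quasi-projective varieties, as in Kühne's arithmetic equidistribution for families of abelian varieties, adapted to the extra toric part. The target measure will be
\[
\mu=\frac{c_1(\ovl N)^{\wedge n}|_{\mc X(\C)}}{\int_{\mc X(\C)}c_1(\ovl N)^{\wedge n}},\qquad \ovl N:=\ovl L+\ovl M,\quad n=\dim\mc X .
\]
The proof then runs through the standard variational argument.

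\textbf{Step 1 (positivity and zero height of $\mc X$).} The first step is to check that $\ovl L$ and $\ovl M$ are nef adelic line bundles on $\mc G$ in the sense of Yuan--Zhang.

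\begin{itemize}
\item For $\ovl L$ this is the construction of \cite[\S 6]{YuanZhang}, pulled back along $\mc G\to\mc A$.
\item For $\ovl M$ we use the limit $\lim_{k}\frac1{k}[k]^*$ applied to the boundary line bundle $M$ on the model $\ovl{\mc G}=(\mc G\times(\mathbb P^1)^t)/\mathbb G_m^t$. The multiplication map $[k]$ extends to $\ovl{\mc G}$ and satisfies $[k]^*M\simeq kM$ up to a bounded error. The metrics converge because, locally over $\mc A$, the torus torsor trivializes and the Green function is $\log\max(|z|,|z|^{-1})$ corrected by a bounded term. Convergence in the boundary topology will come from a telescoping sum.
\end{itemize}

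Then $\widehat h\ge0$ and $\widehat h$ vanishes on torsion points. Since $(x_i)$ is generic with $\widehat h(x_i)\to0$, the successive minima inequality (Zhang's theorem in the Yuan--Zhang setting) gives $\widehat h_{\ovl N}(\mc X)\le \liminf\widehat h(x_i)=0$. As $\ovl N$ is nef, this forces $\widehat h_{\ovl N}(\mc X)=0$.

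\textbf{Step 2 (non-vanishing of the volume).} The normalization of $\mu$ requires $\int_{\mc X(\C)}c_1(\ovl N)^{\wedge n}>0$, i.e. that the geometric self-intersection $\ovl N|_{\mc X}^{\,n}$ is positive. Here we use non-degeneracy. On $\mc G(\C)$, the form $c_1(\ovl N)$ is semipositive and is built from the Betti coordinates: on the abelian part it is the pull-back of the flat form $\sum d\theta_i\wedge d\theta_{i+g}$, and on the toric part it is the measure $dd^c\max(0,\log|z_j|)$ type currents. Their wedge product on a non-degenerate $\mc X$ (maximal Betti rank) is a nonzero positive measure on the open set where the Betti map is submersive.

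I expect this to be the \textbf{main obstacle}. The toric part contributes singular currents supported on $|z_j|=1$ rather than smooth forms, so the argument must be localized in the log-coordinates via a regularization of $\max$. We must also check that the relevant mixed Monge--Ampère mass is exactly detected by the rank condition of \S\ref{subsection_degeneracy_criterion}. I would first treat the product situation, where $\mc X$ locally splits into abelian and toric Betti coordinates, and then reduce to it by a generic-point computation.

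\textbf{Step 3 (variational argument).} Take a compactly supported smooth function $f$ on $\mc X(\C)$ and perturb $\ovl N$ to $\ovl N(\epsilon f)$ by adding $\epsilon f$ to the Green function at the archimedean place. By the successive minima bound,
\[
\liminf_i \widehat h_{\ovl N(\epsilon f)}(x_i)\ \ge\ \frac{\widehat h_{\ovl N(\epsilon f)}(\mc X)}{\operatorname{vol}}.
\]
The left side equals $\epsilon\lim\frac1{\#O(x_i)}\sum f(x^\sigma)$. To expand the right side to first order, we use the Yuan--Zhang volume/differentiability theorem, whose Siu-type bound $\operatorname{vol}\ge \ovl N^{n+1}+(n+1)\epsilon\,\ovl N^n\cdot\mathcal O(f)+O(\epsilon^2)$ holds for the nef $\ovl N$. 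This gives
\[
\liminf \frac1{\#O(x_i)}\sum_\sigma f(x_i^\sigma)\ \ge\ \int f\,d\mu+O(\epsilon).
\]
Letting $\epsilon\to0^\pm$ and replacing $f$ by $-f$ gives equality. A tightness argument, using total mass one and the fact that $\mu$ is a probability measure, extends the convergence from compactly supported test functions to weak convergence on $\mc X(\C)$.
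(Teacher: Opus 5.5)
There is a genuine gap, and it starts in Step 1. Over a non-isotrivial base $S$, the adelic line bundle $\ovl M$ (hence $\ovl L+\ovl M$) is \emph{not} nef. It is not even semipositive at the archimedean place. By Proposition \ref{prop_dsh_toric_betti} its Green function is $\sum_i|\log|b_{\tor,i}||$, but by \eqref{eq_Betti_map} $b_{\tor}$ is not holomorphic on $\mc G(\C)$. The term $u\,\mathrm{Im}(\tau)^{-1}\mathrm{Im}(v)$ couples the fibre coordinate $v$ with the extension class $u$, so off $\{|b_{\tor}|=1\}$ the current $c_1(\ovl M)$ is a smooth form of indefinite signature. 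This is why the paper only proves integrability (Proposition \ref{integrable}): nefness appears only after adding $m\ovl L$ and a pull-back $\pi^*\ovl H$ from the base.

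The consequences for your proof are as follows.
\begin{itemize}
\item The fundamental inequality and the nef Siu/differentiability expansion you use in Steps 1 and 3 are unavailable.
\item The semipositivity you invoke in Step 2 holds only after restricting to fibres of $b_{\ab}$, where $b_{\tor}$ is holomorphic; this is exactly what Theorem \ref{measure} exploits.
\item Your target measure is wrong.
\end{itemize}

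Here is a concrete example of the last point. Take $g=t=1$, a fixed elliptic curve (so $\tau$ is constant), and $S$ an open curve in its dual. Let $\mc X$ be a surface whose fibres are curves dominating the elliptic curve, so that $\mc X$ is locally a graph $w=w(u,v)$; such $\mc X$ is non-degenerate. On $\mc X$, up to a pluriharmonic function and a nonzero constant, $\log|b_{\tor}|$ equals $\mathrm{Im}(u)\,\mathrm{Im}(v)$, while $c_1(\ovl L)$ is a multiple of $\sqrt{-1}\,dv\wedge d\bar v$. Hence, off $\{|b_{\tor}|=1\}$:
\begin{itemize}
\item $c_1(\ovl L)^2=0$ and $c_1(\ovl L)\wedge c_1(\ovl M)=0$;
\item $c_1(\ovl M)^2$ is a strictly negative multiple of the volume form.
\end{itemize}
So $c_1(\ovl L+\ovl M)^2|_{\mc X}$ is not a positive measure. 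On the other hand, the Betti map is a local diffeomorphism on $\mc X$, so torsion points are Zariski dense in $\mc X$ and all lie on $\{|b_{\tor}|=1\}$. A generic sequence of them therefore cannot equidistribute to your $\mu$. When $S$ is a point your argument is essentially K\"uhne's and is fine; the failure is specific to families.

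What the paper does instead, and what your proposal is missing, is the following.
\begin{itemize}
\item Choose $I$ such that $\mc X$ is $I$-non-degenerate (Remark \ref{remark_I_non_deg_equiv_non_deg}) and work with $\widehat h_I\le\widehat h$.
\item Let $2r$ be the abelian Betti rank. The correct form of your Step 2 is $\int_{\mc X(\C)}c_1(\ovl L)^rc_1(\ovl M_I)^{\dim\mc X-r}>0$ (Theorem \ref{theorem_I_non_deg_integral}). This is proved via Theorem \ref{measure} and a translation moving non-degenerate points onto $\GGG^{\mathbb S_1}_I$.
\item Control the base, which is the idea you lack entirely. Applying $[m]^*$ to Proposition \ref{integrable} makes $m^2n\ovl L+m\ovl M_I+\pi^*\ovl H'$ nef. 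Arithmetic Siu, together with the leading term $m^{\dim\mc X+r}\binom{\dim\mc X}{r}\int_{\mc X(\C)}c_1(\ovl L)^rc_1(\ovl M_I)^{\dim\mc X-r}$, then gives bigness of $(m^2n\ovl L+m\ovl M_I-\pi^*\ovl H+\ovl{\mc O}(e))|_{\mc X}$ (Proposition \ref{big}).
\item From this bigness comes the height inequality of Corollary \ref{weak_bogomolov}. It keeps $h_{\ovl H}(\pi(x_i))$ bounded along any generic sequence with $\widehat h(x_i)\to0$, which pays for the base term needed to reach nefness.
\item With that bound, Balla\"y--Sombra's equidistribution theorem replaces your nef variational argument. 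It is applied to semipositive approximations $\ovl Q_m$ of $(n\ovl L+\ovl M_I)|_{\mc X}$, in which $\ovl M_I$ carries weight of order $1/m$ and $\pi^*\ovl H$, $\ovl{\mc O}(e)$ weight of order $1/m^2$.
\item Because $c_1(\ovl L|_{\mc X})^{r+1}=0$, the normalized measures $c_1(\ovl Q_m)^{\dim\mc X}$ converge to $c_1(\ovl L)^rc_1(\ovl M_I)^{\dim\mc X-r}$ normalized. That is the correct $\mu$, and it is positive and supported on $\GGG^{\mathbb S_1}_I$.
\end{itemize}
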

We give the explicit description of $\mu$ in Theorem \ref{theorem_equidistribution}, which is an intertwined wedge product of currents given by the Betti map, called the Betti currents. 

The proof of the equidistribution is done by implementing a general equidistribution result of Balla\"{y}-Sombra \cite{BS}, after proving several positivity results for $\ovl M$ and $\ovl L$ using an infinitesimal trick. 

\subsection*{Organization of the article}
In \S \ref{section_Betti}, we first give the definition of Betti maps and Betti currents, including the data of universal coverings of $\mc P_{g,t}^\times$.

In \S \ref{section_Shimura}, we explain the connected mixed Shimura variety structure on $\mc P_{g,t}^\times$, along with a description of certain types of connected mixed Shimura subvarieties of $\mc P_{g,t}^\times.$ Moreover, we study the Shimura quotient by a normal subgroup, which will be used in the proof of the non-degeneracy.

In \S \ref{section_degeneracy}, we provide a short proof for a generalization of \cite[Theorem 1.1]{ZG1} using the geometric Zilber--Pink conjecture and the equivariancy of Betti maps given in \S \ref{section_Betti}.

To prove the non-degeneracy of fiber power, we give a generalization of Gao's isogeny trick in \S \ref{sec_nondeg_fiberproduct}. 

We then briefly review Yuan-Zhang's theory of adelic line bundles \cite{YuanZhang} in  \S\ref{section_adelic}.

In \S\ref{section_canonical}, we construct the canonical adelic line bundle $\ovl M$ and $\ovl L$ for our use and explain their relationship with the Betti map. The connection with non-degeneracy is discussed in \S\ref{section_intersection}.

In \S\ref{section_UBC}, we first prove the uniform Bogomolov conjecture with a bound of height in terms of subvarieties's modular height, using the equidistribution result. We the obtain our uniform Bogomolov conjecture with extra term via an optimization proposition.

In \S\ref{section_UML}, we first use a refined ball packing argument given by the uniform Bogomolov conjecture with extra term to deduce the points counting version of uniform Mordell--Lang conjecture (Theorem \ref{ML}). In \S\ref{subsection_relative_ueno}, we introduce the relative Ueno locus, and prove the finiteness result, i.e., Proposition \ref{prop_relative_ueno}. Combining with Theorem \ref{MLforpoints}, we then conclude the proof of Theorem \ref{ML}.

The Appendix \S\ref{Appendix} contains only the construction of the isogeny used in \S \ref{sec_nondeg_fiberproduct}, which is an easy consequence of Poincaré reducibility for abelian variety and algebraic tori.
\subsection*{Acknowledgement}
We thank Ziyang Gao, Junyi Xie and Xinyi Yuan for their consistent supports and valuable discussions. We thank Kaiyuan Gu and Chenxin Huang for some talks on their preprints. We also thank Nuno Hultberg for sharing his preprint.

\subsection*{AI usage}
All ideas in the article are due to authors' brains.
In the early stage of the preparation, Wenbin Luo uses \textbf{DeepSeek V4} to help polish some part of the preprint. After seeing the AI develop much faster than we expect, we decide not to use AI in any part of this work.
\section*{Notation and conventions}
In this article, a \emph{variety} refers to an integral separated scheme of finite type over a field.

For two line bundles $L_1$ and $L_2$ on a scheme, we denote by $L_1+L_2$ for their tensor product.

Let $S$ be a scheme. A \emph{semiabelian scheme} over $\mc G$ over $S$ is a group $S$-scheme given by an exact sequence
$$0\rightarrow \mathbb{G}_m^t\times S\rightarrow \mathcal G\rightarrow \mathcal A\rightarrow 0$$
of group $S$-schemes, where $\mathcal A/S$ is an abelian scheme of relative dimension $g$. We remark that this is a semiabelian scheme whose toric part is split of constant rank in the common sense. 

We say a semiabelian scheme $\GGG$ is \emph{(principally) polarized} if its abelian quotient is so.
The same definition applies to level structures.

Let $\mc X\subset \mc G$ be a subscheme, and $\mc G'$ be a semiabelian subscheme of $\mc G$. We denote by $\mc X/\mc G'$ the image of $\mc X$ in $\mc G/\mc G'.$

\section{Betti maps and Betti currents}\label{section_Betti}
Let $\Delta$ be a simply connected complex analytic space. 
Let $\mathcal G_\Delta\rightarrow \Delta$ be a family of connected complex algebraic groups.
Fix a point $s\in \Delta$, we say $b:\GGG_\Delta\rightarrow (\GGG_{\Delta})_s$ is a \emph{Betti map} if the following conditions are satisfied:
\begin{enumerate}
    \item[(B1)] $b$ is real analytic.
    \item[(B2)] For any $x\in (\GGG_\Delta)_s$, $b^{-1}(x)$ is complex analytic.
    \item[(B3)] For any $s'\in S$, $b|_{(\GGG_\Delta)_{s'}}:(\GGG_\Delta)_{s'}\rightarrow (\GGG_\Delta)_s$ is a group isomorphism.
\end{enumerate}
Note that the conditions (B1) and (B3) require that any two fibers are isomorphic as real Lie groups. 

In the case that $\mc G_\Delta$ is a family of abelian varieties, the idea of using Betti maps to solve Diophantine problems dates back to works of Masser and Zannier \cite{MZ:torsionanomalous, zannier2012unlikely, MasserZannierTorsionPointOnSqEC, MASSER2014116, MasserZannierRelMMSimpleSur}. More recently, it has seen many more arithmetic applications, joint with the Betti forms of Mok \cite[pp. 374]{Mok91Bettiform}. 
For example, Andr\'e--Corvaja--Zannier proves the density of torsion points on sections of certain abelian schemes \cite[Theorem 2.3.2]{ACZ2020Betti}, Gao--Habegger and Cantat--Gao--Habegger--Xie prove the geometric Bogomolov Bogomolov conjecture in characteristic $0$ \cite{GH2019HeighIneq,CGHX2021GBC}, and the series of works \cite{DGH2021Umlpencil,kühne2024equi,GGK} on uniform Mordell--Lang conjecture relies on the study of rank of Betti maps due to Gao \cite{ZG1}.


In this article, we generalize these constructions to families of semiabelian varieties, whose torus part is of fixed rank $t\geq 0$ and abelian quotient has dimension $g\geq 0$. Moreover, we assume that the abelian quotient has a principal polarization and level-$4$ structure. When $g=t=1$, the Betti map was constructed by Bertrand--Masser--Pillay--Zannier \cite{BMPZ}.


In \S \ref{subsection_alg_grps} and \S\ref{sec_univ_cov}, we will first introduce the moduli space of such semiabelian varieties, and its universal covering, following the construction in \cite{BE}. We then define the Betti maps and its associated currents in \S\ref{subsec_Betti_maps} and \S \ref{subsection_betti_currents}. Compare with previous works, we in addition introduce the toric Betti currents $\omega_{\tor}$, which is a difference of semipositive $(1,1)$-currents in contrast with the abelian case (see \S \ref{subsec_betti_adelic_relation} for a further discussion).
\subsection{Algebraic groups}\label{subsection_alg_grps}
We first fix the following notation for algebraic groups:
\begin{itemize}
    \item For any positive integer $p,q,r>0$, we denote by $\mr M_{p,q}$ the $\Q$-vector group of $p\times q$ matrices equipped with the addition group law, which is isomorphic to $\mathbb G_{a}^{pq}.$ 
    \item Set $J:=\begin{pmatrix}0\qquad I_g\\-I_g\quad0\end{pmatrix}$.
    Let\begin{align*}
    \mr{Sp}_{2g}&:=\{h\in  \mr{GL}_{2g}\,:\, h^\top J h=J\},\\
    \mr{GSp}_{2g}&:=\left\{h\in \mr{GL}_{2g}\,:\,\displaystyle h^\top J h=cJ\text{  for some $c\in\mathbb{Q}^{\times}$}\right\}.
    \end{align*}
    In particular, we denote that $\mu(g):=c$ for $g\in \mr{GSp}_{2g}.$
    Let $$\mr{Sp}_{2g}(1+4\Z):=\{h\in\mr{Sp}_{2g}(\Z)\,:\, h\equiv I_{2g}(\mod 4\Z)\}$$ be an arithmetic subgroup of $\mr{Sp}_{2g}(\Z)$.

    \item Let $P_{g,t}$ be the $\Q$-group $$\left\{\begin{pmatrix}
            \mu(h)I_{t} & * &*\\
            0& h& *\\
            0& 0& 1
        \end{pmatrix}\in \mr{GL}_{t+2g+1}\,:\, h\in \mr{GSp}_{2g}\right\}.$$
        We may and do write $P_{g,t}=(\mr M_{t,1}\oplus \mr M_{2g,1})\rtimes \mr M_{t,2g}\rtimes \mr{GSp}_{2g}$ if we denote by $(z,y,x,g)$ the element $\begin{pmatrix}
            \mu(h)1_{t} & xJ &z\\
            0& h& y\\
            0& 0& 1
        \end{pmatrix}$
    for $z\in \mr{M}_{t,1}, y\in \mr{M}_{2g,1},x\in\mr{M}_{t,2g}$ and $h\in \mr{GSp}_{2g}.$
    We have the following commutative diagram:
    \[\begin{tikzcd}
	P_{g,t} && \\
	& {(\mr M_{2g,1}\oplus \mr M_{t,2g})\rtimes \mr{GSp}_{2g}} & {\mr M_{2g,1}\rtimes \mr{GSp}_{2g}} \\
	& {\mr M_{t,2g}\rtimes \mr{GSp}_{2g}} & \mr{GSp}_{2g}
 	\arrow["p_{\pi}"', from=1-1, to=2-2]
	\arrow["{p_{\ab}}", curve={height=-18pt}, from=1-1, to=2-3]
	\arrow["p_{\mr{sab}}"', curve={height=18pt}, from=1-1, to=3-2]
	\arrow["p_{\mr{Siegel}}"', curve={height=80pt}, from=1-1, to=3-3]
	\arrow[from=2-2, to=2-3]
	\arrow[from=2-2, to=3-2]
	\arrow[from=2-3, to=3-3]
	\arrow[from=3-2, to=3-3]
\end{tikzcd}.\]
\end{itemize}
The following will be frequently used in next section.
\begin{lemma}\label{lemm_normality_weight-2}
    Any $\Q$-vector subgroup of $\mr M_{t,1}$ is normal in $P_{g,t}.$ 
\end{lemma}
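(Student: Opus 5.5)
The plan is a direct matrix computation: show that $\mr M_{t,1}$ is central up to a scalar twist, so that conjugation by any element of $P_{g,t}$ acts on it by a scalar, and every vector subgroup is then stable.

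First, identify $\mr M_{t,1}$ inside $P_{g,t}$ as the elements $(z,0,0,I_{2g})$, i.e. the matrices
$$u_z=\begin{pmatrix} I_t & 0 & z\\ 0 & I_{2g} & 0\\ 0&0&1\end{pmatrix}.$$
Equivalently, $u_z=I+E_z$, where $E_z$ is the block matrix whose only nonzero block is $z$ in the upper right corner.

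Next, take an arbitrary $p=(z',y,x,h)\in P_{g,t}$, written as the block upper triangular matrix
$$p=\begin{pmatrix}\mu(h)I_t & xJ & z'\\ 0&h&y\\0&0&1\end{pmatrix}.$$
Then $pu_zp^{-1}=I+pE_zp^{-1}$. Write $e_{\mathrm{last}}$ for the last standard basis row vector, which has length $t+2g+1$.

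Since $E_z$ maps $e_{\mathrm{last}}$ into the first $t$ coordinates and kills everything else, we have $E_z = \iota(z)\, e_{\mathrm{last}}$, where $\iota(z)$ is the column vector $(z,0,0)^\top$. Hence
$$pE_zp^{-1}=(p\,\iota(z))(e_{\mathrm{last}}p^{-1}).$$

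We compute the two factors separately.

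\emph{The column factor.} Using the block form of $p$, we get $p\,\iota(z)=(\mu(h)z,0,0)^\top$.

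\emph{The row factor.} The last row of $p$ is $e_{\mathrm{last}}$, so $e_{\mathrm{last}}p=e_{\mathrm{last}}$. Therefore $e_{\mathrm{last}}p^{-1}=e_{\mathrm{last}}$.

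Combining these, $pu_zp^{-1}=u_{\mu(h)z}$.

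Finally, let $W\subset\mr M_{t,1}$ be a $\Q$-vector subgroup, meaning a linear subspace defined over $\Q$. Conjugation by $p$ acts on $W$ as multiplication by the scalar $\mu(h)$, which is a unit. Hence $pWp^{-1}=W$. This holds on $R$-points for every $\Q$-algebra $R$, with $\mu(h)\in R^\times$, so $W$ is a normal algebraic subgroup of $P_{g,t}$.

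No real obstacle is expected. The only care needed is the bookkeeping of the block structure; in particular, the last row of $p$ must be $(0,0,1)$, which is what makes $\mr M_{t,1}$ the "weight $-2$" part acted on only through $\mu$.
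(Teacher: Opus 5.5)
Your proof is correct and takes the same approach as the paper, whose entire proof is that the lemma ``is a direct consequence of the matrix representation of $P_{g,t}$''. Your factorization $E_z=\iota(z)e_{\mathrm{last}}$ spells out that computation: conjugation by $p=(z',y,x,h)$ sends $u_z$ to $u_{\mu(h)z}$, so it acts on $\mr M_{t,1}$ through the scalar $\mu(h)$, and hence every $\Q$-linear subspace is stable.
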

\begin{proof}
    This this a direct consequence of the matrix representation of $P_{g,t}.$
\end{proof}
\subsection{Universal coverings}\label{sec_univ_cov}
Now set
    $\mathfrak H_g^+:=\{\tau\in \mr M_{g,g}(\C)\,:\, \tau^\top=\tau, \mr{Im}(\tau)>0\}$, that is, the Siegel upper half space. 
    Define that \begin{align*}\mathcal{X}^+_{g,t}:&=\mr M_{t,1}(\C)\times\mr M_{g,1}(\C)\times \mr M_{t,g}(\C)\times \mathfrak H_g^+
    \end{align*}
    which will serve as the universal covering of the moduli space of semiabelian varieties of our concern.
    We denote by $P_{g,t}(\R)^+$ the neutral component of $P_{g,t}(\R).$ Then $P_{g,t}(\R)^+\mr M_{t,1}(\C)$ acts on $\mc X_{g,t}^+$:
    \begin{equation}\label{eq_group_action}
        \begin{pmatrix}
            \mu(g) \mr{I}_t& x_1& x_2 &  z\\
            0& A & B& y_1\\
            0& C & D &y_2\\
            0& 0 & 0 & 1
        \end{pmatrix}(w,v,u,\tau)\longmapsto\left(w',v',u',\tau'\right).
    \end{equation}
where $(x_2, -x_1)=(x_1, x_2)J^{-1}\in \mr{M}_{t,2g}(\R), \begin{pmatrix}y_1 \\ y_2\end{pmatrix}\in \mr{M}_{2g,1}(\R),z\in \mr M_{t,1}(\C), g=\begin{pmatrix}A\quad B \\ C\quad D\end{pmatrix}\in \mr{GSp}_{2g}(\R)^+$, $w\in \mr{M}_{t,1}(\C),v\in \mr{M}_{g,1}(\C),u\in \mr{M}_{t,g}(\C),\tau\in \mathfrak H_g^+$ and $(w',v',u',\tau')$ is given by the following formulas:
    \[
    \begin{aligned}
    \tau' &= (A\tau + B)(C\tau + D)^{-1}, \quad (\text{Möbius transform})\\
    u'   &= \bigl( \mu\, u + x_1 \tau + x_2 \bigr) (C\tau + D)^{-1}, \\
    v'   &= A v + y_1 - (A\tau + B)(C\tau + D)^{-1} (C v + y_2) \\
     &= A v + y_1 - \tau' (C v + y_2), \\
    w'   &= \mu\, w + x_1 v + z - u' (C v + y_2).
    \end{aligned}
\]
Consider the commutative diagram
\[\begin{tikzcd}
	\mc X^+_{g,t} && \\
	& {\mr M_{g,1}(\C)\times \mr M_{t,g}(\C)\times \mathfrak H^+_g} & {\mr M_{g,1}(\C)\times \mathfrak H^+_g} \\
	& {\mr M_{t,g}(\C)\times \mathfrak H^+_g} & \mathfrak H^+_g
 	\arrow["\widetilde\pi"', from=1-1, to=2-2]
	\arrow["{\widetilde\pi_{\ab}}", curve={height=-18pt}, from=1-1, to=2-3]
	\arrow["\widetilde\pi_{\mr{sab}}", curve={height=20pt}, from=1-1, to=3-2]
	\arrow["\widetilde\pi_{\mr{Siegel}}"', curve={height=90pt}, from=1-1, to=3-3]
	\arrow[from=2-2, to=2-3]
	\arrow[from=2-2, to=3-2]
	\arrow[from=2-3, to=3-3]
	\arrow[from=3-2, to=3-3]
\end{tikzcd}.\]
The action of $P_{g,t}(\R)^+\mr M_{t,1}(\C)$ on $\mc X_{g,t}^+$ descends to the following actions of
\begin{itemize}
    \item $(\mr M_{2g,1}(\R)\oplus \mr M_{t,2g}(\R))\rtimes \mr{GSp}_{2g}(\R)^+$ on ${\mr M_{g,1}(\C)\times \mr M_{t,g}(\C)\times \mathfrak H^+_g}$,
    \item $\mr M_{t,2g}(\R)\rtimes \mr{GSp}_{2g}(\R)^+$ on ${\mr M_{t,g}(\C)\times \mathfrak H^+_g}$,
    \item $\mr M_{2g,1}(\R)\rtimes \mr{GSp}_{2g}(\R)^+$ on ${\mr M_{g,1}(\C)\times \mathfrak H^+_g}$,
    \item and $\mr{GSp}_{2g}(\R)^+$ on $\mathfrak H^+_g$
\end{itemize}
via $p_\pi$, $p_{\sab}$, $p_{\ab}$ and $p_{\mr{Siegel}}$ respectively.
Then one obtain the following moduli spaces 
\begin{itemize}
    \item $\mc P_{g,t}^\times:=(\mr M_{t,1}(\Z)\oplus \mr M_{2g,1}(\Z))\rtimes \mr M_{t,2g}(\Z)\rtimes \mr{Sp}_{2g}(1+4\Z))\backslash \mc X_{g,t}^+,$
    \item $(\mc A_g^{\vee})^{[t]}:=(\mr M_{t,2g}(\Z)\rtimes \mr{Sp}_{2g}(1+4\Z)^+)\backslash({\mr M_{t,g}(\C)\times \mathfrak H^+_g})$,
    \item $\mc A_g:=(\mr M_{2g,1}(\Z)\rtimes \mr{Sp}_{2g}(1+4\Z))\backslash ({\mr M_{g,1}(\C)\times \mathfrak H^+_g}),$
    \item and $\mathbb A_g:=\mr{Sp}_{2g}(1+4\Z)\backslash \mathfrak H^+_g.$
\end{itemize}
Here $\mathbb A_g$ is the fine moduli space of principally polarized abelian varieties with level-$4$ structure, $\mc A_g$ is the universal abelian variety over $\mathbb A_g$, and $\mc A_g^\vee$ the dual of $\mc A_g$. If $t=1$, $\mathcal P_{g,1}^\times$ is the universal Poincaré torsor over $\mathbb A_g$.
Similarly as above, we have the commutative diagram:
\[\begin{tikzcd}
	\PPP^\times_{g,t} && \\
	& {\AAA_g\times_{\mathbb A_g}(\AAA_g^\vee)^{[t]}} & \AAA_g \\
	& {(\AAA_g^\vee)^{[t]}} & \mathbb A_g
 	\arrow["\pi"', from=1-1, to=2-2]
	\arrow["{\pi_{\ab}}", curve={height=-18pt}, from=1-1, to=2-3]
	\arrow["\pi_{\mr{sab}}"', curve={height=18pt}, from=1-1, to=3-2]
	\arrow["\pi_{\mr{Siegel}}"', curve={height=80pt}, from=1-1, to=3-3]
	\arrow["", from=2-2, to=2-3]
	\arrow["", from=2-2, to=3-2]
	\arrow[from=2-3, to=3-3]
	\arrow[from=3-2, to=3-3]
\end{tikzcd}.\]
The fibration $\pi_{\sab}:\mc P_{g,t}^\times\rightarrow (\mc A_g^\vee)^{[t]}$ parametrizes all semiabelian varieties of toric rank $t$, with the abelian quotient of dimension $g$ carrying principal polarization and level-$4$ structure \cite[Proposition 4.6]{BE}.

\subsection{Betti maps}\label{subsec_Betti_maps}
The action of $P^{\mr{der}}_{g,t}(\R)^+\mr{M}_{t,1}(\C)$ on $\mc X_{g,t}^+$ defined by \eqref{eq_group_action} is transitive. Moreover, we have the bijection
\begin{equation}\label{eq_betti_matrix}
\begin{aligned}
 \mathcal{X}^+_{g,t}&\xlongrightarrow{\varphi_b} \mr M_{t,1}(\C)\times \mr M_{2g,1}(\R)\times \mr{M}_{t,2g}(\R)\times \mathfrak H_g^+,\\
    (z,y,x,h)\cdot(0,0,0,\tau_0)&\longmapsto (z,y,x,h\cdot \tau_0). 
\end{aligned}
\end{equation}
where $(z,y,x,g)\in P^{\mr{der}}_{g,t}(\R)^+\mr{M}_{t,1}(\C)=(\mr{M}_{t,1}(\C)\oplus \mr M_{2g,1}(\R))\rtimes \mr{M}_{t,2g}(\R)\rtimes \mr{Sp}_{2g}(\R)$, and $\tau_0=\sqrt{-1}I_g\in \mathfrak H_g^+.$ Note that $\varphi_b$ identifies $\mc X_{g,t}^+$ as a semi-algebraic space.
\begin{remark}
    Here we add an explanation of the bijection in \eqref{eq_betti_matrix}. For any $\upsilon\in \mc X_{g,t}^+$, there exists $(z,y,x,h)\in P_{g,t}^{\mr{der}}(\R)^+\mr M_{t,1}(\C)$ such that $(z,y,x,h)\cdot (0,0,0,\tau_0)=\upsilon$, and for any other such $(z',y',x',h')$, we have $(z,y,x)=(z',y',x').$ 
    This is the special case of the general argument in \cite[(4.1)]{ZGAxLindemann}.
\end{remark}

We define the \emph{universal Betti-map} $\widetilde{b}=(\widetilde{b}_{\tor},\widetilde b_{\ab})$ to be the composition of $\varphi_b$ with the projection to $\mr{M}_{t,1}(\C)\oplus \mr M_{2g,1}(\R).$ 
Here $\widetilde b_{\tor}$ denotes the further projection to $\mr M_{t,1}(\C)$, and $\widetilde b_\ab$ denotes the $\mr M_{2g,1}(\R)$ part.

We provide the following explicit formula:
\begin{equation}\label{eq_Betti_map}
\begin{aligned}
 \mathcal{X}^+_{g,t}&\longrightarrow \C^t\times \R^{2g},\\
    (w,v,u,\tau)&\longmapsto (w-u\mr{Im}(\tau)^{-1}\mr{Im}(v),(\mr{Re}(v)-\mr{Re}(\tau)\mr{Im}(\tau)^{-1}\mr{Im}(v),-\mr{Im}(\tau)^{-1}\mr{Im}(v))). 
\end{aligned}
\end{equation}
Hence $\widetilde b_{\ab}$ is the pull-back of the Betti map on $\mr{M}_{g,1}(\C)\times \mathfrak H_g^+$ (see \cite[\S3 and \S4]{ZG1}).
\begin{remark}
    Our formula for $\widetilde b_\ab$ is slightly different with \cite{ZG1} since we use the real coordinate $v=y_1-\tau y_2$ instead of $v=y_1+\tau y_2$. 
\end{remark}

The following proposition will be used in the subsequent section.
\begin{proposition}\label{prop_betti_equivariant}
The universal Betti map $\widetilde b$ is $P_{g,t}(\R)^+\mr{M}_{t,1}(\C)$-equivariant in the sense that for any $q=(z,y,x,g)\in (\mr{M}_{t,1}(\C)\oplus \mr M_{2g,1}(\R))\rtimes \mr{M}_{t,2g}(\R)\rtimes \mr{GSp}_{2g}(\R)^+=P_{g,t}(\R)^+\mr{M}_{t,1}(\C)$, and $\upsilon\in \mc X_{g,t}^+$,
\[{ \begin{aligned} \widetilde{b}_{\ab}(q\cdot\upsilon) &= g \widetilde{b}_{\ab}(\upsilon)+y, \\ \widetilde{b}_{\tor}(\upsilon) &= \mu(g) \, \widetilde{b}_{\tor}(\upsilon)+x \widetilde{b}_{\ab}(\upsilon) + z.  \end{aligned}}\]
In particular, the map $\C^t\times \R^{2g}\rightarrow \C^t\times \R^{2g}, \widetilde b(\upsilon)\mapsto \widetilde b(q\cdot \upsilon)$ is a homeomorphism.
\end{proposition}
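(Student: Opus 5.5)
The plan is to argue from the description of $\widetilde b$ through the bijection $\varphi_b$ in \eqref{eq_betti_matrix}, rather than from the explicit formula \eqref{eq_Betti_map}. (The second displayed identity should read $\widetilde b_{\tor}(q\cdot\upsilon)$ on the left.)

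Write $\upsilon=p\cdot(0,0,0,\tau_0)$ with $p=(z_0,y_0,x_0,h_0)\in P^{\mr{der}}_{g,t}(\R)^+\mr M_{t,1}(\C)$. By definition, $\widetilde b(\upsilon)=(z_0,y_0)$. Then
$$q\cdot\upsilon=(qp)\cdot(0,0,0,\tau_0).$$
The one complication is that $q$ may have $\mu(g)\neq1$, in which case $qp\notin P^{\mr{der}}$. To handle this, write $g=c\,g_1$-type data: choose $\delta\in \mr{GSp}_{2g}(\R)^+$ with $\delta\cdot\tau_0=\tau_0$ and $\mu(\delta)=\mu(g)$. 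For instance, $\delta=\mathrm{diag}(\mu I_g, I_g)$ composed with a suitable element of the stabilizer will not quite work, so instead use $\delta=\sqrt{\mu}\,I_{2g}$, which fixes $\tau_0$ under the Möbius action and has $\mu(\delta)=\mu$. Then set
$$qp=p'\cdot(0,0,0,\delta),\qquad p'\in P^{\mr{der}}(\R)^+\mr M_{t,1}(\C).$$
Next, check from \eqref{eq_group_action} that $(0,0,0,\delta)$ fixes $(0,0,0,\tau_0)$: with $v=0$, $u=0$, $w=0$ and $C=0$, every coordinate stays $0$. Hence
$$q\cdot\upsilon=p'\cdot(0,0,0,\tau_0),\qquad \widetilde b(q\cdot\upsilon)=(z',y'),$$
where $(z',y')$ is the unipotent part of $p'$.

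The core step is the matrix computation. Multiply $q=\begin{pmatrix}\mu I_t& xJ& z\\0&g&y\\0&0&1\end{pmatrix}$ by $p=\begin{pmatrix}I_t&x_0J&z_0\\0&h_0&y_0\\0&0&1\end{pmatrix}$ and read off the last column of $qp$:
$$\begin{pmatrix}\mu z_0+xJy_0+z\\ gy_0+y\\1\end{pmatrix}.$$
Right multiplication by $(0,0,0,\delta)^{-1}$, which is block diagonal of the form $\mathrm{diag}(\mu^{-1}\cdot?,\delta^{-1},1)$ in the appropriate normalization, does not change the last column. Therefore
$$y'=g\,y_0+y,\qquad z'=\mu z_0+xJy_0+z.$$
This gives both formulas once $x\widetilde b_\ab$ is interpreted with the paper's convention $x\leftrightarrow xJ$. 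Under the parametrization $(z,y,x,g)$, the entry $xJ$ is what was called $x$ in \eqref{eq_group_action}, so the notation matches up to this identification.

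The main obstacle is bookkeeping. One must make sure that the normalization of $\delta$ (a scalar in $\mr{GSp}$ rescales the top-left block by $\mu$) leaves the last column invariant, and that the convention linking $x$ to $xJ$ matches the statement. As a fallback, verify the identities directly from the explicit formula \eqref{eq_Betti_map} and the transformation rules for $(w',v',u',\tau')$, separately for each type of generator: translations $z$, translations $y$, elements $x$, and $g\in\mr{GSp}_{2g}(\R)^+$.

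Finally, the homeomorphism claim is immediate. The map $(z_0,y_0)\mapsto(\mu z_0+xy_0+z,\ gy_0+y)$ is affine, with invertible linear part $\begin{pmatrix}\mu&x\\0&g\end{pmatrix}$, since $\mu\neq0$ and $g$ is invertible.
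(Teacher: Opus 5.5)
Your proposal is correct and is essentially the paper's argument: the paper also proves this by direct matrix multiplication through $\varphi_b$ and then notes that the induced affine map has invertible linear part. You additionally make explicit how to absorb $\mu(g)\neq 1$, using the scalar $\sqrt{\mu}\,I_{2g}$, which fixes $(0,0,0,\tau_0)$ and leaves the last column unchanged. You are also right that the second identity should have $\widetilde b_{\tor}(q\cdot\upsilon)$ on the left, and that under the paper's parametrization the cross term is really $xJ\,\widetilde b_{\ab}(\upsilon)$, i.e.\ the matrix entry $(x_1,x_2)$; a direct check with \eqref{eq_Betti_map} confirms this, and neither point affects the homeomorphism claim.
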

\begin{proof}
    The formula follows from direct matrix multiplication computation via \eqref{eq_betti_matrix}. We may also refer to \cite[pp. 15]{ZGAxLindemann} for a general discussion. As the Jacobian of the induced map has the determinant of $\det(g)\mu(g)\not=0$, we conclude the proof.
\end{proof}

A consequence is that the map $\widetilde b$ descends to a map $b=(b_\tor,b_\ab)$ from $\mc P_{g,t}^\times \times_{(\mc A_g^{\vee})^{[t]}} (\mr{M}_{t,2g}(\R)\times \mathfrak H_g^+)=(\mr{M}_{t,1}(\Z)\oplus \mr M_{2g,1}(\Z))\backslash \mc X_{g,t}^+$ to $(\C^\times)^t\times \R^{2g}/\Z^{2g}$. Moreover, $b$ satisfies the conditions (B1-3) of Betti maps in the beginning of this section.

Now we consider a principally polarized semiabelian scheme $\mc G$ over a quasi-projective $S$ over $\C$. Then up to a finite cover, we have the modular map 
\[
\begin{tikzcd}
  \mc G \ar[d] \ar[r,"\iota"] & \mc P_{g,t}^\times \ar[d] \\
  S \ar[r,"\iota_S"] &  (\mc A_g^\vee)^{[t]}
  \ar[ul, phantom, "\ulcorner", pos=0.45] 
\end{tikzcd}
\]
Then $\iota_S$ lift to $\widetilde S\rightarrow \mr M_{t,2g}(\R)\times \mathfrak H_g^+$ where $\widetilde S$ is the universal covering of $S(\C)$. By abuse of notation, we still denote by $b$ its the pull-back to $\mc G_{\widetilde S}.$

\subsection{Betti currents}\label{subsection_betti_currents}
In this subsection, for simplicity, we use $\wedge$ to denote both wedge product and matrix multiplication.

We first review the Betti form used in previous works on abelian scheme. Here we call it the \emph{abelian Betti form} $\omega_{\ab}$ on $\mc A_g$, which is descended from the following \emph{universal abelian Betti form}:
$$\widetilde\omega_{\ab}:=2\sqrt{-1}\partial\bar\partial(\mr{Im}(v^\top)\mr{Im}(\tau)^{-1}\mr{Im}(v))$$
on $\mr{M}_{g,1}(\C)\times \mathfrak H_g^+ \ni (v,\tau).$ Here we list some properties of $\omega_{\ab}$:
\begin{itemize}
    \item This is a semipositive $(1,1)$-form on $\mc A_g$ \cite[Lemma 2.3]{DGH};
    \item If we denote by $[N]$ the multiplication by $N$ map on $\mc A_g$, then $[N]^*\omega_\ab=N^2\omega_\ab$ \cite[Lemma 2.6]{DGH}. 
    \item The form $\widetilde \omega_\ab$ is invariant under $\mr M_{2g,1}(\R)\rtimes \mr{Sp}_{2g}(\R)^+$ \cite[Proof of Lemma 2.6]{DGH}. This guarantees the descent to $\omega_\ab$.
\end{itemize}

In the rest of this article, we also denote by $\omega_\ab$ its pull-back to $\mc P_{g,t}^\times$ by abuse of notation.

Now we start to introduce the \emph{toric Betti currents}.
We first define the \emph{universal toric Betti forms} on $\mathcal P_{g,t}^\times \times_{(\mc A_g^\vee)^{[t]}}(\mr M_{t,2g}(\R)\times \mathfrak H_g^\times)$:\begin{align*}
    \widetilde \omega_{\tor, i}^{(p)}&:=\frac{2}{p}\sqrt{-1} \partial \bar\partial \log((|b_{\tor,i}|^{-p}+1)(1+ |b_{\tor,i}|^p)) \quad (i=1,\dots,t),\\
    \widetilde \omega_{\tor}^{(p)}&:=\sum_{1\leq i\leq t} \widetilde \omega_{\tor,i}^{(p)}
\end{align*}
where $b_{\tor,i}$ denotes the composition of $b_\tor$ with the $i$-th projection $\C^t\rightarrow \C$. 

We then define the \emph{universal Betti currents}:
\begin{equation}\label{eq_betti_currents}
     \begin{aligned}
\widetilde \omega_{\tor, i}&:=\sqrt{-1} \partial \bar\partial \left\lvert\log|b_{\tor,i}|\right\rvert\quad (i=1,\dots,t),\\
    \widetilde \omega_{\tor}&:=\sum_{1\leq i\leq t} \widetilde\omega_{\tor,i}.\\
\end{aligned}
\end{equation}
We can see that $\widetilde \omega_{\tor,i}^{(p)}$ (resp. $\widetilde \omega_{\tor}^{(p)}$) converges to $\widetilde \omega_{\tor, i}$ (resp. $\widetilde\omega_{\tor}$) as $p\rightarrow \infty.$ 

Then we have \begin{itemize}
    \item $\widetilde \omega_{\tor,i}$ (resp. $\widetilde \omega_\tor$) descend to a current $\omega_{\tor, i}$ (resp. $\omega_\tor$) on $\mc P_{g,t}^\times$, which are called the \emph{toric Betti currents}.
    \item The currents $\omega_{\tor,i}$ and $\omega_{\tor}$ are differences of semipositive $(1,1)$-currents. 
\end{itemize}
The proof will be given in Proposition \ref{prop_dsh_toric_betti} via the theory of adelic line bundles. 

Finally, we define the \emph{Betti current}:
$$\omega:=\omega_\ab+\omega_\tor.$$

There is another fact we have to point out.
Notice that we have real 2-forms on $\C^t\times \R^{2g}$:
\begin{align*}
    \beta_{\ab}&:=dy_1^\top\wedge dy_2,\\\beta_{\tor, i}&:=\sqrt{-1} \partial \bar\partial \left\lvert\log|z_i|\right\rvert\quad (i=1,\dots,t),\\
    \beta_{\tor}&:=\sum_{1\leq i\leq t} \beta_{\tor,i}.
\end{align*}
where $z_i\in \C$ and $y_1,y_2\in \R^g$ such that $v=y_1-\tau y_2$.

By the computation via formula \eqref{eq_Betti_map}, we obtain that 
\begin{equation*}
    \widetilde b_*\widetilde \omega_{\ab}=\beta_{\ab}\text{ and }\widetilde b_*\widetilde\omega_{\tor,i}=\beta_{\tor,i} 
\end{equation*}
as currents, which desend to
\begin{equation}\label{eq_currents_pull_back_via_betti}
    b_*\omega_{\ab}=\beta_{\ab}\text{ and }b_*\omega_{\tor,i}=\beta_{\tor,i} 
\end{equation}
as currents.
\section{$\mc P_{g,t}^\times$ as a (mixed) Shimura variety}\label{section_Shimura}
In this subsection, we first review the definition of the connected mixed Shimura variety defined by Pink \cite{PinkThesis} and explain the Shimura data for $\mc P_{g,t}^\times$ following \cite{BE}.

As building blocks of bi-algebraic topology (we will explain this in section \ref{section_degeneracy}) on Shimura varieties and of unlikely intersection problems, the special subvarieties are of main concern. We will give a characterization of special subvarieties of a certain type in \S \ref{subsec_special_sub}.

We remark that in our case the Ribet section (see \cite[\S 5]{BE} for the definition) is excluded. For general discussion on special subvarieties of $\mc P_{g,t}^\times$, we refer to \cite{GuHuang}.

Moreover, we will discuss some specific case of the Shimura quotient by a normal group in \S\ref{subsec_normal_subgrp}, which will be used in our criterion for non-degeneracy.
\subsection{Mixed Shimura varieties}
A \emph{mixed Shimura datum} $(P,\mc X,h)$ consists of 
\begin{itemize}
    \item a connected linear algebraic group $P$ over $\Q$;
    \item a left homogeneous space $\mc X$ under the subgroup $P(\R)U(\C)\subset P(\C)$, where $U\subset P$ is a normal subgroup (which will be called the weight $-2$ part);
    \item $h:\mc X\hookrightarrow \mr{Hom}(\mathbb S,P_\C)$ is a $P(\R)U(\C)$-equivariant map, where $\mathbb S:=\mr{Res}_{\C/\R}(\mathbb G_m)$ is the Deligne-torus.
\end{itemize} such that the conditions in \cite[Definition 2.1]{PinkThesis} are satisfied. We omit $h$ if there is no ambiguity. We then say the mixed Shimura datum $(P,\mc X^+)$ is \emph{connected} if $\mc X^+\xhookrightarrow{h} \mr{Hom}(\mathbb S,P_\C)$ is an orbit under the subgroup $P(\R)^+U(\C)\subset P(\C),$ where $P(\R)^+$ is the connected component of $P(\R)$. 

A \emph{connected mixed Shimura variety} $M$ associated with $(P,\mc X^+)$ is of form $\Gamma\backslash\mc X^+$ for some congruence subgroup $\Gamma\subset P(\Q)\cap P(\R)_+$ where $P(\R)_+$ is the stabilizer of $\mc X^+\subset \mr{Hom}_\C(\mathbb S_\C,P_\C).$ We will denote by $\uni:\mc X^+\rightarrow M$ the uniformization map for any Shimura varieties that show up in this article, if there is no ambiguity. Hence our $\mc P_{g,t}^\times$ is a connected mixed Shimura variety with the Shimura datum $(P_{g,t},\mc X_{g,t}^+).$

A \emph{Shimura morphism} $(P,\mc X^+)\rightarrow (P',{\mc X'}^+)$ is a homomorphism $f: P\rightarrow P'$ of algebraic groups over $\Q$ which induces a map $\mc X^+\rightarrow \mc X'^+, x\mapsto f\circ x.$ If $f$ is further injective, we say $(P,\mc X^+)$ is a \emph{Shimura subdatum} of $(P',\mc X'^+).$

We denote by $\mc R_u(P)$ the unipotent radical of $P$.
By weight reason, we have the following:
\begin{proposition}\cite[Proposition 2.9]{ZGAxLindemann}\label{prop_weight_decomp}
    Let $f:(P,\mc X^+)\rightarrow (P',\mc X‘^+)$ be a Shimura morphism of connected mixed Shimura data. Then
    $$f(U)\subset U'\quad\text{and}\quad f(\mc R_u(P))\subset \mc R_u(P'),$$
    where $U'$ is the normal subgroup of $P'$ of weight $-2$ part.
\end{proposition}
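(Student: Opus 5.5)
The plan is to follow Pink's weight formalism, as in \cite[Proposition 2.9]{ZGAxLindemann}. Recall from \cite[Definition 2.1]{PinkThesis} that for a mixed Shimura datum $(P,\mc X^+)$ and any $x\in\mc X^+$, the composite $\mathbb G_{m,\R}\xrightarrow{w}\mathbb S\xrightarrow{h_x}P_\R$ with the weight cocharacter $w$ acts on $\mr{Lie}(P)$ through the adjoint representation. This action has weights only in $\{-2,-1,0\}$. Moreover $\mr{Lie}(\mc R_u(P))=W_{-1}\mr{Lie}(P)$ and $\mr{Lie}(U)=W_{-2}\mr{Lie}(P)$, where $W_\bullet$ is the weight filtration determined by $h_x$; it is independent of the choice of $x$, since it is defined over $\Q$ and is $P(\R)U(\C)$-invariant.

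\textbf{Step 1: $f$ respects Hodge structures.} The Shimura morphism condition says that $h_{f(x)}=f\circ h_x$ for $x\in\mc X^+$. Hence $d f:\mr{Lie}(P)\to\mr{Lie}(P')$ intertwines the adjoint action of $\mathbb S$ through $h_x$ with the action through $h_{f(x)}$. In other words, $df$ is a morphism of real mixed Hodge structures. It is in fact a morphism of rational ones, because $f$ is defined over $\Q$ and the weight filtrations are defined over $\Q$.

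\textbf{Step 2: strictness.} Morphisms of mixed Hodge structures are strict for the weight filtration. At minimum they preserve it, so $df(W_k\mr{Lie}(P))\subset W_k\mr{Lie}(P')$. Taking $k=-2$ gives $df(\mr{Lie}(U))\subset\mr{Lie}(U')$, and taking $k=-1$ gives $df(\mr{Lie}\,\mc R_u(P))\subset\mr{Lie}\,\mc R_u(P')$. To keep this elementary, I would avoid strictness altogether. The graded pieces of a Hodge structure are direct sums of weight spaces of $w$, and an equivariant linear map sends a weight-$k$ eigenvector to a weight-$k$ eigenvector. Since $W_k$ is spanned by the weights $\le k$ after choosing a splitting (for instance a Deligne splitting over $\C$), an equivariant map preserves $W_k$ over $\C$. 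Because $W_k$ is defined over $\Q$, it is then preserved over $\Q$.

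\textbf{Step 3: from Lie algebras to groups.} The groups $U$ and $\mc R_u(P)$ are connected unipotent groups in characteristic $0$, so each is determined by its Lie algebra via the exponential map. In particular, $f(U)$ is a connected subgroup whose Lie algebra $df(\mr{Lie}\,U)$ lies in $\mr{Lie}\,U'$, and therefore $f(U)\subset U'$. The same argument gives $f(\mc R_u(P))\subset\mc R_u(P')$.

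The main subtlety, though a mild one, is Step 2. The weight cocharacter $h_x\circ w$ need not be defined over $\Q$, and it is only the filtration $W_\bullet$, not a grading, that is canonical. One must therefore check that preservation of weights under the real cocharacters yields preservation of the rational filtration. I would handle this by working over $\C$ as above and then descending, using that both filtrations are $\Q$-rational.
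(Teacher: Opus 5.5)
Your argument is correct and is the same ``weight reason'' the paper invokes when it cites \cite[Proposition 2.9]{ZGAxLindemann}: $df$ is $\Q$-rational and intertwines the $\mathbb S_\C$-actions through $h_x$ and $f\circ h_x$, so it preserves the weight filtrations $W_{-2}=\mr{Lie}\,U$ and $W_{-1}=\mr{Lie}\,\mc R_u(P)$ given by Pink's axioms, and connectedness of unipotent groups in characteristic $0$ carries the inclusion from Lie algebras to groups. Two small corrections: $h_x\circ w$ is in general only a cocharacter of $P_\C$, not of $P_\R$; and the splitting you use in Step 2 should be the eigenspace decomposition of $\mr{Ad}\circ h_x\circ w$ itself, rather than an arbitrarily chosen splitting, since that decomposition splits $W_\bullet\otimes\C$ by the definition of the mixed Hodge structure induced by $h_x$.
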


If $N$ is a normal subgroup of $P$, Pink constructed the \emph{quotient mixed Shimura datum} $(P,\mc X^+)/N$ in \cite[2.9]{PinkThesis}. The underlying group is $P/N$, and the underlying space is simply denoted as $\mc X^+/N$. This induces an associated \emph{quotient mixed Shimura variety} $M_{/N}$ and the quotient map $\rho_{/N}:M\rightarrow M_{/N}$ which is called the \emph{Shimura quotient}.




\subsection{Special subvarieties}\label{subsec_special_sub}

Let $Q$ be a connected algebraic subgroup of $P_{g,t}$. We denote that 
\begin{itemize}
    \item $R_Q:=Q\cap (\mr{M}_{t,1}\oplus \mr M_{2g,1})\rtimes \mr M_{t,2g})$
    \item $G_Q:=Q/R_Q=p_{\mr{Siegel}}(Q)$.
    \item $U_Q:=Q\cap \mr M_{t,1}$
    \item $V_Q:=p_{\pi}(Q)\cap \mr M_{2g,1}=(Q/U_Q)\cap \mr M_{2g,1},$
    \item $V^{\ab}_Q:=p_{\ab}(Q)\cap \mr M_{2g,1},$ 
    \item and $W_Q:=p_{\sab}(Q)\cap \mr{M}_{t,2g}.$
\end{itemize}

Now we further assume that $(Q,\mc Y^+)$ is a connected mixed Shimura subdatum of $(P_{g,t},\mc X^+_{g,t})$. The connected mixed Shimura subvariety $M\subset \mc P_{g,t}^\times$ associated with $(Q,\mc Y^+)$ is called a \emph{special subvariety}.

By Proposition \ref{prop_weight_decomp}, we have \begin{itemize}
    \item The unipotent radical $\mc R_u(Q)=R_Q$;
    \item The weight $-2$ part of $Q$ is $U_Q$;
    \item The reductive part of $Q$ is $G_Q$.
\end{itemize}

Since $Q$ is a connected algebraic group over $\Q$, by \cite[Theorem 2.7]{Platonov1994AlgGrp}, we have the Levi decomposition $\mc R_u(Q)\rtimes G_Q,$ where the section $G_Q\rightarrow Q$ is given by
$$(0,g)\mapsto (q_0,0)(0,g)(q_0,0)^{-1}$$
where $g\in G_Q$ and $q_0\in ((\mr{M}_{t,1}\oplus \mr M_{2g,1})\rtimes \mr{M}_{t,2g})(\Q).$



In this subsection, we aim to prove the following:
\begin{theorem}\label{theorem_special_subvariety}
    Assume that \begin{equation}\label{eq_split_unipotent_radical}
        V^{\ab}_Q=V_Q,\text{ i.e., } \mc R_u(p_{\pi}(Q))=V^{\ab}_Q\oplus W_Q.
    \end{equation}
    Then $M\rightarrow \pi_{\sab}(M)$ is a torsion translation of semiabelian subscheme of $\mc P_{g,t}^\times|_{\pi_{\sab}(M)}.$
\end{theorem}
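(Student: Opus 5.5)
We reduce to the fibers over $\pi_{\sab}(M)$ and identify the fiberwise image of the uniformized special subvariety with a translate of a $\Q$-subgroup by a rational point. Via the Levi decomposition $Q=\mc R_u(Q)\rtimes G_Q$, with the section obtained by conjugating by $q_0\in((\mr M_{t,1}\oplus\mr M_{2g,1})\rtimes\mr M_{t,2g})(\Q)$, we first translate by $q_0^{-1}$. This reduces to the case $q_0=0$, at the cost of a rational translation. Since $M$ is a $\Gamma$-quotient, such a translation descends to a torsion translation in the fibers, because rational points of the unipotent part act as torsion sections after quotienting by the integral lattice. So assume $G_Q\subset \mr{GSp}_{2g}$ is embedded via the standard section. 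Then $Q=R_Q\rtimes G_Q$ with $R_Q$ a $G_Q$-stable unipotent subgroup of $(\mr M_{t,1}\oplus\mr M_{2g,1})\rtimes\mr M_{t,2g}$.

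Next we analyze $R_Q$ using hypothesis \eqref{eq_split_unipotent_radical}. The image $p_\pi(R_Q)=V_Q\oplus W_Q$ splits, so $R_Q$ contains lifts of elements of $V_Q$ and of $W_Q$. The key step is to show that, up to the rational conjugation above, $R_Q\cap((\mr M_{t,1}\oplus\mr M_{2g,1})\rtimes\{0\})$ surjects onto $V_Q$. The obstacle is that a lift of $v\in V_Q$ might carry a nonzero $\mr M_{t,2g}$-component. The hypothesis $V_Q=V^{\ab}_Q$ is exactly what rules this out: $V^{\ab}_Q$ is the projection to $\mr M_{2g,1}$, and $V_Q$ sits inside $p_\pi(Q)\cap\mr M_{2g,1}$. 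One then sets $H:=R_Q\cap(\mr M_{t,1}\oplus\mr M_{2g,1})$, a unipotent vector group with $U_Q\subset H$ and $H/U_Q=V_Q$. We must also check that $H$ is normalized by the $W_Q$-part. Using the group law, the commutator of $x\in W_Q$ with $y\in V_Q$ is $xJy\in\mr M_{t,1}$, so we need $xJV_Q\subset U_Q$. This follows because the commutator lies in $Q\cap\mr M_{t,1}=U_Q$. I expect this to be the main obstacle: controlling the commutators and lifts so that $H$ is an honest $\Q$-subgroup whose fiberwise exponential is a subgroup.

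Finally, we describe the fibers. Over a point of $\widetilde\pi_{\sab}(\mc Y^+)$, the fiber of $\mc Y^+$ is the $H(\R)U_Q(\C)$-orbit of a point (using the transitivity from \S\ref{subsec_Betti_maps}). Through the Betti coordinates of Proposition~\ref{prop_betti_equivariant}, this orbit is the set of points whose Betti coordinates lie in $U_Q(\C)\oplus V_Q(\R)$ after shifting by the fiber's $x$-coordinate. Since $V_Q$ and $U_Q$ are $\Q$-subspaces, their images mod $\mr M_{t,1}(\Z)\oplus\mr M_{2g,1}(\Z)$ form a closed real subgroup of each fiber, and this subgroup is complex analytic because it is the fiber of the special (hence algebraic) $M$. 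So it is an algebraic subgroup, and it varies in a family; it is a semiabelian subscheme because it is an extension of an abelian subscheme, namely the image of $V_Q$, which is $G_Q$-stable, by the subtorus $U_Q$. Undoing the rational translation gives the torsion translate of a semiabelian subscheme over $\pi_{\sab}(M)$.
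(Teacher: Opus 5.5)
Your outline has a genuine gap at the one place where the theorem has content: showing that the translation in each fibre is torsion. The steps you single out are automatic. Since $\mr M_{t,1}\oplus\mr M_{2g,1}=\ker p_{\sab}=p_\pi^{-1}(\mr M_{2g,1})$ is normal in $P_{g,t}$, the group $H=Q\cap(\mr M_{t,1}\oplus\mr M_{2g,1})$ is always a normal $\Q$-subgroup of $Q$ with $p_\pi(H)=V_Q$. By the definition of $V_Q$, no lift of $v\in V_Q$ has an $\mr M_{t,2g}$-component, so neither the surjectivity onto $V_Q$ nor the commutator check uses \eqref{eq_split_unipotent_radical}.

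What actually matters is the shift. In Betti coordinates, the fibre of $\mc Y^+$ over a point with $\mr M_{t,2g}$-coordinate $x_1\in x_0+W_Q(\R)$ is $u_{x_1}+H(\R)U_Q(\C)$. Here $u_{x_1}$ is the $(z,y)$-part of $rq_0$ for some $r\in R_Q(\R)$ lying over $x_1-x_0$. This shift depends real-linearly on $x_1$, so a single rational translation by $q_0$ cannot make it torsion; your phrase ``after shifting by the fiber's $x$-coordinate'' hides exactly this. Two facts are needed.
(a) The lift $r$ can be chosen with zero $\mr M_{2g,1}$-component. This is what the hypothesis really gives, and without it the conclusion fails. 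For example, take the preimage in $\mc P_{1,1}^\times$ of the graph of the principal polarization $\mc A_1^\vee\cong\mc A_1$; there $V_Q=0\neq V_Q^{\ab}$, and the fibres are $\mathbb G_m$-cosets over non-torsion points of $A$.
(b) After conjugating by $q_0$, the $\mr M_{t,1}$-component $\psi(w)$ of the lift $(\psi(w),0,w)$ of $w\in W_Q$ lies in $U_Q$.

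Point (b) appears nowhere in your argument. It is precisely what the paper proves when it shows that the section $\sigma$ is a constant torsion point: first rigidity via Lemma~\ref{lemma_abelian_section}, then the quotient by $\mc R_u(Q)$, where $\mc Y^+$ collapses to $\{\widetilde q_0\}\times\widetilde\pi_{\mr{Siegel}}(M)$.

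Within your framework, (b) can be supplied by a weight argument. $G_Q$ contains the scalars $cI_{2g}$ (the image of the weight cocharacter), which act on $(z,y,x)$ by $(c^2z,cy,cx)$. Compare $(\psi(w),0,w)^2=(2\psi(w),0,2w)$ with its conjugate by $2I_{2g}$, namely $(4\psi(w),0,2w)$. This gives $2\psi(w)\in q_0^{-1}R_Qq_0\cap\mr M_{t,1}=U_Q$, and the same argument works for lifts of $V_Q$. Hence $q_0^{-1}R_Qq_0=(U_Q\oplus V_Q)\rtimes W_Q$. The fibre over every $x_1$ is then $(z_0,y_0)+\{(c+x_0Jy,\,y):c\in U_Q(\C),\,y\in V_Q(\R)\}$: a translate of a fixed closed subgroup by the fixed rational point $(z_0,y_0)$. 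After that, your algebraicity argument goes through.

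Note also that left multiplication by $q_0^{-1}$ is not a fibrewise translation. It moves the base by $-x_0$ and shears fibres via $z\mapsto z-x_0Jy$, which does not preserve the integral lattice, so it only gives a Hecke correspondence. It is cleaner to compute directly in Betti coordinates as above.
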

We first prove the following straightforward fact:
\begin{lemma}\label{lemm_split_unipotnent_radical}
    The condition \eqref{eq_split_unipotent_radical}, is equivalent to $$\pi(M)=\pi_{\ab}(M)\times_{\pi_{\mr{Siegel}}(M)}\pi_{\sab}(M).$$
\end{lemma}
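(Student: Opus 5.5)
The plan is to pass everything to the level of groups acting on universal coverings and compare dimensions. Since $M$ is the image of $\mc Y^+$ under the uniformization $\uni$, and all projections $\pi,\pi_{\ab},\pi_{\sab},\pi_{\mr{Siegel}}$ are induced by the group projections $p_\pi,p_{\ab},p_{\sab},p_{\mr{Siegel}}$, the images are themselves special subvarieties. Concretely, $\pi(M)$ is associated with the connected mixed Shimura datum $(p_\pi(Q),\widetilde\pi(\mc Y^+))$, and similarly for the other three projections. So it suffices to compare $p_\pi(Q)$ with the fibre product
\[
p_{\ab}(Q)\times_{G_Q}p_{\sab}(Q)\subset (\mr M_{2g,1}\oplus \mr M_{t,2g})\rtimes \mr{GSp}_{2g}.
\]

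First I would note that $p_\pi(Q)$ always sits inside this fibre product. Correspondingly, $\pi(M)\subset\pi_{\ab}(M)\times_{\pi_{\mr{Siegel}}(M)}\pi_{\sab}(M)$, and both sides are irreducible. Indeed, the fibre product of these special subvarieties is the special subvariety attached to the fibre product of groups, since the fibre product over $\pi_{\mr{Siegel}}(M)$ is irreducible: its fibres over the base are products of irreducible orbits.

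Next I compute unipotent radicals. Both $p_\pi(Q)$ and the fibre product have reductive quotient $G_Q$, by Proposition \ref{prop_weight_decomp} applied to the Shimura morphisms. Their unipotent radicals are $\mc R_u(p_\pi(Q))$ and $V^{\ab}_Q\oplus W_Q$ respectively. For the second, note that $\mr M_{2g,1}$ and $\mr M_{t,2g}$ commute in the quotient by $\mr M_{t,1}$, so the unipotent part of the fibre product is the direct sum of $\mc R_u(p_{\ab}(Q))=V^{\ab}_Q$ and $\mc R_u(p_{\sab}(Q))=W_Q$.

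Moreover, $\mc R_u(p_\pi(Q))$ always contains $V_Q$ and maps onto $W_Q$ with kernel $V_Q$, so
\[
\dim\mc R_u(p_\pi(Q))=\dim V_Q+\dim W_Q .
\]
Since $V_Q\subset V_Q^{\ab}$, equality of the two groups holds if and only if $V_Q=V_Q^{\ab}$. Both groups are connected, and one contains the other with the same reductive part, so they are equal precisely when their dimensions agree. This is condition \eqref{eq_split_unipotent_radical}.

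Finally, I translate back to the varieties. The dimension of the special subvariety equals $\dim$ of the homogeneous space $\widetilde\pi(\mc Y^+)$, which is $\dim_\C$ of the Siegel part plus half the real dimension of the unipotent part. So $\pi(M)$, being contained in the irreducible fibre product, equals it if and only if the dimensions match. By the previous step, the dimensions match if and only if $\dim V_Q=\dim V_Q^{\ab}$.

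The main obstacle I expect is the careful justification that images and fibre products of special subvarieties correspond to the images and fibre products of groups, together with their orbits. In particular, one must check that the orbit $\widetilde\pi(\mc Y^+)$ is the full $p_\pi(Q)(\R)^+$-orbit with the expected dimension. I would handle this via the Betti coordinates \eqref{eq_betti_matrix}. These identify the orbit with
\[
\mr R_u(\R)\times(\text{orbit of }G_Q(\R)^+\text{ in }\mathfrak H_g^+),
\]
which makes the dimension count transparent.
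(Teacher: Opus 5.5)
The step that fails is your claim that $\pi_{\ab}(M)\times_{\pi_{\mr{Siegel}}(M)}\pi_{\sab}(M)$ is irreducible and equals the special subvariety attached to $p_{\ab}(Q)\times_{G_Q}p_{\sab}(Q)$. The rest of your group-theoretic part is correct. Specifically, $p_\pi(Q)$ sits inside that group fibre product, and both are connected with reductive quotient $G_Q$. Their unipotent radicals have dimensions $\dim V_Q+\dim W_Q$ and $\dim V_Q^{\ab}+\dim W_Q$. The dimension formula via free action of the unipotent radical is also fine.

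The problem is with the fibres. The fibre of $\pi_{\ab}(M)\to\pi_{\mr{Siegel}}(M)$ over a point is not a single orbit. It is the union of the images of the orbits over all lifts of that point, i.e.\ a finite union of translates of an abelian subvariety permuted by monodromy, and such fibre products can split. Concretely, take $t=1$ and $q_0=(y_0,x_0)$ with $y_0,x_0$ of exact order $3$ modulo the lattice. Put $Q=\mr M_{1,1}\cdot q_0\,\mr{GSp}_{2g}\,q_0^{-1}$; then $V_Q=V_Q^{\ab}=W_Q=0$, so \eqref{eq_split_unipotent_radical} holds. Since $\mr{Sp}_{2g}(1+4\Z)\to\mr{Sp}_{2g}(\Z/3\Z)$ is surjective, $\pi_{\ab}(M)$ and $\pi_{\sab}(M)$ are the full multisections of nonzero $3$-torsion points of $\mc A_g$ and $\mc A_g^\vee$. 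Their fibre product over $\mathbb A_g$ splits according to the value of the Weil pairing. Meanwhile $\pi(M)$ is the diagonal monodromy orbit of $(y_0,x_0)$, which is only one of those pieces. So ``\eqref{eq_split_unipotent_radical} $\Rightarrow$ equality'' is false at the level of varieties. What is true, and what the paper's proof actually establishes, is the equality $\widetilde\pi(\mc Y^+)=\widetilde\pi_{\ab}(\mc Y^+)\times_{\widetilde\pi_{\mr{Siegel}}(\mc Y^+)}\widetilde\pi_{\sab}(\mc Y^+)$ on the uniformizing spaces.

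Your argument proves exactly this once you compare $\pi(M)$ with the right object. Set $M'':=\uni\bigl(\widetilde\pi_{\ab}(\mc Y^+)\times_{\widetilde\pi_{\mr{Siegel}}(\mc Y^+)}\widetilde\pi_{\sab}(\mc Y^+)\bigr)$, the special subvariety with group $p_{\ab}(Q)\times_{G_Q}p_{\sab}(Q)$. Upstairs its fibres really are products of single orbits, so $M''$ is irreducible. It contains $\pi(M)$ and is an irreducible component of the variety-level fibre product. Your dimension count then gives \eqref{eq_split_unipotent_radical} $\iff \pi(M)=M''$. The implication from literal equality to \eqref{eq_split_unipotent_radical} also survives, since $\pi(M)\subset M''\subset\pi_{\ab}(M)\times_{\pi_{\mr{Siegel}}(M)}\pi_{\sab}(M)$.

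Compared with the paper, the routes differ as follows. The paper quotes Gao's explicit description $\widetilde\pi(\mc Y^+)=(p_\pi(q_0)+V_Q(\R)\oplus W_Q(\R))\times\widetilde\pi_{\mr{Siegel}}(\mc Y^+)$ and its analogues, and reads off the set equality directly. Your route needs only dimensions, so it uses less input. The price is that you must compare against an irreducible object containing $\pi(M)$, which is precisely where the universal-cover formulation becomes unavoidable.
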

\begin{proof}
    Consider the quotient $\mc A_1:=(\mc A_g|_{\pi_{\mr{Siegel}}(M)})\times_{\pi_{\mr{Siegel}}(M)}{\pi_{\sab}(M)}\rightarrow \mc A_2:=\mc A_g|_{\pi_{\mr{Siegel}}(M)}$, which is a Shimura quotient given by the data:
    \begin{align*}
        &((\mr M_{2g,1}\oplus W_Q)\rtimes G_Q, (p_{\pi}q_0)+\mr M_{2g,1}(\R)\oplus W_Q(\R))\times \widetilde\pi_{\mr{Siegel}}(M))\\&\kern 17em\longrightarrow (\mr M_{2g,1}\rtimes G_Q, \mr M_{2g,1}(\R)\times \widetilde\pi_{\mr{Siegel}}(M))
    \end{align*}
    where the section $G_Q\rightarrow (\mr M_{2g,1}\oplus W_Q)\rtimes G_Q$ is a conjugation by $p_{\pi}q_0).$ Then $\pi(M)\subset \mc A_1$ is the preimage of $\pi_{\ab}(M)\subset \mc A_2$, if and only if so is the Shimura subdatum. Therefore "$\Leftarrow$" is obvious.

    Conversely, since $\pi(M)$ is a connected mixed Shimura subvariety of $\mc A_g\times_{\mathbb A_g} (\mc A_g^{\vee})^{[t]},$ associated with the Shimura subdatum $(p_{\pi}(Q),\widetilde \pi(\mc Y^+)),$ by \cite[Propoosition 3.4 and its proof]{ZG2017}, we have that
    \begin{itemize}
        \item The group $p_{\pi}(Q)$ is the conjugate of $(V_Q\oplus W_Q)\rtimes G_Q < (\mr M_{2g,1}\oplus \mr M_{t,2g})\rtimes \mr{GSp}_{2g}$ by $(p_{\pi}q_0),1);$
        \item Via the semi-algebraic identification \eqref{eq_betti_matrix},
        $$\widetilde \pi(\mc Y^+)=(p_{\pi}(q_0)+V_Q(\R)\oplus W_Q(\R))\times \widetilde \pi_{\mr{Siegel}}(\mc Y^+).$$
    \end{itemize}
    Similarly, we have $\widetilde\pi_{\sab}(\mc Y^+)=(p_{\sab}(q_0)+W_Q(\R))\times \widetilde \pi_{\mr{Siegel}}(\mc Y^+).$
    By the condition \eqref{eq_split_unipotent_radical}, we see  
    that $\pi(M)=\pi_{\ab}(M)\times_{\pi_{\mr {Siegel}}(M)}\pi_{\sab}(M).$
\end{proof}

The following is a well-known but useful fact. We include the proof for the reader's convenience.
\begin{lemma}\label{lemma_abelian_section}
    Let $S$ be a projective variety over an algebraically closed field $k$. 
    Let $0\rightarrow \mathbb G_m^t\times S\rightarrow \mc G\rightarrow \mc A\rightarrow 0$ be a semiabelian scheme. Assume that we have a section $s: \mc A\rightarrow \mc G$ of $\mc A$. Then $\mc G=\mc A\times \mathbb G_m^t$ and $\sigma (\mc A)=\mc A\times\{x\}$ for some $x\in \mathbb G_m^t$.
\end{lemma}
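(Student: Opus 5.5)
The idea is to measure how far the section $s$ is from being a group homomorphism, and to show that the defect is constant because $\mc A$ is proper over the projective base $S$. Here $s\colon \mc A\to\mc G$ is a morphism of $S$-schemes splitting the projection $p\colon\mc G\to\mc A$; it is not assumed to be a homomorphism.

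\textbf{Step 1: global functions on $\mc A$ and $\mc A\times_S\mc A$ are constant.} Since $\mc A\to S$ is an abelian scheme over the integral scheme $S$, it is smooth with geometrically connected fibres. Hence $\mc A$ and $\mc A\times_S\mc A$ are integral and proper over $S$. As $S$ is a projective variety over the algebraically closed field $k$, it follows that $\mc A$ and $\mc A\times_S\mc A$ are proper integral $k$-schemes. Therefore
\[
\mathcal O(\mc A)=\mathcal O(\mc A\times_S\mc A)=k .
\]
Consequently every $k$-morphism from either of them to the affine variety $\mathbb G_m^t$ is constant.

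\textbf{Step 2: the defect is constant.} Using the group law of $\mc G$ over $S$, define
\[
c\colon \mc A\times_S\mc A\to\mc G,\qquad c(a,b):=s(a+b)-s(a)-s(b).
\]
Since $p$ is a homomorphism and $p\circ s=\mathrm{id}$, we have $p\circ c=0$. Hence $c$ factors through the kernel $\mathbb G_m^t\times S$. Composing with the projection to $\mathbb G_m^t$, Step 1 shows that $c$ equals a constant $c_0\in\mathbb G_m^t(k)$.

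Similarly, the composition $s\circ e\colon S\to\mc G$, where $e$ is the zero section of $\mc A$, lands in $\mathbb G_m^t\times S$. Its projection to $\mathbb G_m^t$ is a morphism $S\to\mathbb G_m^t$ from a projective variety, so it is a constant $x\in\mathbb G_m^t(k)$. Evaluating $c$ along the zero section $(e,e)$ gives
\[
c_0=s(0)-s(0)-s(0)=-x .
\]

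\textbf{Step 3: conclusion.} Put $s':=s-x$, that is, $s$ translated by the constant section $-x$ of the torus. Then for all $a,b$,
\[
s'(a+b)-s'(a)-s'(b)=c(a,b)+x=c_0+x=0 ,
\]
so $s'$ is a homomorphic section of $p$. Hence the extension splits as group schemes over $S$:
\[
\mathcal G\cong\mc A\times_S(\mathbb G_m^t\times S),\qquad (a,\lambda)\mapsto s'(a)+\lambda .
\]
Under this identification $s'(\mc A)=\mc A\times\{1\}$. Therefore
\[
s(\mc A)=s'(\mc A)+x=\mc A\times\{x\} .
\]

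\textbf{Main point of care.} The only genuinely non-formal input is Step 1. There one must check that properness of $\mc A$ over $S$ and projectivity of $S$ force morphisms to $\mathbb G_m^t$ to be constant, which needs connectedness and reducedness of $\mc A\times_S\mc A$. Everything else is formal manipulation with the group law.
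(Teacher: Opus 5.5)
Your proof is correct and follows the paper's argument. The paper likewise shows that the defect $s(a+b)-s(a)-s(b)$, a morphism from $\mc A\times_S\mc A$ to $\mathbb G_m^t$, is constant by properness, translates $s$ by a constant torus section to obtain a homomorphic section, and splits the extension. Your identification of $c_0=-x$, with $x$ the torus component of $s(e)$, also pins down the direction of the translation and the value of $x$ in $s(\mc A)=\mc A\times\{x\}$ more explicitly than the paper does.
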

\begin{proof}
    Consider \begin{align*}
        h:\mc A^{[2]}=\mc A\times_S\mc A&\longrightarrow \mathbb G_m^t\\
        (a,b)&\longmapsto s(a+b)s (a)^{-1}s(b)^{-1}
    \end{align*}
    Since $\mc A^{[2]}$ is projective, $h$ has to be a constant map. We set $x\in \mathbb G_m^t$ to be the image.

    After replacing $s$ with its translation by the section $\{-x\}\times S\subset \mathbb G_m^t\times S\subset \mc G$, we may assume that $x=0.$ Then we see that $\sigma$ is a group homomorphism.

    Then we have the isomorphism $\mc G\rightarrow \mc A\times \mathbb G_m^t, g\mapsto (\pi(g),gs(\pi(g))^{-1})$ which conclude the proof.
\end{proof}
\begin{proof}[Proof of Theorem \ref{theorem_special_subvariety}]
    Put $P:=((\mr M_{t,1}\oplus V_Q)\rtimes W_Q)\rtimes G_Q$ and $\mc X^+:=\mc X_{g,t}^+|_{\widetilde \pi(M)}.$ Then $(P,\mc X^+)$ is the connected mixed Shimura datum of $M_P:=\mc P_{g,t}^\times |_{\pi(M)}.$ By Lemma \ref{lemm_split_unipotnent_radical}, $M_P$ is also a semiabelian $\pi_{\sab}(M)$-subscheme of $\mc P_{g,t}^\times|_{\pi_{\sab}(M)}$ up to a torsion translation.

    Let $t'=\dim U_Q$ and $\mathbb G_m^{t'}$ be the subtorus of $\mathbb G_m^t$ given by the saturation of the lattice $\Z^t\cap U_Q$. Since $U_Q$ is normal in $P$ by Lemma \ref{lemm_normality_weight-2},
    we have the commutative diagram of $\pi_{\sab}(M)$-schemes
    \[\begin{tikzcd}
	M_{/U_Q} & (M_P)_{/U_Q}=M_P/\mathbb G_m^{t'}\\
	 & {\pi(M)}
	\arrow[hook, from=1-1, to=1-2]
	\arrow["\mr{id}",from=1-1, to=2-2]
	\arrow[from=1-2, to=2-2]
\end{tikzcd}\]
    where $M_{/U_Q}$ (resp. $(M_P)_{/U_Q}$) is the Shimura quotient of $M$ (resp. $M_P$) by $U_Q$. Since $M$ is the preimage of $M_{/U_Q}$ in $M_P$, it suffices to prove that $M$ is a torsion translation of an abelian $\pi_{\sab}(M)$-subscheme of $(M_P)_{/U_Q}.$

    Applying Lemma \ref{lemma_abelian_section} to each fiber of $M_P/\mathbb G_m^{t'}$ over $\pi_{\mr{Siegel}}(M)$, we obtain that \begin{enumerate}
    \item[(i)] as semiabelian schemes over $\pi_{\sab}(M)$, $M_P/\mathbb G_m^{t'}=\pi(M)\times \mathbb G_m^{t-t'}$,
    \item[(ii)] and $M_{/U_Q}=\pi(M)\times_{\pi_{\mr{Siegel}}(M)} \sigma$ where $\sigma:\pi_{\mr{Siegel}}(M)\rightarrow \mathbb G_m^{t-t'}$ is a morphism of varieties.
    \end{enumerate} 

    Now it suffices to prove that $\sigma$ is a constant map to a torsion point.

    Note that the Levi decomposition $P=\mc R_u(P)\rtimes G_Q$ gives that
    $$\mc X^+=(q_0+ U_P(\C)\mc R_u(P)(\R))\times \widetilde\pi_{\mr{Siegel}}(M).$$ 
    
    By (i), we see that $V_Q\oplus W_Q$ is a normal subgroup of $P/U_P$, hence so is its preimage $(U_Q\oplus V_Q)\rtimes W_Q=\mc R_u(Q)$. After taking the quotient of $M_P$ and $M$ by $\mc R_u(Q)$, we obtain that 
    \begin{align*}
        \mc Y^+_{/\mc R_u(Q)}&=\{\widetilde q_0\}\times \widetilde \pi_{\mr{Siegel}}(M),\\
        \mc X^+_{/\mc R_u(Q)}&=(\widetilde q_0+ (U_P/U_Q)(\C))\times \widetilde\pi_{\mr{Siegel}}(M).
    \end{align*}
    Then $\sigma\times \mr{id}_{\pi_{\mr{Siegel}}(M)}$ is exactly the map $$\uni(\mc Y^+_{/\mc R_u(Q)})\rightarrow \uni(\mc X^+_{/\mc R_u(Q)}).$$
    Therefore we conclude the proof by the fact that $q_0$ is a $\Q$-point.

\end{proof}
\subsection{Normal subgroups}\label{subsec_normal_subgrp}
We keep the same notation as in \S \ref{subsec_special_sub}. The target is to give a geometric interpretation to the quotient by a normal subgroup. The key here is that this can only be done after we take a modification \eqref{eq_subgroup_closure}.

From now on, let $(Q,\mc Y^+)$ be a connected mixed Shimura subdatum of $(P_{g,t},\mc X_{g,t}^+).$ Assume that the condition \eqref{eq_split_unipotent_radical} holds. Let $N\trianglelefteq Q$ be a connected normal subgroup whose reductive part is semi-simple.
\begin{proposition}\label{prop_normality_triviality} 
    \begin{enumerate}
        \item $U_N\oplus V^{\ab}_N$ is a normal subgroup of $Q$, equivalently, it is a $p_{\sab}(Q)$-submodule of $U_Q\oplus V_Q.$
        \item $p_{\sab}(N)$ acts trivially on $(U_Q\oplus V_Q)/(U_N\oplus V^{\ab}_N).$
    \end{enumerate}
\end{proposition}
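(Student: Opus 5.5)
The plan is to work entirely with the matrix description of $P_{g,t}$ and reduce to computing commutators. First note that, under \eqref{eq_split_unipotent_radical}, $\mc R_u(p_\pi(Q))=V_Q\oplus W_Q$ with $V_Q=V^{\ab}_Q$. Consequently $U_Q\oplus V_Q$ is the intersection of $Q$ with the abelian normal subgroup $\mr M_{t,1}\oplus\mr M_{2g,1}$, up to the conjugation by $q_0$ in the Levi decomposition. Since $\mr M_{t,1}\oplus\mr M_{2g,1}$ is abelian and normal in $P_{g,t}$, conjugation by $Q$ factors through $p_{\sab}(Q)=W_Q\rtimes G_Q$ (after conjugating by $q_0$). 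This is the content of the ``equivalently'' in (1): a subspace of $U_Q\oplus V_Q$ is normal in $Q$ if and only if it is $p_{\sab}(Q)$-stable. The action is explicit: $(x,h)$ sends $(z,y)$ to $(\mu(h)z+xJy,\ hy)$.

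\textbf{Step 1: determine the unipotent radical of $N$ and show it lies in the weight $\le -1$ part.} Since $N$ is normal in $Q$, $\mc R_u(N)=N\cap\mc R_u(Q)$. Its image $V^{\ab}_N\subset V_Q$ and $U_N=N\cap\mr M_{t,1}$ are defined as in \S\ref{subsec_special_sub}. The group $U_N\oplus V^{\ab}_N$ is to be understood as $N\cap(\mr M_{t,1}\oplus\mr M_{2g,1})$, possibly enlarged by $U_N$. Here one must check that $V^{\ab}_N$ lifts into $N$ inside the abelian part; I would use condition \eqref{eq_split_unipotent_radical} applied to $Q$ and the fact that $N\cap(\mr M_{t,1}\oplus\mr M_{2g,1})$ surjects onto $V^{\ab}_N$ modulo $U_Q$.

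\textbf{Step 2: prove normality.} The intersection of the normal subgroups $N$ and $\mr M_{t,1}\oplus\mr M_{2g,1}$ of $Q$ is normal in $Q$, and $U_N$ is normal by Lemma \ref{lemm_normality_weight-2}. Adding $U_Q\cap(\cdot)$ where needed, the sum $U_N\oplus V^{\ab}_N$ is then normal.

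\textbf{Step 3: prove (2) by a commutator argument.} For $n\in N$ and $u\in U_Q\oplus V_Q\subset Q$, normality of $N$ gives $nun^{-1}u^{-1}\in N\cap(U_Q\oplus V_Q)$. This is contained in $U_N\oplus V^{\ab}_N$ by Step 1, so $n$ acts trivially on the quotient. The action of $N$ on this abelian group factors through $p_{\sab}(N)$, which gives (2).

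The main obstacle is Step 1: identifying $N\cap(\mr M_{t,1}\oplus\mr M_{2g,1})$ with $U_N\oplus V^{\ab}_N$. The projection of this intersection to $\mr M_{2g,1}$ could a priori be smaller than $V^{\ab}_N=p_{\ab}(N)\cap\mr M_{2g,1}$. Indeed, elements of $N$ mapping into $\mr M_{2g,1}$ under $p_{\ab}$ might carry nontrivial $\mr M_{t,2g}$-components. To handle this I would first try the hypothesis \eqref{eq_split_unipotent_radical} together with semisimplicity of the reductive part of $N$. Writing $\mc R_u(N)$ as a $G_N$-module, semisimplicity splits off the $W$-component. Combined with $V_Q=V^{\ab}_Q$, this should show that the $V$-part and the $W$-part of $\mc R_u(N)$ decouple, forcing $p_\pi(N)\cap\mr M_{2g,1}=V^{\ab}_N$.
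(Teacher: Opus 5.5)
\textbf{Gap in part (1).} Your Step~1 is false as stated, and Step~2, hence your proof of (1), rests on it. The projection of $N\cap(\mr M_{t,1}\oplus\mr M_{2g,1})$ to $\mr M_{2g,1}$ is $p_\pi(N)\cap\mr M_{2g,1}$. This can be strictly smaller than $V^{\ab}_N=p_{\ab}(N)\cap\mr M_{2g,1}$.

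The decoupling you hope for does not exist. The Levi acts by $(z,y,x)\mapsto(\mu(h)z,hy,xh^{\top})$, so $\mr M_{2g,1}$ and $\mr M_{t,2g}$ are both sums of copies of the standard representation of $\mr{GSp}_{2g}$. Complete reducibility therefore cannot stop $\mc R_u(N)$ from sitting diagonally between them.

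Concretely, take $t=1$ and $Q=P_{g,1}$, which satisfies \eqref{eq_split_unipotent_radical}. Let $N=\{(z,y,\lambda y^{\top})\}$ with $\lambda\in\Q^{\times}$. It is a connected unipotent subgroup that contains $[\mc R_u(P_{g,1}),\mc R_u(P_{g,1})]=\mr M_{1,1}$ and is $\mr{GSp}_{2g}$-stable, so it is normal. Yet $N\cap(\mr M_{1,1}\oplus\mr M_{2g,1})=\mr M_{1,1}$, while $V^{\ab}_N=\mr M_{2g,1}$.

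So Step~2 only shows that $U_N\oplus(p_\pi(N)\cap\mr M_{2g,1})$ is normal, not $U_N\oplus V^{\ab}_N$. This is exactly why the paper later has to form $N'=(U_N\oplus V^{\ab}_N)\cdot N$: in general $U_N\oplus V^{\ab}_N\not\subset N$.

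\textbf{The missing idea.} Prove (1) without ever lifting $V^{\ab}_N$ into the abelian part of $N$.

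Stability under $G_Q$ is easy. $V^{\ab}_N$ is the intersection of $p_{\ab}(N)\trianglelefteq p_{\ab}(Q)$ with $\mr M_{2g,1}$, and $U_N$ is handled by Lemma~\ref{lemm_normality_weight-2}.

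The real content is $W_Q$-stability, i.e.\ $xJy\in U_N$ for all $x\in W_Q$ and $y\in V^{\ab}_N$. Take any $n=(z_0,y,x_0)\in N\cap\mc R_u(Q)$ lying over $y$, with $x_0$ allowed to be nonzero. Use \eqref{eq_split_unipotent_radical} to choose $q=(z_1,0,x)\in Q$. Then $qnq^{-1}n^{-1}=(xJy,0,0)\in N\cap\mr M_{t,1}=U_N$. This is where the hypothesis on $Q$ really enters: it lets you lift $x$ with vanishing $\mr M_{2g,1}$-component, not decouple $N$.

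\textbf{Part (2).} Your Step~3 is the right commutator idea, and the paper uses the same commutators. However, it cannot appeal to Step~1. What it actually needs is only the inclusion $N\cap(\mr M_{t,1}\oplus\mr M_{2g,1})\subseteq U_N\oplus V^{\ab}_N$, and this must be justified independently. Alternatively, argue as the paper does and treat two pieces separately:
\begin{itemize}
\item For $\mc R_u(N)$: the commutator of $(z,y,x)$ with $(0,y',0)$ is $(xJy',0,0)\in U_N$.
\item For the semisimple part $G_N$: $\mu$ is trivial on it, and it acts trivially on $V_Q/V^{\ab}_N$ via commutators in $p_{\ab}(Q)$.
\end{itemize}
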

\begin{proof}
    (1)
    The Levi-decomposition $Q=R_Q\rtimes G_Q$ induces a Levi decomposition $W_Q\rtimes G_Q$ of $p_{\sab}(Q)$. 

    By \cite[Proposition 3.5]{ZG2017}, $V^{\ab}_N$ is $G_Q$-equivariant. By the representation of $P_{g,t}$, we have the following diagram 
    \[\begin{tikzcd}
	G_Q\times (U_Q\oplus V_Q)&G_Q\times U_Q & \\
	U_Q\oplus V_Q & U_Q
	\arrow["\mathrm{id}\times{\mr{pr}_{U_Q}}", from=1-1, to=1-2]
	\arrow[from=1-1, to=2-1]
    \arrow["{\mr{pr}_{U_Q}}",from=2-1, to=2-2]
	\arrow[from=1-2, to=2-2]
    \end{tikzcd}\]
    Hence $U_N\oplus V_N^{\ab}$ is $G_Q$-equivariant by the normality of $U_N$ due to Lemma \ref{lemm_normality_weight-2}.
    
    Now it suffices to prove that $U_N\oplus V^{\ab}_N$ is $W_Q$-equivariant, i.e., $U_N\oplus V^{\ab}_N$ is normal in $R_Q$. 
    Since the action of $W_Q$ on $V^{\ab}_N$ is trivial, the action of $W_Q$ on $(U_Q\oplus V_Q)$ is given by $$x\cdot (z,y)\mapsto (z+f_x(y),y)$$
    where $(x \in W_Q)\mapsto (f_x\in \mr{Hom}(V_Q,U_Q))$ is a group homomorphism.
    Hence it suffices to prove that $f_x(y)\in U_N$ for any $y\in V^{\ab}_N$ and $x\in W_Q$.

    Since $V^{\ab}_N=p_{\ab}(N)\cap \mr{M}_{2g,1}=p_{\ab}(N\cap \mc R_u(Q))$, there exists $n=(z_0,y,x_0)\in N\cap \mc R_u(Q)\subset (U_Q\oplus V^{\ab}_Q)\rtimes W_Q$ such that $p_{\ab}(n)=y.$
    Then $$(0,0,x)n(0,0,x)^{-1}=(z_0+f_x(y),y,x+x_0)\cdot (0,0,-x)=(z_0+f_x(y),y,x_0)$$
    which exactly gives that $f_x(y)\in U_N.$

    (2) Since $p_{\ab}(N)\trianglelefteq p_{\ab}(N)$, $G_N$ acts trivially on $V_Q/V_N^\ab$. Similarly as in (1), by the representation of $P_{g,t}$, $G_N$ acts trivially on $(U_Q\oplus V_Q)/(U_N\oplus V_N^\ab).$
    
    Now it suffices to prove that $W_N$ acts trivially on $(U_Q\oplus V_Q)/(U_N\oplus V^{\ab}_N),$ i.e.,
    for any $x\in W_N$ and $(z',y')\in U_Q\oplus V_Q$, $$x\cdot (z',y')-(z',y')\in U_N\oplus V^{\ab}_N\Leftrightarrow f_{x}(y')\in U_N.$$

    Since $W_N=p_{\sab}(\mc R_u(N))\subset (U_Q\oplus V_Q)\rtimes W_Q$, there exists an element $n=(z,y,x)\in \mc R_u(N)$. Let $q=(0,y',0)\in Q$. By the normality of $N$, the commutator
    \begin{align*}
        nq n^{-1}q^{-1}=(0,y',f_x(y'))q^{-1}=(0,0,f_x(y'))\in U_N,
    \end{align*}
    which concludes the proof.
\end{proof}
In the following, we set \begin{equation}\label{eq_subgroup_closure}
    N':=(U_N\oplus V^{\ab}_N)\cdot N
\end{equation}
which is, by the normality of $U_N\oplus V_N^\ab$, naturally a normal subgroup of $Q$ as well.

Now we explain the Shimura quotient of $M$ by $N'$ in two steps.
The first is a sequel of Proposition \ref{prop_normality_triviality}.(1). We first recall that in \eqref{eq_betti_matrix}, we have the semi-algebraic identification of $\mc X_{g,t}^+$ with $\mr M_{t,1}(\C)\times \mr M_{2g,1}(\R)\times \mr M_{t,2g}(\R)\times \mathfrak H_g^+.$ After translations via the matrix multiplication, we have semi-algebraic identifications of
$\widetilde \pi_{\sab}(\mc Y^+)$ with $W_Q(\R)\times \widetilde \pi_{\mr{Siegel}}(M)$, and 
$\mc Y^+$ with $U_Q(\C)\times V_Q^\ab(\R)\times \widetilde \pi_{\sab}(M).$
\begin{corollary}\label{coro_semiabelian_subscheme}
    Let $M_0:=\uni(U_N(\C)\times V^{\ab}_N(\R)\times \widetilde\pi_{\sab}(\mc Y^+)).$ Then $M_0$ is a semiabelian $\pi_{\sab}(M)$-subscheme of $M$.
\end{corollary}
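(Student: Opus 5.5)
We will realize $M_0$ as the special subvariety attached to a Shimura subdatum of $(Q,\mc Y^+)$ satisfying condition \eqref{eq_split_unipotent_radical}, and then invoke Theorem \ref{theorem_special_subvariety}.

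\textbf{Step 1: the group.} Put
\[
Q_0:=\bigl((U_N\oplus V^{\ab}_N)\rtimes W_Q\bigr)\rtimes G_Q,
\]
where the Levi section $G_Q\to Q$ is the one fixed above, given by conjugation by $q_0$. By Proposition \ref{prop_normality_triviality}(1), $U_N\oplus V^{\ab}_N$ is a $p_{\sab}(Q)$-submodule of $U_Q\oplus V_Q$, that is, it is stable under both $W_Q$ and $G_Q$. Hence $Q_0$ is a connected $\Q$-subgroup of $Q$.

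\textbf{Step 2: the subdatum.} We want $(Q_0,\mc Y_0^+)$ to be a connected mixed Shimura subdatum, where $\mc Y_0^+$ is the $Q_0(\R)^+U_N(\C)$-orbit of a point of $\mc Y^+$ over the base point $q_0$. The Hodge-theoretic axioms of \cite[Definition 2.1]{PinkThesis} descend from $(Q,\mc Y^+)$ because $Q_0$ contains the whole reductive part $G_Q$ and the full weight $-1$ part $W_Q$ over $\widetilde\pi_{\sab}$. Consequently $h(\mc Y_0^+)$ factors through $Q_0$, and the weight filtration restricts correctly: $U_{Q_0}=U_N$ is weight $-2$, and $V^{\ab}_N\oplus W_Q$ is weight $-1$.

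Under the semi-algebraic identification $\mc Y^+\cong U_Q(\C)\times V^{\ab}_Q(\R)\times\widetilde\pi_{\sab}(\mc Y^+)$, the orbit is
\[
\mc Y_0^+=U_N(\C)\times V^{\ab}_N(\R)\times\widetilde\pi_{\sab}(\mc Y^+).
\]
Here we use that $W_Q(\R)\rtimes G_Q(\R)^+$ acts transitively on $\widetilde\pi_{\sab}(\mc Y^+)$, together with the equivariance formulas for the Betti coordinates in Proposition \ref{prop_betti_equivariant}. The action of $x\in W_Q$ shifts the torus coordinate by $f_x(y)$, and for $y\in V^{\ab}_N$ this shift lies in $U_N$ by the computation in Proposition \ref{prop_normality_triviality}(1). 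So the orbit is exactly this product and is stable under the action. Therefore $\uni(\mc Y_0^+)=M_0$ is a special subvariety, in particular algebraic.

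\textbf{Step 3: apply Theorem \ref{theorem_special_subvariety}.} For $Q_0$ we compute $V_{Q_0}=V^{\ab}_{Q_0}=V^{\ab}_N$ and $W_{Q_0}=W_Q$, so
\[
\mc R_u(p_\pi(Q_0))=V^{\ab}_{Q_0}\oplus W_{Q_0}
\]
and condition \eqref{eq_split_unipotent_radical} holds. Theorem \ref{theorem_special_subvariety} then says that $M_0\to\pi_{\sab}(M_0)=\pi_{\sab}(M)$ is a torsion translate of a semiabelian subscheme.

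It remains to remove the torsion translate. The zero section of $\mc P^\times_{g,t}|_{\pi_{\sab}(M)}$ corresponds to Betti coordinates $(0,0)$ in the chosen translated coordinates, and this point lies in $M_0$. A torsion translate that contains the identity section is the subscheme itself, so $M_0$ is a genuine semiabelian subscheme, contained in $M$ because $\mc Y_0^+\subset\mc Y^+$.

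\textbf{Main obstacle.} The delicate step is Step 2. One must check that the translation by $q_0$ is compatible, so that the base point's Betti coordinates are $\Q$-rational and the zero section is really hit. One must also confirm that $U_N(\C)\times V^{\ab}_N(\R)\times\widetilde\pi_{\sab}(\mc Y^+)$ is a single orbit rather than a union of orbits. If the zero section is not directly hit, I would instead pass to the torsion-translated subgroup and identify it through the Shimura quotient by $U_N\oplus V^{\ab}_N$, which is normal in $Q$ by Proposition \ref{prop_normality_triviality}(1). The fiber of that quotient over the identity section is then $M_0$.
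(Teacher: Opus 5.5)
Your route differs from the paper's. The paper never constructs a new subdatum. It uses \cite[Rappels 4.4.3]{PD} to see that $\pi(M_0)$ is an abelian subscheme of $\pi(M)$, and takes its preimage $M_R$ in $M$. It then passes to the Shimura quotient by $U_N$, where Lemma \ref{lemma_abelian_section} splits $\rho_{/U_N}(M_R)$ as $\pi(M_0)\times\mathbb G_m^{t_0}$ and exhibits $\rho_{/U_N}(M_0)$ as the graph of a map $\sigma_N$. Finally it quotients further by $V^{\ab}_N$ to show $\sigma_N$ is constant; in effect it reruns the proof of Theorem \ref{theorem_special_subvariety} in a relative setting.

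You instead realize $M_0$ itself as the special subvariety of $Q_0=((U_N\oplus V^{\ab}_N)\rtimes W_Q)\rtimes G_Q$ and apply Theorem \ref{theorem_special_subvariety} as a black box. This is cleaner and avoids duplicating the rigidity argument. The cost is that you must verify Pink's axioms for $(Q_0,\mc Y_0^+)$, which the paper sidesteps by only using quotient data.

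Your group-theoretic checks are right. Closure of $Q_0$ needs exactly $f_x(y)\in U_N$ for $x\in W_Q$, $y\in V^{\ab}_N$, which is Proposition \ref{prop_normality_triviality}(1), and $V_{Q_0}=V^{\ab}_{Q_0}=V^{\ab}_N$. But your stated reason for the Hodge axioms is inaccurate: $Q_0$ does not contain the full weight $-1$ part of $Q$, since it drops $V_Q/V^{\ab}_N$. What actually makes it work is the following:
\begin{itemize}
\item at the $q_0$-base point, $h$ factors through the Levi $\mathrm{int}(q_0)(G_Q)\subset Q_0$, so by equivariance every $h_y$ with $y\in\mc Y_0^+$ factors through $Q_{0,\C}$;
\item $\mathrm{Lie}\,Q_0$ is a sub-Hodge structure of $\mathrm{Lie}\,Q$ with weight $-2$ part $U_N$, weight $-1$ part $V^{\ab}_N\oplus W_Q$, and the same reductive quotient $G_Q$, so the remaining axioms are inherited.
\end{itemize}

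The last step, however, is wrong as written. In the translated coordinates the origin is not the zero section. It is the base section $\sigma_0:=\uni(\{0\}\times\{0\}\times\widetilde\pi_{\sab}(\mc Y^+))$, whose Betti coordinates are the $\Q$-rational $(z,y)$-components of $q_0$. This is a torsion multisection, in general nonzero, and the zero section need not even lie in $M$. For instance, if $U_N=V^{\ab}_N=0$, then $M_0=\sigma_0$, which is not a group subscheme.

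So the statement has to be read relative to $\sigma_0$, as the paper tacitly does: it treats $M$ as a semiabelian subscheme although Theorem \ref{theorem_special_subvariety} only gives it up to a torsion translate. Your fallback in the last paragraph points in this direction, and the repair is short. We have $\sigma_0\subset M_0\subset M$. Applying Theorem \ref{theorem_special_subvariety} to $(Q_0,\mc Y_0^+)$ and to $(Q,\mc Y^+)$ gives $M_0=\mc H+\sigma_0$ and $M=\mc G_M+\sigma_0$ for semiabelian subschemes $\mc H,\mc G_M$, because a torsion translate of a group subscheme that contains $\sigma_0$ is its translate by $\sigma_0$. Hence $\mc H\subset\mc G_M$. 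With your zero-section sentence replaced by this, the argument goes through.
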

\begin{proof}
    Notice that $\pi(M_0)=\uni(V^{\ab}_N(\R)\times \widetilde \pi_{\sab}(\mc Y^+))$ is an abelian $\pi_{\sab}(M)$-subscheme of $\pi(M)$ by \cite[Rappels 4.4.3]{PD}.
    Then $M_R:=\pi|_M^{-1}(\pi(M_0))$ is a semiabelian $\pi_{\sab}(M)$-subscheme of $M$.
    
    Taking the Shimura quotient of $M$ by $U_N$, we obtain the following commutative diagram:
    \[\begin{tikzcd}
	\rho_{/U_N}(M_0)& \rho_{/U_N}(M_R) &M_{/U_N}\\
	 & \pi(M_0) & {\pi(M)}
	\arrow[hook, from=1-1, to=1-2]
    \arrow[hook, from=1-2, to=1-3]
	\arrow["\mr{id}",from=1-1, to=2-2]
	\arrow[from=1-2, to=2-2]
    \arrow[hook, from=2-2, to=2-3]
    \arrow[from=1-3, to=2-3]
\end{tikzcd}.\]
Applying Lemma \ref{lemma_abelian_section} to each fiber of $\rho_{/U_N}(M_R)$ over $\pi_{\mr{Siegel}}(M)$, we obtain that \begin{enumerate}
    \item[(a)] $\rho_{/U_N}(M_R)=\pi(M_0)\times \mathbb G_m^{t_0}$ where $t_0=\dim U_Q-\dim U_N$;
    \item[(b)] $\rho_{/U_N}(M)=\pi(M_0)\times_{\pi_{\mr{Siegel}}(M)}\sigma_N$, where $\sigma_N:\pi_{\mr{Siegel}}(M)\rightarrow \mathbb G_m^{t_0}.$
\end{enumerate}
By Proposition \ref{prop_normality_triviality}, $V^{\ab}_N$ is normal in $Q/U_N$. We then may further take the quotient of $M_{/U_N}$ by $V^{\ab}_N$. Then by a similar argument as in the proof of Theorem \ref{theorem_special_subvariety}, we obtain that $\sigma_N$ is a constant map to $0$. Therefore $\rho_{/U_N}(M_0)$ is an abelian $\pi_\sab(M)$-subscheme of $\rho_{/U_N}(M)$, its preimage $M_0$ is thus a semiabelian $\pi_{\sab}(M)$-subscheme of $M_R$, which concludes the proof.
\end{proof}

Let $M':=M/M_0\rightarrow \pi_{\sab}(M)$ where $M/M_0$ is understood as the quotient of group schemes over $\pi_{\sab}(M).$ Note that $M'$ is also the Shimura quotient of $M$ by $U_N\oplus V^{\ab}_N.$ Hence the quotient $\rho_{/N'}$ factors through $M'$.

On the other hand, we have the Shimura quotient $$\rho_{/p_{\sab}(N)}:\pi_{\sab}(M)\rightarrow \pi_{\sab}(M)_{/p_{\sab}(N)}.$$
By Proposition \ref{prop_normality_triviality}.(2), $M'|_{\rho_{/p_{\sab}(N)}^{-1}(y_{\sab})}\rightarrow \rho_{/p_{\sab}(N)}^{-1}(y_{\sab})$ is an isotrivial semiabelian scheme due to the theorem of the Fixed part \cite[Theorem 1]{YA}.

To conclude, we have the commutative diagram:
\begin{equation}\label{eq_two_steps_quotient}\begin{tikzcd}
	{M} & M'=M/M_0 & {M_{/N'}} \\
	{\pi_{\sab}(M)} & {\pi_{\sab}(M)} & {\pi_{\sab}(M)_{/p_{\sab}(N)}}
	\arrow["\rho_{/(U_N\oplus V_N^{\ab})}", from=1-1, to=1-2]
	\arrow[from=1-1, to=2-1]
	\arrow[from=1-2, to=1-3]
	\arrow[from=1-2, to=2-2]
	\arrow["\rho_{/p_{\sab}(N)}", from=1-3, to=2-3]
	\arrow[from=2-1, to=2-2]
	\arrow[from=2-2, to=2-3]
\end{tikzcd}\end{equation}

\section{Degeneracy loci}\label{section_degeneracy}
In this section, we give a generalization of Gao's study on generic rank of Betti maps \cite{ZG1}.
\subsection{Weak Optimality and Geometric Zilber--Pink}
In this subsection we recall the definition of weak optimality defined in \cite[Section 7]{ZG1} and a Geometric Zilber--Pink result from \cite{BU}. These will be crucial in our characterization of degeneracy loci in the next subsection.

Let $M$ be a Shimura variety associated with the mixed Shimura datum $(P,\mc X^+).$ 
By \cite[Proposition 4.1]{ZGAxLindemann}, there exists a complex algebraic variety $\mc X^\vee$ such that we have an embedding $\mc X^+\hookrightarrow \mc X^\vee$ as an Euclidean open and semi-algebraic subset. We first recall some facts of the bi-algebraic geometry on $\mc X^+.$
\begin{definition}\label{def_algebraicity_uni_cover}
Let $\tilde{Y}$ be an analytic subvariety of $\mc X^+$.
\begin{enumerate}
\item $\widetilde{Y}$ is called an \emph{irreducible algebraic subset} of $\mc X^+$ if it is a complex analytically irreducible component of $Z\cap \mc X^+$ for some algebraic closed subset $Z\subset \mc X^\vee$;
\item $\widetilde{Y}$ is called \emph{algebraic} if it is a finite union of irreducible algebraic subsets of $\mc X^+$;
\end{enumerate}
\end{definition}

\begin{definition}
    \begin{enumerate}
        \item An irreducible subvariety $Z$ of $M$ is said to be \emph{bi-algebraic} if one (and hence any) complex analytic irreducible component of $\uni^{-1}(Z)$ is algebraic in $\mc X^\vee$;
        \item For any subset $\Omega\subset M$, we denote by $\Omega^{\biZar}$ the smallest bi-algebraic subset containing $\Omega$. Define the \emph{weakly defect} of $\Omega$ as $$\delta_{\mr{ws}} (\Omega)=\dim(\Omega^\biZar)-\dim(\Omega^\Zar);$$
        \item Let $Y$ be a subvariety of $M$. We say an irreducible Zariski closed subvariety $Z\subset Y$ is \emph{weakly optimal} if for any irreducible Zariski closed subset $Z'\supsetneq Z$ of $Y$, $\delta_{\mr{ws}}(Z)<\delta_{\mr{ws}}(Z').$
    \end{enumerate}
\end{definition}
We recall the definition of weakly special subsets of $M$ in \cite[Definition 4.1.(b)]{Pink2005}, more precisely, the equivalent definition due to \cite[\S 5.1]{ZGAxLindemann}.
\begin{definition} \label{def_Weakly_Special}
A subvariety $Y$ of $M$ is called \emph{weakly special} if there exist\begin{enumerate}
    \item  a connected mixed Shimura subdatum $(Q,\mc Y^+)$ of $(P,\mc X^+)$,
    \item a connected normal subgroup $N$ of $Q$ whose reductive part is semisimple,
    \item and a point $\tilde{y}\in\mc Y^+$
\end{enumerate}
such that $Y=\uni(N(\R)^+U_N(\C)\tilde{y})$, where $U_N=N\cap U$ and $U$ is the normal subgroup of $P$ of weight $-2$ part in \cite[Definition 2.1]{PinkThesis}. Equivalently, $Y$ is a fiber of the Shimura quotient map $\rho_{/N}.$

Note that weakly special subsets are precisely the bi-algebraic subsets due to \cite{UY2011Char} and \cite[Corollary 8.3]{ZGAxLindemann}
\end{definition}
\begin{remark}
Let $(Q,\mc Y^+)$ be a connected mixed Shimura subdatum of $(P_{g,t},\mc X^+_{g,t}),$ and $N$ be a normal subgroup of $Q$. Then $U_N$ in Definition \ref{def_Weakly_Special} is exactly $N\cap \mr{M}_{t,1}.$
\end{remark}

Now we recall the geometric Zilber--Pink Theorem due to \cite{BU}. In the case of $Y(1)^N$, this is proved by Habegger--Pila \cite{HP2016atypical}, which is generalized to any pure Shimura varieties due to Daw--Ren \cite{DawRen2018Ax}. Gao proved the geometric Zilber--Pink for connected mixed Shimura variety of Kuga type \cite{ZGMixedAF}, in the name of finite result à la Bogomolov.
\begin{theorem}\cite[Theorem 7.1]{BU}\label{Geometric_Zilber_Pink}
Let $Y\subset M$ be a subvariety. There exists a finite subset $$\Sigma_Y\subset\left\{(Q,\mathcal Y^+,N):\begin{aligned}
(Q,\mc Y^+)\text{ is a connected mixed Shimura}\\\text{ subdatum of }(P,\mc X^+),\text{ and } N \trianglelefteq Q\end{aligned}\right\}$$
such that for any weakly optimal subset $Z$ of $Y$, there exist $(Q,\mc Y^+,N)\in \Sigma_Y$ and $\widetilde y\in \mc Y^+$, such that 
$$Z^\biZar=\uni(N(\R)^+U_N(\C)\widetilde y).$$
\end{theorem}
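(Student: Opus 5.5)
We do not reprove this theorem; it is quoted from \cite{BU}. We sketch the strategy we would follow, which is the standard o-minimal route to geometric Zilber--Pink (Habegger--Pila, Daw--Ren, Gao). Here is the plan. The two inputs are the weak Ax--Schanuel theorem for connected mixed Shimura varieties (due to Gao) and definability of the uniformization $\uni$ restricted to a suitable fundamental set $\mathfrak F\subset \mc X^+$ in $\R_{\mathrm{an},\exp}$.

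\textbf{Step 1: reduce to intersection components.} Let $Z\subset Y$ be weakly optimal. By Definition~\ref{def_Weakly_Special} and the remark following it, $Z^{\biZar}$ is weakly special, so $Z^{\biZar}=\uni(N(\R)^+U_N(\C)\widetilde y)$ for some datum $(Q,\mc Y^+,N)$ and some point $\widetilde y$. Weak optimality, combined with Ax--Schanuel applied to a complex-analytic component $\widetilde Z$ of $\uni^{-1}(Z)$, should show that $\widetilde Z$ is a component of $\widetilde Y\cap \widetilde F$. Here $\widetilde Y=\uni^{-1}(Y)$ and $\widetilde F=N(\R)^+U_N(\C)\widetilde y$. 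Moreover this component should have dimension larger than the one expected from $\dim Y+\dim F-\dim M$ computed inside the Shimura quotient by $N$. In other words, $Z$ is an atypical component with respect to the fibration $\rho_{/N}$.

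\textbf{Step 2: parametrize the candidates definably.} For each fixed pair $(Q,N)$ of real algebraic groups, consider the definable set of pairs $(\widetilde y, h)$, where $\widetilde y\in\mathfrak F\cap\widetilde Y$ and $h$ runs over $P(\R)$-conjugates of $N(\R)^+U_N(\C)$, subject to the condition that the orbit of $h$ through $\widetilde y$ meets $\widetilde Y$ in a component of prescribed dimension at $\widetilde y$. Conjugates of $N$ form a real-algebraic family, so this set is definable. The pairs that actually come from $\Q$-subgroups $N$ form a countable subset. Next we show that, for a weakly optimal $Z$, the group $N$ is rigid: Ax--Schanuel forbids deforming $N$ within the real family while keeping an intersection of the same dimension. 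Hence these pairs are isolated points of the definable set of maximally atypical conjugates. A definable set has only finitely many isolated points, so a single real family contributes only finitely many $\Q$-groups $N$.

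\textbf{Step 3: conclude.} There are only finitely many $P(\Q)$-conjugacy types of pairs $(Q,N)$ up to $\Gamma$. Using this together with the $\Gamma$-invariance coming from the fundamental set, we obtain the finite set $\Sigma_Y$. The main obstacle is the rigidity statement in Step~2, namely turning weak optimality into isolation in the definable family. In the mixed setting the weight $-2$ part $U_N(\C)$ is complex rather than real, and unipotent radicals can vary continuously, so one must check that Ax--Schanuel still forces the family to be locally constant. We would first try a dimension count on the quotient $\rho_{/N}$ combined with the weak Ax--Schanuel inequality; if that is insufficient, we would carry out the Habegger--Pila style induction on $\dim Y$.
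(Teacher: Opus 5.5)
Deferring to \cite{BU} is exactly what the paper does: Theorem~\ref{Geometric_Zilber_Pink} is imported as a black box and the paper gives no proof, so as far as what you actually assert, your proposal matches the paper. The strategy you sketch, however, would not go through as written, and two of its steps need correcting before you rely on it.

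\textbf{Step 2.} The pairs $(\widetilde y,h)$ are not isolated, because weakly optimal subvarieties with the same $N$ typically move in positive-dimensional families. For example, take a fibre of $\mc P^\times_{g,t}$ whose $t$-th extension class is trivial, so that it splits as $G'\times\mathbb G_m$. Let $Y=C\times\mathbb G_m$, where $C\subset G'$ is a curve that is not a coset. Each $\{c\}\times\mathbb G_m$ has weak defect $0<\delta_{\mr{ws}}(Y)$, and the only larger irreducible subvariety of $Y$ is $Y$ itself, so each is weakly optimal. All of them share the same $N$ while $\widetilde y$ moves.

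More seriously, Ax--Schanuel does not give the rigidity of $N$ you invoke. Applied to a component $A$ of $\widetilde Y\cap h\widetilde y'$, where $h$ is a nearby real conjugate of $N$, it only gives $\delta_{\mr{ws}}(\uni(A)^{\Zar})\le\dim A^{\Zar}-\dim A$. If the deformation preserves the dimensions, this is at most $\delta_{\mr{ws}}(Z)$. That contradicts nothing, because $\uni(A)^{\Zar}$ need not contain $Z$.

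In \cite{HP2016atypical,DawRen2018Ax} the logic runs the other way. One takes a definable family of real candidates $h$, with $\widetilde y$ restricted to the fundamental set. Inside it one cuts out the locus where $\widetilde Y\cap h\widetilde y$ has a component of the prescribed dimension satisfying a suitable optimality condition. Ax--Schanuel then shows that each such $h\widetilde y$ is weakly special, so $h$ comes from a $\Q$-group. The set of such $h$ is therefore definable and countable, hence finite. Local constancy of $N$ is an output of this countability argument, not an input to it.

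\textbf{Step 3.} This step rests on a finiteness statement that is false, and the failure is intrinsic to the mixed setting. By Lemma~\ref{lemm_normality_weight-2}, every $\Q$-line $\ell\subset\mr M_{t,1}$ (with $t\ge 2$) is a normal subgroup $N=U_N$ of $P_{g,t}$. Conjugation by $P_{g,t}$ acts on $\mr M_{t,1}$ only through the scalar $\mu$, so distinct lines are never conjugate, not even over $\R$.

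Consequently there are infinitely many conjugacy classes of pairs $(Q,N)$. Letting $h$ range over $P(\R)$-conjugates of one fixed $N$ misses candidates, here the translates of the various one-dimensional subtori in the fibres. You flag the continuous variation of unipotent parts as an obstacle, but Step~3 then assumes it away. The definable family must be chosen from the outset to contain every possible $N$, for instance with real Grassmannian parameters for the unipotent parts. The finiteness of $\Sigma_Y$ then has to come out of the o-minimal argument rather than be fed in at the end.

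\textbf{Ax--Schanuel input.} Since $\mc P^\times_{g,t}$ has a nontrivial weight $-2$ part, Gao's Kuga-type Ax--Schanuel theorem does not apply. The relevant input is Gao--Klingler \cite{GaoKlingler2024AxSchanuel}, which the paper itself uses in Proposition~\ref{prop_bettirank_deg}.
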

\subsection{Degeneracy loci}
We will give a generalization of \cite[\S 6-7]{ZG1} to $\mc P_{g,t}^\times$. 


We first fix some convention and notation:
\begin{itemize}
    \item For any irreducible subset $Y$ in a connected mixed Shimura variety, we denote by $\widetilde Y$ any irreducible component of $\uni^{-1}(Y).$ Moreover, if $x\in Y$, we denote by $\widetilde x$ any point in $\widetilde Y$ such that $\uni(\widetilde x)=x$.
    \item For a weakly special subset $Y$, we sometimes write $Y=\uni(N(\R)^+U_N(\C)\widetilde y)$ without claiming the ambient Shimura subdatum $(Q,\mc Y^+)$ if there is no ambiguity.
\end{itemize}

\begin{definitionproposition}
    Let $Y=\uni(N(\R)^+U_N(\C)\widetilde y)\subset \mc P^\times_{g,t}$ be a weakly special subset.
    Then half of the \emph{the Betti rank} $$\frac{1}{2}\max_{\widetilde x\in \widetilde Y}\mr{rank}_\R(d\widetilde b|_{\widetilde Y})_{\widetilde x}$$ is independent of the choice of $\widetilde y$, which we denote as $h_N$. 
\end{definitionproposition}
\begin{proof}
    For any $\widetilde y'\in \mc Y^+$, we have $q\cdot \widetilde y=\widetilde y'$ for some $q\in Q(\R)^+U_Q(\C)$. 
    Consider the map $\varphi:\mc X_{g,t}^+\rightarrow \mc X^+_{g,t}, x\mapsto q\cdot x.$
    Note that $$N(\R)^+U_N(\C)\widetilde y'=q N(\R)^+U_N(\C)q^{-1} q\cdot \widetilde y=qN(\R)^+U_N(\C)\widetilde y=\varphi(\widetilde Y)$$
    where the first equality is due to the normality of $N.$
    By the equivariance of the Betti map due to Proposition \ref{prop_betti_equivariant}, we obtain the homeomorphism
    $\widetilde {\varphi}:\C^{t}\times \R^{2g}\rightarrow \C^{t}\times \R^{2g}$ such that $\widetilde b\circ \varphi=\widetilde\varphi\circ\widetilde b$, which concludes the proof.
\end{proof}

\begin{definition}
    Let $Y\subset \mathcal P_{g,t}^\times$ be a closed subvariety.
    We define the $r$\emph{-th degeneracy locus} $Y^{\deg}(r)$ as the union of all weakly optimal subvarieties $Z$ of $Y$ such that \begin{equation}\label{eq_defect_condition}\dim Z> h_N+r\end{equation}
    where $Z^\biZar=\uni(N(\R)^+U_N(\C)\widetilde y)$ for some $(Q,\mc Y^+,N)\in \Sigma_Y$ and $\widetilde y\in \mc Y^+.$
\end{definition}
We first prove the Zariski-closedness of the degeneracy loci.
\begin{lemma}\label{lemma_biZar_weakly_optimal}
    Let $Z\subset Y$ be a weakly optimal subset. Then $Z$ is an irreducible component of $Z^{\biZar}\cap Y$.  
\end{lemma}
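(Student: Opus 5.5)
Since $Z\subset Y$ and $Z\subset Z^{\biZar}$, the set $Z$ lies in $Z^{\biZar}\cap Y$. This intersection is Zariski closed: $Z^{\biZar}$ is a weakly special, hence algebraic, subvariety of $\mc P_{g,t}^\times$, and $Y$ is closed. Because $Z$ is irreducible, it lies in some irreducible component $Z'$ of $Z^{\biZar}\cap Y$. The plan is to show $Z'=Z$ by contradiction, using only the definition of weak optimality and the minimality built into $(\cdot)^{\biZar}$.

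Suppose instead that $Z\subsetneq Z'$. Then $Z'$ is an irreducible Zariski closed subset of $Y$ strictly containing $Z$, and weak optimality of $Z$ gives
\[
\delta_{\mr{ws}}(Z)<\delta_{\mr{ws}}(Z').
\]
We now compare the two sides directly.

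First, the bi-algebraic closures agree. Since $Z\subset Z'$, we have $Z^{\biZar}\subset Z'^{\biZar}$. Conversely, $Z'\subset Z^{\biZar}$ by construction, and $Z^{\biZar}$ is bi-algebraic, so the minimality defining $Z'^{\biZar}$ gives $Z'^{\biZar}\subset Z^{\biZar}$. Hence $Z'^{\biZar}=Z^{\biZar}$.

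Second, the Zariski closures are $Z$ and $Z'$ themselves, since both are closed. As $Z\subsetneq Z'$ with $Z'$ irreducible, $\dim Z'>\dim Z$.

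Putting these together,
\[
\delta_{\mr{ws}}(Z')=\dim Z^{\biZar}-\dim Z'<\dim Z^{\biZar}-\dim Z=\delta_{\mr{ws}}(Z).
\]
This contradicts the strict inequality supplied by weak optimality. Therefore $Z=Z'$, so $Z$ is an irreducible component of $Z^{\biZar}\cap Y$.

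The only point needing care is that $Z^{\biZar}$ is an irreducible closed algebraic subvariety, so that the intersection is Zariski closed and has finitely many components, and so that its dimension is well defined. This follows from the identification of bi-algebraic subsets with weakly special subvarieties (Definition~\ref{def_Weakly_Special}); in particular $Z^{\biZar}$ is irreducible because $Z$ is. Beyond this, the argument is routine.
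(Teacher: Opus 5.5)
Your proposal is correct and follows essentially the same route as the paper's proof: take an irreducible component $Z'$ of $Z^{\biZar}\cap Y$ containing $Z$, observe that $Z'^{\biZar}=Z^{\biZar}$, and note that $Z\subsetneq Z'$ would force $\delta_{\mr{ws}}(Z')<\delta_{\mr{ws}}(Z)$, contradicting weak optimality. Your additional remarks on why $Z^{\biZar}\cap Y$ is Zariski closed are harmless extra care that the paper leaves implicit.
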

\begin{proof}
    Let $Z'$ be an irreducible component $Z^\biZar\cap Y$ containing $Z$.
    Since $Z\subset Z'\subset Z^\biZar$, we obtain that $Z^\biZar\subset Z'^\biZar\subset Z^\biZar$, i.e., $Z^\biZar=Z'^\biZar$. 
    Hence $\delta_{\mr{ws}}(Z')=\dim (Z^{\biZar})-\dim Z'$. Assume that $Z\subsetneq Z'$, then $\delta_{\mr{ws}}(Z')>\delta_{\mr{ws}}(Z)$ by the weak optimality. Therefore $\dim Z'<\dim Z$, which is a contradiction. We thus obtain that $Z'=Z.$
\end{proof}

\begin{proposition} We have the equality
    \begin{equation}\label{eq_deg_loci_closed}
        \begin{aligned}
        Y^{\deg}(r)&=\bigcup_{(Q,\mc Y^+,N)\in \Sigma_Y} \{x\in Y\cap M:\dim_x  \rho^{-1}_{/N}\rho_{/N}(x)\cap (Y\cap M)> h_N+r \}
        \end{aligned}
    \end{equation}
    where $M$ is the Shimura subvariety given by $(Q,\mathcal Y^+)$.
    In particular $Y^{\deg}(r)$ is Zariski-closed.
\end{proposition}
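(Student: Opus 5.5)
We prove the equality by double inclusion and then read off Zariski-closedness from the right-hand side. Throughout, for a triple $(Q,\mc Y^+,N)\in\Sigma_Y$ with associated special subvariety $M$, note that the fibers of $\rho_{/N}|_M$ are exactly the weakly special subsets $\uni(N(\R)^+U_N(\C)\widetilde y)$ with $\widetilde y\in\mc Y^+$ (Definition \ref{def_Weakly_Special}). By the Definition-Proposition above, the number $h_N$ depends only on $N$, not on the fiber.

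For ``$\subset$'': let $x\in Z$ with $Z$ weakly optimal and $\dim Z>h_N+r$, where $Z^\biZar=\uni(N(\R)^+U_N(\C)\widetilde y)$ for some $(Q,\mc Y^+,N)\in\Sigma_Y$, as provided by Theorem \ref{Geometric_Zilber_Pink}. Then $Z\subset Z^\biZar\subset M$, so $x\in Y\cap M$. Moreover $Z^\biZar=\rho_{/N}^{-1}\rho_{/N}(x)$, since it is the fiber through $x$. By Lemma \ref{lemma_biZar_weakly_optimal}, $Z$ is an irreducible component of $Z^\biZar\cap Y$ passing through $x$. Hence $\dim_x\rho^{-1}_{/N}\rho_{/N}(x)\cap(Y\cap M)\ge\dim Z>h_N+r$.

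For ``$\supset$'': take $x$ in the right-hand side for some triple, and let $F=\rho_{/N}^{-1}\rho_{/N}(x)$. Choose an irreducible component $Z_0$ of $F\cap Y$ through $x$ with $\dim Z_0>h_N+r$. This $Z_0$ need not be weakly optimal. Among irreducible closed $Z'\subset Y$ with $Z'\supseteq Z_0$ and $\delta_{\mr{ws}}(Z')\le\delta_{\mr{ws}}(Z_0)$, take a maximal one, $Z$. It is weakly optimal by construction, and it contains $x$.

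We must then check that $\dim Z>h_{N_Z}+r$, where $N_Z$ is the group attached to $Z^\biZar$ by Theorem \ref{Geometric_Zilber_Pink}. This is the main obstacle, since it requires comparing Betti ranks. Because $Z_0^\biZar\subset F$ and $Z_0\subset Z$, we have $Z_0^\biZar\subset Z^\biZar$. The plan is to use
\[
\dim Z - h_{N_Z}\ \ge\ \dim Z_0 - h_{N_{Z_0}} .
\]
To obtain this, write $\dim Z^\biZar-h_{N_Z}$ as the dimension of the generic fiber of the Betti map restricted to $Z^\biZar$, which is complex analytic by (B2). Then $\delta_{\mr{ws}}(Z)\le\delta_{\mr{ws}}(Z_0)$ gives
\[
\dim Z-\dim Z_0\ \ge\ \dim Z^\biZar-\dim Z_0^\biZar .
\]
It remains to show
\[
\dim Z^\biZar-\dim Z_0^\biZar\ \ge\ h_{N_Z}-h_{N_{Z_0}}.
\]
This holds because the Betti rank of a weakly special subvariety grows by at most its dimension growth. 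The Betti fibers are complex, so real rank equals twice a complex dimension count, and a sub-weakly-special subvariety loses at most codimension in rank. Finally, $h_{N_{Z_0}}\le h_N$ since $Z_0^\biZar\subset F$ and the rank is monotone under inclusion. Together these give $\dim Z-h_{N_Z}\ge\dim Z_0-h_N>r$, so $x\in Z\subset Y^{\deg}(r)$.

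For closedness: $\Sigma_Y$ is finite, so it suffices to treat a single triple. The map $\rho_{/N}|_{Y\cap M}$ is an algebraic morphism of varieties. By upper semicontinuity of fiber dimension (Chevalley), the locus of points where the local fiber dimension exceeds $h_N+r$ is Zariski closed in $Y\cap M$, which is closed in $Y$. A finite union of closed sets is closed, so $Y^{\deg}(r)$ is Zariski closed.
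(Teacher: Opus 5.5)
Your proof is correct and is essentially the paper's argument. The inclusion $\subset$ matches the paper's use of Theorem~\ref{Geometric_Zilber_Pink} and Lemma~\ref{lemma_biZar_weakly_optimal}. For $\supset$, the paper uses the same monotonicity $h_{N_{Z_0}}\le h_N$, which comes from $Z_0^{\biZar}$ lying in the fiber of $\rho_{/N}$.

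The difference is in the converse. The paper's proof stops once the component satisfies the defect inequality \eqref{eq_defect_condition}, even though $Y^{\deg}(r)$ is defined via weakly optimal subvarieties. Weak optimality is supplied only implicitly, by Lemma~\ref{lemm_weakly_optimal_maximality} later on: take $Z$ maximal for the defect inequality. You instead take $Z$ maximal with $\delta_{\mr{ws}}(Z)\le\delta_{\mr{ws}}(Z_0)$, which is weakly optimal by construction, and then transfer the defect inequality to $Z$. This makes the proposition self-contained.

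The inequality you need, $\dim Z^{\biZar}-h_{N_Z}\ge\dim Z_0^{\biZar}-h_{N_{Z_0}}$, is exactly the one that lemma asserts without proof. Your justification (``a sub-weakly-special subvariety loses at most codimension in rank'') is valid only at points where $d\widetilde b|_{\widetilde{Z^{\biZar}}}$ has maximal rank. A priori, $\widetilde{Z_0^{\biZar}}$ could sit inside the lower-rank locus, so you should say why it does not. The reason is that this rank is constant on $\widetilde{Z^{\biZar}}$. The group $N_Z(\R)^+U_{N_Z}(\C)$ acts transitively on $\widetilde{Z^{\biZar}}$, and by Proposition~\ref{prop_betti_equivariant} each of its elements intertwines $\widetilde b$ with an affine automorphism of $\C^t\times\R^{2g}$. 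Hence at every point of $\widetilde{Z_0^{\biZar}}$ the restricted rank is at least $2h_{N_Z}-2(\dim Z^{\biZar}-\dim Z_0^{\biZar})$, which is what you claim. With that sentence added, the argument is complete.

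For the closedness, your use of Chevalley's semicontinuity for the algebraic map $\rho_{/N}|_{Y\cap M}$ supplies a justification that the paper leaves unstated.
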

\begin{proof}
    Since $\dim_x  \rho^{-1}_{/N}\rho_{/N}(x)\cap (Y\cap M)$ is exactly the dimension of $\uni(N(\R)^+ U_N(\C)\widetilde y)\cap Y$'s irreducible component $Y'$ which contains $x$, we have 
    $$\text{Right hand side of \eqref{eq_deg_loci_closed}}=\bigcup_{\substack{(Q,\mc Y^+,N)\in \Sigma_Y, \widetilde y\in \mc Y^+, Y'\subset \uni(N(\R)^+ U_N(\C)\widetilde y)\cap Y,\\ Y'{\text{ irreducible and }} \dim Y'>h_N+r}} Y'$$
    
    We thus obtain the direction $"\subset"$ by Theorem \ref{Geometric_Zilber_Pink} and Lemma \ref{lemma_biZar_weakly_optimal}. 

    Conversely, let $Y'$ be an irreducible component of $\uni(N(\R)^+U_N(\C)\widetilde y)\cap Y$ with $\dim Y'\geq h_N+r$. Then $Y'^{\biZar}\subset \uni(N(\R)^+U_N(\C)\widetilde y)$. Therefore the generic Betti rank of $Y'^{\biZar}$ is not larger than $2h_N$, which implies that $Y'$ satisfies the condition \eqref{eq_defect_condition}. This concludes the proof.
\end{proof}

\begin{theorem}\label{theorem_non_deg_shimura_quotient}
    Let $Y\subset \mc P^\times_{g,t}$ be an irreducible closed subvariety. Then $Y^{\deg}(r)=Y$ if and only if there exists a triplet $(Q,\mc Y^+,N)\in \Sigma_Y$ such that
    $\dim Y-\dim \rho_{/N}(Y)> h_N+r.$
\end{theorem}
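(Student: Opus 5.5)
We argue by comparing $Y^{\deg}(r)$ with the description \eqref{eq_deg_loci_closed}. Since $\Sigma_Y$ is finite and each set on the right-hand side of \eqref{eq_deg_loci_closed} is Zariski closed, the irreducibility of $Y$ gives the following: $Y^{\deg}(r)=Y$ holds if and only if there is a single triple $(Q,\mc Y^+,N)\in\Sigma_Y$ whose corresponding set equals $Y$. In that case $Y\subset M$, where $M$ is the special subvariety attached to $(Q,\mc Y^+)$, and $\dim_x \rho_{/N}^{-1}\rho_{/N}(x)\cap Y>h_N+r$ for every $x\in Y$.

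For the direction ``$\Leftarrow$'', we take the triple with $\dim Y-\dim\rho_{/N}(Y)>h_N+r$. The first step is to ensure $Y\subset M$. We want to choose the triple from $\Sigma_Y$ compatibly, e.g. via the weakly optimal $Z=Y$ or the triple attached to $Y^{\biZar}$; if needed, we first replace $M$ by the smallest special subvariety containing $Y$. Then $\rho_{/N}|_Y:Y\to\overline{\rho_{/N}(Y)}$ is a dominant morphism of irreducible varieties. By upper semicontinuity of fibre dimension, every $x\in Y$ satisfies
\[
\dim_x \rho_{/N}^{-1}\rho_{/N}(x)\cap Y\ \ge\ \dim Y-\dim\rho_{/N}(Y)\ >\ h_N+r.
\]
Hence $Y$ lies in the set attached to this triple, so $Y^{\deg}(r)=Y$.

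For the direction ``$\Rightarrow$'', the reduction above gives a triple $(Q,\mc Y^+,N)\in\Sigma_Y$ with $Y\subset M$ and all local fibre dimensions of $\rho_{/N}|_Y$ exceeding $h_N+r$. We apply this at a generic point $x$. For a dominant morphism of irreducible varieties, the fibre dimension at a general point equals $\dim Y-\dim\rho_{/N}(Y)$. This yields $\dim Y-\dim\rho_{/N}(Y)>h_N+r$.

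The main obstacle is the reduction step. The finite set $\Sigma_Y$ comes from Theorem \ref{Geometric_Zilber_Pink}, and we need the special subvariety $M$ of the chosen triple to contain $Y$. In ``$\Rightarrow$'' this is immediate, because the sets in \eqref{eq_deg_loci_closed} are subsets of $Y\cap M$: if the closed set equals $Y$, then $Y\subset M$. In ``$\Leftarrow$'' one has to read the statement with $M\supset Y$ implicit, or argue via the Ax-type property that $Y^{\biZar}$ lies in a fibre structure of some member of $\Sigma_Y$. A secondary point to check is that the right-hand side of \eqref{eq_deg_loci_closed} is closed in $Y$, which again follows from upper semicontinuity of fibre dimension.
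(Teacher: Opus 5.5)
Your proposal is correct and follows the paper's approach. The paper's proof is the one-line remark that the statement follows from \eqref{eq_deg_loci_closed} and the irreducibility of $Y$: the right-hand side of \eqref{eq_deg_loci_closed} is a finite union of closed subsets of $Y$, using finiteness of $\Sigma_Y$ and upper semicontinuity of fibre dimension. You have supplied the generic-fibre-dimension argument for both directions.

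The one thing to clean up is the detour in ``$\Leftarrow$''. The inclusion $Y\subset M$ is already part of the hypothesis, because $\rho_{/N}(Y)$ only makes sense when $Y$ lies in the special subvariety $M$ on which $\rho_{/N}$ is defined. Replacing $M$ by the smallest special subvariety containing $Y$ would in general give a triple outside $\Sigma_Y$, and the Ax-type alternative is also unnecessary, so drop both.
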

\begin{proof}
    This is a direct consequence of \eqref{eq_deg_loci_closed} and the irreducibility of $Y$.
\end{proof}

We now relate the degeneracy loci with the generic rank of the Betti map restricted on $\widetilde Y$. We start with a key lemma. 
\begin{lemma}\label{lemm_weakly_optimal_maximality}
    Let $Z\subset Y$ be an irreducible closed subvariety. Assume that $\dim Z>h_N+r$ and $Z$ is maximal for this property. Then $Z$ is weakly optimal.
\end{lemma}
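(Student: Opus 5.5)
The plan is to argue by contradiction, using a monotonicity property of the quantity "dimension minus half Betti rank" for weakly special subvarieties. For a weakly special $W=\uni(N(\R)^+U_N(\C)\widetilde y)$ set $f(W):=\dim W-h_N$. Throughout, I read $h_N$ in the hypothesis as attached to $Z^\biZar=\uni(N(\R)^+U_N(\C)\widetilde y)$. I read maximality as: every irreducible closed $Z'\supsetneq Z$ in $Y$ satisfies $\dim Z'\le h_{N'}+r$, where $N'$ is attached to $Z'^\biZar$.

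\emph{Step 1 (constant rank on weakly special sets).} Let $W=\uni(N(\R)^+U_N(\C)\widetilde y)$. I will show that $\mathrm{rank}_\R(d\widetilde b|_{\widetilde W})$ is constant, equal to $2h_N$, at every point of $\widetilde W$, and not just generically. Here $\widetilde W=N(\R)^+U_N(\C)\widetilde y$ is a single orbit. For $n\in N(\R)^+U_N(\C)$, Proposition \ref{prop_betti_equivariant} gives $\widetilde b(n\cdot \upsilon)=\widetilde\varphi_n(\widetilde b(\upsilon))$ with $\widetilde\varphi_n$ an affine diffeomorphism. Since $n$ is a biholomorphism preserving $\widetilde W$, the rank at $\widetilde y$ equals the rank at $n\widetilde y$. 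By the constant rank theorem, the fibres of $\widetilde b|_{\widetilde W}$ are then real analytic submanifolds of real dimension $2\dim W-2h_N$. By (B2) they are complex analytic, of complex dimension $f(W)$, through \emph{every} point of $\widetilde W$.

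\emph{Step 2 (monotonicity).} Let $W\subset W'$ be weakly special, and choose lifts $\widetilde W\subset\widetilde W'$. For $x\in\widetilde W$, the fibre of $\widetilde b|_{\widetilde W}$ through $x$ is contained in the fibre of $\widetilde b|_{\widetilde W'}$ through $x$. By Step 1, applied to both $W$ and $W'$ at the same point $x$, this gives $f(W)\le f(W')$. This is the step I expect to be the main obstacle. Generic rank alone gives the inequality in the wrong direction, because rank is only lower semicontinuous. The homogeneity from Step 1 is exactly what removes the special-point issue, so the care goes into verifying that the equivariance really preserves $\widetilde W'$ and rank along it.

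\emph{Step 3 (conclusion).} Suppose $Z$ is not weakly optimal. Then there is an irreducible closed $Z'\supsetneq Z$ in $Y$ with $\delta_{\mr{ws}}(Z')\le\delta_{\mr{ws}}(Z)$. Since $Z\subset Z'$ we have $Z^\biZar\subset Z'^\biZar$, both weakly special, so Step 2 gives $f(Z'^\biZar)\ge f(Z^\biZar)$. Hence
\[
\dim Z'-h_{N'}=f(Z'^\biZar)-\delta_{\mr{ws}}(Z')\ge f(Z^\biZar)-\delta_{\mr{ws}}(Z)=\dim Z-h_N>r .
\]
Thus $Z'$ also satisfies $\dim Z'>h_{N'}+r$, contradicting the maximality of $Z$. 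Therefore $Z$ is weakly optimal.
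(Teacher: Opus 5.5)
Your proof is correct and follows the paper's argument. The paper runs exactly your Step~3 contradiction, but it uses the inequality $\dim Z^{\biZar}-h_N\le \dim Z'^{\biZar}-h_{N'}$ without comment; your Steps~1--2 supply the missing justification. They show that the Betti rank is constant on a weakly special orbit, using the transitive $N(\R)^+U_N(\C)$-action and Proposition~\ref{prop_betti_equivariant}, and then compare fibre dimensions at a common point.
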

\begin{proof}
    Let $Z'$ be an irreducible closed subvariety of $Y$ such that $Z\subsetneq Z'$ and $\delta_{\mr{ws}}(Z)\geq \delta_{\mr{ws}}(Z')$. Since $$\dim Z^\biZar- h_N\leq \dim Z'^{\biZar}-h_{N'},$$ we obtained that $$\dim Z'^\biZar-\dim Z'\leq\dim Z^{\biZar}-\dim Z<\dim Z^{\biZar}-h_N-r\leq \dim Z'^{\biZar}-h_{N'}-r$$
    which implies that $\dim Z'>h_{N'}+r$, contradicting to the maximality of $Z$.
\end{proof}
\begin{proposition}\label{prop_bettirank_deg} Let $Y\subset \mc P_{g,t}^\times$ be an irreducible closed subset. If $r\geq 0$, then
    $Y^\deg(r)=Y$ if and only if $\displaystyle\max_{x\in Y^{\mr{sm}}}\mr{rank}_{\R}(\widetilde b|_{\widetilde Y})_{\widetilde x}< 2 (\dim Y-r).$
\end{proposition}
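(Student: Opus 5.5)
We prove the two directions separately, following the strategy of \cite[\S 7]{ZG1}. The basic tool is the following fibration picture. Let $(Q,\mc Y^+,N)$ be a triple and $M$ the associated special subvariety. The fibers of $\rho_{/N}$ are the weakly special subsets $\uni(N(\R)^+U_N(\C)\widetilde y)$, and the Betti map restricted to such a fiber has real rank at most $2h_N$ by definition. We will also use that, on $\widetilde M$, the Betti map $\widetilde b$ together with the projection $\widetilde\pi_{\sab}$ is a real-analytic semi-algebraic embedding, through the identification $\varphi_b$ of \eqref{eq_betti_matrix}.

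\emph{Direction ``$\Rightarrow$''.} Assume $Y^{\deg}(r)=Y$. By Theorem \ref{theorem_non_deg_shimura_quotient} there is a triple $(Q,\mc Y^+,N)\in\Sigma_Y$ with $Y\subset M$ and $\dim Y-\dim\rho_{/N}(Y)>h_N+r$. Take a generic smooth point $x\in Y$ and let $F$ be the fiber of $\rho_{/N}|_Y$ through $x$, so that $\dim F=\dim Y-\dim\rho_{/N}(Y)$. We then bound the differential:
\[
\mr{rank}_\R(d\widetilde b|_{\widetilde Y})_{\widetilde x}\le \mr{rank}_\R(d\widetilde b|_{\widetilde F})_{\widetilde x}+\bigl(2\dim Y-2\dim F\bigr)\le 2h_N+2\dim Y-2\dim F.
\]
The first inequality holds because $T\widetilde F$ has real codimension $2(\dim Y-\dim F)$ in $T\widetilde Y$. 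The second holds because $\widetilde F$ lies in $N(\R)^+U_N(\C)\widetilde y$, where the rank is at most $2h_N$. Combining with $\dim F>h_N+r$, the right-hand side is $<2(\dim Y-r)$. At non-generic smooth points the rank can only drop, by lower semicontinuity of rank along the analytic family.

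\emph{Direction ``$\Leftarrow$''.} Assume the generic rank is $2(\dim Y-r')$ for some $r'>r$. Then through a generic point $\widetilde x$ the fiber $\widetilde b|_{\widetilde Y}^{-1}(\widetilde b(\widetilde x))$ is complex analytic by (B2), and has dimension $\ge r'>r$ near $\widetilde x$. The key step is to show that a positive-dimensional complex-analytic germ $\widetilde Z$ of a Betti fiber inside $\widetilde Y$ is contained in an algebraic subvariety $Z\subset Y$ with $\dim Z>h_N+r$, where $Z^{\biZar}=\uni(N(\R)^+U_N(\C)\widetilde y)$. Then Lemma \ref{lemm_weakly_optimal_maximality} (after enlarging $Z$ to a maximal such subvariety) shows that $Z$ lies in a weakly optimal subvariety satisfying \eqref{eq_defect_condition}. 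Since this happens through a generic point, closedness of $Y^{\deg}(r)$ gives $Y^{\deg}(r)=Y$.

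To produce $Z$, we would take $Z$ to be the Zariski closure of $\uni$ applied to a Betti fiber. A Betti fiber in $\mc X^+_{g,t}$ is $\{\text{fixed }(z,y)\}\times\mr M_{t,2g}(\R)\times\mathfrak H_g^+$ under $\varphi_b$. In particular it lies in the $\mr M_{t,1}\oplus\mr M_{2g,1}$-translate of the ``zero section'' orbit, which is algebraic in $\mc X^\vee$. We then apply the mixed Ax–Schanuel / Ax–Lindemann theorem \cite{ZGAxLindemann}: the bi-algebraic closure of $\uni(\widetilde Z)$ has Betti rank controlled by $\dim Z^{\biZar}-\dim Z$. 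More precisely, we would show that $h_N\le\dim Z^{\biZar}-\dim(\text{fiber directions})$, and then run the defect count of Lemma \ref{lemm_weakly_optimal_maximality} in reverse to obtain $\dim Z>h_N+r$.

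\textbf{Main obstacle.} I expect this last step to be the main obstacle: showing that the algebraic hull of a Betti fiber has small weakly special closure, which requires Ax–Schanuel rather than just Theorem \ref{Geometric_Zilber_Pink}. Some care is also needed because $\widetilde b_\tor$ takes complex values, so its fibers have real codimension $2t$ and not $t$. I would first try to handle this by passing to the Shimura quotient by $\mr M_{t,1}$ (Lemma \ref{lemm_normality_weight-2}), reducing the torus directions to the abelian case of \cite[Theorem 1.1]{ZG1}.
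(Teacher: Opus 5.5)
Your ``$\Rightarrow$'' direction is exactly the paper's argument: take the component $F$ of $\rho_{/N}^{-1}\rho_{/N}(x)\cap Y$ through a general smooth point and bound the rank by $2h_N+2(\dim Y-\dim F)$. Your architecture for ``$\Leftarrow$'' is also the paper's: a Betti-fiber germ in $\widetilde Y$, its Zariski closure $Z$, then Lemma~\ref{lemm_weakly_optimal_maximality}.

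\textbf{The gap.} The step you label the main obstacle is where the proof actually lives, and you never establish $\dim Z>h_N+r$. Neither tool you propose would give it.
\begin{itemize}
\item Ax--Lindemann \cite{ZGAxLindemann} only speaks about \emph{algebraic} subsets of $\uni^{-1}(Y)$. Your germ $\widetilde W$ is an analytic component of (algebraic Betti fiber)$\,\cap\,\widetilde Y$, which is in general not algebraic, so Ax--Lindemann says nothing about it.
\item Passing to the Shimura quotient by $\mr M_{t,1}$ lands in $\mc A_g\times_{\mathbb A_g}(\mc A_g^\vee)^{[t]}$. Its Betti map over $(\mc A_g^\vee)^{[t]}$ is just $\widetilde b_{\ab}$. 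The coordinate $\widetilde b_{\tor}$, which is exactly what distinguishes this statement from \cite[Theorem 1.1]{ZG1}, is thrown away, so no reduction to the abelian case results.
\end{itemize}

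\textbf{The missing ingredient} is the full Ax--Schanuel theorem in the mixed setting \cite[Theorem 1.1]{GaoKlingler2024AxSchanuel}. This is needed because $\mc P_{g,t}^\times$ is not of Kuga type: its weight $-2$ part $\mr M_{t,1}$ is nonzero. You then combine it with a dimension count for which you already have all the pieces.
\begin{enumerate}
\item Let $\widetilde W$ be the component through a generic $\widetilde x$ of $\widetilde b^{-1}(\widetilde b(\widetilde x))\cap\widetilde Y$, so $\dim\widetilde W\ge r'>r$. Put $Z=\uni(\widetilde W)^{\Zar}$ and write $Z^{\biZar}=\uni(N(\R)^+U_N(\C)\widetilde x)$.
\item Ax--Schanuel gives
\[
\dim\widetilde W^{\Zar}+\dim Z\ \ge\ \dim\widetilde W+\dim Z^{\biZar}.
\]
\item The Betti fiber is algebraic; this is your own computation $v=y_1-\tau y_2$, $w=z-uy_2$. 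Also, $\widetilde W$ lies in the orbit $N(\R)^+U_N(\C)\widetilde x$. Hence $\widetilde W^{\Zar}\subset\widetilde b^{-1}(\widetilde b(\widetilde x))\cap N(\R)^+U_N(\C)\widetilde x$.
\item By the equivariance in Proposition~\ref{prop_betti_equivariant}, $d\widetilde b$ has constant real rank $2h_N$ along this orbit, and its fibers there are complex by (B2). So the intersection in step 3 is algebraic of dimension $\dim Z^{\biZar}-h_N$.
\item Substituting into step 2 gives $\dim Z\ge\dim\widetilde W+h_N>h_N+r$. Lemma~\ref{lemm_weakly_optimal_maximality} and the closedness of $Y^{\deg}(r)$ then finish as you intended.
\end{enumerate}
Your vague bound ``$h_N\le\dim Z^{\biZar}-\dim(\text{fiber directions})$'' should be the exact equality $\dim\bigl(\widetilde b^{-1}(\widetilde b(\widetilde x))\cap N(\R)^+U_N(\C)\widetilde x\bigr)=\dim Z^{\biZar}-h_N$, fed into Ax--Schanuel. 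No reverse defect count is required.
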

\begin{proof}
    "$\Rightarrow$": Since $Y^{\deg}(r)=Y$, we see that there exists a triplet $(Q,\mc Y^+,N)\subset \Sigma_Y$ such that $\dim Y-\dim \rho_{/N}(Y)>h_N+r$ by Theorem \ref{theorem_non_deg_shimura_quotient}. For any $x\in Y^{\mr{sm}}$, let $Z$ be the irreducible component of $\rho_{/N}^{-1}\rho_{/N}(x)\cap Y$ containing $x$, hence $\dim Z>h_N+r$ for general $x$. 
    Then $$\begin{aligned}\mr{rank}_{\R}(d\widetilde b|_{\widetilde Y})_{\widetilde x}&\leq \mr{rank}_{\R}(d\widetilde b|_{\widetilde Z})_{\widetilde x}+2(\dim Y-\dim Z)\\&\leq \mr{rank}_\R(d\widetilde b|_{N(\R)^+U_N(\C)\widetilde x})_{\widetilde x}+2(\dim Y-\dim Z)\\&=2(h_N+\dim Y-\dim Z)<2(\dim Y-r).\end{aligned}$$

    "$\Leftarrow$": Assume that $\displaystyle\ell:=\max_{x\in Y^{\mr{sm}}}\mr{rank}_{\R}(d\widetilde b|_{\widetilde Y})_{\widetilde x}< 2 (\dim Y-r)$. Then for any $x\in Y^{\mr{sm}}$, there exists a complex analytic set $\widetilde W\subset \widetilde b^{-1}(\widetilde b(\widetilde x))\cap \widetilde Y$ such that  $\dim \widetilde W\geq \dim Y-\ell>r$.
    Let $Z:=\uni(\widetilde W)^\Zar$. By the Ax-Schaunuel theorem \cite[Theorem 1.1]{GaoKlingler2024AxSchanuel}, we have 
     $$\dim \widetilde W^{\Zar}+\dim Z\geq \dim \widetilde W+\dim Z^{\biZar}$$
    Since $\widetilde W^{\Zar}\subset \widetilde b^{-1}(\widetilde b(\widetilde{x}))\cap (N(\R)^+U_N(\C)\widetilde x)$, and the latter is algebraic and of dimension $\dim Z^\biZar-h_N$, we get $\dim Z\geq \dim W+h_N>r+h_N.$ By Lemma \ref{lemm_weakly_optimal_maximality}, $Z$ is contained in $Y^{\deg}(r)$, we are done.
\end{proof}
\subsection{Degeneracy criterion}\label{subsection_degeneracy_criterion}
Let $Y\subset \mc P_{g,t}^\times$ be an irreducible closed subset, and $M$ be the smallest special subvariety containing $Y$.

\begin{theorem}\label{theorem_quotient_subgrp_closure} Assume that $$\pi(M)=\pi_{\ab}(M)\times_{\pi_{\mr{Siegel}}(M)}\pi_{\sab}(M).$$Let $(Q,\mc Y^+)$ be the associated connected mixed Shimura datum of $M$. If $Y^{\deg}(r)=Y$, then there exists a normal subgroup $N\trianglelefteq Q$ with semisimple reductive part satisfying $N=(U_N\oplus V_N^\ab)\cdot N$, such that 
$$\dim Y-\dim \rho_{/N}(Y)>h_N+r.$$
\end{theorem}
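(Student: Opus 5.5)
Since $Y^{\deg}(r)=Y$, Theorem \ref{theorem_non_deg_shimura_quotient} gives a triplet $(Q_1,\mc Y_1^+,N_1)\in\Sigma_Y$ with $\dim Y-\dim\rho_{/N_1}(Y)>h_{N_1}+r$. The plan is to transport this to the datum $(Q,\mc Y^+)$ of $M$, and then replace the normal subgroup by its closure $N'=(U_N\oplus V_N^{\ab})\cdot N$ from \eqref{eq_subgroup_closure}. We then check that the inequality survives this enlargement.

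\textbf{Step 1: transporting to $(Q,\mc Y^+)$.} Since $Y\subset M_1\cap\uni(\dots)$ and $M$ is the smallest special subvariety containing $Y$, we may assume $Y\subset M\subset M_1$. Put $N:=(N_1\cap Q)^\circ$, or better the normal subgroup of $Q$ generated by the connected stabilizer of the fibers $\rho_{/N_1}^{-1}(\cdot)\cap M$. A fiber of $\rho_{/N_1}$ through a point of $Y$ is $\uni(N_1(\R)^+U_{N_1}(\C)\widetilde y)$, and its intersection with $M$ is a weakly special subset of $M$. Its bi-algebraic component can therefore be written as $\uni(N(\R)^+U_N(\C)\widetilde y)$ with $N\trianglelefteq Q$ having semisimple reductive part. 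The fibers of $\rho_{/N}|_Y$ contain the components of the fibers of $\rho_{/N_1}|_Y$, so $\dim Y-\dim\rho_{/N}(Y)\ge \dim Y-\dim\rho_{/N_1}(Y)$. Also $h_N\le h_{N_1}$, because the $N$-orbits lie in $N_1$-orbits and the Betti rank is monotone under inclusion. Hence the inequality holds for $N\trianglelefteq Q$. Alternatively, one can simply take $N$ to be the group whose orbits give $Z^{\biZar}$ for a generic weakly optimal $Z$ through a point of $Y$.

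\textbf{Step 2: passing to $N'$.} The hypothesis $\pi(M)=\pi_{\ab}(M)\times_{\pi_{\mr{Siegel}}(M)}\pi_{\sab}(M)$ is \eqref{eq_split_unipotent_radical} by Lemma \ref{lemm_split_unipotnent_radical}. Proposition \ref{prop_normality_triviality} then applies, so $N'$ is normal in $Q$. Its reductive part equals $G_N$, which is semisimple, and clearly $N'=(U_{N'}\oplus V^{\ab}_{N'})\cdot N'$. Since $\rho_{/N'}$ factors through $\rho_{/N}$, we get $\dim Y-\dim\rho_{/N'}(Y)\ge\dim Y-\dim\rho_{/N}(Y)$.

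\textbf{Step 3: the key claim, $h_{N'}=h_N$.} This is the main obstacle. The $N'$-orbit through $\widetilde y$ is the $N$-orbit translated by $U_N(\C)\times V_N^{\ab}(\R)$, which in the coordinates \eqref{eq_betti_matrix} is the fiberwise semiabelian subgroup $M_0$ of Corollary \ref{coro_semiabelian_subscheme}. I would compute the Betti rank using the equivariance of Proposition \ref{prop_betti_equivariant}. The image of $\widetilde b$ on the $N$-orbit is an orbit of the affine action of $N$ on $\C^t\times\R^{2g}$, which is given by $(z,y,x,g)$ acting via $b_\ab\mapsto gb_\ab+y$ and $b_\tor\mapsto\mu b_\tor+xb_\ab+z$. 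The elements of $N\cap\mc R_u(Q)$ already produce translations by all of $V_N^{\ab}$ in the $b_\ab$-direction and, by the definition of $U_N$, by all of $U_N$ in the $b_\tor$-direction. The computation in the proof of Proposition \ref{prop_normality_triviality}(1) shows that the $f_x$-twists stay inside $U_N$. So the $\widetilde b$-image of the $N$-orbit is already stable under translation by $U_N(\C)\oplus V_N^{\ab}(\R)$, and the $N'$-orbit has the same Betti image dimension. This gives $2h_{N'}=2h_N$.

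If this equality turns out too delicate, the fallback is the weaker bound $h_{N'}-h_N\le\dim N'$-orbit $-\dim N$-orbit. The extra directions are fibers of $M_0$, which are complex directions on which the Betti map is injective. Hence the drop in $\dim\rho(Y)$ balances the increase in $h$, and the strict inequality is preserved.
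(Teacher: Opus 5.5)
Your proposal follows the paper's proof essentially step for step. The paper replaces $N^*$ by the neutral component of $Q\cap N^*$, cut down to have semisimple reductive part. It observes that $\widetilde\rho_{/N}$ and $\widetilde\rho_{/N^*}$ have the same fibers on $\widetilde Y$ while $h_N\le h_{N^*}$. It then enlarges to $N'=(U_N\oplus V_N^{\ab})\cdot N$, which is normal by Proposition~\ref{prop_normality_triviality}(1), and asserts $h_{N'}=h_N$ in one line from the equivariance of the Betti map.

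Two corrections to your Step 3, neither of which departs from the paper's route. First, the twists $x\,b_{\ab}$ in the affine action involve $x\in W_N$ and the abelian Betti coordinate of the moving point, not an element of $V_N^{\ab}$. So the relevant input is Proposition~\ref{prop_normality_triviality}(2), not (1). You also need the $\mr M_{t,1}$-components of the lifts in $N$ of elements of $V_N^{\ab}$ to lie in $U_N$, which does not follow from the definitions alone. Second, your fallback should be dropped. The fibers of $Y\to\rho_{/N'}(Y)$ need not contain the extra $M_0$-directions, so an increase of $h$ need not be offset by a drop in $\dim\rho_{/N'}(Y)$.
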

\begin{proof}
Let $(Q*,\mc Y^{+,*},N^*)$ be as in Theorem \ref{theorem_non_deg_shimura_quotient}.
Let $M^*=\uni(\mc Y^{+,*})$. Since $Y\subset M$, we have $M\subset M^*$ and $Q<Q^*.$ Let $N$ be the neutral component of $Q\cap N^*.$ We may assume that the reductive part of $N$ is semisimple after replacing $N$ with $N\cap p_{\mr{Siegel}}^{-1}(p_{\mathrm{Siegel}}(N)^{\mr{der}})$.

We have the following commutative diagram:
\[\begin{tikzcd}
    \widetilde Y &\mc Y^+ &\mc Y^{+,*}\\
	Y & M & M^*\ \\
	\rho_{/N}(Y) & M_{/N} & M^*_{/N^*}
	\arrow[from=1-1, to=1-2]
	\arrow[from=1-1, to=2-1]
	\arrow[from=1-2, to=1-3]
	\arrow[from=1-2, to=2-2]
	\arrow[from=1-3, to=2-3]
	\arrow[from=2-1, to=2-2]
	\arrow[from=2-2, to=2-3]
    \arrow[from=2-1, to=3-1]
	\arrow[from=2-2, to=3-2]
	\arrow[from=2-3, to=3-3]
	\arrow[from=3-1, to=3-2]
	\arrow[from=3-2, to=3-3]
\end{tikzcd}\]
We denote by $\widetilde \rho_{/N}: \mc Y^+\rightarrow \mc Y^+/N$ (resp. $\widetilde \rho_{/N^*}: \mc Y^+\rightarrow \mc Y^{*,+}/N^*$) the map on the covering space induced by $\rho_{/N}$ (resp. $\rho_{/N^*}$).

Notice that the fiber of $\widetilde\rho_{/N}|_{\widetilde Y}$ are of form $$(U_N(\C)N(\R)^+\widetilde y)\cap \widetilde Y=((U_{N^*}(\C)N^*(\R)^+\widetilde y)\cap \mc Y^+)\cap \widetilde Y=((U_{N^*}(\C)N^*(\R)^+\widetilde y)\cap \widetilde Y,$$
while the fiber of $\widetilde \rho_{/N^*}|_{\widetilde Y}$ take the form of the right hand side. Therefore $\dim \rho_{/N^*}(Y)=\dim \rho_{/N}(Y)$. Since $N<N^*$, by definition, we have $h_N\leq h_{N^*}.$
Therefore we may replace $(Q^*,\mc Y^{+,*},N^*)$ with $(\mc Q,\mc Y^+, N).$

On the other hand, we set $N':=(U_N\oplus V_N^\ab)\cdot N$ which is normal by Proposition \ref{prop_normality_triviality}.(1). By the equivariance of Betti map, we have    \begin{equation}\label{eq_betti_rank}
    h_{N'}=h_{N}=\dim U_N+\frac{1}{2}\dim V^{\ab}_N.
\end{equation}
Hence we may further replace $N$ with $N'$, which concludes the proof
\end{proof}

\section{Non-degeneracy of fiber product }\label{sec_nondeg_fiberproduct}
Let $S$ be a quasi-projective variety over $\C$, and $\mc G$ be a semiabelian scheme given by the exact sequence
$$0\rightarrow \mathbb{G}_m^t\times S\rightarrow \mathcal G\rightarrow \mathcal A\rightarrow 0$$
of group $S$-schemes, where $\mathcal A/S$ is an abelian scheme of relative dimension $g$.
\begin{definition}
    We denote by $\mr{Mod}.\dim_{\mc G}(S)$ the smallest dimension of quasi-projective variety $S'$ such after a finite cover of $S$, there exists a morphism $f:S\rightarrow S'$ such that for each $s'\in S'$
    $$\mc G|_{f^{-1}(s')}\rightarrow f^{-1}(s')$$
    is an isotrivial family of semiabelian varieties.
\end{definition}

Assume that $\mc A$ has a principal polarization and level-$4$ structure. Then we have the modular map 
\[
\begin{tikzcd}
  \mc G \ar[d] \ar[r,"\iota"] & \mc P_{g,t}^\times \ar[d] \\
  S \ar[r,"\iota_S"] &  (\mc A_g^\vee)^{[t]}
  \ar[ul, phantom, "\ulcorner", pos=0.45] 
\end{tikzcd}
\]
Then we have $\mr{Mod}.\dim(\mc G/S)=\dim \iota_{S}(S).$
\begin{definition}
    Let $\mc X\subset \mc G$ be an irreducible closed subvariety.
    We say $\mc X$ is non-degenerate if for $Y:=\ovl{\iota(\mc X)}$
    $$\max_{x\in Y^{\mr{sm}}}\mr{rank}_{\R}(d\widetilde b|_{\widetilde Y})_{\widetilde x}=2\dim \mc X.$$
\end{definition}

In this subsection, we aim to prove that for a suitable subvariety $\mc X$, its large enough fiber power is non-degenerate. Assume that $\mc X\rightarrow S$ is surjective. We recall the notations that
$$\mc X^{[m]}:=\underbrace{\mc X\times_S\cdots\times_S\mc X}_{m\text{ times}}.$$
and the modular map for fiber power: $$\iota^{[m]}:\mc G^{[m]}\rightarrow \mc P^\times_{mg,mt}.$$
\begin{theorem}\label{theo_nondeg_gen}
    Let $\ovl\eta$ be the geometric generic point of $S$.
    Assume that 
        \begin{enumerate}
        \item $\iota^{[m]}|_{\mc X^{[m]}}$ is generically finite for $m\gg 0$.
        \item $\pi_{\mc G/\mc A}(\mc X)_s$ generates $\mathcal A_s$ for each $s\in S;$
        \item $\mc X_{\ovl \eta}$ is irreducible and has finite stabilizer.
    \end{enumerate}
    Then $\mc X^{[m]}$ is non-degenerate for every $m\gg 0.$
\end{theorem}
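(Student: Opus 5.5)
We argue by contradiction, reducing everything to the Shimura-theoretic criterion of \S\ref{section_degeneracy}. Suppose $\mc X^{[m]}$ is degenerate for infinitely many $m$, and put $Y_m:=\ovl{\iota^{[m]}(\mc X^{[m]})}\subset \mc P^\times_{mg,mt}$. By assumption (1), for $m\gg0$ we have $\dim Y_m=\dim \mc X^{[m]}=m(\dim\mc X-\dim S)+\dim S$. By Proposition \ref{prop_bettirank_deg} with $r=0$, degeneracy means $Y_m^{\deg}(0)=Y_m$.

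We first want to apply Theorem \ref{theorem_quotient_subgrp_closure}, so we must check its hypothesis $\pi(M_m)=\pi_{\ab}(M_m)\times_{\pi_{\mr{Siegel}}(M_m)}\pi_{\sab}(M_m)$ for the smallest special subvariety $M_m\supset Y_m$. We plan to do this as follows.
\begin{enumerate}
\item Using assumption (2), the projection of $Y_m$ to the abelian part generates, fibrewise, the whole of $\mc A^{[m]}_s$ up to the isogeny constructed in the Appendix.
\item Hence the abelian unipotent part $V^{\ab}$ of $M_m$ is as large as the base allows, which forces the splitting condition \eqref{eq_split_unipotent_radical}, via Lemma \ref{lemm_split_unipotnent_radical}.
\item If necessary, we first replace $\mc G$ by an isogenous semiabelian scheme (Appendix), which does not affect Betti ranks.
\end{enumerate}
Theorem \ref{theorem_quotient_subgrp_closure} then yields a normal $N\trianglelefteq Q_m$ with $N=(U_N\oplus V_N^{\ab})\cdot N$ and
\begin{equation*}
\dim Y_m-\dim\rho_{/N}(Y_m)>h_N=\dim U_N+\tfrac12\dim V^{\ab}_N+h_N^{\mr{base}},
\end{equation*}
where $h_N^{\mr{base}}$ is the Betti rank contribution of $p_{\sab}(N)$ on the base.

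Next we make this inequality geometric through the two-step factorization \eqref{eq_two_steps_quotient}. By Corollary \ref{coro_semiabelian_subscheme}, the first step is the quotient by a semiabelian subscheme $M_0$, which on fibres is an algebraic subgroup $H_s\subset\mc G_s^m$ with $\dim H_s=\dim U_N+\tfrac12\dim V_N^{\ab}$. The second step is a map to a base along whose fibres $F\subset S$ the family $\mc G^m/H$ is isotrivial, by Proposition \ref{prop_normality_triviality}(2) and the theorem of the fixed part. Counting dimensions, the fibres of $\mc X^{[m]}\to \mc X^{[m]}/H$ contribute at most $\dim H_s$. The inequality therefore forces the following: for a positive-dimensional $F$ with $\dim F$ exceeding $h_N^{\mr{base}}$, the images $(\mc X_s^m+H_s)/H_s$, $s\in F$, are constant in the isotrivial trivialization, and the fibre dimension of $\mc X^m_s\to \mc G^m_s/H_s$ is then too large.

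The main obstacle is the last step: showing that this cannot happen for $m\gg0$. This is where assumption (3) must be used, in the manner of Gao's isogeny trick.
\begin{enumerate}
\item Using the Appendix isogeny, we write $\mc G_{\ovl\eta}$ up to isogeny as a product of simple abelian and torus factors.
\item We describe the subgroup $H\subset \mc G^m_{\ovl\eta}$ by a matrix over the endomorphism algebra.
\item We choose a nonzero homomorphism $\phi:\mc G^m\to\mc G$ vanishing on $H$, obtained by projecting suitable coordinates.
\item We deduce that a nonzero expression $\sum_i \phi_i(x_i)$ with $x_i\in\mc X_s$ is constant in $s$ along $F$.
\end{enumerate}
Varying the $x_i$ independently, this would make $\mc X_s-\mc X_s$, and hence a translate of $\mc X_s$, isotrivial along $F$ modulo a subgroup of the stabilizer. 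Finite stabilizer and irreducibility of $\mc X_{\ovl\eta}$ then force the modular image of $F$ to be a point, so $\iota^{[m]}|_{\mc X^{[m]}}$ would fail to be generically finite, contradicting assumption (1). The delicate part is the uniform bookkeeping in $m$. We plan to use that the possible $N$ come from the finite set $\Sigma$ of Theorem \ref{Geometric_Zilber_Pink} up to permutation of factors, and that $\dim H_s$ grows linearly in $m$ while the excess is bounded by $\dim S$. From this we expect a pigeonhole argument to produce the required nonzero relation $\phi$.
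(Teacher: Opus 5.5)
Your reduction agrees with the paper's. Assumption (2) gives the splitting condition via Lemma \ref{lemm_split_unipotnent_radical} (no isogeny is needed there), Theorem \ref{theorem_quotient_subgrp_closure} supplies $N$, and Corollary \ref{coro_semiabelian_subscheme} together with \eqref{eq_two_steps_quotient} makes the inequality geometric. There is no base term $h_N^{\mr{base}}$: the Betti map kills base directions, and $p_{\sab}(N)$ acts trivially on $(U_Q\oplus V_Q)/(U_N\oplus V_N^{\ab})$. So $h_N=\dim U_N+\tfrac12\dim V_N^{\ab}=\dim H_s$, as in \eqref{eq_betti_rank}.

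Consequently it suffices to show, for $m\ge\dim S$, that one of two alternatives holds:
\begin{enumerate}
\item[(i)] $\dim(\mc X_s^m/H_s)\ge\dim\mc X_s^m-\dim H_s+\dim S$; then a single fibre already gives $\dim\rho_{/N}(Y)\ge\dim Y-h_N$;
\item[(ii)] $\mr{Mod}.\dim_{\mc G^{[m]}/\mc H}(S)=\mr{Mod}.\dim_{\mc G}(S)$; then $\rho_{/p_{\sab}(N)}$ is generically finite on $\iota_S(S)$, and the trivial bound $\le\dim H_s$ on fibres of $Y\to Y/M_0$ gives the same conclusion.
\end{enumerate}

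Your endgame does not establish this, for three reasons.
\begin{enumerate}
\item A semiabelian variety with non-torsion extension class is not isogenous to a product of abelian and torus factors. Proposition \ref{prop_isogeny_trick} does not claim this: it only splits a subtorus of $(\mathbb G_m^t)^m$, or an abelian subvariety of $G^m$, across the $m$ factors. So $H$ cannot be encoded by a matrix over an endomorphism algebra, and its torus part and abelian remainder must be split separately.
\item A nonzero $\phi\colon\mc G^m\to\mc G$ vanishing on $H$ need not exist (e.g.\ when $H_s=\mc G_s^m$). When one does exist, it forces $\mc G$ to be isotrivial along $F$ only if $\phi$ is surjective. Also, ``$\mc X_s$ isotrivial up to translation along $F$'' does not contradict (1), since varying translates are compatible with generic finiteness. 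In the paper, (1) is only used to compare $Y$ with $\mc X^{[m]}$.
\item The uniformity you invoke is unavailable. $\Sigma_{Y_m}$ lives in $\mc P^\times_{mg,mt}$ and changes with $m$, and $\dim H_s$ need not grow with $m$.
\end{enumerate}

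The missing idea is a fibrewise dimension gain coming from finite stabilizer; it needs no isotriviality. In the example $H_s=\mc G_s^m$, alternative (i) holds outright: finite stabilizer gives $\dim\mc X_s<\dim\mc G_s$, so $m(\dim\mc G_s-\dim\mc X_s)\ge m\ge\dim S$.

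In general the paper proves this dichotomy as Theorem \ref{theorem_nondeg_group_quotient}. It applies Proposition \ref{prop_isogeny_trick} first to the torus part $T$ of $\mc H$, and then to the abelian remainder $\mc B$. The basic gain is this: for irreducible $\mc X_s$ with finite stabilizer and a nonzero connected subgroup $B$, one has $\dim(\mc X_s/B)\ge\dim\mc X_s-\dim B+1$, since otherwise $\mc X_s+B=\mc X_s$. Proposition \ref{prop_isogeny_induction} propagates this one split factor at a time (Propositions \ref{prop_torus_quotient} and \ref{prop_abel_quotient}). Each factor on which the split subgroup is nonzero contributes $+1$.

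So either every factor contributes, giving a total gain of $m$ and hence (i) once $m\ge\dim S$. Or some factor of the abelian remainder is missed; then $\mc G^{[m]}/\mc H$ contains a copy of $\mc G$ up to isogeny, which gives (ii). Your surjective-$\phi$ idea at best handles this last case. Without the $+1$-per-factor count, the argument is incomplete.
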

\subsection{Reduction step}
In order to prove Theorem \ref{theo_nondeg_gen}, it suffices to prove the following:
    \begin{theorem}\label{theorem_nondeg_group_quotient}
        We keep the same hypothesis as in Theorem \ref{theo_nondeg_gen}.
        For any $m\gg0$, let $\mc H\rightarrow \mc G^{[m]}$ be a semiabelian subscheme. Then either of the following holds:
    \begin{enumerate}
        \item[(i)] For general $s\in S$, $$\dim (\mc X^m_s/\mc H_s)\geq \dim (\mc X_s^m)-\dim \mc H_s+\dim S$$
        (see Notation and Conventions for the definition of $\mc X_s^m/\mc H_s$),
        \item[(ii)] or $\mr{Mod}.\dim _{\mc G^{[m]}/\mc H}(S)=\mr{Mod}.\dim_{\mc G}(S)$.
    \end{enumerate}
    \end{theorem}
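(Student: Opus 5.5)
The plan is to adapt Gao's isogeny trick \cite{ZG1} to the semiabelian setting. First, reduce to a normal form for $\mc H$. After a finite base change of $S$, $\mc H_{\ovl\eta}$ is defined over the generic point. By the isogeny constructed in the Appendix (Poincaré reducibility for the abelian quotient $\mc A_{\ovl\eta}$ and for the torus $\mathbb G_m^t$), we may replace $\mc G$ by an isogenous semiabelian scheme. Isogenies change neither dimensions of images nor $\mr{Mod}.\dim$, so this is harmless. After the replacement, $\mc G^{[m]}/\mc H$ becomes the image of a homomorphism $\phi\colon\mc G^{[m]}\to\mc G_1^{a_1}\times\cdots\times\mc G_k^{a_k}$. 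Here the $\mc G_j$ are the "isotypic" semiabelian quotients of $\mc G$: simple abelian factors together with the toric extension data lying over them. The map $\phi$ is given by a matrix with entries in the relevant endomorphism orders, and its rows can be taken linearly independent. The quantity $\mr{Mod}.\dim_{\mc G^{[m]}/\mc H}(S)$ is then the modular dimension of the family $\prod_j\mc G_j$ over the indices $j$ with $a_j>0$. The same quantity for the toric part is governed by the extension classes in $(\mc A^\vee)^{[t]}$ that survive in the quotient.

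Next, split into the two cases of the statement. If every isotrivial-breaking piece of $\mc G$ survives in the quotient, then the quotient has the same modular dimension as $\mc G$, so alternative (ii) holds. Concretely, this means each simple factor of $\mc A$ along which $\iota_S$ varies, and each moving extension class, appears with $a_j>0$ in a way that remembers the extension. Otherwise some moving piece $\mc G_{j_0}$ (or a moving extension class) is killed entirely by $\phi$. In that case $\mc H$ contains the full $m$-fold power of the corresponding sub-semiabelian scheme in every coordinate direction, say $\mc H\supseteq \mc B^{m}$ with $\mc B\subset\mc G$ nontrivial and $\dim\mc B\ge 1$. This holds up to isogeny: for the toric piece it is the preimage of a torus factor, and for the abelian piece it is a factor $\mc B$ whose modulus, or whose extension data, varies.

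In the second case we must prove inequality (i). The fibres of $\mc X_s^m\to\mc X_s^m/\mc H_s$ are the sets $\mc X_s^m\cap(x+\mc H_s)$, so it suffices to bound their generic dimension by $\dim\mc H_s-\dim S$. Project to $\mc G^m/\mc B^m=(\mc G/\mc B)^m$ and use that $\mc H/\mc B^m$ is a subgroup of $(\mc G/\mc B)^m$. This gives
\[\dim\bigl(\mc X_s^m\cap(x+\mc H_s)\bigr)\le m\,\dim\bigl(\text{generic fibre of }\mc X_s\to\mc X_s/\mc B_s\bigr)+\dim\bigl((\mc X_s/\mc B_s)^m\cap(\bar x+\mc H_s/\mc B^m_s)\bigr).\]
By hypotheses (2) and (3), $\mc X_s$ generates and has finite stabilizer, so $\mc X_s$ is not a union of $\mc B_s$-cosets. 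Hence the generic fibre of $\mc X_s\to\mc X_s/\mc B_s$ has dimension at most $\dim\mc B_s-1$. Summing over the $m$ factors, the fibre dimension is at most $\dim\mc H_s-m$ plus a contribution from the quotient. The quotient contribution is handled inductively on $\dim\mc G$ by the same argument, or bounded trivially. For $m\ge\dim S$ this gives a saving of at least $\dim S$, which is (i). Hypothesis (1) is used to ensure that $\dim S$ is the correct defect to beat, since $\iota^{[m]}$ does not collapse $\mc X^{[m]}$.

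The main obstacle is the second case: making precise that "$\mr{Mod}.\dim$ drops" forces $\mc H$ to contain a full power $\mc B^m$ up to isogeny. For the abelian part this follows from the rigidity of sub-abelian schemes and Poincaré reducibility. For the toric part, one has to show that if the extension classes lose moduli, then the kernel contains the relevant torus in every coordinate. I would handle this through the $\mr{End}$-module description of $\mc H_{\ovl\eta}$ over $\mathbb G_m^t$ and the fixed part theorem, as in \eqref{eq_two_steps_quotient}. The worry is that a "diagonal" $\mc H$ could kill moduli while containing only an antidiagonal copy of $\mc B$. I would rule this out by noting that antidiagonal subgroups are themselves moving, which follows from the equivariance in Proposition \ref{prop_normality_triviality}.
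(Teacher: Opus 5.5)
Your second case is sound. Once $\mc B^{[m]}\subset\mc H$ for a nonzero semiabelian subscheme $\mc B\subset\mc G$, the finite stabilizer gives $\dim(\mc X_s/\mc B_s)\ge\dim\mc X_s-\dim\mc B_s+1$. Hence $\dim(\mc X_s^m/\mc H_s)\ge m\dim(\mc X_s/\mc B_s)-\dim(\mc H_s/\mc B_s^m)=\dim\mc X_s^m-\dim\mc H_s+m$, which is (i) for $m\ge\dim S$. This image estimate is cleaner than your fibre version, because special fibres of $\mc X_s\to\mc X_s/\mc B_s$ can be whole cosets; hypotheses (1) and (2) play no role here.

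The gap is the dichotomy itself. You flag it as the main obstacle but do not prove it, and the route you sketch does not exist. There is no decomposition of $\mc G$ into ``isotypic semiabelian quotients'' through which $\mc G^{[m]}/\mc H$ becomes the image of a matrix over endomorphism orders. Semiabelian varieties do not satisfy Poincar\'e reducibility: for a non-torsion extension of $A$ by $\mathbb G_m$, the torus has no complement up to isogeny. The Appendix only splits a \emph{given} subtorus or abelian subvariety $B\subset G^m$; it does not decompose $\mc G$. The appeals to End-modules, the fixed part theorem and ``antidiagonal subgroups are moving'' do not prove that a drop of $\mr{Mod}.\dim$ forces $\mc B^{[m]}\subset\mc H$.

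The missing step has a short proof. Let $s_i\colon\mc G\to\mc G^{[m]}$ be the $i$-th axis inclusion, and let $\mc B$ be the neutral component of $\bigcap_i s_i^{-1}(\mc H)$ over a dense open of $S$. Then $\mc B^{[m]}=\sum_i s_i(\mc B)\subset\mc H$ automatically.
\begin{itemize}
\item If $\mc B_{\ovl\eta}=0$, the homomorphism $\mc G\to(\mc G^{[m]}/\mc H)^{[m]}$, $g\mapsto(s_i(g)\bmod\mc H)_i$, has finite kernel. So $\mc G$ is isogenous to a subgroup scheme of a power of $\mc G^{[m]}/\mc H$. A semiabelian scheme isogenous to a subgroup scheme of an isotrivial family is isotrivial (rigidity), so $\mr{Mod}.\dim_{\mc G}(S)\le\mr{Mod}.\dim_{\mc G^{[m]}/\mc H}(S)\le\mr{Mod}.\dim_{\mc G}(S)$, which is (ii).
\item If $\mc B_{\ovl\eta}\neq0$, you are in your second case.
\end{itemize}

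With this replacement your proof is complete and genuinely different from the paper's. The paper applies the splitting isogeny of Proposition~\ref{prop_isogeny_trick} first to the torus part $T$ of $\mc H$, then to the abelian part of $h(\mc H)\cap\mc G^{[m-m_{\tor}]}$. It gains $+1$ in every coordinate where the split subgroup is nonzero, via the inductive fibration of Proposition~\ref{prop_isogeny_induction}. It obtains (ii) only when some coordinate is untouched, so that $\mc G$ survives as a factor of the quotient. Your coarser dichotomy ($\mc H$ contains a full power $\mc B^{[m]}$ or not) sends all remaining cases to (ii) and bypasses that machinery.
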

\begin{proof}[Proof of Theorem \ref{theorem_nondeg_group_quotient} $\Rightarrow$ Theorem \ref{theo_nondeg_gen}]
   Let $M\subset \mc P_{mg,mt}^\times$ be the smallest special subvariety containing $Y=\ovl{\iota^{[m]}(\mc X^{[m]})}$.
    By hypothesis (2) in Theorem \ref{theo_nondeg_gen}, we obtain that 
    $\pi_{\mc G^{[m]}/\mc A^{[m]}}(\mc X^{[m]})_s$ generates $\mc A^m_s$ for any $s\in S$. Therefore
    $M$ satisfies the condition \eqref{eq_split_unipotent_radical} due to Lemma \ref{lemm_split_unipotnent_radical}. 
   Hence $M$ is a semiabelian subscheme of $\mc P_{mg,mt}^\times|_{\pi_{\sab}(M)}$ by Theorem \ref{theorem_special_subvariety}. Let $\mc G_{M}$ be the neutral component of $(\iota^{[m]})^{-1}(M\cap \iota^{[m]}(\mc G^{[m]})).$

    Let $(Q,\mc Y^+)$ be the Shimura subdatum of $M$. Assume that $\mc X^{[m]}$ is not non-degenerate, by Proposition \ref{prop_bettirank_deg}, $Y^{\deg}(0)=Y.$ Then there exists a normal subgroup $N$ of $Q$ with semisimple reductive part satisfying $N=(U_N\oplus V_N^\ab)\cdot N$ such that \begin{equation}\label{eq_deg_contradiction}
        \dim Y-\dim\rho_{/N}(Y)>h_N
    \end{equation} due to Theorem \ref{theorem_quotient_subgrp_closure}.

    Let $M_0$ be the semiabelian subscheme of $M$ constructed in Corollary \ref{coro_semiabelian_subscheme}. 
    Put $\mc H$ to be the neutral component of $(\iota^{[m]})^{-1} (M_0\cap \iota^{[m]} (\mc G^{[m]}))\subset \mc G_M.$

In the case (i) of Theorem \ref{theorem_nondeg_group_quotient}, by the discussion on two steps quotient of $\rho_{/N}$ in \eqref{eq_two_steps_quotient}, \begin{align*}
    \dim \rho_{/N}(Y)&\geq \dim (Y/M_0)_{\iota_S(s)}= \dim (Y/\iota^{[m]}(\mc H))_{\iota_S(s)}\\
    &\geq\dim (\mc X_s^m/\mc H_s)\\
    &\geq \dim (\mc X_s^m)-\dim \mc H_s +\dim S\\
    &\geq\dim Y_{\iota_S(s)}-\dim (M_0)_{\iota_S(s)}+\dim \iota_S(S)\\
    &=\dim Y-h_N
\end{align*}
for general $s\in S$, where the first and second equality is due to hypothesis (1) in Theorem \ref{theo_nondeg_gen}, and the last equality is due to \eqref{eq_betti_rank}. This is a contradiction to \eqref{eq_deg_contradiction}.

As for the case (ii), 
since $\pi_{\mc G^{[m]}/\mc A^{[m]}}(\mc G_M)=\mc A^{[m]}$ by condition (2) of Theorem \ref{theo_nondeg_gen}, $\mc G$ is isogeneous to $\mc G_M\times \mathbb G_m^{t'}$ for some $0\leq t'\leq t.$ Hence $\mc G/\mc H$ is isogeneous to $(\mc G_M/\mc H)\times \mathbb G_m^{t'}$. Therefore $$\mr{Mod}.\dim_{\mc G_M/\mc H}(S)=\mr{Mod}.\dim_{\mc G/\mc H}(S)\overset{\text{by (ii)}}{=}\mr{Mod}.\dim_{\mc G}(S)=\dim \iota_S(S).$$ Thus by the discussion before \eqref{eq_two_steps_quotient}, we obtain that $$\dim \rho_{/_{p_{\sab}(N)}}(\iota_S(S))=\dim \iota_S(S)$$ due to the minimality of $\mr{Mod}.\dim _{\mc G/\mc H}(S).$ Moreover, \eqref{eq_two_steps_quotient} again gives that
$$\dim \rho_{/N}(Y)=\dim (Y/M_0)|_{\iota_S(S)}=\dim Y-h_N$$
which also contradicts \eqref{eq_deg_contradiction}.
\end{proof}

From now on to the end of this section, let $\mc H$ be a semiabelian subscheme of $\mc G^{[m]}$ given by the following commutative diagram:
    \begin{equation}\label{eq_semiabelian_subscheme}
    \begin{tikzcd}
    0 \arrow[r] & T \arrow[d] \arrow[r] & \mathcal{H} \arrow[d] \arrow[r] & \mc A_{\mc H} \arrow[d] \arrow[r] & 0 \\
    0 \arrow[r] & (\mathbb G_m^t)^m \arrow[r] & \mc{G}^{[m]} \arrow[r] & \mathcal A^{[m]} \arrow[r] & 0
    \end{tikzcd},
    \end{equation}
\subsection{Quotient via spliting isogeny}\label{subsection_induction_step}
This part will serve as a induction step for the proofs in the subsequent subsections.
In this subsection, we consider the setting up as follows.
Let \begin{align*}
    &0\rightarrow \mc T_1\rightarrow \mc G_1\rightarrow \mc G_0\rightarrow 0\\
    \text{ and }&0\rightarrow \mc T_1^\perp\rightarrow \mc G_1^\perp\rightarrow \mc G_0\rightarrow 0
\end{align*} be exact sequences of semiabelian schemes over $S.$ Here, each $\mc T_1$ (resp. $\mc T_1^\perp$) is not necessarily the toric part of $\mc G_1$ (resp. $\mc G_1^\perp$).
Let $\mc T:=\mc T_1\times_S \mc T_1^\perp$ and $\mc G:=\mc G_1\times_{\mc G_0}\mc G_1^\perp$. Denote the projections
$$\begin{cases}
    p_{1,\mc T}:\mc T\rightarrow \mc T_1,\\
    p_{1,\mc T}^\perp:\mc T\rightarrow \mc T_1^\perp,\\
    p_1:\mc G\rightarrow \mc G_1,\\
    p_1^\perp:\mc G\rightarrow \mc G_1^\perp.
\end{cases}$$

Let $\mc X_1\subset \mc G_1$ and $\mc X_1^\perp\subset\mc G_1^\perp$ be closed subvarieties such that 
\begin{itemize}
    \item[(i)] $\pi_{\mc G_1/\mc G_0}(\mc X_1)=\pi_{\mc G_1^\perp/\mc G_0}(\mc X_1^\perp)=:\mc X_0$,
    \item[(ii)] $\mc X_0\rightarrow S$ is dominant,
    \item[(iii)] $\mr{Stab}_{\mc T_{1,\ovl\eta}}(\mc X_{1,\ovl\eta})$ is finite,
\end{itemize}

\begin{proposition}\label{prop_isogeny_induction}
     Let $\mc B\subset \mc T$ be a semiabelian subscheme. Assume that there exists an isogeny $h_{\mc T}:\mc T\rightarrow \mc T$ satisfying the following:
    \begin{itemize}
        \item[(a)] $h_{\mc T}(\mc B)=\mc B_1\times_S \mc B_1'^\perp$, where $\mc B_1:=p_{1,\mc T}(h_{\mc T}(\mc B))$ and $\mc B_1'^\perp=p_{1,\mc T}^\perp(h_{\mc T}(\mc B))$;
        \item[(b)] There exists another isogeny $h_{1,\mc T}:\mc T_1\rightarrow \mc T_1$ such that $h_{1,\mc T}p_{1,\mc T}=p_{1,\mc T}h_{\mc T}$, i.e., the following diagram:
        \[
\begin{tikzcd}
  \mc T \ar[d,"p_{1,\mc T}"'] \ar[r,"h_{\mc T}"] & \mc T \ar[d,"p_{1,\mc T}"] \\
  \mc T_1 \ar[r,"h_{1,\mc T}"'] &  \mc T_1
\end{tikzcd}
\]
    \end{itemize}
    Let $\mc B_1^\perp$ be the neutral component of $h_{\mc T}^{-1}(\epsilon_S\times_S \mc B_1'^\perp)$. Put $\mc X:=\mc X_1\times_{\mc X_0}\mc X_1^\perp \subset \mc G$.
    Then
    $$\dim ((\mc X/\mc B)_w)\geq \dim (\mc X_1)_w-\dim (\mc B_1)_s+ \dim ((\mc X_1^\perp/\mc B_1^\perp)_w)+\begin{cases}
    1  \qquad \text{if }\mc B_1\not=0,\\
    0   \qquad \text{otherwise}
\end{cases}$$
for general $w\in \mc X_0$ and $s=\pi_{\mc G_0/S}(w).$
\end{proposition}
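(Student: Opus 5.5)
The plan is to work fiberwise over a general point $w\in\mc X_0$ with $s=\pi_{\mc G_0/S}(w)$. We use the isogeny $h_{\mc T}$ to turn $\mc B$ into the product $\mc B_1\times_S\mc B_1'^\perp$. We then bound $\dim(\mc X/\mc B)_w$ from below by a fibration argument along the projection $p_1$.

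\textbf{Step 1: reduce to the product $h_{\mc T}(\mc B)$.} The fiber $\mc G_w$ of $\mc G\to\mc G_0$ is a $\mc T_s$-torsor, and
$$\mc X_w=(\mc X_1)_w\times(\mc X_1^\perp)_w\subset(\mc G_1)_w\times(\mc G_1^\perp)_w=\mc G_w.$$
Choosing a base point identifies $\mc G_w$ with $\mc T_s$, and $(\mc G_1)_w$, $(\mc G_1^\perp)_w$ with $(\mc T_1)_s$, $(\mc T_1^\perp)_s$. We have $\dim(\mc X/\mc B)_w=\dim\mc X_w-\dim(\mc X_w\cap(x+\mc B_s))$ for general $x\in\mc X_w$. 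Since $h_{\mc T}$ is finite and surjective, this equals $\dim h_{\mc T}(\mc X_w)-\dim\bigl(h_{\mc T}(\mc X_w)\cap(y+h_{\mc T}(\mc B)_s)\bigr)$ for general $y$. Thus
$$\dim(\mc X/\mc B)_w=\dim\bigl(h_{\mc T}(\mc X_w)/((\mc B_1)_s\times(\mc B_1'^\perp)_s)\bigr).$$
By (b), $p_{1,\mc T}\circ h_{\mc T}=h_{1,\mc T}\circ p_{1,\mc T}$, so $p_{1,\mc T}(h_{\mc T}(\mc X_w))=h_{1,\mc T}((\mc X_1)_w)$. This has the same dimension as $(\mc X_1)_w$.

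\textbf{Step 2: fibration along $p_1$.} Consider the map
$$h_{\mc T}(\mc X_w)/h_{\mc T}(\mc B)_s\longrightarrow h_{1,\mc T}((\mc X_1)_w)/(\mc B_1)_s.$$
Its target has dimension $\dim(\mc X_1)_w-\dim(\mc B_1)_s+\epsilon$, where $\epsilon$ is the generic codimension of $h_{1,\mc T}((\mc X_1)_w)\cap(z+\mc B_1)$ inside $z+\mc B_1$. For the fibers, pull back to $\mc X_w$. Over a fixed first coordinate $x_1\in(\mc X_1)_w$, the fiber contains $\{x_1\}\times(\mc X_1^\perp)_w$ modulo $\mc B_s\cap(0\times\mc T_1^\perp)_s$.

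The key identification is that the neutral component of $\mc B\cap(0\times\mc T_1^\perp)$ is $\mc B_1^\perp$. One inclusion: $\mc B_1^\perp$ is contained in $h_{\mc T}^{-1}(0\times\mc B_1'^\perp)$. By (b), $p_{1,\mc T}(\mc B_1^\perp)\subset\ker h_{1,\mc T}$, which is finite, so by connectedness $\mc B_1^\perp\subset 0\times\mc T_1^\perp$. For the reverse inclusion and equality of dimensions, compare $h_{\mc T}(\mc B\cap(0\times\mc T_1^\perp))$ with $(0\times\mc B_1'^\perp)\cap h_{\mc T}(0\times\mc T_1^\perp)$ up to finite groups. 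Granting this, the generic fiber has dimension at least $\dim(\mc X_1^\perp)_w-\dim(\mc X_1^\perp)_w\cap(x+\mc B_1^\perp)=\dim((\mc X_1^\perp/\mc B_1^\perp)_w)$. Adding the base and fiber dimensions gives the stated bound, once we know $\epsilon\geq1$ when $\mc B_1\neq0$.

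\textbf{Step 3: the extra $+1$.} Suppose $\mc B_1\neq 0$ but $\epsilon=0$. Then for general $w$, $h_{1,\mc T}((\mc X_1)_w)$ is a union of $\mc B_1$-cosets, and since it is irreducible for general $w$ it is $\mc B_1$-stable. Pulling back by $h_{1,\mc T}$, $(\mc X_1)_w$ is stable under the neutral component $\mc B_1^\circ$ of $h_{1,\mc T}^{-1}(\mc B_1)$, which is positive-dimensional. Stability for general $w\in\mc X_0$, together with $\mc X_0\to S$ dominant by (ii), implies that $\mc X_{1,\ovl\eta}$ is stable under the positive-dimensional torus $\mc B^\circ_{1,\ovl\eta}\subset\mc T_{1,\ovl\eta}$. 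This contradicts (iii).

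\textbf{Main obstacle.} I expect the main obstacle to be the bookkeeping in Step 2. One must identify $\mc B\cap(0\times\mc T_1^\perp)$ with $\mc B_1^\perp$ up to finite index, which needs a careful use of (a) and (b). One must also justify that the fiber lower bound holds for general fibers after passing through the isogeny, rather than only over special $x_1$. I would handle this by working on tangent spaces, or with dimension counts of the incidence variety $\{(x,x')\in\mc X_w^2: x-x'\in\mc B_s\}$, which avoids quotient-variety subtleties.
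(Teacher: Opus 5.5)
Your proposal is correct and is essentially the paper's argument. The paper also splits $\mc B$ via $h_{\mc T}$, though it pushes out $\mc G$ globally to $\mc G'$ instead of choosing base points fiberwise; it then fibers over the first-factor quotient $h_1(\mc X_1)/\mc B_1$, uses (iii) for the extra $+1$, and bounds the fiber by the image of $\{y\}\times(\mc X_1^\perp)_w$ after identifying $(\mc B\cap(\epsilon_S\times_S\mc T_1^\perp))^\circ$ with $\mc B_1^\perp$ up to finite index. Two small points. First, the inclusion your inequality actually needs is the one you ``grant'', and it is immediate: for $b\in\mc B\cap(\epsilon_S\times_S\mc T_1^\perp)$, (b) gives $p_{1,\mc T}h_{\mc T}(b)=h_{1,\mc T}(0)=0$, so (a) puts $b$ in $h_{\mc T}^{-1}(\epsilon_S\times_S\mc B_1'^\perp)$, whose neutral component is $\mc B_1^\perp$. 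Second, in Step 3 you may not assume $(\mc X_1)_w$ is irreducible; instead run the stability argument on a top-dimensional component and conclude using the irreducibility of $\mc X_1$.
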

\begin{proof}
        
We do the fiberwise pushout over $S$ for the isogeny $h_{\mc T}:\mc T\to\mc T$, to a $\mc G_0$-invariant isogeny
$h:\mc G\to\mc G'.$
Let $\mc G_1'$ (resp. $\mc G_1'^\perp$) be the pushout of $\mc G'$ along $p_{1,\mc T}$ (resp. $p_{1,\mc T}^\perp$), and denote by $p_1':\mc G'\to \mc G_1'$ (resp. $p_1'^\perp:\mc G'\to \mc G_1'^\perp$) the induced projection. By condition (b), there is an isogeny $h_1:\mc G_1\to \mc G_1'$
such that
\begin{equation}\label{eq_isogeny_cd}
h_1\circ p_1 = p_1'\circ h.    
\end{equation}

Consider the quotient
\[
\mc G'/h(\mc B)=(\mc G_1'/\mc B_1)\times_{\mc G_0}(\mc G_1'^\perp/\mc B_1'^\perp),\]
and the induced isogeny $\ovl h:\mc G/\mc B\rightarrow \mc G'/h(\mc B)$. Let
$\ovl p_1':\mc G'/h(\mc B)\to \mc G_1'/\mc B_1$
be the first projection. Then we have the following commuative diagram
\[
\begin{tikzpicture}[
  scale=1.1,
  every node/.style={font=\small},
  >=stealth
]
\node (GB) at (0,0) {$\mc G/\mc B$};
\node (G1B) at (3.5,0) {$\mc G_1/p_1(\mc B)$};
\node (GpB) at (1.4,1.6) {$\mc G'/h(\mc B)$};
\node (G1pB) at (4.9,1.6) {$\mc G_1'/\mc B_1$};

\node (G) at (0,2.5) {$\mc G$};
\node (G1) at (3.5,2.5) {$\mc G_1$};
\node (Gp) at (1.4,4.1) {$\mc G'$};
\node (G1p) at (4.9,4.1) {$\mc G_1'$};

\draw[->] (G) -- node[below] {$p_1$} (G1);
\draw[->] (G) -- node[left] {$h$} (Gp);
\draw[->] (G1) -- node[right] {$h_1$} (G1p);
\draw[->] (Gp) -- node[above] {$p_1'$} (G1p);

\draw[->] (GB) -- node[below] {$\ovl p_1$} (G1B);
\draw[->] (GB) -- node[left] {$\ovl h$} (GpB);
\draw[->] (G1B) -- node[right] {$\ovl h_1$} (G1pB);
\draw[->] (GpB) -- node[above] {$\ovl p_1'$} (G1pB);

\draw[->] (G) -- node[left] {} (GB);
\draw[->] (G1) -- node[right] {} (G1B);
\draw[->] (Gp) -- node[left] {} (GpB);
\draw[->] (G1p) -- node[right] {} (G1pB);

\end{tikzpicture}
\]
Then by \eqref{eq_isogeny_cd},
\[
\ovl p'_1\bigl(\ovl h(\mc X/\mc B)\bigr)=\ovl p'_1\bigl(h(\mc X)/h(\mc B)\bigr)=h_1(\mc X_1)/\mc B_1.
\]
Hence for a general point $z\in h_1(\mc X_1)_w$ with $\ovl z$ being its image in $h_1(\mc X_1)_s/(\mc B_1)_s$, we have
\[
\begin{aligned}\allowdisplaybreaks
\dim\bigl(\ovl h(\mc X/\mc B)_w\bigr)&\ge\dim\bigl((h_1(\mc X_1)/\mc B_1)_w\bigr)+\dim\Bigl({\ovl p'_1}^{-1}(\ovl z)\cap \ovl h(\mc X/\mc B)_w\Bigr)\\
&\ge\dim(\mc X_1)_w - \dim(\mc B_1)_s+\dim\Bigl({\ovl p'_1}^{-1}(\ovl z)\cap \ovl h(\mc X/\mc B)_w\Bigr)\\
&\kern 7em+\begin{cases}
    1  \qquad \text{if }\mc B_1\not=0,\\
    0   \qquad \text{otherwise}
\end{cases},
\end{aligned}
\]
where the last inequality follows from condition (iii) for $\mc X_1$.

Now notice that
\[
{\ovl p'_1}^{-1}(\ovl z)\cap \ovl h(\mc X/\mc B)_w=\ovl h((\mc X/\mc B)\cap \ovl h^{-1}{\ovl p'_1}^{-1}(\ovl z))=\ovl h((\mc X/\mc B)\cap \ovl p_1^{-1}\ovl h_1^{-1}(\ovl z))
\]
where the second equality is due to \eqref{eq_isogeny_cd}.
Therefore 
$$
\begin{aligned}\allowdisplaybreaks
    {\ovl p'_1}^{-1}(\ovl z)\cap \ovl h(\mc X/\mc B)_w&=\ovl h((p_1^{-1}h_1^{-1}(z)\cap \mc X)/\mc B_s)\\
    &\kern-2em=\bigcup_{y\in \mc X_1\cap h_1^{-1}(z)} \ovl h\big(\left(\{y\}\times(\mathcal X_1^\perp)_w\right)/\mc B_s\big)
\end{aligned}
$$
where the first equality holds since $$(p_1^{-1}h_1^{-1}(z+(\mc B_1)_s)\cap \mc X)/\mc B_s=(p_1^{-1}h_1^{-1}(z)\cap\mc X)/\mc B_s.$$
As $\mc X_1\cap h_1^{-1}(z)$ is a finite set, we only need to estimate $\dim ((\{y\}\times(\mathcal X_1^\perp)_w)/\mc B_s)$.

We take arbitrary $y'\in \{y\}\times (\mc G_1^\perp)_w$,
Then 
\begin{align*}\allowdisplaybreaks\dim ((\{y\}\times(\mathcal X_1^\perp)_w)/\mc B_s)&=\dim ((\{y\}\times(\mathcal X_1^\perp)_w-y')/\mc B_s)
\end{align*}
Observe that \begin{align*}
    \{y\}\times(\mathcal X_1^\perp)_w-y'=\{e_1\}\times ((\mc X_1^\perp)_s-p_1^{\perp}(y'))_{e_0}
\end{align*}
where $e_1$ (resp. $e_0$) stands for the identity in $(\mc G_1)_s$ (resp. $(\mc G_0)_s$). Therefore \begin{align*}
\dim ((\{y\}\times(\mathcal X_1^\perp)_w)/\mc B_s)&=\dim (((\mc X_1^\perp)_s-p_1^{\perp}(y'))_{e_0}/(\{e_1'\}\times (\mc T_1^\perp)_s\cap\mc B_s))\\
&=\dim ((\mc X_1^\perp/\mc B_1^\perp)_{w})
\end{align*}
where $e_1'$ stands for the identity in $(\mc T_1)_s$. The last equality is due to the facts that 
\begin{align*}
    \mc B_1^\perp&=\text{neutral component of }h_{\mc T}^{-1}(\epsilon_S\times_S \mc B_1'^\perp)\\&=\text{neutral component of }\mc B\cap h_{\mc T}^{-1}(\epsilon_S\times_S\mc T_1^\perp).
\end{align*}
and $(e_S\times_S T_1^\perp)\cap \mc B\subset \mc B\cap h_{\mc T}^{-1}(\epsilon_S\times_S\mc T_1^\perp)$ by hypothesis (b), which thus is of finite index in $\mc B_1^\perp$ by dimension reason.
\end{proof}

\subsection{Quotient by tori}
Now we consider the situation as in \eqref{eq_semiabelian_subscheme}. 
\begin{proposition}\label{prop_torus_quotient}
    Let $h_{\tor}$ be an isogeny as in Proposition \ref{prop_isogeny_trick} applying to $T\subset (\mathbb G_m^t)^m$ such that after reordering factors of $(\mathbb G_m^t)^m$, $h_{\tor}(T)=T_1\times \cdots\times T_m$ and $T_i\not=0$ if and only if $i\leq m_{\tor}$ for some $m_{\tor}$.
    Then $$\dim (\mc X_s^m/\mc H_s)\geq \dim \mc X_s^m-\dim \mc H_s+m_{\tor}$$
    for general $s\in S.$
\end{proposition}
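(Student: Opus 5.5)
We prove the estimate by induction on the number of factors, applying Proposition \ref{prop_isogeny_induction} once for each toric factor $T_i$. Two preliminary reductions come first.

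Replacing $\mc G^{[m]}$ by its pushout along $h_{\tor}$ changes neither side of the inequality. An isogeny is finite, so $\dim(\mc X_s^m/\mc H_s)$ equals the dimension of the image of $h(\mc X^m_s)$ modulo $h(\mc H_s)$. The dimensions of $\mc X_s^m$ and $\mc H_s$ are also unchanged. Hence we may assume $T=T_1\times\cdots\times T_m$, with $T_i\subset\mathbb G_m^t$ in the $i$-th factor.

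Next we quotient by the maximal torus first. Since $T\subset\mc H$, we have
$$\dim(\mc X^m_s/\mc H_s)\ \ge\ \dim(\mc X^m_s/T)-\dim(\mc H_s/T)=\dim(\mc X^m_s/T)-\dim\mc H_s+\dim T.$$
Indeed, the fibres of $\mc X^m_s/T\to\mc X^m_s/\mc H_s$ have dimension at most $\dim\mc H_s/T$. It therefore suffices to prove
$$\dim(\mc X^m_s/T)\ \ge\ \dim\mc X^m_s-\dim T+m_{\tor}.$$

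We prove this by induction on $m$, writing $\mc G^{[m]}=\mc G_1\times_{\mc G_0}\mc G_1^\perp$. Here $\mc G_0=\mc A\times_S\mc G^{[m-1]}$, we take $\mc G_1=\mc G\times_S\mc G^{[m-1]}$ with $\mc T_1$ the first copy of $\mathbb G_m^t$, and $\mc G_1^\perp=\mc G_0$-extension given by the remaining factors. Concretely, it is convenient to present $\mc X^{[m]}=\mc X_1\times_{\mc X_0}\mc X_1^\perp$ with $\mc X_1=\mc X\times_S \pi(\mc X)^{[m-1]}$-type data, where $\mc X_0$ is the image in the abelian part of the first factor times the remaining factors.

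We then apply Proposition \ref{prop_isogeny_induction} with $\mc B=T$ and $h_{\mc T}=\mr{id}$. Condition (a) holds because $T$ is a product, and (b) is trivial. The hypotheses (i) and (ii) of that setting hold since $\mc X\to S$ is surjective. Hypothesis (iii) follows from the finiteness of $\mr{Stab}(\mc X_{\ovl\eta})$, since a torus stabilizing the fibres stabilizes $\mc X_{\ovl\eta}$. The proposition yields
$$\dim(\mc X^m/T)_w\ \ge\ \dim(\mc X_1)_w-\dim T_1+\dim(\mc X_1^\perp/T')_w+[T_1\neq0],$$
where $T'=T_2\times\cdots\times T_m$. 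The inductive hypothesis applied to $\mc X_1^\perp/T'$ handles the remaining factors, contributing $\#\{2\le i\le m_{\tor}\}$. Summing over the fibres over general $w\in\mc X_0$, and adding $\dim\mc X_{0,s}$, gives the claim for general $s$.

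The main obstacle is arranging the decomposition so that both conditions hold at each step. The pieces must be genuinely of the form in \S\ref{subsection_induction_step}, with $\mc X^{[m]}=\mc X_1\times_{\mc X_0}\mc X_1^\perp$ and $\mc X_1$ satisfying the finite-stabilizer condition (iii) relative to $\mc T_1$. At the same time, the inductive hypothesis must apply to $\mc X_1^\perp\subset\mc G_1^\perp$, whose toric part is smaller but which still has finite toric stabilizer. My first attempt is to peel off one factor $\mc G$ at a time, taking $\mc G_0$ to be the pushout of $\mc G^{[m]}$ killing the first torus $\mathbb G_m^t$. The finite toric stabilizer of each slice should then follow from that of $\mc X_{\ovl\eta}$ by restricting to a general fibre over $\mc X_0$.
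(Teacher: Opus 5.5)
Your reduction to $\dim(\mc X^m_s/T)\ge\dim\mc X^m_s-\dim T+m_{\tor}$ is correct and is the one the paper uses. The first reduction, ``we may assume $T=T_1\times\cdots\times T_m$'', is not harmless, and the rest of your argument depends on it.

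Pushing out along $h_{\tor}$ preserves the dimensions in question. But the new ambient group is no longer $\mc G^{[m]}$, and $h(\mc X^{[m]})$ is no longer a fibre power. The isogeny of Proposition \ref{prop_isogeny_trick} is only triangular, e.g.\ $h_0(x_1,\dots,x_m)=Mn(x_1,\dots,x_m)-(0,\varphi(x_1))$, so the last $m-1$ toric coordinates of $h(x)$ depend on $x_1$.

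Already for $m=2$ and $T$ the diagonal of $(\mathbb G_m^t)^2$, the construction gives $h(x_1,x_2)=(x_1,x_2-x_1)$. Over a general $w=(w_1,w_2)$:
\begin{itemize}
\item the fibre of $h(\mc X^{[2]})$ has dimension $2\dim\mc X_w$;
\item the fibre product of its two projections (to the pushouts along the first and along the second toric factor) has fibre $\mc X_{w_1}\times(\mc X_{w_2}-\mc X_{w_1})$;
\item this has dimension $3$ if, say, $t=2$ and $\mc X_w$ is a curve that is not a coset.
\end{itemize}
So $h(\mc X^{[m]})$ is not of the form $\mc X_1\times_{\mc X_0}\mc X_1^\perp$, which is the standing hypothesis of Proposition \ref{prop_isogeny_induction}. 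Applying that proposition with $h_{\mc T}=\mr{id}$ is therefore unjustified. Moreover, the slice you want to feed to the inductive hypothesis is not a fibre power of $\mc X$.

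Independently, your choice $\mc G_0=\mc A\times_S\mc G^{[m-1]}$ cannot work. Then $\mc T$ is just the first copy of $\mathbb G_m^t$ and does not contain $T$; one needs $\mc G_0=\mc A^{[m]}$.

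The paper never replaces $\mc X^{[m]}$. It takes $\mc G_0=\mc A^{[m]}$, $\mc T_1$ the first $\mathbb G_m^t$, $\mc T_1^\perp=(\mathbb G_m^t)^{m-1}$, $\mc X_1=\mc X\times_S\pi(\mc X)^{[m-1]}$ and $\mc X_1^\perp=\pi(\mc X)\times_S\mc X^{[m-1]}$. Then $\mc X^{[m]}=\mc X_1\times_{\mc X_0}\mc X_1^\perp$ holds on the nose.

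It then feeds $h_{\mc T}=h_{\tor}$ itself into Proposition \ref{prop_isogeny_induction}. Condition (a) there is property (1) of Proposition \ref{prop_isogeny_trick}, and condition (b) is property (4) for $i=1$. The non-product shape of $T$ is handled inside the proof of Proposition \ref{prop_isogeny_induction}. That proof performs the pushout itself, but only analyses fibres of $\ovl p_1'$, where the structure survives.

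The leftover torus is the neutral component $T_1^\perp$ of $h_{\tor}^{-1}(\{0\}\times T_2\times\cdots\times T_m)$. It lies in $\ker p_1$ and is mapped onto $T_2\times\cdots\times T_m$ by $h_{\tor}^{(1)}$, which again has properties (1)--(4). Since $(\mc X_1^\perp)_w\simeq\mc X^{[m-1]}_{(p_2,\dots,p_m)(w)}$, the inductive hypothesis applies to $\mc X^{[m-1]}$ directly. It must be formulated, as in the statement, for tori split by such an isogeny rather than for product tori.

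The only finite-stabilizer input is that of $\mc X_{\ovl\eta}$, used for the base case $m=1$ and for condition (iii). So the difficulty raised in your last paragraph does not arise.
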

\begin{proof}
Notice that 
$$\dim (\mc X_s^m/\mc H_s)\geq\dim (\mc X^m_s/T)-\dim (\mc A_{\mc H})_s$$
and $\dim (\mc X^m_s/T)\geq \dim (\mc X^m_w/T)+\dim \pi(X)^m_s$ for general $w\in \pi(\mc X)^m_s$.
It suffices to prove that \begin{equation}\label{eq_torus_quotient_fiber_dim}
    \dim (\mc X^m_w/T)\geq\dim (\mc X_w^m)-\dim T+m_{\tor}
\end{equation}
for general $s\in S$ and $w\in \pi(\mc X)^m.$

We proceed by induction on $m$. In the case that $m=1$ is due to the condition (3) in Theorem \ref{theo_nondeg_gen}.
Now assume that $m\geq 2$. We may further assume that $m_{\tor}>0$ since there is nothing to prove if $m_\tor=0.$
We briefly explain that we are in the situation as \S \ref{subsection_induction_step} by the substitution
\[\begin{cases}\allowdisplaybreaks
    \mc T_1=\mathbb G_m^t\times S, \\\mc G_1=\mc G\times_S \mc A^{[m-1]},\\
    \mc T_1^\perp=(\mathbb G_m^t)^{m-1}\times S,\\
    \mc G_1^\perp=\mc A\times_S\mc G^{[m-1]},\\
    \mc X_1=\mc X\times_S \pi(\mc X)^{[m-1]},\\ \mc X_1^\perp=\pi(\mc X)\times_S \mc X^{[m-1]}.
\end{cases}
\]
Note that here $\mc G_0=\mc A^{[m]}$. Applying Proposition \ref{prop_isogeny_induction} for $\mc B=T\times S$, we obtain that 
for general $w\in \pi(\mc X)^{[m]},$
\begin{equation}\label{eq_torus_quotient_fiber_dim_1}\dim (\mc X_w^m/T)\geq \dim (\mc X_1)_w-\dim (T_1)+\dim ((\mc X_1^\perp)_w/ T_1^\perp)+1.\end{equation}
Here $T_1^\perp$ is the neutral component of $h_{\tor}^{-1}(\{0\}\times T_2\times\cdots T_m)$.

Since $(\mc X_1)_w\simeq \mc X_{p_1(w)}$ and $(\mc X_1^\perp)_w\simeq\mc X^{[m-1]}_{(p_2,\dots,p_m)(w)}$, applying the induction hypothesis to $h_{\tor}^{(1)}$ (see the notation in Proposition \ref{prop_isogeny_trick}(3)) and $T_1^\perp$, we obtain that 
$$\dim ((\mc X_1^\perp)_w/ T_1^\perp)\geq \dim (\mc X^{[m-1]}_{(p_2,\dots,p_m)(w)})-\sum_{2\leq i\leq m}\dim T_i+m_\tor-1.$$
Combining with \eqref{eq_torus_quotient_fiber_dim_1}, we obtain \eqref{eq_torus_quotient_fiber_dim}, which concludes the proof.
\end{proof}
\subsection{Quotient by abelian subscheme}
The following is a generalization of \cite[Lemma 10.2 and (10.4)]{ZG1}.
\begin{proposition}\label{prop_abel_quotient} Assume that $\mc B\subset \mc G^{[m]}$ is an abelian subscheme. Then either
 $$\dim(\mc X_s^m/\mc B_s)\geq \dim \mc X^{[m]}-\dim \mc B+m$$
for general $s\in S$, or $\mr{Mod}.\dim_{\mc G^{[m]}/\mc B}(S)=\mr{Mod}.\dim_{\mc G}(S).$ 
\end{proposition}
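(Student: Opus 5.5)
The plan is to reduce to the purely abelian statement \cite[Lemma 10.2 and (10.4)]{ZG1} by projecting to the abelian quotient. I would then handle the extra torus-extension data with the induction machinery of Proposition \ref{prop_isogeny_induction}, in the same pattern as Proposition \ref{prop_torus_quotient}. Throughout I read the inequality fiberwise over a general $s\in S$, as in Proposition \ref{prop_torus_quotient}.

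\textbf{Step 1 (passing to $\mc A^{[m]}$).} Since $\mc B$ is an abelian subscheme, $\mc B\cap (\mathbb G_m^t)^m\times S$ is finite. Hence $\pi_{\mc G^{[m]}/\mc A^{[m]}}$ restricts to an isogeny $\mc B\to \mc B'$ onto an abelian subscheme $\mc B'\subset \mc A^{[m]}$. The torus $(\mathbb G_m^t)^m$ still injects into $\mc G^{[m]}/\mc B$, and the natural map $\mc G^{[m]}/\mc B\to \mc A^{[m]}/\mc B'$ has kernel $(\mathbb G_m^t)^m$ modulo a finite group. This gives the fiberwise bound
$$\dim(\mc X_s^m/\mc B_s)\ \ge\ \dim(\pi(\mc X)_s^m/\mc B'_s)+\dim\bigl(\mc X^m_w\bigr)$$
for general $w\in\pi(\mc X)^m_s$. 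To justify it I would take the fiber of $\mc X^m_s/\mc B_s$ over a general point of $\pi(\mc X)^m_s/\mc B'_s$ and use that $\mc B_s$ meets the toric fibers only finitely. The toric fibers of $\mc X^m$ therefore survive the quotient.

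\textbf{Step 2 (applying Gao's dichotomy to $\pi(\mc X)$).} By hypothesis (2) of Theorem \ref{theo_nondeg_gen}, $\pi(\mc X)_s$ generates $\mc A_s$, and finite stabilizer is inherited in the form Gao uses. So I would apply \cite[Lemma 10.2, (10.4)]{ZG1} to $\pi(\mc X)^{[m]}\subset \mc A^{[m]}$ and $\mc B'$. Either
$$\dim(\pi(\mc X)^m_s/\mc B'_s)\ge \dim \pi(\mc X)^m_s-\dim\mc B'_s+m,$$
or $\mr{Mod}.\dim_{\mc A^{[m]}/\mc B'}(S)=\mr{Mod}.\dim_{\mc A}(S)$. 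In the first case, Step 1 together with $\dim \mc X^m_s=\dim\pi(\mc X)^m_s+\dim\mc X^m_w$ and $\dim\mc B=\dim\mc B'$ gives the desired inequality.

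\textbf{Step 3 (the second case, the main obstacle).} From equality of abelian modular dimensions I must deduce $\mr{Mod}.\dim_{\mc G^{[m]}/\mc B}(S)=\mr{Mod}.\dim_{\mc G}(S)$. The difficulty is that the modular dimension also sees the extension classes in $(\mc A_g^\vee)^{[t]}$.

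I would argue as follows. The extension $\mc G^{[m]}/\mc B$ of $\mc A^{[m]}/\mc B'$ by $(\mathbb G_m^t)^m$ is classified by the images of the extension classes of $\mc G$ under the dual map $(\mc A^{[m]}/\mc B')^\vee\to (\mc A^{[m]})^\vee$. By Poincaré reducibility (the Appendix isogeny), up to isogeny $\mc A^{[m]}/\mc B'$ contains a copy of $\mc A$ as a factor on which the quotient map is a diagonal-type isogeny. So the extension data of $\mc G$ can be recovered from those of $\mc G^{[m]}/\mc B$ up to isogeny, and conversely. The fibers of the map $f:S\to S'$ realizing $\mr{Mod}.\dim_{\mc G^{[m]}/\mc B}(S)$ are then isotrivial for $\mc G$, which gives "$\ge$". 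For "$\le$", note that $\mc G^{[m]}/\mc B$ is a quotient of $\mc G^{[m]}$, so isotriviality of $\mc G$ passes to it.

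If the diagonal factor cannot be found directly, I would instead run an induction on $m$ via Proposition \ref{prop_isogeny_induction}. I would use the substitution from Proposition \ref{prop_torus_quotient} with $\mc T_1$ replaced by the abelian factors, splitting off one copy of $\mc A$ at a time. Each nonzero split-off factor contributes the $+1$, which accumulates to the $+m$.
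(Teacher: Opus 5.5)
Your Step 2 rests on a false claim: finite stabilizer is \emph{not} inherited by $\pi(\mc X)$. Hypothesis (3) of Theorem \ref{theo_nondeg_gen} concerns $\mc X_{\ovl\eta}\subset\mc G_{\ovl\eta}$. Typically $\pi(\mc X_s)=\mc A_s$ (whenever $\mc X_s$ dominates $\mc A_s$), and then its stabilizer is everything. So \cite[Lemma 10.2]{ZG1} does not apply, and no better citation saves this route, because neither branch of the abelian dichotomy need hold.

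Concretely, let $S$ be a curve and $\mc G_s=G_1\times E_{2,s}$, where $G_1$ is a fixed non-split extension of a fixed elliptic curve $E_1$ by $\mathbb G_m$ and $E_{2,s}$ is non-isotrivial. Let $\mc X_s\subset\mc G_s$ be a surface with finite stabilizer dominating $\mc A_s=E_1\times E_{2,s}$, and take $m=1$, $\mc B=0\times E_2$.
\begin{itemize}
\item $\mc G/\mc B=G_1$ and $\mc A/\mc B'=E_1$ are constant, so both modular alternatives fail.
\item The abelian inequality would require $\dim(\mc A_s/\mc B'_s)=1\ge 2$.
\item Your Step 1 only yields $\dim(\mc X_s/\mc B_s)\ge 1+0$.
\end{itemize}
Yet the statement holds here: $\mc X_s$ is not $E_{2,s}$-stable, so $\dim(\mc X_s/\mc B_s)=2$.

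You can see where the $+1$ is lost by looking at the fibre of $\mc X_s/\mc B_s\to\pi(\mc X)_s/\mc B'_s$. It is the image of the curve $\mc X_s\cap\pi^{-1}(w+\mc B'_s)$, which is a curve in a torus coset. It is not the image of the finite set $\mc X_w$, which is all Step 1 keeps. Projecting to $\mc A$ discards exactly the interaction between $\mc B$ and the toric directions of $\mc X$, and that interaction is what finite stabilizer in $\mc G$ controls.

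Step 3 is also not justified as written. $\mc A^{[m]}/\mc B'$ need not contain a copy of $\mc A$: for $m=1$ and $\mc B'\neq0$ it has smaller dimension, while Gao's second branch holds trivially whenever $\mc A$ is isotrivial.

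The paper never passes to $\mc A$. It applies the splitting isogeny of Proposition \ref{prop_isogeny_trick} directly to $\mc B_\eta\subset\mc G_\eta^m$; this is what Lemma \ref{lemm_extension_reducibility}(ii), on abelian subvarieties of a semiabelian variety, is for. It then spreads the isogeny over the normalization of $S$, and the dichotomy becomes combinatorial.
\begin{itemize}
\item If some $\mc B_i=0$ (i.e. $m'<m$), then $\mc G^{[m]}/h(\mc B)\cong\mc G^{[m-m']}\times_S\mc G'$ contains a full factor $\mc G$. This gives the modular alternative without comparing abelian modular dimensions.
\item If all $\mc B_i\neq0$, one inducts on $m$ with Proposition \ref{prop_isogeny_induction}, taking $\mc G_0=S$, $\mc T_1=\mc G_1=\mc G$, $\mc T_1^\perp=\mc G_1^\perp=\mc G^{[m-1]}$, $\mc X_1=\mc X$, $\mc X_1^\perp=\mc X^{[m-1]}$. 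Each $+1$ comes from condition (iii), i.e. the finite stabilizer of $\mc X_{\ovl\eta}$ in $\mc G_{\ovl\eta}$.
\end{itemize}
Your closing fallback points in this direction. It has to be run with copies of $\mc G$ rather than $\mc A$, and with the alternative ``some $\mc B_i=0$'' in place of Gao's dichotomy.
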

\begin{proof}
Applying Proposition \ref{prop_isogeny_trick} to the generic fiber $\mc B_\eta\subset \mc G_\eta^m$, we obtain an isogeny $h:\mc G^m_\eta\rightarrow \mc G_\eta^m$  We may assume that $S$ is normal after replacing $S$ with its normalization.
Then by \cite[Chapter I, Proposition 2.7]{FalitngsChai}, $h$ induces an isogeny $\mc G^{[m]}\rightarrow \mc G^{[m]}$, which we still denote as $h$, such that the properties in Proposition \ref{prop_isogeny_trick} are preserved. We recall here:
    \begin{enumerate}
        \item $h(\mc B)=\mc B_1\times \cdots\times \mc B_m$ where $\mc B_i=p_{i}(h(\mc B));$
        \item $\mc B_i\not=0$ if and only if $i\leq m'$ for some $0\leq m'\leq m;$
        \item For each $i=0,\dots, m-1$, $h$ preserves the subgroup $\mr{ker}(p_{1},\dots,p_i)$. We denote the induced isogeny on $\mr{ker}(p_{1},\dots,p_i)\simeq \mc G^{[m-i]}$ as $h^{(i)}.$ Moreover $h^{(m')}$ is the multiplication by $n$ map for some $n>0;$ 
        \item For each $i=1,\dots,m$ there exists an isogeny $h_i:\mc G\rightarrow \mc G$ such that 
        $$h_i\circ p_{i}= p_{i}\circ h^{(i-1)}.$$
    \end{enumerate}
We first consider the case that $m'<m$.
After identifying $\mr{ker}(p_1,\dots,p_{m'})$ with $\mc G^{[m-m']}$, by condition (3), we have $\mc G^{[m]}/h(\mc B)=\mc G^{[m-m']}\times_S \mc G'$ for some semiabelian scheme $\mc G'$ over $S$.
Therefore $$\mr{Mod}.\dim_{\mc G^{[m]}/\mc B}(S)=\mr{Mod}.\dim_{\mc G^{[m]}/h(\mc B)}(S)=\mr{Mod}.\dim_{\mc G}(S).$$

From now on, we assume that $m=m'$. We proceed by induction on $m$. The case that $m=1$ is trivial . Now we assume that $m\geq 2$. Similarly as in previous subsection, we consider the following substitution according to the notations in \S \ref{subsection_induction_step}:
\[\begin{cases}
    \mc G_0=S,\\
    \mc T_1=\mc G_1=\mc G,\\
    \mc T_1^\perp=\mc G_1^\perp=\mc G^{[m-1]},\\
    \mc X_1=\mc X,\\
    \mc X_1^\perp=\mc X^{[m-1]}.
\end{cases}\]
Then Proposition \ref{prop_isogeny_induction} gives that 
\begin{equation}\label{eq_quotient_abel_fiber_dim}
    \dim (\mc X_s^m/\mc B_s)\geq \dim \mc X_s-\dim (\mc B_1)_s+\dim (\mc X^{m-1}_s/(\mc B_1^\perp)_s)+1
\end{equation}
where $\mc B_1^\perp$ is the neutral component of $h^{-1}(\epsilon_S\times_S \mc B_2\times_S\cdots\times_S\mc B_m).$ By induction hypothesis, $$\dim (\mc X_s^{m-1}/(\mc B_1^\perp)_s)\geq \dim (\mc X^{[m-1]})-\dim (\mc B_1^\perp)+m-1$$
which concludes the proof by combining with \eqref{eq_quotient_abel_fiber_dim}.
\end{proof}

\subsection{Proof of Theorem \ref{theorem_nondeg_group_quotient}}
\begin{proof}
    Applying the Proposition \ref{prop_isogeny_trick} to $T$ and $(\mathbb G_m^t)^m$, we obtain an isogeny $h_{\mr{tor}}:(\mathbb G_m^t)^m\rightarrow (\mathbb G_m^t)^m$ such that \begin{enumerate}
        \item $h_{\tor}(T)=T_1\times \cdots \times T_m$ where $T_i$ is the projection of $h_{\tor}(T)$ on the $i$-th $\mathbb G_m^t.$
        \item For each $i$ such that $T_i=0$, the restriction of $h_\tor$ on the $i$-th $\mathbb G_m^t$ is the multiplication by $n$ map.
    \end{enumerate}
    Let $\mc G'$ be the pushout of $\mc G^{[m]}$ along $h_{\tor}$, which gives an isogeny $h:\mc G^{[m]}\rightarrow \mc G'.$ 
    
    Without loss of generality, we may assume that $T_i\not=0$ if and only if $i\leq m_\tor$ for some $0\leq m_{\tor}\leq m.$ We abbreviate by $\mc G^{[m-m_\tor]}$ the fiber product of the last $m-m_{\tor}$ $\mc G$'s over $S$, and by $p:\mc G^{[m]}\rightarrow \mc G^{[m_\tor]}$ the quotient map.
    By the construction and condition (2) above, the restriction of $h$ on $\mc G^{[m-m_\tor]}$ is the multiplication by $n$ map.
    
    Let $\mathcal B$ be the neutral component of $h(\mc H)\cap \mc G^{[m-m_\tor]}$, which is a semiabelian subscheme of $\mc G^{[m-m_\tor]}$. The torus part of $\mc B$ is contained in $(\mathbb G_m^t)^{m-m_\tor}\cap h_{\tor}(T)=0$, i.e., $\mc B$ is an abelian subscheme.
We have the following diagram \begin{equation}\label{eq_cd_quotient_isogeny}
\begin{tikzcd}
0 \ar[r] & \mc G^{[m-m_{\mr{tor}}]} \ar[r] \ar[d, "\lbrack n\rbrack"] & \mc G^{[m]} \ar[r,"p"] \ar[d, "h"] & \mc G^{[m_{\mr{tor}}]} \ar[r] \ar[d,"h'"] & 0 \\
0 \ar[r] &
\mc G^{[m-m_{\mr{tor}}]} \ar[r] \ar[d] &
\mc G' \ar[r, "p'"] \ar[d] &
\mc G'' \ar[r] \ar[d] &
0 \\
0 \ar[r] & \mc G^{[m-m_{\mr{tor}}]}/\mathcal B \ar[r] & \mc G'/h(\mc H) \ar[r,"\ovl p'"] & \mc G''/h'(p(\mc H)) \ar[r] & 0
\end{tikzcd}
\end{equation}
where $\mc G''=\mc G'/\mc G^{[m-m_\tor]},$ and $h':\mc G^{[m_\tor}]\rightarrow \mc G''$ is an $\mathcal A^{[m_\tor]}$-invariant isogeny. Remark that the bottom left square is not necessarily a pull-back, but since $\mc B$ is of finite index in $\mr{ker}(p')$, this does affect our dimension arguments.

    Therefore \begin{equation}\label{eq_torus_fiber_dim}
        \begin{aligned}
        \dim (\mathcal X_s^{m}/\mc H_s)&=\dim (h(\mathcal X^{m}_s)/h(\mc H_s))\\
        &\geq\dim \ovl p'(h(\mathcal X_s^m)/h(\mc H_s))+\dim (\ovl p'^{-1}(\ovl {h'(y)})\cap (h(\mathcal X_s^m)/h(\mc H_s)))
    \end{aligned}
    \end{equation}
    for general $s\in S$, $y\in p(\mathcal X_s^m)=\mathcal X_s^{m_\tor}$, and $\ovl {h'(y)}$ being its image in $\mc G''/h'(p(\mc H)).$

    We first see that 
    \begin{equation}\label{eq_abelian_quot_dim}
        \begin{aligned}\dim \ovl p'(h(\mathcal X_s^m)/h(\mc H_s))&=\dim (h'(p(\mathcal X_s^m))/h'(p(\mc H_s)))\\&=\dim (h'(\mathcal X_s^{m_{\tor}})/h'(p(\mc H_s)))\\&\geq \dim (\mathcal X_s^{m_{\tor}})-\dim p(\mc H_s)+m_{\tor}
    \end{aligned}
    \end{equation}
    where the inequality is due to Proposition \ref{prop_torus_quotient}.
    
    On the other hand, for general $s\in S$ and $y\in \mc X^{[m_\tor]},$
    \begin{align*}
        &\dim (\ovl p'^{-1}(\ovl {h'(y)})\cap \big(h(\mathcal X_s^m)/h(\mc H_s)\big))=\dim (\big(p^{-1}(y)\cap \mathcal X_s^m\big)/\mc H_s)\\
        &\kern 3em=\dim \big(\mathcal X_s^{m-m_{\tor}}+(z_1,\dots,z_{m-m_\tor})\big)/(\mc H_s\cap \mc G_s^{m-m_\tor})\\
        &\kern 3em=\dim \left(\big((\mathcal X_s-z_1)\times \cdots \times(\mathcal X_s-z_{m-m_\tor})\big)/\mathcal B_s\right)
    \end{align*}
    for $(z_1,\dots,z_{m-m_{\tor}})\in \mathcal X_s^{m-m_{\tor}}\simeq p^{-1}(y)\cap \mathcal X_s^m.$
    By Proposition \ref{prop_abel_quotient}, either

\noindent(i) for general $s\in S$,
    \begin{align*}
        \dim \left(\big((\mathcal X_s-z_1)\times \cdots \times(\mathcal X_s-z_{m-m_\tor})\big)/\mathcal B_s\right)\geq \dim \mathcal X_s^{m-m_\tor}-\dim \mathcal B_s+m-m_{\tor}
    \end{align*}
    (ii) or $$\mr{Mod}.\dim_{\mc G^{[m-m_{\tor}]}/\mc B}(S)=\mr{Mod}.\dim_{\mc G^{[m-m_{\tor}]}}(S)=\dim \iota_S(S).$$ 
    
    Note that case (ii) gives that $$\dim \iota_S(S)=\mr{Mod}.\dim_{\mc G'/h(\mc H)}(S)=\mr{Mod}.\dim_{\mc G^{[m]}/\mc H}(S)$$
    where the first equation is given by the inclusion $\mc G^{[m-m_{\tor}]}/\mc B\subset \mc G'/h(\mc H)$ in \eqref{eq_cd_quotient_isogeny}.

    If $m\geq \dim S$, combining with \eqref{eq_torus_fiber_dim} and \eqref{eq_abelian_quot_dim}, we thus conclude the proof in both cases. 
\end{proof}

\section{Adelic line bundles}\label{section_adelic}

We review Yuan-Zhang's theory of adelic line bundles on quasi-projective varieties \cite{YuanZhang}.

By an \emph{arithmetic variety} (resp. \emph{projective arithmetic variety}) we mean an integral scheme which is quasi-projective (resp. projective) and flat over $\spec(\ZZ)$.

\subsection{Adelic divisors}

Let $\ZZZ$ be a projective arithmetic variety. An \emph{arithmetic divisor} $\ovl\DDD=(\DDD,g)$ on $\ZZZ$ consists of a Cartier divisor $\DDD$ on $\ZZZ$ and a continuous Green function
$$g:(\ZZZ\setminus|\DDD|)(\C)\to\RR$$
with logarithmic singularity along $|\DDD|(\C)$, i.e., for any meromorphic function $f$ on an analytic open subset $U\subset\ZZZ(\C)$ such that $\DDD|_{U}=\mr{div}(f)$, the function $g+\log|f|$ extends to a continuous function on $U$. Denote by $\widehat\Div(\ZZZ)$ the group of arithmetic divisors.

An arithmetic divisor $\ovl\DDD=(\DDD,g)$ is \emph{effective} (resp. \emph{strictly effective}), denoted by $\ovl\DDD\ge0$ (resp. $\ovl\DDD>0$), if $\DDD$ is an effective divisor and $g\ge0$ (resp. $g>0$) on $(\ZZZ\setminus|\DDD|)(\C)$.

A \emph{$\Q$-arithmetic divisor} on $\ZZZ$ is an element in $\widehat\Div(\ZZZ)\otimes\mathbb Q$. A $\Q$-arithmetic divisor $\ovl\DDD$ is effective if $m\ovl\DDD$ is an effective arithmetic divisor for some positive integer $m$.

Let $\UUU$ be an arithmetic variety. A \emph{projective model} of $\UUU$ is a projective arithmetic variety $\ZZZ$ with an open immersion $\UUU\to\ZZZ$. A \emph{model divisor} on $\UUU$ is 
a pair $(\DDD,\ovl\DDD')$ of a divisor $\DDD$ on $\UUU$ and a $\Q$-arithmetic divisor $\ovl\DDD'$ on some projective model $\ZZZ$ of $\UUU$ such that $\DDD'|_\UUU=\DDD$. If both $\DDD$ and $\ovl\DDD'$ are effective, we say $(\DDD,\ovl\DDD')$ is effective.

A \emph{boundary divisor} of $\UUU$ is a strictly effective arithmetic divisor $\ovl\EEE$ on some projective model $\ZZZ$ such that $|\EEE|=\ZZZ\setminus\UUU$.

An \emph{adelic divisor} $\ovl\DDD=(\DDD,(\ZZZ_i,\ovl\DDD_i)_{i\ge1})$ on $\UUU$ consists of:
\begin{enumerate}
    \item[(1)]a Cartier divisor $\DDD_0$ on $\UUU$, called the \emph{underlying divisor} of $\ovl\DDD$,
    \item[(2)]a projective model $\ZZZ_i$ of $\UUU_i$ for each $i\ge1$,
    \item[(3)]a $\Q$-arithmetic divisor $\ovl\DDD_i$ on each $\ZZZ_i$ such that $\DDD_i|_{\UUU}=\DDD$.
\end{enumerate}
It is required to satisfy the following \emph{Cauchy condition}. Let $\ovl\EEE$ be a boundary divisor of $\UUU$. Then there is a sequence of rational numbers $(\epsilon_i)_{i\ge1}$ converging to $0$ such that for all $i\ge j\ge 1$,
$$-\epsilon_j\ovl\EEE\le\ovl\DDD_{i}-\ovl\DDD_{j}\le\epsilon_j\ovl\EEE.$$
This condition is independent of $\ovl\EEE$ (cf. \cite[Lemma 2.4.1]{YuanZhang}). Two adelic divisors $\ovl\DDD=(\DDD,(\ZZZ_i,\ovl\DDD_i)_{i\ge1})$ and $\ovl\DDD'=(\DDD',(\ZZZ_i',\ovl\DDD_i')_{i\ge1})$ are identified if $\ovl\DDD_i-\ovl\DDD_i'$ converges to $0$, i.e., there is a sequence of rational numbers $(\epsilon_i)_{i\ge1}$ coverging to $0$ such that 
$$-\epsilon_i\ovl\EEE\le\ovl\DDD_{i}-\ovl\DDD_{i}'\le\epsilon_i\ovl\EEE.$$
Denote by $\widehat\Div(\UUU/\ZZ)$ the group of adelic divisors.

We say an adelic divisor $\ovl\DDD$ on $\UUU$ is \emph{effective} if $\ovl\DDD=(\DDD,(\ZZZ_i,\ovl\DDD_i)_{i\ge1})$ for a sequence of effective $\Q$-arithmetic divisors $\ovl\DDD_i$.

Let $U$ be a quasi-projective variety over $\Q$. An \emph{arithmetic model} of $U$ is an arithmetic variety $\UUU$ with an isomorphism $\UUU_\Q\cong U$. An adelic divisor $\ovl D$ on $U$ is an element in the direct limit $\varinjlim\widehat\Div(\UUU/\Z)$ over all arithmetic models. It is effective if it is represented by some effective adelic divisors on some arithmetic model.

\subsection{Adelic line bundles}


Let $\ZZZ$ be a projective arithmetic variety. A \emph{hermitian line bundle} $\ovl\LLL=(\LLL,\lVert\cdot\rVert)$ on $\ZZZ$ consists of a line bundle $\LLL$ on $\ZZZ$ with a continuous hermitian metric $\lVert\cdot\rVert$ on $\LLL(\mathbb{C})$. Any rational section $s$ of $\LLL$ defines an arithmetic divisor
$$\widehat{\mr{div}}(s)=(\mr{div}(s),-\log\|s\|).$$
Conversely, an arithmetic divisor $\overline\DDD=(\DDD,g)$ defines a hermitian line bundle
$$\mathcal{O}(\overline\DDD)=(\mathcal{O}(\DDD),\lVert\cdot\rVert)$$
with the hermitian metric determined by $-\log\|s\|=g,$
where $s$ is the rational section of $\mathcal{O}(\DDD)$ corresponding to $\DDD$.

A $\Q$-line bundle $q\LLL$ (resp. a $\Q$-hermitian line bundle $q\ovl\LLL$) on $\ZZZ$ consists of a rational number $q$ and a line bundle $\LLL$ (resp. a hermitian line bundle $\ovl\LLL$). A rational section of a $\Q$-line bundle $q\LLL$ is of form $\frac{1}{m}s_m$ for some integer $m$ killing the denominator of $q$ and rational section $s_m$ of $(mq)\LLL$. Two rational sections $\frac1ms_m$ and $\frac1{m'}s_{m'}$ are identified if $nm's_m=nms_{m'}$ for some positive integer $n$.

Let $q\ovl\LLL$ be a $\Q$-hermitian line bundle on $\ZZZ$. A rational section $\frac{1}{m}s_m$ of $q\LLL$ defines a $\Q$-arithmetic divisor
$$\widehat{\mr{div}}(\frac1ms_m)=\frac1m\widehat{\mr{div}}(s_m).$$

Let $\UUU$ be an arithmetic variety. Let $\ovl\LLL$ be a $\Q$-hermitian line bundle on some projective model, and $s$ be a rational section of $\LLL$. Then $(\mr{div}(s)|_{\UUU},\widehat{\mr{div}}(s))$ is a model divisor. We still denote this by $\widehat{\mr{div}}(s)$ if there is no ambiguity.

Let $\ovl \LLL_1$ and $\ovl\LLL_2$ be two $\Q$-hermitian line bundles on two projective models $\mathcal Z_1$ and $\mathcal Z_2$, respectively. Assume that we have an isomorphism $\ell:\LLL_1|_\UUU\to\LLL_2|_\UUU$ as $\Q$-line bundles. Then $\ell$ induces a rational function $f$ on $\mathcal U$, via $m\LLL_2|_\UUU-m\LLL_1|_\UUU\simeq \mathcal O_{\mathcal U}$ for some positive integer $m$, which can also be viewed as rational functions on both $\mathcal Z_1$ and $\mathcal Z_2.$ We denote by $\widehat {\mr{div}}(\ell)$ the model divisor given by $\frac{1}{m}f$ and $\ovl \LLL_1-\ovl \LLL_2.$

\begin{definition}\label{def_adelic_line_bundle} Let $\mc U$ be an arithmetic variety.
    An \emph{adelic line bundle} $\ovl\LLL=(\LLL,(\ZZZ_i,\ovl\LLL_i,\ell_i)_{i\ge1})$ on $\UUU$ consists of:
\begin{enumerate}
    \item[(1)]a line bundle $\LLL$ on $\UUU$, called the underlying line bundle of $\ovl\LLL$,
    \item[(2)]a projective model $\ZZZ_i$ of $\UUU$ for each $i\ge1$,
    \item[(3)]a $\Q$-hermitian line bundle $\ovl\LLL_i$ on each $\ZZZ_i$, with an isomorphism $\ell_i:\LLL_i|_{\mathcal U}\simeq \mathcal L$
\end{enumerate}
such that it satisfies the Cauchy condition:
$$-\epsilon_j\ovl\EEE\le\widehat{\mr{div}}(\ell_i\ell_j^{-1})\le\epsilon_j\ovl\EEE$$
for some boundary divisor $\ovl\EEE$ of $\UUU$ and sequence of rational number $(\epsilon_i)_{i\ge1}$ converging to $0$.

Let $\ovl \LLL=(\LLL,(\ZZZ_i,\ovl\LLL_i,\ell_i)_{i\ge1})$ and $\ovl \LLL'=(\LLL',(\ZZZ_i',\ovl\LLL_i',\ell_i')_{i\ge1})$ be two adelic line bundles on $\mathcal U$. We say $\ovl\LLL$ is isomorphic to $\ovl\LLL'$ if there exists an isomorphism $\iota:\LLL\rightarrow \LLL'$ such that $\widehat{\mr{div}}(\ell_i'^{-1}\iota\ell_i)$ converges to $0$.
\end{definition}

Let $U$ be a quasi-projective variety over $\Q$. An adelic line bundle on $U$ is an adelic line bundle $\ovl{\mathcal L}$ on some arithmetic model. To distinguish with the case over an arithmetic variety, we denote it by $\ovl L$ for $L=\mathcal L_{\Q}$.  An isomorphism of two adelic line bundles is an isomorphism of their pull-back to some common model. Denote by $\Pic(U)$ the isomorphism class group of adelic line bundles on $U$.

Here we recall some definition of positivity for adelic line bundles:
\begin{definition}
An adelic line bundle $\ovl L$ is said to be
\begin{enumerate}
    \item[(1)]\emph{strongly nef} (resp. \emph{semipositive}) if it is a limit of nef (resp. \emph{relatively semipositive}) $\Q$-hermitian line bundles;
    \item[(2)]\emph{nef} if for any strongly nef adelic line bundle $\ovl M$ on $U$, $\ovl L+\ovl M$ is nef;
    \item[(3)]\emph{integrable} if there are strongly nef adelic line bundles $\ovl M_1,\ovl M_2$ on $U$ such that $\ovl L=\ovl M_1-\ovl M_2$.
\end{enumerate}
Denote by $\Pic(U)_{\mr{int}}$ the isomorphism class group of integrable adelic line bundles.
\end{definition}

We also recall that for an adelic line bundle $\ovl L=(\mc L, (\mc Z_i,\ovl{\mc L}_i,s_i)_{i\geq 1})$, defined over an arithmetic model $\mc U$ of $U$ as in Definition \ref{def_adelic_line_bundle}, 
the data $(\mc L_\Q,((\mc Z_i)_\Q, (\mc L_i)_\Q,\ell_i)$, is called the \emph{geometric part} of $\ovl{L}$, and denote it as $\widetilde{L}.$
Note that the sequence also satisfies the Caunchy condition that $$-\epsilon_j\mc E_\Q\leq \mr{div}(\ell_i\ell_j^{-1})_\Q\leq \epsilon_j\mc E_\Q.$$
\subsection{Analytic data}

Let $U$ be a quasi-projective variety over $\Q$. Let $\ovl D$ be an adelic divisor on $U$. The Green function of $\ovl D$ is a continuous function
$$g_{\ovl D}:(U\setminus|D|)(\C)\to\RR$$
with logarithmic singularity along $|D|(\C)$ defined by taking the pointwise limit.

Let $\ovl L$ be an adelic line bundle on $U$. It defines a hermitian metric $\lVert\cdot\rVert$ on $L_\C$ also by taking limit. If $s$ is a rational section of $L$, then $-\log\|s\|$ is the Green function of the adelic divisor $\widehat{\mr{div}}(s)$. Conversely, for any adelic divisor $\ovl D$ on $U$, the hermitian metric on $\mathcal{O}(\ovl D)$ satisfies
$$-\log\|s_D\|=g_{\ovl D},$$
where $s_D$ is the rational section of $\mathcal{O}(D)$ corresponding to $D$. 

The first Chern current of $\ovl L$ is given by $$c_1(\ovl L):=dd^c(-\log \lVert s\rVert)+\delta_{\mr{div}(s)},$$
where $s$ is a rational section of $L$. Note that $c_1(\ovl L)$ is independent of the choice of $s.$ Let $d=\dim U$. For $\ovl L_1,\dots,\ovl L_d\in\Pic(U)_{\mr{int}}$, 
$$c_1(\ovl L_1)\cdots c_1(\ovl L_d)$$
exists as a signed-measure on $U(\C)$. In particular, if each $\ovl L_i$ is nef, then $c_1(\ovl L_1)\cdots c_1(\ovl L_d)$ is non-negative, i.e., a measure.

\subsection{Intersection theory and height}

Let $U$ be a quasi-projective variety over $\Q$ of dimension $d$. There is an intersection pairing defined in \cite[\S 4.1.1]{YuanZhang}
$$\Pic(U)_{\mr{int}}^{d+1}\longrightarrow\RR,\quad(\ovl L_0,\ovl L_1,\dots,\ovl L_d)\longmapsto(\ovl L_0\cdot\ovl L_1\cdots\ovl L_d),$$
which is symmetric, multi-linear and semipositive in the sense that if $\ovl L_0,\ovl L_1,\dots,\ovl L_d$ are nef, then
$$(\ovl L_0\cdot\ovl L_1\cdots\ovl L_d)\ge0.$$
In particular, if $d=0$, we denote the intersection number (of a single adelic line bundle) by $\Deg(\cdot)$.

We do not recall the full definition. Instead, we present an example to be used. Let $e$ be a real number and $\ovl{\mathcal{O}}(e)$ be the pull-back to $X$ of some adelic line bundles on $\spec(\Q)$ of degree $e$. Then its intersection number with integrable adelic line bundles $\ovl{L}_1,\dots,\ovl{L}_d$ on $X$ is
$$(\ovl{\mathcal O}(e)\cdot\ovl L_1\cdots\ovl L_d)=e\int_{U(\C)}c_1(\ovl L_{1})\cdots c_1(\ovl L_{d}).$$

Note that in \cite[\S 4.1.1]{YuanZhang}, we also have the intersection pairing for the geometric part:
$$(\widetilde L_1,\dots,\widetilde L_d)\longmapsto (\widetilde L_1\dots\widetilde L_d)$$
satisfying the formula
\begin{equation}\label{eq_chern_formula}
    \widetilde L_1\cdots\widetilde L_d=\int_{U(\C)}c_1(\ovl L_1)\dots c_1(\ovl L_d),
\end{equation}
as shown in \cite[Theorem 1.2]{Guo2025ChernFormula}.

Let $\ovl L$ be an adelic line bundle on $U$. The associated \emph{height function}
$$h_{\ovl L}(\cdot):U(\ovl\Q)\longrightarrow\RR$$
is defined as follows. For any number field $K$ and $x\in U(K)$, define
$$h_{\ovl L}(x)=\frac{\Deg(x^*\ovl L)}{[K:\Q]}.$$
This is stable under field extension and gives the height function on $U(\ovl\Q)$. If $\ovl L$ is nef, then $h_{\ovl L}(\cdot)$ is non-negative, the \emph{essential minimum} is defined by
$$\mr{ess}(U,\ovl L):=\sup_{\text{open }V\subset U}\inf_{x\in V(\ovl \Q)}h_{\ovl L}(x).$$
If further $\int_{U(\C)}c_1(\ovl L)^d>0$, then it satisfies the following fundamental inequality (cf. \cite[Theorem 5.3.3]{YuanZhang})
\begin{equation}\label{eq_fundamental}
    \mr{ess}(U,\ovl L)\ge\frac{(\ovl L^{d+1})}{(d+1)\displaystyle \int_{U(\C)}c_1(\ovl L)^d}.
\end{equation}
Here $(\ovl L^{d+1})$ is the self-intersection number.

If $\ovl L$ is relatively semipositive, the \emph{absolute minimum} is defined as
$$\mr{abs}(U,\ovl L)=\sup\{\lambda\in\mathbb R:\ovl L+\ovl{\mc O}(\lambda)\mathrm{\ is\ nef}\}.$$
If further $\int_{U(\C)}c_1(\ovl L)^d>0$, the following inequality follows from the non-negativity of height function associated to nef line bundle.
$$\mr{abs}(U,\ovl L)\le\frac{(\ovl L^{d+1})}{(d+1)\displaystyle \int_{U(\C)}c_1(\ovl L)^d}.$$

An \emph{effective section} $s$ of $\ovl L$ is a rational section of $L$ such that $\widehat{\mr{div}}(s)$ is an effective adelic divisor. If $s$ is an effective section of $\ovl L$, then the height function $h_{\ovl L}(\cdot)$ is non-negative on $(U\setminus|\mr{div}(s)|)(\ovl\Q)$.

Denote the logarithm of the number of effective sections by $\widehat h^0(\ovl L)$, which is finite due to \cite[Lemma 5.1.5]{YuanZhang}. The \emph{volume} of $\ovl L$ is defined as
$$\vol(\ovl L)=\lim_{n\to\infty}\frac{}{}\frac{\widehat h^0(n\ovl L)}{n^{d+1}/(d+1)!}.$$
This limit always exists and satisfies the following arithmetic Siu's inequality \cite[Theorem 5.2.2.(2)]{YuanZhang}: for nef adelic line bundles $\ovl L_1$ and $\ovl L_2$ on $U$,
\begin{equation}\label{eq_Siu_inequality}
    \vol(\ovl L_1-\ovl L_2)\ge(\ovl L_1^{d+1})-(d+1)(\ovl L_1^d\cdot\ovl L_2).
\end{equation}

Recall that in \cite[\S 5]{YuanZhang}, we say \begin{itemize}
    \item $\ovl L$ is \emph{big} if $\vol(\ovl L)>0$,
    \item $\ovl L$ is \emph{pseudo-effective} if for any big adelic line bundle $\ovl M$, $\ovl L+\ovl M$ is big.
\end{itemize}

\section{Canonical adelic line bundles on semiabelian schemes}\label{section_canonical}

In this subsection, We define the canonical adelic line bundle on semiabelian schemes. It is a sum of two parts: one from the abelian scheme and the other from a toric compactification.

We first recall some facts from \cite[\S 6]{YuanZhang}. We quote the part we need here for the reader's convenience.
Let $S$ be a quasi-projective variety over $\Q$. We consider the following triplet $(X,f,L)$ over $S$ consists of:
\begin{enumerate}
    \item[(1)]an integral scheme $X$ projective and flat over $S$,
    \item[(2)]an $S$-morphism $f:X\to X$,
    \item[(3)]a line bundle $L$ on $X$ such that $f^*L=qL$ for some integer $q>1$.
\end{enumerate}
Then the line bundle $L$ has a unique \emph{invariant adelic extension} $\ovl L$ \cite[Theorem 6.1.2(1)]{YuanZhang}, i.e., $\ovl L$ is an adelic line bundle on $X$ with the underlying line bundle $L$ such that $f^*\ovl L=q\ovl L$.

Moreover, if $L$ is relatively ample over $S$, then $(X,f,L)$ is called a \emph{polarized dynamical system}. In this case, the extension $\ovl L$ is further nef \cite[Theorem 6.1.1(2)]{YuanZhang}.
\subsection{Abelian part}\label{subsec_abelian_canonical}

We first review the canonical adelic line bundle on abelian schemes constructed in \cite[\S 6]{YuanZhang}. 

Let $\AAA/S$ be an abelian scheme. By a \emph{symmetric polarization} of $\AAA$ we mean a line bundle $L$ on $\AAA$ satisfying the following conditions:
\begin{enumerate}
    \item[(1)]$L$ is relatively ample over $S$;
    \item[(2)]$L$ is symmetric, i.e., $[-1]^*L$ is isomorphic to $L$;
    \item[(3)]$L$ is rigidified, i.e., for the zero section $e:S\to\AAA$, there is an isomorphism $e^*L=\mathcal O_S$, called a \emph{rigidification}. The rigidification is part of the data.
\end{enumerate}
Symmetricity implies that for any integer $n$ there is a unique isomorphism $[n]^*L=n^2L$ compatible with the rigidification.

If $S$ is a quasi-projective variety over $\mathbb{Q}$, then $(\AAA,[2],L)$ is a polarized dynamical system and hence $L$ has a unique invariant adelic extension $\ovl L\in\Pic(\AAA)$ satisfying $[2]^*\ovl L=4\ovl L$. The uniqueness implies $[n]^*\ovl L=n^2\ovl L$ for any integer $n$. The \emph{canonical height function} $\widehat h(\cdot)$ on $\AAA(\ovl\Q)$ is defined to be the height function $h_{\ovl L}(\cdot)$ associated with $\ovl L$.

Assume that $\mc A\rightarrow S$ is equipped with a principal polarization, up to a finite cover, we have the modular map
\[
\begin{tikzcd}
  \mathcal{A} \ar[d] \ar[r,"\iota_{\mathcal A}"] & \mathcal{A}_g \ar[d] \\
  S \ar[r,"\iota_S"] & \mathbb A_g
  \ar[ul, phantom, "\ulcorner", pos=0.45] 
\end{tikzcd}.
\]
There is a line bundle $\mc L_{\mr{pp}}$ on $\mathcal A_g$ constructed in \cite[\S 8.7]{ChristinaLange2004CAV} such that for each $s\in \mathbb A_g(\C)$, $(\mc L_{\mr{pp}})_s$ is the principal polarization on $(\mc A_g)_s.$
We set \begin{equation}\label{eq_symmstrification_pp}
    L:=\iota_{\mc A}^* (\mc L_{\mr{pp}}+[-1]^* \mc L_{\mr{pp}}).
\end{equation}
 which is a symmetric polarization of $\mc A$. 
Then we have \begin{equation}\label{eq_first_Chern_form_pullback}
    c_1(\ovl L)=\iota_A^*\omega_\ab
\end{equation}
by \cite[Proof of Lemma 7]{kühne2024equi}.

\subsection{Compactification}\label{subsec_toric_compactification}

We review the compactification of semiabelian schemes. In this paper, we only consider compactifications of type $(\mathbb{P}^1)^t$. See \cite[\S 3]{CL} for general toric compactification.

The complement of the natural inclusion $\mathbb{G}_{m}\subset\mathbb{P}^1$ consists of two irreducible divisors, which we denote by $D_{[0]}$ and $D_{[\infty]}$. For general positive integer $t$, the complement of the $t$-fold product $\mathbb{G}_{m}^t\subset(\mathbb{P}^1)^t$ consists of $2t$ divisors, namely the pull-back $D_{[0]}^{(i)}$ and $D_{[\infty]}^{(i)}$ of $D_{[0]}$ and $D_{[\infty]}$ along the $i$-th projection for $i=1,2\dots t$. Moreover, the map $[n]:\mathbb{G}_{m}^t\to\mathbb{G}_{m}^t$ extends to $[n]:(\mathbb{P}^1)^t\to(\mathbb{P}^1)^t$ and $$[n]^*D_{[0]}^{(i)}=nD_{[0]}^{(i)},\quad[n]^*D_{[\infty]}^{(i)}=nD_{[\infty]}^{(i)}.$$

Let $\mathcal G$ be a semiabelian scheme over $S$ as in \S \ref{sec_nondeg_fiberproduct}.

The push-forward
$$\ovl\GGG=(\GGG\times(\mathbb{P}^1)^t)/\mathbb{G}_{m}^t$$
along $\mathbb{G}_{m}^t\to(\mathbb{P}^1)^t$ contains $\GGG$ as an open subset and is called the \emph{compactification} of $\GGG$. For $i=1,2\dots t$, the divisors $D_{[0]}^{(i)}$ and $D_{[\infty]}^{(i)}$ on $\GGG\times(\mathbb{P}^1)^t$ descend to $\ovl\GGG$, for which we use the same notation $D_{[0]}^{(i)}$ and $D_{[\infty]}^{(i)}$. Again, the multiplication by $n$ map $[n]:\mc G\rightarrow \mc G$ extends to $[n]:\ovl{\mc G}\rightarrow \ovl {\mc G}$, and
$$[n]^*D_{[0]}^{(i)}=nD_{[0]}^{(i)},\quad[n]^*D_{[\infty]}^{(i)}=nD_{[\infty]}^{(i)}.$$
Denote by
\begin{equation}\label{eq_boundary_line_bundle}M=\sum_{i=1}^t\mathcal O(D_{[0]}^{(i)}+D_{[\infty]}^{(i)})\end{equation}
the total toric line bundle.

\subsection{Toric canonical line bundle}\label{subsection_tor_canonical}
We keep the same hypothesis as in last subsection. There exist unique invariant adelic extensions $\ovl D_{[0]}^{(i)}$ (resp. $\ovl D_{[\infty]}^{(i)}$) with respect to the triplet $(\ovl\GGG,[2],D_{[0]}^{(i)})$ (resp. $(\ovl\GGG,[2],D_{[\infty]}^{(i)})$). Here the invariance means that for any integer $n$,
$$[n]^*\ovl D_{[0]}^{(i)}=n\ovl D_{[0]}^{(i)}\quad(\mr{resp.\ }[n]^*\ovl D_{[\infty]}^{(i)}=n\ovl D_{[\infty]}^{(i)}).$$
Denote by
$$\ovl M=\sum_{i=1}^t\mathcal{O}(\ovl D_{[0]}^{(i)}+\ovl D_{[\infty]}^{(i)})$$
the invariant adelic extension of $M$. Note that $(\ovl\GGG,[2],M)$ is not a polarized dynamical system due to the lack of relatively ampleness.
But we will show that $\ovl M$ is integrable.

\begin{proposition}
\label{integrable}
For any symmetric polarization $L$ of $\AAA$ and its invariant adelic extension $\ovl L$, there is a nef adelic line bundle $\ovl H$ on $S$ and a positive integer $m$ such that 
$$\mathcal{O}(\ovl D_{[0]}^{(i)})+m\ovl L+\pi_{\mathcal G/S}^*\ovl H \quad(\mr{resp.\ }\mathcal{O}(\ovl D_{[\infty]}^{(i)})+n\ovl L+\pi_{\mathcal G/S}^*\ovl H)$$
is nef.
In particular, $\ovl D_{[0]}^{(i)}$ and $\ovl D_{[\infty]}^{(i)}$ are integrable.
\end{proposition}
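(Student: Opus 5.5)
We will show that $N:=\mathcal O(D_{[0]}^{(i)})+m L+\pi_{\mc G/S}^*H$ is, after a suitable choice of $m$ and of an ample $H$ on a projective model of $S$, the underlying line bundle of a polarized-dynamical-type system, and then identify its canonical extension with the sum of invariant extensions. The case of $D_{[\infty]}^{(i)}$ follows by applying $[-1]$, which exchanges $D_{[0]}^{(i)}$ and $D_{[\infty]}^{(i)}$ and fixes the symmetric $L$.

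The first step is a geometric positivity statement on $\ovl{\mc G}$. The divisor $D_{[0]}^{(i)}$ is the compactification of a $\mathbb G_m^{t-1}$-torsor over $\mc A$, and $\mathcal O(D_{[0]}^{(i)})$ restricted to it is, up to sign, the pull-back of the extension class in $\mc A^\vee$ giving the $i$-th $\mathbb G_m$-factor. Concretely, $\mathcal O(D_{[0]}^{(i)}-D_{[\infty]}^{(i)})$ is the pull-back of a line bundle $P_i$ on $\mc A$ algebraically equivalent to zero on fibers (a Poincaré-type bundle). On each fiber $\mathbb P^1$ of $\ovl{\mc G}\to \ovl{\mc G}/(\text{$i$-th factor})$, the bundle $\mathcal O(D_{[0]}^{(i)})$ has degree $1$. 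Hence $\mathcal O(D_{[0]}^{(i)})$ is relatively nef over $\mc A$ in the toric direction, and along the abelian direction its failure of positivity is controlled by $P_i$. Since $P_i$ is fiberwise algebraically trivial and $L$ is relatively ample, $mL\pm P_i$ is relatively ample over $S$ for $m\ge1$. We conclude that $\mathcal O(D_{[0]}^{(i)})+\sum_j\mathcal O(D^{(j)}_{[0]}+D^{(j)}_{[\infty]})\epsilon+mL$ is relatively ample over $S$ on $\ovl{\mc G}$ (a product-type argument using the description $\ovl{\mc G}=\mathbb P(\oplus)$-bundle over $\mc A$). Twisting by $\pi^*H$ with $H$ sufficiently ample on a model of $S$ makes it globally ample on models.

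The second step is arithmetic, passing to adelic extensions. Take a model $\ovl{\mc N}_0$ of $N$ that is nef (ample with semipositive metric) on some projective model of $\ovl{\mc G}$. Apply the Tate limit $\frac{1}{2^{k}}[2]^{*k}$ separately to each piece: on $D_{[0]}^{(i)}$ the scaling is by $2$, and on $L$ by $4$. So we instead use $\ovl N_k:=\frac1{2^k}[2]^{k*}\mathcal O(\ovl{\mc D}_0)+\frac m{4^k}[2]^{k*}\ovl{\mc L}_0+\pi^*\ovl H$. Each summand converges to the invariant extension by the Cauchy estimates in \cite[Theorem 6.1.2]{YuanZhang}. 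The key point is that $\ovl N_k$ is a pull-back of a nef model bundle twisted only by a controlled error: $[2]^{k*}$ of a nef bundle is nef, and the discrepancy between $\frac1{2^k}$ and $\frac{1}{4^k}$ scaling on the $L$ part is absorbed because $\frac1{2^k}[2]^{k*}(m\ovl{\mc L}_0)=2^k\cdot\frac{m}{4^k}[2]^{k*}\ovl{\mc L}_0$ is larger. Equivalently, $\frac1{2^k}[2]^{k*}\ovl{\mc N}_0$ is nef and differs from $\ovl N_k$ by $(2^k-1)\frac{m}{4^k}[2]^{k*}\ovl{\mc L}_0$, which is nef. Thus we write $\ovl N_k=\frac1{2^k}[2]^{k*}\ovl{\mc N}_0-(\text{nef})$. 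This has the wrong sign, so we should reverse the roles: use the $4^k$-normalization on the whole of $\ovl{\mc N}_0$ and add the nonnegative correction $(\frac{1}{2^k}-\frac1{4^k})[2]^{k*}\mathcal O(\ovl{\mc D}_0)$. That correction is effective, and it is nef when restricted away from $D$, since $\mathcal O(D)$ restricted to $D$ is nef by the first step. Its nefness is then checked via effectivity plus restriction to $D$, using induction on $t$ because $D^{(i)}_{[0]}$ is itself a compactified semiabelian scheme of toric rank $t-1$.

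We expect this sign/normalization issue in the second step to be the main obstacle. Once each $\ovl N_k$ is nef and $\ovl N_k\to \mathcal O(\ovl D^{(i)}_{[0]})+m\ovl L+\pi^*\ovl H$ in the boundary topology, the limit is nef. Integrability of $\ovl D_{[0]}^{(i)}$ then follows by writing it as the difference of this nef bundle and the nef bundle $m\ovl L+\pi^*\ovl H$, with $\ovl L$ nef by \cite[Theorem 6.1.1(2)]{YuanZhang}.
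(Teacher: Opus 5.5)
Your second step has a genuine gap, at exactly the point you flagged. After switching to the $4^k$-normalization, you need the correction term $(2^{-k}-4^{-k})[2]^{k*}\mathcal O(\ovl{\mc D}_0)$ to be nef for every $k$. This term equals $(1-2^{-k})\cdot 2^{-k}[2]^{k*}\mathcal O(\ovl{\mc D}_0)$, which converges to $\mathcal O(\ovl D_{[0]}^{(i)})$ itself. Nonnegativity of heights passes to limits in the boundary topology, so your argument would prove that $\mathcal O(\ovl D_{[0]}^{(i)})$ is nef on its own, and that is false.

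To see this, take $t=1$. Then $\mathcal O(D_{[0]})|_{D_{[0]}}$ is the normal bundle of the zero section, namely the fiberwise algebraically trivial line bundle $P$ on $D_{[0]}\cong\mc A$ that classifies the extension. By uniqueness of invariant extensions (using $[2]$ and $[-1]$ on $\mc A$), the restriction $\ovl P$ of $\mathcal O(\ovl D_{[0]})$ satisfies $[-1]^*\ovl P=-\ovl P$. Hence $h_{\ovl P}$ is an odd function: up to sign it is the Néron--Tate pairing with the extension class. It therefore takes negative values as soon as that class is non-torsion, for instance in the universal family.

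Your justification, ``$\mathcal O(D)|_D$ is nef by the first step'', does not follow. Step one only gives relative ampleness over $S$ of $\mathcal O(D)+mL+\cdots$, where the $mL$ is essential. Fiberwise numerical triviality of $P$ says nothing about arithmetic nefness. Induction on $t$ cannot rescue this either, since the failure already occurs for $t=1$. The underlying obstruction is that under $[2]$ on $\ovl{\mc G}$ the divisor $D_{[0]}$ scales by $2$ while $L$ scales by $4$. So no relatively ample combination of them is an eigenbundle, and any summand-by-summand Tate limit leaves a non-nef piece with positive coefficient.

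The missing idea, which is the paper's route, is an endomorphism that scales every piece by the same factor. By Weil--Barsotti, $\mc G=s^*\mc P$ for a section $s:S\to\mc A^\vee$, where $\mc P$ is the Poincaré biextension over $\mc A\times_S\mc A^\vee$. On $\ovl{\mc P}$ both $[2]$ and $[2^\vee]$ act, and each scales $D_{[0]}$ by $2$. So $[2]\circ[2^\vee]$ scales $D_{[0]}$, $\pi_{\ovl{\mc P}/\mc A}^*L$ and $\pi_{\ovl{\mc P}/\mc A^\vee}^*L'$ all by $4$.

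Set $N=\mathcal O(D_{[0]})+m(\pi_{\ovl{\mc P}/\mc A}^*L+\pi_{\ovl{\mc P}/\mc A^\vee}^*L')$, which is relatively ample over $S$ for $m\gg0$. Then $(\ovl{\mc P},[2]\circ[2^\vee],N)$ is a polarized dynamical system, so its invariant extension is nef. By uniqueness it equals $\mathcal O(\ovl D_{[0]})+m\pi^*\ovl L+m\pi^*\ovl L'$. Pulling back along $s$ gives the statement with $\ovl H=m\,s^*\ovl L'$.

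This $\ovl H$ records the height of the extension class, which is exactly what absorbs the odd pairing term above. In your argument, $\ovl H$ is an arbitrary ample bundle on a model of $S$ and never interacts with the problematic term. Your reduction of $D_{[\infty]}$ to $D_{[0]}$ via $[-1]$ is fine.
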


\begin{proof}
We only prove the statement for $\ovl D_{[0]}^{(i)}$. The proof for $\ovl D_{[\infty]}^{(i)}$ is verbatim.

We only need to consider the toric rank $1$ case. Indeed, let $p_i:\overline\GGG\to\overline\GGG_i$ (resp. $p_i:\GGG\to\GGG_i$) be the push-forward along the $i$-th projection map $(\mathbb{P}^1)^t\to\mathbb{P}^1$ (resp. $\mathbb{G}_{m}^t\to\mathbb{G}_{m}$). Then $\GGG_i$ is a semiabelian scheme and $\ovl\GGG_i$ is its compactification. Let $D_{[0]}$ be the divisor on $\ovl\GGG_i$. Then $p_i^*D_{[0]}=D_{[0]}^{(i)}$. Once the statement is true for $\ovl D_{[0]}$, then so is it for $\ovl D_{[0]}^{(i)}=p_i^*\ovl D_{[0]}$. Thus, we may assume and do $t=1$ and omit the superscript $(i)$.

Let $\AAA^\vee$ be the dual abelian scheme of $\AAA$ and $\PPP$ the Poincaré torsor over $\AAA\times_S\AAA^\vee$.
Then we have the following properties:
\begin{enumerate}
    \item $\PPP$ is a biextension of both $\mathcal A$ and $\mathcal A^\vee$ by $\mathbb G_m$, that is, $\PPP$ is a semiabelian scheme over $\AAA^\vee$ with the abelian part $\mathcal A\times_S\AAA^\vee$, and $\PPP$ is also a semiabelian scheme over $\mathcal A$ with the abelian part $\mathcal A^\vee\times_S\mathcal A$.
    \item By the Weil--Barsotti formula \cite[\S III.18]{Oort1966group}, there is a section $s:S\to\AAA^\vee$ such that $\GGG=s^*\PPP.$
    \item Let $\ovl\PPP=(\PPP\times\mathbb P^1)/\mathbb{G}_m$ be the $\mathbb P^1$-toric compactification of $\PPP$ such that $\ovl\GGG=s^*\ovl \PPP$. Then the boundary divisors of $\ovl\GGG$ are exactly the pull-back of the boundary divisors of $\ovl\PPP$. Moreover, $\ovl\PPP$ is also the $\mathbb P^1$-toric compactification of $\PPP$ as a semiabelian scheme over $\AAA$.
    \item The the fiberwise multiplication by $n$ map $[n]_{\ovl\PPP}$ (resp. $[n^\vee]_{\ovl\PPP}$) of $\PPP$ over $\AAA^\vee$ (resp. $\AAA$), extends uniquely to $\ovl\PPP$. We still denote the extension as $[n]_{\ovl \PPP}$ (resp. $[n^\vee]_{\ovl \PPP}$).
\end{enumerate}
To summarize, we have the following commutative diagrams:
\[\begin{tikzcd}
    \ovl\GGG &
	\ovl\PPP && \\
	& & {\AAA\times_S\AAA^\vee} & \AAA \\
	&S & {\AAA^\vee} & S
    \arrow[from=1-1, to=1-2]
    \arrow["s", from=3-2, to=3-3]
    \arrow["{\pi_{\ovl \GGG/S}}"', curve={height=18pt}, from=1-1, to=3-2]
	\arrow["\pi"', from=1-2, to=2-3]
	\arrow["{\pi_{\ovl \PPP/\AAA}}", curve={height=-18pt}, from=1-2, to=2-4]
	\arrow["\pi_{\ovl \PPP/\AAA^\vee}"', curve={height=18pt}, from=1-2, to=3-3]
	\arrow["{p'}", from=2-3, to=2-4]
	\arrow["p"', from=2-3, to=3-3]
	\arrow[from=2-4, to=3-4]
	\arrow[from=3-3, to=3-4]
\end{tikzcd}\]
and 
\begin{equation}\label{cd_multiplication}
    \begin{tikzcd}
    \ovl{\PPP} & \ovl{\PPP} \\
    {\AAA\times\AAA^\vee} & {\AAA\times\AAA^\vee} \\
    \AAA & \AAA
    \arrow["{[n]_{\ovl{\PPP}}}", from=1-1, to=1-2]
    \arrow[from=1-1, to=2-1]
    \arrow[from=1-2, to=2-2]
    \arrow["{[n]_{\AAA}\times_S \mr{id}_{\AAA^\vee}}"', from=2-1, to=2-2]
    \arrow["{p'}"', from=2-1, to=3-1]
    \arrow["{p'}", from=2-2, to=3-2]
    \arrow["{[n]_{\AAA}}"', from=3-1, to=3-2]
    \arrow["{\pi_{\ovl{\PPP}/\AAA}}"', curve={height=24pt}, from=1-1, to=3-1]
    \arrow["{\pi_{\ovl{\PPP}/\AAA}}", curve={height=-24pt}, from=1-2, to=3-2]
\end{tikzcd}
\quad
\begin{tikzcd}
    \ovl{\PPP} & \ovl{\PPP} \\
    {\AAA\times\AAA^\vee} & {\AAA\times\AAA^\vee} \\
    \AAA^\vee & \AAA^\vee
    \arrow["{[n^\vee]_{\ovl{\PPP}}}", from=1-1, to=1-2]
    \arrow[from=1-1, to=2-1]
    \arrow[from=1-2, to=2-2]
    \arrow["{\mr{id}_\AAA\times[n]_{\AAA^\vee}}"', from=2-1, to=2-2]
    \arrow["{p}"', from=2-1, to=3-1]
    \arrow["{p}", from=2-2, to=3-2]
    \arrow["{[n]_{\AAA^\vee}}"', from=3-1, to=3-2]
    \arrow["{\pi_{\ovl{\PPP}/\AAA^\vee}}"', curve={height=24pt}, from=1-1, to=3-1]
    \arrow["{\pi_{\ovl{\PPP}/\AAA^\vee}}", curve={height=-24pt}, from=1-2, to=3-2]
\end{tikzcd}.
\end{equation}
Notice that if the statement of the proposition is true for the boundary divisors of $\ovl \PPP$ over $\mathcal A^\vee$, then it is also true for the boundary divisors of $\ovl \GGG$ over $S$ via pull-backs. Hence we only need to consider the case of $\mathcal P/\mathcal A^\vee$. By abuse of notation, we still use $D_{[0]}$ to denote the boundary divisor on $\ovl \PPP$.

From the property \textnormal{(3)} and \textnormal{(4)}, we can see that
$$[n]_{\ovl\PPP}^*D_{[0]}=[n^\vee]_{\ovl\PPP}^*D_{[0]}=nD_{[0]}.$$

Let $L'$ be a polarization of $\AAA^\vee$ and $\ovl L'$ its invariant adelic extension. Since $p'^*L+p^*L'$ is relatively ample over $S$ and $D_{[0]}$ is relatively ample over $\AAA\times_S\AAA^\vee$, we can take an integer $m$ large enough such that the line bundle
$$N=\mathcal{O}(D_{[0]})+m(\pi_{\ovl \PPP/\AAA}^*L+\pi_{\ovl \PPP/\AAA^\vee}^*L')$$
on $\ovl\PPP$ is relatively ample over $S$.

We claim that $(\ovl\PPP,[2]_{\ovl\PPP}\circ[2^\vee]_{\ovl\PPP},N)$ is a polarized dynamical system over $S$. By the commutative diagram \eqref{cd_multiplication}, we have the isomorphism \begin{equation*}\label{eq_inv_dsh}
    \begin{aligned}
([2]_{\ovl{\PPP}}\circ[2^\vee]_{\ovl{\PPP}})^* N
&= [2^\vee]_{\ovl{\PPP}}^*\,[2]_{\ovl{\PPP}}^*\Bigl(\mathcal{O}(D_{[0]}) + m\pi_{\ovl{\PPP}/\AAA}^*L + m\pi_{\ovl{\PPP}/\AAA^\vee}^*L'\Bigr)\\
&= \mathcal{O}(4D_{[0]}) + m\,\pi_{\ovl{\PPP}/\AAA}^*[2]_{\AAA}^*L + m\,\pi_{\ovl{\PPP}/\AAA^\vee}^*[2^\vee]_{\AAA^\vee}^*L'=4N.
\end{aligned}
\end{equation*}Therefore, there exists a unique nef invariant adelic extension $\ovl  N$ of $N$. Since $\ovl N-m(\pi_{\ovl \PPP/\AAA^\vee}^*\ovl  L+\pi_{\ovl \PPP/\AAA}^*\ovl L')$ is an invariant extension of $D_{[0]}$, by the uniqueness we obtain $\ovl D_{[0]}=\ovl N-m(\pi_{\ovl \PPP/\AAA^\vee}^*\ovl  L+\pi_{\ovl \PPP/\AAA}^*\ovl L')$, which is integrable.
\end{proof}

\begin{remark}
    The nef line bundle $\ovl H$ in the proposition can be taken to be the pull-back of $m\ovl L'$ via the section $s:S\rightarrow \mc A^\vee$.
\end{remark}

\subsection{Relation with the Betti map}\label{subsec_betti_adelic_relation} 
Recall the toric Betti map $b_{\tor}:\GGG_{\widetilde{S}}\to(\C^\times)^t$ and its composition $b_{\tor,i}$ with the $i$-th projection $(\C^\times)^t\rightarrow \C^\times$. Here $\widetilde{S}$ is the universal covering of $S(\C)$, and $\GGG_{\widetilde S}$ is the pull-back of $\GGG(\C)$ along the covering map $\widetilde S\to S(\C)$.

\begin{proposition}\label{prop_dsh_toric_betti}
Let $g_{[0]}^{(i)}$ (resp. $g_{[\infty]}^{(i)}$) be the Green function of $\ovl D_{[0]}^{(i)}$ (resp. $\ovl D_{[\infty]}^{(i)}$) on $\GGG(\C)$ and $\widetilde{g}_{[0]}^{(i)}$ (resp. $\widetilde{g}_{[\infty]}^{(i)}$) its pull-back to $\GGG_{\widetilde S}$. Then
$$\widetilde{g}_{[0]}^{(i)}(\cdot)=\max\{-\log|b_{\tor,i}(\cdot))|,0\}\quad(\mr{resp.\ }\widetilde{g}_{[\infty]}^{(i)}(\cdot)=\max\{\log|b_{\tor,i}(\cdot))|,0\}).$$
\end{proposition}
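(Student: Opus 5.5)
The plan is to show that the candidate function $\phi:=\max\{-\log|b_{\tor,i}|,0\}$ has the defining properties that pin down the Green function of the invariant extension $\ovl D_{[0]}^{(i)}$. By the reduction in the proof of Proposition \ref{integrable}, it suffices to treat $t=1$ and the Poincaré torsor $\PPP$ over $\AAA^\vee$; everything then pulls back along the section $s$ and the projection $p_i$. The Betti map is compatible with these pull-backs, since $b_{\tor,i}$ is the $i$-th coordinate of \eqref{eq_Betti_map}. So we drop the superscript $(i)$. The case of $D_{[\infty]}$ follows by applying the inversion $z\mapsto z^{-1}$ on the torus, which swaps $D_{[0]}$ and $D_{[\infty]}$ and replaces $b_\tor$ by $b_\tor^{-1}$.

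\textbf{Step 1: $\phi$ is a Green function for $D_{[0]}$ that descends to $\GGG(\C)$.} First, $\phi$ descends from $\GGG_{\widetilde S}$ to $\GGG(\C)$: by Proposition \ref{prop_betti_equivariant}, an element of the deck group $(\mr M_{t,1}(\Z)\oplus\mr M_{2g,1}(\Z))\rtimes\mr M_{t,2g}(\Z)\rtimes\mr{Sp}_{2g}(1+4\Z)$ changes $\widetilde b_\tor$ by $\widetilde b_\tor\mapsto \widetilde b_\tor+x\widetilde b_\ab+z$. Here I use the additive coordinate, with $b_\tor=\exp(2\pi\sqrt{-1}\,\widetilde b_\tor)$ up to normalization, and $x,z$ integral. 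The term $z$ does not change $b_\tor$. The term $x\widetilde b_\ab$ is real, so it multiplies $b_\tor$ by a unimodular factor, and $|b_\tor|$ is invariant. Next, in a local trivialization $\GGG|_U\simeq \AAA|_U\times\mathbb G_m$ (analytically, over a small $U\subset \AAA(\C)$), the formula $b_\tor = z\cdot e^{\psi}$ holds, where $z$ is the fiber coordinate and $\psi$ is real-analytic on the base. Indeed, \eqref{eq_Betti_map} shows that $\widetilde b_\tor - w$ depends only on $(v,u,\tau)$. Hence $\phi+\log|z|$ is continuous near $D_{[0]}=\{z=0\}$, and $\phi$ is continuous on $\ovl\GGG\setminus D_{[0]}$, including near $D_{[\infty]}$ where it vanishes. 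So $\phi$ is a continuous Green function for $D_{[0]}$ on $\ovl\GGG(\C)$.

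\textbf{Step 2: invariance.} Multiplication by $n$ on $\GGG$ lifts to $\GGG_{\widetilde S}$, and by (B3) the Betti map is a group homomorphism on fibers. Hence $b_\tor([n]x)=b_\tor(x)^n$ and $[n]^*\phi=n\phi$. Thus the pair $(D_{[0]},\phi)$ on the compactification $\ovl\GGG$ satisfies $[2]^*(D_{[0]},\phi)=2(D_{[0]},\phi)$ at the archimedean place.

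\textbf{Step 3: identification with $g_{[0]}$ (main obstacle).} The archimedean Green function of an invariant adelic extension is the Tate limit $g_{[0]}=\lim 2^{-k}[2^k]^*g$ for any initial continuous Green function $g$ of the model. The difference $g-\phi$ is a continuous function on $\ovl\GGG(\C)$, but it is \emph{not} bounded globally in the quasi-projective setting, since $S$ is noncompact. The limit must therefore be controlled in the adelic topology, using the boundary divisor $\ovl\EEE$. My plan is to run the standard telescoping argument fiberwise over $S$. Writing $g=\phi+f$, one has $2^{-k}[2^k]^*g=\phi+2^{-k}[2^k]^*f$. On the preimage of any compact subset of $S(\C)$ the fibers $\ovl\GGG_s$ are compact, so $f$ is bounded there and $2^{-k}[2^k]^*f\to0$ locally uniformly over $S(\C)$. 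Since the adelic Green function is by definition the pointwise limit of the model Green functions, this forces $g_{[0]}=\phi$. The remaining concern is to check that pointwise limits of the Cauchy sequence defining $\ovl D_{[0]}$, built from $[2^k]$-pullbacks as in \cite[Theorem 6.1.2]{YuanZhang} and using the nef $\ovl N$ of Proposition \ref{integrable}, are indeed the Tate limits $2^{-k}[2^k]^*g$. This holds because the Yuan--Zhang construction is exactly this iteration, so $g_{[0]}$ is the pointwise limit just computed.
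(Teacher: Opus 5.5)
Your proof is correct, but it identifies the Green function by a different mechanism than the paper.

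\textbf{The paper's route.} It never checks how $\max\{-\log|b_{\tor}|,0\}$ behaves at the boundary or under monodromy; it works entirely on $\GGG_{\widetilde S}$.
\begin{enumerate}
\item It sets $f=\widetilde g_{[0]}-\max\{-\log|b_{\tor}|,0\}$ and uses $f(nx)=nf(x)$.
\item It shows $f=0$ on the torus part $(\mathbb G_m)_{\widetilde S}$. There $b_{\tor}$ is the projection, and restricting $\ovl D_{[0]}$ to the closure $\mathbb P^1_S$ of $\mathbb G_m\times S$ gives, by uniqueness of the invariant extension, the standard Green function $\max\{-\log|z|,0\}$.
\item It spreads $f=0$ to all points $x$ with $nx$ in the torus part, i.e.\ those lying over torsion of $\AAA$. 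These are analytically dense, so continuity of $f$ finishes the proof.
\end{enumerate}

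\textbf{Your route.} You first prove that $\phi$ is itself a continuous Green function of $D_{[0]}$ on all of $\ovl\GGG(\C)$, using descent via Proposition \ref{prop_betti_equivariant} and the local form $b_{\tor}=z e^{\psi}$. You then kill the difference by homogeneity plus boundedness on $\ovl\GGG$ over compact subsets of $S(\C)$; this is where properness of the compactification enters.

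\textbf{Trade-off.} Your argument costs the extra boundary and monodromy verification. In exchange it needs neither the torus computation nor the density argument, and it makes the boundary behaviour of $\phi$ explicit.

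\textbf{Simplifying Step 3.} The ``remaining concern'' about the Yuan--Zhang construction can be avoided entirely.
\begin{itemize}
\item $g_{[0]}$ is already a continuous Green function of $D_{[0]}$ on $(\ovl\GGG\setminus|D_{[0]}|)(\C)$ with logarithmic singularity along $D_{[0]}$.
\item It satisfies $g_{[0]}\circ[2]=2g_{[0]}$, because $[2]^*\ovl D_{[0]}=2\ovl D_{[0]}$.
\item Hence $f=g_{[0]}-\phi$ extends continuously to $\ovl\GGG(\C)$, satisfies $f\circ[2^k]=2^kf$, and is bounded on $\ovl\GGG_K$ for every compact $K\subset S(\C)$.
\item This gives $|f|\le 2^{-k}\sup_{\ovl\GGG_K}|f|$ for all $k$, so $f=0$, without appealing to how the invariant extension was built.
\end{itemize}
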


\begin{proof}
We may assume $t=1$ and only deal with $\widetilde g_{[0]}$. Let
$$f(x)=\widetilde{g}_{[0]}(x)-\max\{-\log|b_{\tor}(x)|,0\}$$
be the difference. 

Since $b_{\tor}$ is a group homomorphism and $[n]^*\ovl D_{[0]}=n\ovl D_{[0]}$, we have $f(nx)=nf(x)$. In fact, it suffices to prove $f=0$ on $(\mathbb{G}_{m})_{\widetilde{S}}\subset\GGG_{\widetilde{S}}$. Indeed, this implies $f(x)=0$ for those $x\in\GGG_{\widetilde{S}}$ satisfying $nx\in(\mathbb{G}_{m})_{\widetilde{S}}$ for some positive integer $n$. Such points $x$ are dense in the analytic topology in $\GGG_{\widetilde{S}}$. The continuity of $f$ concludes the proof. The restriction $b_{\tor}|_{(\mathbb G_m)_{\widetilde S}}$ is just the projection $(\mathbb G_m)_{\widetilde S}(\C)=\widetilde S(\C)\times\C^\times\to\C^\times.$ 

On the other hand, the Zariski closure of $\mathbb{G}_m\times S$ in $\ovl \GGG$ is exactly $\mathbb P^1_S.$ Moreover, by the uniqueness of the invariant adelic line bundle, the restriction of $\ovl D_{[0]}$ on it is nothing but the pull-back of $(D_{[0],\ZZ},\max\{-\log\lvert\cdot\rvert,0\})$ via $\mathbb P_S^1\rightarrow \mathbb P^1_\Z.$ This concludes the proof.
\end{proof}

\section{Intersection of canonical line bundles}\label{section_intersection}
Let $\mc G$ be a principally polarized semiabelian scheme over a quasi-projective $\Q$-variety $S$. Let $L$ be as in \eqref{eq_symmstrification_pp}, and $\ovl L$ be its invariant adelic extension.
For simplicity of language, from now on we still denote by $L$ (resp. $\ovl L$) its pull-back to $\ovl\GGG$ if this is no ambiguity.

The canonical height function on a polarized semiabelian scheme $(\GGG,L)$ is defined as
\begin{equation}\label{eq_canonical_height}
\widehat h(\cdot)=h_{\ovl L}(\cdot)+h_{\ovl M}(\cdot).\end{equation}
\subsection{Top Betti current}\label{subsec_top_betti}

For any subset $I\subset\{1,2,\dots,t\}$, denote
\begin{align*}
    \ovl M_I:=\sum_{i\in I}\mathcal{O}(\ovl D_{[0]}^{(i)}+\ovl D_{[\infty]}^{(i)}),\quad\widehat h_I(\cdot)=h_{\ovl L}(\cdot)+h_{\ovl M_{I}}(\cdot)
\end{align*}
and $b_I:\GGG_{\widetilde S}\to(\C^\times)^{|I|}$ the composition of $b_{\tor}$ with the projection map $(\C^\times)^t\to(\C^\times)^{|I|}$.
Define that
$$\GGG^{\mathbb S_1}_I:=\{x\in\GGG(\C):g_{[0]}^{(i)}(x)=g_{[\infty]}^{(i)}(x)=0,i\in I\}\subset \GGG(\C).$$
Its inverse image in $\GGG_{\widetilde{S}}$ is $b_I^{-1}((\mathbb{S}^1)^{|I|})$.
Let $\omega_{\mathbb{S}^1}$ be twice the unit Haar measure on $\mathbb{S}^1$, viewed as a smooth $1$-form, and
$$\omega_I=\sum_{i\in I}p_i^*\omega_{\mathbb S^1}.$$
Then $b_I^*\omega_I$ on $b_I^{-1}((\mathbb{S}^1)^{|I|})$ descends to $\GGG^{\mathbb S_1}_I$, which we also denote by $\omega_I$.

Let $\XXX\subset\GGG$ be a closed subvariety. Let $2r$ be its abelian Betti rank, i.e., the maximal rank of the tangent map of $\widetilde b_{\ab}$'s restriction on $\widetilde \XXX^\mr{sm}(\C)$. For any subset $I\subset\{1,2,\dots,t\}$ such that $|I|=\dim\XXX-r$, and smooth point $x\in\XXX(\C)$, $\XXX$ is called \emph{$I$-non-degenerate} at $x$ if 
$$\mr{rank}_\R(d\widetilde{(b_I,b_{\ab})}|_{
\widetilde\XXX(\C)})_{\widetilde x}=2\dim\XXX.$$
The set $U$ of all $x$ at which $\mc X$ is $I$-non-degenerate is called the \emph{$I$-non-degeneracy locus}.

We need the following Fubini type result on iterated integrals.

\begin{lemma}\label{lemm_fubini}
Let $M$ be a complex manifold of dimension $d$. Let $N$ be a real manifold of dimension $2r$. Let $(p,q):M\to N\times\C^{d-r}$ be a real analytic étale map such that $q|_{p^{-1}(x)}:p^{-1}(x)\to\C^{d-r}$ is holomorphic for each $x\in N$. Let $\alpha$ be a smooth $2r$-form on $N$ such that $q^*\alpha$ is semi-positive on $M$. Let $g$ be a dsh function on $M$. Denote by $\beta=(dd^cg)^{d-r}$ the current on $M$. Then for any continuous function $f$ with compact support on $M$,
$$(x\in N)\mapsto\int_{p^{-1}(x)}f\beta$$
is a continuous function with compact support, and
$$\int_Mf\beta\wedge p^*\alpha=\int_N\Big(\int_{p^{-1}(x)}f\beta\Big)\alpha.$$
\end{lemma}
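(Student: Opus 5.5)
The plan is to reduce everything to local coordinates in which $(p,q)$ is a product chart, and there apply the classical Fubini theorem for currents with a smooth form on the base. Since $(p,q)$ is étale, we can take a partition of unity $\{\chi_j\}$ on $\mathrm{supp} f$ subordinate to finitely many open sets $W_j\subset M$ on which $(p,q)$ is a real analytic diffeomorphism onto $V_j\times B_j$, with $V_j\subset N$ and $B_j\subset\C^{d-r}$ open. Both sides of the claimed identity are linear in $f$, so it suffices to treat $f=\chi_j f$ supported in a single $W_j$. In that chart we identify $W_j\cong V_j\times B_j$. Each slice $\{x\}\times B_j$ is a complex submanifold, because $q$ restricted to $p^{-1}(x)$ is holomorphic and étale. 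We write $g(x,\cdot)$ for the restriction of $g$ to the slice.

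The first substantive step is to make sense of $\beta$ on the slices. Write $g=g_1-g_2$ locally with $g_i$ psh. Then $g_i(x,\cdot)$ is psh on $B_j$ (or $\equiv-\infty$, which we exclude for $x$ outside a pluripolar, hence null, set). So $(dd^c g(x,\cdot))^{d-r}$ is defined by Bedford--Taylor theory whenever the $g_i$ are locally bounded; in our application the $g_i$ are continuous, as for the Green functions of Proposition \ref{prop_dsh_toric_betti}. The inner integral $\int_{p^{-1}(x)}f\beta$ is then defined as the slice integral. Continuity in $x$ follows from the continuity of the Monge--Ampère operator under uniform convergence: $g_i(x',\cdot)\to g_i(x,\cdot)$ locally uniformly as $x'\to x$, since the $g_i$ are continuous on $W_j$. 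Compact support is clear, because the integrand vanishes off $p(\mathrm{supp} f)$.

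The second step is the identity itself. Approximate $g_i$ by smooth psh functions $g_i^{(k)}$ converging locally uniformly; for instance, convolve in the $B_j$ directions and mollify in $x$, after shrinking charts. For smooth $g$, the $(2d,)$-form $f(dd^c g)^{d-r}\wedge p^*\alpha$ only sees the components of $(dd^c g)^{d-r}$ of top degree in the fibre directions, because $p^*\alpha$ already has full degree $2r$ in the base directions. The identity is then ordinary Fubini on $V_j\times B_j$. To pass to the limit, we use that $q^*\alpha$ is semipositive, which is the hypothesis as stated and presumably means $p^*\alpha$. With that, $(dd^c g^{(k)})^{d-r}\wedge p^*\alpha$ is a difference of positive currents whose masses are locally uniformly bounded, by a Chern--Levine--Nirenberg estimate. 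Bedford--Taylor continuity then gives weak convergence of the left side. The right side converges by the slicewise continuity from the first step together with dominated convergence in $x$.

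The main obstacle is justifying this limiting argument for a genuinely dsh, rather than smooth or locally bounded psh, $g$. Two points need care: that the mixed product $\beta\wedge p^*\alpha$ is well defined and agrees with the limit, and that mollification in the real directions of $N$ preserves plurisubharmonicity along the slices. For the latter, I would first try averaging over real translations in the $V_j$ coordinate, which preserves psh along each slice since the slices are exactly the fibres of the product structure. For the former, I would expand $(dd^c(g_1-g_2))^{d-r}$ multinomially into mixed Monge--Ampère terms $(dd^c g_1)^a\wedge(dd^c g_2)^b$ and handle each positive term separately by the argument above.
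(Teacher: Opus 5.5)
\textbf{The gap: your approximants are not psh on $M$.} Your sentence ``approximate $g_i$ by smooth psh functions converging locally uniformly'' is what the argument needs. The construction you give does not deliver it. Convolving in $w$ and averaging over real translations in $x$ inside the real-analytic product chart $W_j\cong V_j\times B_j$, as in your last paragraph, only produces smooth functions that are psh along each slice $\{x\}\times B_j$. They are not psh on $M$, because $x$ is not a holomorphic coordinate and translation in $x$ is not a holomorphic map of $M$.

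Your limit on the left-hand side invokes Bedford--Taylor continuity on $M$, which does not apply to such functions. What your argument actually proves is $\int_M f(dd^c g^{(k)})^{d-r}\wedge p^*\alpha\to\int_N\bigl(\int_{p^{-1}(x)}f\beta\bigr)\alpha$. Nothing in it identifies this limit with $\int_M f\beta\wedge p^*\alpha$, where $\beta$ is the Bedford--Taylor current on $M$. This is the first of the two points you flag as needing care, and it stays open.

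\textbf{The fix, which is the paper's route.} Approximate each $g_i$ uniformly near $\mathrm{supp} f$ by smooth functions that are psh on $M$, using convolution in local holomorphic coordinates (after shrinking the $W_j$ into coordinate balls). Each $p^{-1}(x)$ is a complex submanifold, so the restrictions of these approximants to the slices are automatically smooth psh and converge uniformly there. One sequence then gives Bedford--Taylor convergence on $M$ for the left-hand side and on every slice for the right-hand side. Nothing needs to be mollified in the $N$-directions, so what you call the main obstacle disappears.

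The paper then uses Chern--Levine--Nirenberg to get convergence of the slice integrals uniformly in $x$. That gives continuity of $x\mapsto\int_{p^{-1}(x)}f\beta$ and the interchange with $\int_N$ at once. Your separate continuity argument plus dominated convergence would work equally well once the approximants are chosen this way.
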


\begin{proof}
Take a uniform approximation $(g_m)_{m\ge1}$ of $g$ by smooth dsh functions on the support of $f$. Denote $\beta_m=(dd^cg_m)^{d-r}$. By definition
$$\int_Mf\beta\wedge p^*\alpha=\lim_{m\to\infty}\int_Mf\beta_m\wedge p^*\alpha,$$
and
$$\int_{p^{-1}(x)}f\beta=\lim_{m\to\infty}\int_{p^{-1}(x)}f\beta_m.$$
By the Fubini theorem for smooth forms, $\displaystyle\int_{p^{-1}(x)}f\beta_m$ is continuous in $x$ and
$$\int_Mf\beta_m\wedge p^*\alpha=\int_N\Big(\int_{p^{-1}(x)}f\beta_m\Big)\alpha.$$
It remains to show that $\displaystyle\int_{p^{-1}(x)}f\beta_m$ converges to $\displaystyle\int_{p^{-1}(x)}f\beta.$ uniformly in $x$. By partition of unity, we may assume $(p,q)$ is injective on the support of $f$. Then $\displaystyle\int_{p^{-1}(x)}f\beta_m$ can be viewed as an integration in $\C^{d-r}$, and Chern-Levine-Nirenberg inequality implies the uniformity.
\end{proof}

\begin{corollary}\label{coro_null_mass}
We keep the same hypothesis as in Lemma \ref{lemm_fubini}. Let $Z\subset M$ be a closed subset. Assume for any $x\in N$, $Z\cap p^{-1}(x)$ is null with respect to the signed measure $\beta$ on $p^{-1}(x)$. Then $Z$ is null with respect to $\beta\wedge p^*\alpha$
\end{corollary}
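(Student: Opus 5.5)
The plan is to deduce Corollary \ref{coro_null_mass} from Lemma \ref{lemm_fubini} by approximating the indicator function of $Z$ with continuous compactly supported functions and then passing to the limit. Since nullity is a local statement, I first reduce to the case where $Z$ is compact: I cover $M$ by countably many compact sets $K_j$, on each of which $(p,q)$ is injective, and it is enough to prove that every $Z\cap K_j$ is null for $\beta\wedge p^*\alpha$. Thus I may take $Z$ compact.

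Fix an open neighbourhood $W$ of $Z$ with compact closure, and choose continuous functions $0\le f_n\le 1$ with support in $W$, $f_n=1$ on $Z$, and $f_n\downarrow \mathbf 1_Z$ pointwise. For example, take $f_n(y)=\max(0,1-n\,\mathrm{dist}(y,Z))$ with respect to a fixed metric, for $n$ large. By Lemma \ref{lemm_fubini},
\[
\int_M f_n\,\beta\wedge p^*\alpha=\int_N\Big(\int_{p^{-1}(x)}f_n\beta\Big)\alpha .
\]
On the left, $\beta\wedge p^*\alpha$ is a signed Radon measure. Dominated convergence with respect to its total variation measure (the $f_n$ are bounded by $\mathbf 1_{\ovl W}$) sends the left side to $(\beta\wedge p^*\alpha)(Z)$.

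On the right, for each fixed $x$, the restriction of $\beta$ to the fiber $p^{-1}(x)$ is a signed Radon measure there. Dominated convergence again gives $\int_{p^{-1}(x)}f_n\beta\to \beta|_{p^{-1}(x)}(Z\cap p^{-1}(x))$, and this limit is $0$ by hypothesis. To pass the limit under $\int_N\cdot\,\alpha$, I need a bound on the inner integrals that does not depend on $n$. I would bound them by the total variation of $\beta|_{p^{-1}(x)}$ on $\ovl W\cap p^{-1}(x)$. Writing $g$ locally as a difference of psh functions, this total variation is controlled by a Chern--Levine--Nirenberg estimate in the holomorphic chart $q|_{p^{-1}(x)}$, uniformly in $x$ on the compact set $p(\ovl W)$, which is exactly the argument already used in the proof of Lemma \ref{lemm_fubini}. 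Since $\alpha$ is a smooth form and the integrands are supported in the compact set $p(\ovl W)$, dominated convergence on $N$ (against the total variation of the measure $\alpha$, or with the sign handled by splitting $\alpha$ into positive and negative parts) gives a right side tending to $0$. Therefore $(\beta\wedge p^*\alpha)(Z)=0$.

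The main obstacle I expect is this uniform mass bound on the fibers for a signed measure. When $g$ is only dsh, $\beta=(dd^cg)^{d-r}$ expands into mixed products of positive currents, and I would apply the Chern--Levine--Nirenberg inequality to each term separately to get a locally uniform bound on the total variation. Once that bound is available, the remaining steps are routine measure theory.
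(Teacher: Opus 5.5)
Your proof is correct. Its skeleton is the paper's: reduce to compact $Z$, approximate $\mathbf{1}_Z$ by a decreasing sequence of compactly supported continuous $f_n$, apply Lemma~\ref{lemm_fubini}, and pass to the limit on both sides. The difference is in how the two limit interchanges are justified.

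The paper splits $\beta$ into a positive and a negative part, assumes $\beta\ge 0$, and uses monotone convergence twice. The fiber integrals $\int_{p^{-1}(x)}f_n\beta$ then decrease in $n$, and the first one is continuous with compact support by Lemma~\ref{lemm_fubini}, so no additional estimate is needed.

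You keep $\beta$ signed and use dominated convergence. This forces you to bound the total variation of the fiber measure on $\ovl W\cap p^{-1}(x)$ uniformly in $x$, via Chern--Levine--Nirenberg applied to each mixed term of $(dd^c(u-v))^{d-r}$ in a product chart for $(p,q)$. So you should choose $W$ inside such a chart, which your reduction to small compact pieces allows. The cost is an estimate the paper only uses inside the proof of Lemma~\ref{lemm_fubini}.

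The gain is that you use the fiber hypothesis exactly as stated for the signed measure. The paper's reduction to $\beta\ge0$ tacitly requires $Z\cap p^{-1}(x)$ to be null for each positive piece separately, together with a Fubini statement for those pieces. Nullity for their difference does not give this in general. It does hold in the application, where the exceptional fiber sets are pluripolar.

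One small point to add: your argument yields $(\beta\wedge p^*\alpha)(Z)=0$. To get nullity in the total-variation sense, apply it to every compact $Z'\subset Z$ (the hypothesis passes to these) and use inner regularity of $|\beta\wedge p^*\alpha|$.
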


\begin{proof}
We may assume $Z$ is compact. By an approximation of $Z$ we mean a family $(f_n)_{n\ge1}$ of continuous functions with compact support on $M$ such that $f_n=1$ on $Z$, and $(f_n)_{n\ge 1}$ decrease pointwise to $0$ outside $Z$. We can always construct such an approximation.


By the monotone convergence theorem, the measure $(\beta\wedge p^*\alpha)(Z)$ of $Z$ equals the limit of $\displaystyle\int_{M}f_n\beta\wedge p^*\alpha$ as $n\to\infty$. For any $x\in N$, $(f_n|_{p^{-1}(x)})_{n\ge1}$ is an approximation of $Z\cap p^{-1}(x)$. So $\displaystyle\int_{p^{-1}(x)}f_n\beta\to0$, since $Z\cap p^{-1}(x)$ is null with respect to $\beta$. After decomposing $\beta$ into the positive part and the negative part, we may assume that $\beta$ is a measure. Using Lemma \ref{lemm_fubini}, and applying the monotone convergence theorem again, we obtain that
$$\lim_{n\to\infty}\int_{M}f_n\beta\wedge p^*\alpha=\int_N\Big(\lim_{n\to\infty}\int_{p^{-1}(x)}f_n\beta\Big)\alpha(x)=0.$$
\end{proof}

\begin{theorem}\label{measure}
Let $U\subset\XXX^{\mr{sm}}(\C)$ be the $I$-non-degeneracy locus. Denote by $\iota:U\cap\GGG^{\mathbb S_1}_I\to\XXX(\C)$ the inclusion map. Then
$$c_1(\ovl L)^rc_1(\ovl M_I)^{|I|}=\iota_*(c_1(\ovl L)^r\omega_I^{|I|}).$$
\end{theorem}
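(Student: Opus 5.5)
The plan is to compute both sides locally on the universal cover $\XXX_{\widetilde S}$, where the Betti map gives explicit coordinates. There we reduce the identity to a fiberwise computation along the fibers of $\widetilde b_{\ab}$ and then integrate over the base with Lemma \ref{lemm_fubini}.

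\textbf{Step 1: the currents on the cover.} By Proposition \ref{prop_dsh_toric_betti}, the Green function of $\ovl D_{[0]}^{(i)}+\ovl D_{[\infty]}^{(i)}$ pulls back to
\[
\max\{-\log|b_{\tor,i}|,0\}+\max\{\log|b_{\tor,i}|,0\}=\bigl|\log|b_{\tor,i}|\bigr|.
\]
Hence $c_1(\ovl M_I)$ pulls back to $dd^c g_I$ with $g_I:=\sum_{i\in I}|\log|b_{\tor,i}||$, a continuous dsh function. By \eqref{eq_first_Chern_form_pullback}, $c_1(\ovl L)=\omega_\ab=\widetilde b_{\ab}^*\beta_{\ab}$, which is a smooth semipositive form.

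On a fiber of $\widetilde b_{\ab}$ the quantity $\mr{Im}(\tau)^{-1}\mr{Im}(v)=-y_2$ is constant. Formula \eqref{eq_Betti_map} then shows that $b_{\tor}=w+uy_2$ is holomorphic along the fiber. Near a point of $U$, the map $\widetilde b_{\ab}|_{\widetilde\XXX}$ has rank $2r$, and $(b_I,b_{\ab})$ has full rank $2\dim\XXX$. So we are in the situation of Lemma \ref{lemm_fubini} with $d=\dim \XXX$ and $d-r=|I|$:
\begin{itemize}
\item $p=\widetilde b_{\ab}$, locally a submersion onto a $2r$-dimensional real manifold $N$;
\item $q=b_I$, composed with a local logarithm, landing in $\C^{|I|}$;
\item $\alpha=\beta_\ab^r$ and $g=g_I$.
\end{itemize}

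\textbf{Step 2: the fiberwise computation.} On each fiber $p^{-1}(x)$, the map $q$ is a local biholomorphism onto an open subset of $(\C^\times)^{|I|}$. The current $(dd^c\sum_i|\log|z_i||)^{|I|}$ on $(\C^\times)^{|I|}$ splits as a product in separate variables. It therefore equals $\prod_i dd^c|\log|z_i||$. Each factor is the measure $\omega_{\mathbb S^1}$ (twice Haar) on $|z_i|=1$; the total mass $2$ matches the degree of $D_{[0]}+D_{[\infty]}$ on a $\mathbb P^1$-fiber. Pulling back by $q$, the fiberwise measure $(dd^cg_I)^{|I|}$ is $\omega_I^{|I|}$ restricted to $\GGG^{\mathbb S_1}_I\cap p^{-1}(x)$. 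Lemma \ref{lemm_fubini} then gives, for test functions supported in $U$,
\[
\int f\,c_1(\ovl L)^rc_1(\ovl M_I)^{|I|}=\int_N\Bigl(\int_{p^{-1}(x)}f\,\omega_I^{|I|}\Bigr)\beta_\ab^r=\int_{U\cap\GGG^{\mathbb S_1}_I}f\,c_1(\ovl L)^r\omega_I^{|I|}.
\]
The second equality is the smooth Fubini theorem. This step also uses that the identities descend from $\widetilde S$ to $S(\C)$ by deck-invariance.

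\textbf{Step 3: no mass off $U$ (the main obstacle).} It remains to show that $\XXX(\C)\setminus U$ carries no mass for the left-hand side. I would split this set into three pieces.
\begin{enumerate}
\item The singular locus $\XXX^{\mr{sing}}$. It is a proper analytic subset, hence pluripolar. Mixed Monge--Ampère products of continuous (dsh) potentials charge no pluripolar set.
\item The locus where $\mr{rank}\, d\widetilde b_{\ab}|_{\widetilde\XXX}<2r$. There the smooth form $\omega_\ab^r$ vanishes pointwise. Its wedge with the order-zero current $c_1(\ovl M_I)^{|I|}$ therefore gives zero mass there. I would justify this by approximating $g_I$ by smooth dsh functions and using Chern--Levine--Nirenberg bounds.
\item The locus where $b_{\ab}$ has rank $2r$ but $(b_I,b_{\ab})$ has rank less than $2\dim\XXX$. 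Here I apply Corollary \ref{coro_null_mass} fiberwise. On a fiber $p^{-1}(x)$, this locus is where the holomorphic map $q$ fails to be étale. If $q$ is generically finite on the fiber component, this is a proper analytic subset, which carries no $(dd^cg_I)^{|I|}$ mass. If $q$ has positive-dimensional fibers on a component, then $(dd^c g_I)^{|I|}=q^*(\cdots)$ vanishes there for dimension reasons; to verify this, approximate by smooth potentials pulled back through a rank-deficient map.
\end{enumerate}

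The delicate point is making Lemma \ref{lemm_fubini} and Corollary \ref{coro_null_mass} applicable where $p$ is not a submersion. I would handle this by first discarding the pieces in items 1 and 2, which are closed. On their complement $p$ is a submersion, and I work on it with exhaustions by compact sets.
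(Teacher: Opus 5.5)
Your proposal is correct and follows the paper's proof essentially step for step. You discard $\XXX(\C)\setminus\XXX^{\mr{sm}}(\C)$ as pluripolar. You discard the locus where $\widetilde b_{\ab}|_{\widetilde\XXX}$ has rank $<2r$ because the smooth form $c_1(\ovl L)^r$ vanishes there. You show the locus of your item 3 is null fiber by fiber via Corollary \ref{coro_null_mass}. On $U$ you combine Lemma \ref{lemm_fubini}, the smooth Fubini theorem, and the one-variable computation of $dd^c|\log|z||$ on $\C^\times$.

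In item 3 you separate out the case where $b_I$ is degenerate along a whole fiber of $p$. There the bad set is the entire fiber rather than pluripolar, but the fiberwise Monge--Amp\`ere measure still vanishes. This is more careful than the paper's blanket pluripolarity claim.

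One correction: at points of that locus the map $(p,b_I)$ is not \'etale, so Corollary \ref{coro_null_mass} does not apply with $q=b_I$. There you should take $q$ to be a local holomorphic coordinate projection along the fibers instead. This suffices, since $q$ only supplies fiber coordinates for the Chern--Levine--Nirenberg step.
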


\begin{proof}
Since $\XXX(\C)\setminus\XXX^{\mr{sm}}(\C)$ is pluripolar, it is null with respect to $c_1(\ovl L)^rc_1(\ovl M_I)^{|I|}$. Let $V\subset\XXX^{\mr{sm}}(\C)$ be the open subset where the maximal abelian Betti rank $2r$ is attained. Then the smooth $(r,r)$-form $c_1(\ovl L)^r$ vanishes outside $V$. Thus, $\XXX^{\mr{sm}}(\C)\setminus V$ is also null.

For any $x\in U$, $$\mr{rank}_\R(d(\widetilde{b_{\ab},b_I})|_{\widetilde{\XXX}}))_{\widetilde x}=2\dim\XXX$$ implies $$\mr{rank}_\R(d(\widetilde b_{\ab}|_{\widetilde \XXX}))_{\widetilde x}=2r.$$ Indeed, 
\begin{align*}2\dim\XXX&=\mr{rank}_\R(d(\widetilde{b_{\ab},b_I})|_{\widetilde{\XXX}}))_{\widetilde x}\\&\le\mr{rank}_\R(d(\widetilde b_{\ab}|_{\widetilde \XXX}))_{\widetilde x}+\mr{rank}_\R(d(\widetilde b_{I}|_{\widetilde \XXX}))_{\widetilde x}\\&\le 2r+2(\# I)=2\dim\XXX.
\end{align*}
So the equalities hold. We get $U\subset V$.

For any point $y\in V$, we may take an open neighborhood $M\subset V$ whose closure $\ovl M$ is compact and simply connected. 
Since $b_{\ab}$ has constant rank $2r$ near $y$, after shrinking $M$, $b_{\ab}$ factors through a submersion $p:M\to N$ and an immersion $N\to(\RR/\ZZ)^{2g}$. Recall that $c_1(\ovl L)$ is a pull-back along $b_{\ab}$ by \eqref{eq_currents_pull_back_via_betti} and \eqref{eq_first_Chern_form_pullback}. So there is a $(2r)$-form $\alpha$ on $N$ such that $p^*\alpha=c_1(\ovl L)^r$.

\[\begin{tikzcd}
	M & {(\RR/\ZZ)^{2g}} \\
	N
	\arrow["{b_{\ab}}", from=1-1, to=1-2]
	\arrow["p"', from=1-1, to=2-1]
	\arrow[from=2-1, to=1-2]
\end{tikzcd}\]

Notice that the restriction of $b_{I}$ on $p^{-1}(p(x))=b_{\ab}^{-1}b_{\ab}(x)\cap M$ is holomorphic by the formula \eqref{eq_Betti_map}.

We can apply Corollary \ref{coro_null_mass} to the closed subset $(V\setminus U)\cap M$ in $M$. For any $x\in N$, $(V\setminus U)\cap p^{-1}(x)$ is pluripolar and hence is null with respect to $c_1(\ovl M_I)^{|I|}$. So $(V\setminus U)\cap M$ is null with respect to $c_1(\ovl L)^rc_1(\ovl M_I)^{|I|}$. By taking a countable cover, we see that $V\setminus U$ is so.

Assume $M\subset U$ in the sequel. For any continuous function $f$ with compact support on $M$, by Lemma \ref{lemm_fubini},
$$\int_Mfc_1(\ovl L)^rc_1(\ovl M_I)^{|I|}=\int_N\Big(\int_{p^{-1}(x)}fc_1(\ovl M_I)^{|I|}\Big)\alpha.$$
Since $p|_{U\cap\GGG^{\mathbb S_1}_I}$ is submersive to $N$, by the Fubini theorem for smooth forms,
$$\int_{M\cap\GGG^{\mathbb S_1}_I}(\iota^*f)c_1(\ovl L)^r\omega_I^{|I|}=\int_N\Big(\int_{p^{-1}(x)\cap\GGG^{\mathbb S_1}_I}(\iota^*f)c_1(\ovl M_I)^{|I|}\Big)\alpha.$$

It remains to prove
$$c_1(\ovl M_I|_{p^{-1}(x)})^{|I|}=\iota_*(\omega_{I}|_{p^{-1}(x)\cap\GGG^{\mathbb S_1}_I}^{|I|}).$$
Recall that $b_I^*$ commutes with $dd^c$ due to holomorphicity, and the Green function of $\ovl M_I$ is the pull-back of $\sum_{i\in I}|\log|z_i||$. So we only need to prove
$$(\sum_{i=1}^ndd^c|\log|z_i||)^n=\iota_*(\sum_{i=1}^np_i^*\omega_{\mathbb S^1})^n.$$
on $(\C^\times)^{n}$. It is further reduced to the case $n=1$, which can be verified by simple calculation.
\end{proof}

\subsection{Non-degeneracy in terms of intersection theory}

\begin{theorem}\label{theorem_I_non_deg_integral}
Let $\XXX\subset\GGG$ be a subvariety. Let $2r$ be its abelian Betti rank. Let $I\subset\{1,2,\dots,t\}$ be a $(\dim\XXX-r)$-element subset. Then $\XXX(\C)$ is $I$-non-degenerate if and only if
$$\int_{\XXX(\C)}c_1(\ovl L)^{r}c_1(\ovl M_I)^{\dim\XXX-r}>0.$$
\end{theorem}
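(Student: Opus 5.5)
The plan is to read everything off Theorem \ref{measure}. It identifies the measure $c_1(\ovl L)^rc_1(\ovl M_I)^{|I|}$ on $\XXX(\C)$ with $\iota_*(c_1(\ovl L)^r\omega_I^{|I|})$, where $\iota:U\cap\GGG^{\mathbb S_1}_I\to\XXX(\C)$ and $U$ is the $I$-non-degeneracy locus. The statement "$\XXX(\C)$ is $I$-non-degenerate" means that $U\neq\emptyset$, i.e. the generic rank of $d(\widetilde{b_I,b_\ab})|_{\widetilde\XXX}$ equals $2\dim\XXX$. Since $U$ is the locus where a real analytic map attains its maximal possible rank, $U$ is open, and it is either empty or dense in $\XXX^{\mr{sm}}(\C)$. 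So the proof reduces to: the total mass of $\iota_*(c_1(\ovl L)^r\omega_I^{|I|})$ is positive if and only if $U\neq\emptyset$.

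\textbf{Empty locus.} If $U=\emptyset$, the pushforward is supported on the empty set. By Theorem \ref{measure} the measure vanishes, so the integral is $0$.

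\textbf{Nonempty locus.} Suppose $U\neq\emptyset$. In the proof of Theorem \ref{measure} we showed $U\subset V$, where $b_\ab$ has rank $2r$. Locally near a point $y\in U$ we have the factorization $p:M\to N$ with $p^*\alpha=c_1(\ovl L)^r$, and $\alpha$ is a volume form on $N$: it is the pullback of $\beta_\ab^r$ along the immersion $N\to(\R/\Z)^{2g}$, and $\beta_\ab$ restricts to a positive form on the image because $c_1(\ovl L)$ is semipositive with rank exactly $r$ there (see \eqref{eq_first_Chern_form_pullback}). On the fiber $p^{-1}(x)$, which is a complex manifold of dimension $\dim\XXX-r=|I|$, the map $b_I$ is holomorphic and, by $I$-non-degeneracy, a local biholomorphism onto an open subset of $(\C^\times)^{|I|}$. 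So $c_1(\ovl M_I)^{|I|}$ on the fiber is the pullback of $(\sum dd^c|\log|z_i||)^{|I|}$, which is the Haar-type measure on $(\mathbb S^1)^{|I|}$ (up to a positive constant).

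\textbf{Main obstacle.} The key step is to show that the fiber $p^{-1}(x)$ actually meets $b_I^{-1}((\mathbb S^1)^{|I|})$ with positive measure. I would use the equivariance in Proposition \ref{prop_betti_equivariant}: the torus acts by translation on $b_\tor$, and translation by $\mr M_{t,1}(\C)$ preserves fibers of $b_\ab$. Moving along $\mathbb G_m^t$-orbits inside $\XXX$ is not allowed, though. So the first route I would try is to apply Fubini (Lemma \ref{lemm_fubini}) in reverse. The density of $c_1(\ovl L)^r\omega_I^{|I|}$ is a smooth positive form on the real submanifold $U\cap\GGG^{\mathbb S_1}_I$, of real dimension $2r+|I|$; positivity holds because $(b_\ab,b_I)$ is a local diffeomorphism on $U$ and $\beta_\ab^r\wedge\omega_I^{|I|}$ is positive. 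It then suffices that $U\cap\GGG^{\mathbb S_1}_I\neq\emptyset$.

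To get this, I would argue via the height-zero and torsion structure. Torsion points of fibers of $\XXX$ map under $b_I$ into roots of unity, so they lie in $\GGG^{\mathbb S_1}_I$. Alternatively, I would use that the integral is independent of the choice of invariant extension under the rescaling $[n]$. We have $[n]^*$ multiplies the integral by $n^{2r+|I|}$ in a compatible way with degrees. Also, $\GGG^{\mathbb S_1}_I\cap\XXX$ is nonempty, since $\int c_1(\ovl M_I)^{|I|}$ on a generic $b_\ab$-fiber equals the toric degree $\deg_{M_I}$ of the (nonzero-dimensional) fiber, computed by \eqref{eq_chern_formula}. That degree is positive exactly because $b_I$ is non-constant (finite) on the fiber. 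Combining the fiberwise positivity with Lemma \ref{lemm_fubini} and the positivity of $\alpha$ on the open set $p(U)$ gives a strictly positive total integral, which completes the equivalence.
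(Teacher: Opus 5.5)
Your direction ``integral positive $\Rightarrow$ $I$-non-degenerate'' is fine and matches the paper: if $U=\emptyset$, Theorem~\ref{measure} forces the measure to vanish. The gap is in the converse.

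You correctly reduce it to producing an $I$-non-degenerate point on $\GGG^{\mathbb S_1}_I$, i.e.\ $U\cap\GGG^{\mathbb S_1}_I\neq\emptyset$. None of your three routes establishes this.
\begin{itemize}
\item \emph{Torsion points.} These need not lie on $\XXX$ at all, let alone in $U$. Take $G=E\times\mathbb G_m$ over a point and $\XXX=\{p\}\times\mathbb G_m$ with $p\in E$ non-torsion. Then $\XXX$ is $\{1\}$-non-degenerate but contains no torsion point. Being torsion imposes far more than the $|I|$ real conditions $|b_{\tor,i}|=1$.
\item \emph{Rescaling by $[n]$.} This cannot help: $|b_{\tor,i}(nx)|=|b_{\tor,i}(x)|^n$ gives $[n]^{-1}(\GGG^{\mathbb S_1}_I)=\GGG^{\mathbb S_1}_I$, so $[n]$ neither creates nor destroys points on the unit torus.
\item \emph{Toric degree of a generic $b_{\ab}$-fiber.} This degree does not exist in the family setting. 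When $S$ is positive-dimensional, the fibers of $b_{\ab}|_{\XXX}$ are complex-analytic leaves that move across the fibers $\GGG_s$ and are in general transcendental. They are not algebraic subvarieties over $\Q$, so neither $\deg_{M_I}$ nor \eqref{eq_chern_formula} applies to them.
\item \emph{Circularity.} The local pieces $p^{-1}(x)$ of Lemma~\ref{lemm_fubini} carry positive $c_1(\ovl M_I)^{|I|}$-mass only if they already meet $\GGG^{\mathbb S_1}_I$. Also, $\XXX\cap\GGG^{\mathbb S_1}_I\neq\emptyset$ alone would not suffice; you need a point of $U$ there.
\end{itemize}
Your remark that one cannot move along torus orbits inside $\XXX$ is exactly where the argument stalls.

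The missing idea, which is the paper's, is to move $\XXX$ itself and use that the total mass is a geometric intersection number.
\begin{itemize}
\item \emph{Translate.} For a torus point $s$ and fiberwise translation $t_s$, one has $b_{\ab}\circ t_s=b_{\ab}$ and $b_{\tor}\circ t_s=s\cdot b_{\tor}$. Hence $t_s(\XXX)$ is again $I$-non-degenerate.
\item \emph{Choose $s$.} On $U$ the map $(b_{\ab},b_I)$ has rank $2r+2|I|$ while $b_{\ab}$ has rank at most $2r$, so $b_I$ is a submersion on $U$. Thus $|b_I|(U)$ contains an open subset of $\R_{>0}^{|I|}$. Pick $s$, even with positive rational coordinates so that $t_s$ is defined over $\Q$, such that $t_s(\XXX)$ has an $I$-non-degenerate point on $\GGG^{\mathbb S_1}_I$.
\item \emph{Positivity for the translate.} At that point, Theorem~\ref{measure} gives a smooth non-vanishing density on a real manifold of dimension $2r+|I|$. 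So the integral over $t_s(\XXX)(\C)$ is positive.
\item \emph{Translation invariance.} By \eqref{eq_chern_formula}, the integral equals $(\widetilde L|_{\XXX}^{r}\cdot\widetilde M_I|_{\XXX}^{|I|})$. Since $\ovl L$ comes from the abelian quotient and $t_s$ covers the identity there, $t_s^*\ovl L=\ovl L$. The Green function of $t_s^*\ovl M_I-\ovl M_I$ is $\sum_{i\in I}\bigl(|\log|s_ib_{\tor,i}||-|\log|b_{\tor,i}||\bigr)$, which is bounded by $\sum_{i\in I}|\log|s_i||$. By \cite[Theorem 3.6.4]{YuanZhang}, $t_s^*\widetilde M_I=\widetilde M_I$, so the integrals over $\XXX(\C)$ and $t_s(\XXX)(\C)$ coincide.
\end{itemize}
A posteriori this shows $U\cap\GGG^{\mathbb S_1}_I\neq\emptyset$ whenever $U\neq\emptyset$. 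However, that conclusion comes from global intersection theory, not from local analysis on $\XXX$.
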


\begin{remark}\label{remark_I_non_deg_equiv_non_deg}
For completeness, we remark that $\XXX$ is non-degenerate if and only if
$$\int_{\XXX(\C)}c_1(\ovl L)^{r}c_1(\ovl M)^{\dim\XXX-r}>0.$$
This can be derived from the following two facts: \begin{enumerate}
    \item $\XXX$ is non-degenerate if and only if it is $I$-non-degenerate for some $I$;
    \item $$\int_{\XXX(\C)}c_1(\ovl L)^{r}c_1(\ovl M)^{\dim\XXX-r}>0$$
if and only if
$$\int_{\XXX(\C)}c_1(\ovl L)^{r}c_1(\ovl M_I)^{\dim\XXX-r}>0$$
for some $I$.
\end{enumerate}
 To see the second fact, note that the proof of Theorem \ref{measure} actually implies that each term
$$c_1(\ovl L)^rc_1(\ovl M_{\{i_1\}})c_1(\ovl M_{\{i_2\}})\cdots c_1(\ovl M_{\{i_{\dim\XXX-r}\}})$$
in the expansion of $c_1(\ovl L)^{r}c_1(\ovl M)^{\dim\XXX-r}$ is a non-negative measure on $\XXX(\C)$. So the strict positivity of total measure is equivalent to that of some term.
\end{remark}

\begin{proof}
By Theorem \ref{measure}, if
$$\int_{\XXX(\C)}c_1(\ovl L)^{r}c_1(\ovl M_I)^{\dim\XXX-r}>0,$$
then $\XXX$ is $I$-non-degenerate.

Conversely, Assume $\XXX$ is $I$-non-degenerate. If there is an $I$-non-degenerate point $x\in\XXX(\C)\cap\GGG^{\mathbb S_1}_I$, then $\XXX(\C)\cap\GGG^{\mathbb S_1}_I$ is a manifold near $x$ and $c_1(\ovl L)^r\omega_I^{|I|}$ is a non-vanishing smooth top form on this manifold. So the integral is strictly positive.

In general, take a simply connected open subset $U\subset\XXX(\C)$ contained in the $I$-non-degeneracy locus. By fixing a lifting, we get the Betti map $b_I:U\to(\C^\times)^{|I|}$. As the map is submersive, $b_I(U)$ contains an open subset. So we may take a point $s\in\mathbb{G}_m^I(\ovl\C)$ such that $b_I(U)+s$ intersects $(\mathbb S^1)^{|I|}$. Denote by $t_s:\GGG\to\GGG$ the translation by $s$. Then $t_s(\XXX)$ has an $I$-non-degenerate point contained in $\GGG^{\mathbb S_1}_I$. Thus,
$$(\widetilde L|_{t_s(\XXX)}^{r}\cdot\widetilde M_I|_{t_s(\XXX)}^{\dim\XXX-r})=\int_{t_s(\XXX)(\C)}c_1(\ovl L)^{r}c_1(\ovl M_I)^{\dim\XXX-r}>0.$$
Obviously $t_s^*\ovl L=\ovl L$. The Green function of $t_s^*\ovl M_I-\ovl M_I$ is
$$\sum_{i\in I}(|-\log|s_ib_{\tor}(\cdot)||-|-\log|b_{\tor}(\cdot)||),$$
where $s_i$ is the $i$-th component of $s_i$. The triangular inequality
$$\lvert\lvert-\log\lvert s_iz\rvert\rvert -|-\log|z|\rvert\rvert\le\lvert-\log\lvert s_i\rvert\rvert.$$
implies this Green function is bounded. By \cite[Theorem 3.6.4]{YuanZhang}, $t_s^*\widetilde M_I=\widetilde M_I$. Therefore,
$$\int_{\XXX(\C)}c_1(\ovl L)^{r}c_1(\ovl M_I)^{\dim\XXX-r}=(\widetilde L|_{\XXX}^{r}\cdot\widetilde M_I|_{\XXX}^{\dim\XXX-r})=(\widetilde L|_{t_s(\XXX)}^{r}\cdot\widetilde M_I|_{t_s(\XXX)}^{\dim\XXX-r})>0.$$
\end{proof}

\subsection{Non-degeneracy and bigness}
In this subsection, we show that non-degeneracy has direct application to the height inequality via the arithmetic bigness. This will be used in \S\ref{subsec_height_ineq}.
\begin{proposition}\label{big}
Assume that $\XXX\subset\GGG$ is $I$-non-degenerate. Let $\ovl H$ be a nef adelic line bundle on $S$. Then there is a positive integers $n$ and a positive constant $e>0$ such that for any positive integer $m$ large enough,
$$(m^2n\ovl L+m\ovl M_I-\pi_{\mathcal G/S}^*\ovl H+\ovl{\mathcal O}(e))|_\XXX$$
is big.
\end{proposition}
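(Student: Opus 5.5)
The plan is to apply arithmetic Siu's inequality \eqref{eq_Siu_inequality} on $\XXX$, writing the target bundle as a difference of two nef adelic line bundles. Let $d=\dim\XXX$ and $|I|=d-r$. By Proposition \ref{integrable}, applied to each $\ovl D_{[0]}^{(i)}$ and $\ovl D_{[\infty]}^{(i)}$ with $i\in I$, there are an integer $n_0$ and a nef $\ovl H_0$ on $S$ such that $\ovl N:=\ovl M_I+n_0\ovl L+\pi_{\mathcal G/S}^*\ovl H_0$ is nef. Put
$$\ovl A_m:=m^2n\ovl L+m\ovl N+\ovl{\mathcal O}(e),\qquad \ovl B_m:=mn_0\ovl L+m\pi^*\ovl H_0+\pi^*\ovl H .$$
Then $m^2n\ovl L+m\ovl M_I-\pi^*\ovl H+\ovl{\mathcal O}(e)=\ovl A_m-\ovl B_m$. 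Both $\ovl A_m$ and $\ovl B_m$ are nef: $\ovl L$ is nef as the invariant extension of a polarized dynamical system, $\ovl N$ and $\pi^*\ovl H_0,\pi^*\ovl H$ are nef, and $\ovl{\mathcal O}(e)$ is nef for $e>0$. (One could absorb $mn_0\ovl L$ into $n$, but keeping this bookkeeping makes the degree count transparent.)

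Siu's inequality then gives $\vol\ge (\ovl A_m^{d+1})-(d+1)(\ovl A_m^{d}\cdot\ovl B_m)$ on $\XXX$, and I will expand both sides in $m$. The key input is the geometric degree. The leading contribution comes from the monomials $\widetilde L^{a}\widetilde N^{b}$ with $a+b=d$, weighted by $m^{2a+b}$. Any monomial with $a>r$ vanishes, because $c_1(\ovl L)^{r+1}=0$ on $\XXX$: the abelian Betti rank is $2r$ and $c_1(\ovl L)$ is pulled back via $b_\ab$. So the top power of $m$ is $m^{2r+(d-r)}=m^{d+r}$, with coefficient $n^r\binom{d}{r}\int_{\XXX}c_1(\ovl L)^r c_1(\ovl N)^{d-r}$. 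This integral is at least $\int c_1(\ovl L)^r c_1(\ovl M_I)^{d-r}$, which is positive by Theorem \ref{theorem_I_non_deg_integral}. Indeed, the extra terms involve $\pi^*\ovl H_0$ and more copies of $\ovl L$; these are either nonnegative measures or vanish by the rank bound (a term with $\ge r+1$ factors of $c_1(\ovl L)$ vanishes), and mixed terms with $\pi^*c_1(H_0)$ are nonnegative. Hence $(\ovl A_m^{d+1})\ge (d+1)e\,\widetilde A_m^{d}+(\text{arithmetic self-intersection of the nef part})$. The second term is $\ge0$ by semipositivity, and the first is $\ge c\,e\,m^{d+r}$ with $c>0$.

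For the negative term, $(\ovl A_m^d\cdot\ovl B_m)$ expands into arithmetic intersection numbers that are polynomials in $m$. The terms containing $\ovl{\mathcal O}(e)$ are $e$ times geometric integrals. Each factor of $\ovl B_m$ contributes at most $m$ (not $m^2$), and any monomial whose total $\ovl L$-count exceeds $r$ geometrically vanishes. So I expect the geometric part $e\cdot\widetilde A_m^{d-1}\widetilde B_m$ to be $O(m^{d+r-1})$. The purely arithmetic parts, from intersections of $\ovl L,\ovl N,\pi^*\ovl H_0,\pi^*\ovl H$ without $\ovl{\mathcal O}(e)$, are also $O(m^{d+r})$ but independent of $e$. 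I will therefore fix $n$ (say $n=1$) and then choose $e$ large: the positive leading term $(d+1)e\,c\,m^{d+r}$ should dominate an $e$-independent $C m^{d+r}$ plus $e\cdot O(m^{d+r-1})$.

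The main obstacle is controlling the arithmetic intersection numbers $(\ovl L^{a}\ovl N^{b}\pi^*\ovl H\cdots)$ with correct $m$-degrees. One must verify that the arithmetic monomials with more than $r+1$ copies of $\ovl L$ also do not outgrow $m^{d+r}$. The cleanest fix is to use the $[2]$-invariance: pulling back by $[k]$ scales $\ovl L$ by $k^2$ and $\ovl M_I$ by $k$, and invariance of heights forces arithmetic monomials in $\ovl L,\ovl M_I$ alone to vanish, so that only terms involving $\pi^*\ovl H_0$, $\pi^*\ovl H$ or $\ovl{\mathcal O}(e)$ survive. Such terms are bounded by $m^{d+r}$ times constants independent of $e$, where the bound for terms involving the base uses that the relevant geometric degrees are again limited by the Betti-rank vanishing. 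Once these estimates hold, choosing $e$ large and then $m\gg0$ yields positive volume, i.e. bigness.
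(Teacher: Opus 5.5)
There is a genuine gap, and it sits in your choice of decomposition. Because $\ovl A_m=m^2n\ovl L+m\ovl N+\ovl{\mathcal O}(e)$, the subtracted bundle $\ovl B_m$ contains $m\,\pi_{\mathcal G/S}^*\ovl H_0$. Your claim that $\widetilde A_m^{d-1}\cdot\widetilde B_m=O(m^{d+r-1})$ is then false. The monomial $(m^2n\widetilde L)^r(m\widetilde N)^{d-1-r}\cdot m\,\pi^*\widetilde H_0$ already has order $m^{d+r}$, the same order as your main term.

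Write $\lambda,\mu,\eta$ for $c_1(\ovl L),c_1(\ovl M_I),c_1(\pi^*\ovl H_0)$ on $\XXX(\C)$ and use $\lambda^{r+1}=0$. The $m^{d+r}$-coefficient of the $e$-slope $\widetilde A_m^{d}-d\,\widetilde A_m^{d-1}\widetilde B_m$ is
\[
n^r\binom{d}{r}\int_{\XXX(\C)}\lambda^r\Big(\mu^{d-r}-\sum_{j\ge 2}(j-1)\binom{d-r}{j}\mu^{d-r-j}\eta^{j}\Big).
\]
This is exactly the loss in Siu's inequality when $\ovl M_I$ is written as $(\ovl M_I+\pi^*\ovl H_0)-\pi^*\ovl H_0$. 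For $d-r\ge 2$ nothing controls the negative terms, since $\ovl H_0$ is whatever Proposition~\ref{integrable} produces. So positivity of $\int\lambda^r\mu^{d-r}$ (Theorem~\ref{theorem_I_non_deg_integral}) does not give a positive $e$-slope for any $m$.

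The missing idea is to pull back by $[m]$ rather than multiply by $m$. Since $[m]^*\pi^*\ovl H_0=\pi^*\ovl H_0$, the bundle $\ovl Q_m=[m]^*(n_0\ovl L+\ovl M_I+\pi^*\ovl H_0)=m^2n_0\ovl L+m\ovl M_I+\pi^*\ovl H_0$ is nef, and its base part is \emph{not} scaled. The paper applies Siu's inequality to $\ovl Q_m+\ovl{\mathcal O}(e)$ and to the $m$-independent $\pi^*(\ovl H+\ovl H_0)$. Then $\widetilde Q_m^{d}-d\,\widetilde Q_m^{d-1}\cdot\pi^*(\widetilde H+\widetilde H_0)=m^{d+r}\binom{d}{r}n_0^r\int\lambda^r\mu^{d-r}+O(m^{d+r-1})$, which is positive for $m\gg0$.

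Your treatment of the $e$-free part has a second problem. Fixing $e$ first and letting $m\to\infty$ requires the arithmetic intersection numbers in $(\ovl A_m^d\cdot\ovl B_m)$ to be $O(m^{d+r})$, and your justification does not hold.
\begin{itemize}
\item $[k]$ does not map $\XXX$ to itself, so the invariance of $\ovl L$ and $\ovl M_I$ gives no vanishing of arithmetic monomials on $\XXX$. Already $(\ovl L|_Y^{\dim Y+1})>0$ for a subvariety $Y$ of an abelian variety over a number field that is not a torsion coset.
\item The geometric vanishing $c_1(\ovl L|_\XXX)^{r+1}=0$ does not kill arithmetic monomials with $r+1$ copies of $\ovl L$. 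For example, if the abelian coordinate of $\XXX$ is a constant non-torsion point $x$, then $r=0$, while $\ovl L|_\XXX$ has trivial geometric part and arithmetic degree $\widehat h(x)\neq 0$.
\end{itemize}
Such monomials carry powers of $m$ larger than $m^{d+r}$. A fixed $e$ cannot be played off against them without sign information you do not have.

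The paper never estimates any arithmetic intersection number. It fixes one $m$ with positive $e$-slope and then, since the Siu lower bound is affine in $e$, takes $e$ large. If you want $e$ independent of $m$, as the wording of the statement suggests, that has to be obtained separately, e.g.\ by monotonicity in $m$, not from these asymptotics.
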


\begin{proof}
By Proposition \ref{integrable} there is a positive integer $n$ and a nef adelic line bundle $\ovl H'$ on $S$ such that the adelic line bundle
$$\ovl Q_1=n\ovl L+\ovl M_I+\pi_{\mathcal G/S}^*\ovl H'$$
on $\GGG$ is nef. Thus so is $\ovl Q_m=[m]^*\ovl Q_1=m^2n\ovl L+m\ovl M_I+\ovl H'$

Since $\ovl Q_m+\ovl{\mathcal{O}}(e)$ and $\ovl H+\ovl H'$ are both nef, after applying the arithmetic Siu's inequality \eqref{eq_Siu_inequality},
\begin{align*}
&\ \vol((m^2n\ovl L+m\ovl M_I-\ovl H+\ovl{\mathcal{O}}(e))|_\XXX)\\
=&\ \vol((\ovl Q_m+\ovl{\mathcal{O}}(e)-\ovl H-\ovl H')|_\XXX)\\
\ge&\ ((\ovl Q_m+\ovl{\mathcal{O}}(e))|_\XXX^{\dim\XXX+1})-(\dim\XXX+1)((\ovl Q_m+\ovl{\mathcal O}(e))|_\XXX^{\dim\XXX}\cdot(\ovl H+\ovl H')|_\XXX)\\
=&\ (\ovl Q_m|_\XXX^{\dim\XXX+1})+e(\dim\XXX+1)(\widetilde Q_m|_{\XXX}^{\dim\XXX})\\
&\ -(\dim\XXX+1)\Big((\ovl Q_m|_\XXX^{\dim\XXX}\cdot(\ovl H+\ovl H')|_\XXX)+e\dim\XXX(\widetilde Q_m|_{\XXX}^{\dim\XXX-1}\cdot(\widetilde H+\widetilde H')|_\XXX)\Big)\\
=&\ e(\dim\XXX+1)\Big((\widetilde Q_m|_{\XXX}^{\dim\XXX})-\dim\XXX(\widetilde Q_m|_{\XXX}^{\dim\XXX-1}\cdot(\widetilde H+\widetilde H')|_\XXX)\Big)\\
&\ +(\ovl Q_m|_\XXX^{\dim\XXX+1})-(\dim\XXX+1)(\ovl Q_m|_\XXX^{\dim\XXX}\cdot(\ovl H+\ovl H')|_\XXX)
\end{align*}
This lower bound is a linear function in $e$. Once we have
$$(\widetilde Q_m|_{\XXX}^{\dim\XXX})-\dim\XXX(\widetilde Q_m|_{\XXX}^{\dim\XXX-1}\cdot(\widetilde H+\widetilde H')|_\XXX)>0,$$
then $(m^2n\ovl L+m\ovl M_I-\ovl H+\ovl{\mathcal{O}}(e))|_\XXX$ is big for $e$ large enough. It remains to prove the above inequality holds for some large $m$.

We analyze the asymptotic behavior as $m\to\infty$. Since $2r$ is the Betti rank of $\pi(\XXX)$, we have
$$c_1(\ovl L|_\XXX)^{r+1}=0.$$
So
\begin{align*}
&\ (\widetilde Q_m|_{\XXX}^{\dim\XXX})-\dim\XXX(\widetilde Q_m|_{\XXX}^{\dim\XXX-1}\cdot(\widetilde H+\widetilde H')|_\XXX)\\
=&\ \int_{\XXX(\C)}c_1(\ovl Q_m)^{\dim\XXX}-\dim\XXX\int_{\XXX(\C)}c_1(\ovl Q_m)^{\dim\XXX-1}c_1(\ovl H+\ovl H')\\
=&\ m^{\dim\XXX+r}\binom{\dim\XXX}{r}\int_{\XXX(\C)}c_1(\ovl L)^r\cdot c_1(\ovl M_I)^{\dim\XXX-r}+O(m^{\dim\XXX+r-1}).  
\end{align*}
We conclude the proof.
\end{proof}

\begin{corollary}\label{weak_bogomolov}
We keep the same hypothesis as in Proposition \ref{big}. Then
$$\{x\in \XXX(\Q):\widehat{h}_I(x)\le\epsilon h_{\ovl H}(\pi_{\mc G/S}(x))-\delta\}$$
is not Zariski dense in $\XXX$ for some positive constants $\epsilon$, $\delta>0$.
\end{corollary}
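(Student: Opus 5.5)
The plan is to turn the bigness from Proposition \ref{big} into an effective section and read off a height inequality off its divisor. By Proposition \ref{big}, there are $n>0$, $e>0$ and $m$ large such that
$$\ovl Q:=(m^2n\ovl L+m\ovl M_I-\pi_{\mathcal G/S}^*\ovl H+\ovl{\mathcal O}(e))|_\XXX$$
is big. Bigness means $\widehat h^0(k\ovl Q)>0$ for some $k\ge1$ (indeed for all $k$ large), so there is a nonzero effective section $s$ of $k\ovl Q$ on $\XXX$. Let $Z:=|\mr{div}(s)|$, a proper Zariski closed subset of $\XXX$. As recalled in \S\ref{section_adelic}, effectiveness of $\widehat{\mr{div}}(s)$ gives $h_{k\ovl Q}(x)\ge0$ for every $x\in(\XXX\setminus Z)(\ovl\Q)$.

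Next I unwind this inequality using linearity of heights in the adelic line bundle. Since $h_{\ovl{\mathcal O}(e)}\equiv e$ and $h_{\pi^*\ovl H}(x)=h_{\ovl H}(\pi_{\mc G/S}(x))$, we get, for $x\notin Z$,
$$m^2n\,h_{\ovl L}(x)+m\,h_{\ovl M_I}(x)\ge h_{\ovl H}(\pi_{\mc G/S}(x))-e.$$
Both $h_{\ovl L}$ and $h_{\ovl M_I}$ are non-negative: $\ovl L$ is nef, and $\ovl D^{(i)}_{[0]},\ovl D^{(i)}_{[\infty]}$ are invariant extensions whose Green functions are $\max\{\mp\log|b_{\tor,i}|,0\}\ge0$ by Proposition \ref{prop_dsh_toric_betti}, and the same holds at the finite places for the canonical local heights. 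Since $m^2n\ge m\ge1$, the left side is at most $m^2n\,\widehat h_I(x)$. Hence for $x\notin Z$,
$$\widehat h_I(x)\ge \frac{1}{m^2n}h_{\ovl H}(\pi_{\mc G/S}(x))-\frac{e}{m^2n}.$$

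Finally I choose $\epsilon=1/(2m^2n)$ and $\delta=e/m^2n$, or any pair with $\epsilon<1/(m^2n)$ and $\delta<e/(m^2n)$ adjusted so the inequality is strict. Then every point of the set $\{\widehat h_I(x)\le\epsilon h_{\ovl H}(\pi(x))-\delta\}$ lies in $Z$, which is not Zariski dense. Only points of $\XXX\setminus Z$ with $h_{\ovl H}(\pi(x))$ near $0$ need attention: there the inequality is satisfied by at most points with $\widehat h_I\le -\delta<0$, which do not exist.

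The main obstacle is the non-negativity of $h_{\ovl M_I}$, since $\ovl M$ is not a priori nef. I would handle it by noting that $h_{\ovl M_I}=h_{\ovl L'}\circ$ (projection to the $\mathbb P^1$-factor via invariance), or more directly via the limit formula $h(x)=\lim h(2^kx)/2^k$ with the standard Weil height on the $\mathbb P^1$ boundary. Alternatively, I would avoid it altogether by dropping the $m\,h_{\ovl M_I}$ term only after bounding it by $m^2n$ times itself, which already uses non-negativity. So I would first establish $h_{\ovl D_{[0]}},h_{\ovl D_{[\infty]}}\ge0$ from effectiveness of the invariant extensions, i.e. all Green functions are $\ge0$ at every place.
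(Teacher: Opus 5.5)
Your proposal is correct and is essentially the paper's argument: take an effective section of a multiple of the big bundle from Proposition \ref{big}, use non-negativity of its height off $|\mr{div}(s)|$, and bound $m^2n\,h_{\ovl L}+m\,h_{\ovl M_I}\le m^2n\,\widehat h_I$; as you correctly point out, this last bound requires $h_{\ovl M_I}\ge0$, which the paper uses without comment. Your choice $\epsilon=1/(2m^2n)$, $\delta=e/(m^2n)$ is slightly more careful than the paper's $\epsilon=1/(m^2n)$, $\delta=e/(m^2n)$, because the paper's choice does not exclude points off $|\mr{div}(s)|$ with $\widehat h_I(x)=\epsilon h_{\ovl H}(\pi_{\mc G/S}(x))-\delta$ exactly; however, your fallback ``any pair with $\epsilon<1/(m^2n)$ and $\delta<e/(m^2n)$'' does not work in general, since the argument needs $\epsilon\le 1/(m^2n)$ together with $\delta>e\epsilon$.
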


\begin{proof}
By the bigness, some positive multiple of $(m^2n\ovl L+m\ovl M_I-\pi_{\mathcal G/S}^*\ovl H+\ovl{\mathcal O}(e))|_\XXX$ admits an effective section $s$. Then
$$h_{m^2n\ovl L+m\ovl M_I-\pi_{\mathcal G/S}^*\ovl H'+\ovl{\mathcal O}(e)}(\cdot)\ge0$$
on $(\XXX\setminus|\mr{div}(s)|)(\ovl\Q)$. Since
\begin{align*}
    h_{m^2n\ovl L+m\ovl M_I-\pi_{\mathcal G/S}^*\ovl H+\ovl{\mathcal O}(e)}(x)&=h_{m^2n\ovl L+m\ovl M_I}(x)-h_{\ovl H}(\pi_{\mathcal G/S}(x))+e\\&\le m^2n\widehat{h}_I(x)-h_{\ovl H}(\pi_{\mathcal G/S}(x))+e,
\end{align*}
the statement holds for $\epsilon=m^{-2}n^{-1},\delta=m^{-2}n^{-1}e$.
\end{proof}

\subsection{Equidistribution}
The arithmetic equidistribution theorem which goes back to \cite{SUZ}. It is then generalized by Yuan \cite{Yuan2008Bigness} and Yuan--Zhang \cite{YuanZhang}. Equidistribution on semiabelian varieties is proved by K\"uhne \cite{Kuhne}. Balla\"y and Sombra \cite{BS} generalize it to a very general cases. We give a semiabelian scheme version here by adapting \cite[Theorem 1.2]{BS}.

Recall a sequence $x_i\in X(\ovl\Q)\ (i=1,2\dots)$ in a variety $X$ over $\Q$ is called \emph{generic} if $x_i$ converges to the generic point of $X$, or equivalently, any closed subscheme $Z\subsetneq X$ contains at most finitely many points in the sequence. Let $\mu$ be a measure on $X(\C)$, the sequence is called \emph{equidistributes to $\mu$} if
$$\frac{1}{\#(\mr{Gal}(\ovl\Q/\Q)\cdot x_i)}\sum_{x\in\mr{Gal}(\ovl\Q/\Q)\cdot x_i}\delta_{x}$$
coverges weakly to $\mu$, i.e., for any continuous function $f$ with compact support on $X(\C)$,
$$\lim_{i\to\infty}\frac{1}{\#(\mr{Gal}(\ovl\Q/\Q)\cdot x_i)}\sum_{x\in\mr{Gal}(\ovl\Q/\Q)\cdot x_i}f(x)=\int_{X(\C)}f\mu.$$

\begin{theorem}\label{theorem_equidistribution}
Let $\XXX\subset\GGG$ be a closed subvariety. Let $x_i\in\XXX\ (n=1,2\dots)$ be a generic sequence of closed points. If $\XXX$ is $I$-non-degenerate and
$$\lim_{i\to\infty}\widehat{h}_I(x_i)=0,$$
then the sequence $(x_i)$ equidistributes to
$$\frac{c_1(\ovl L|_{\XXX})^rc_1(\ovl M_I|_\XXX)^{\dim\XXX-r}}{\displaystyle\int_{\XXX(\C)}c_1(\ovl L)^rc_1(\ovl M_I)^{\dim\XXX-r}}.$$
\end{theorem}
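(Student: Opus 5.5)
We will derive Theorem \ref{theorem_equidistribution} from the general equidistribution theorem of Ballaÿ--Sombra \cite[Theorem 1.2]{BS}. Their setup is a quasi-projective variety with an integrable adelic line bundle, and a generic small sequence. We apply it to $\ovl N:=\ovl L+\ovl M_I$ restricted to $\XXX$. Two inputs are needed:
\begin{enumerate}
\item The height $h_{\ovl N}=\widehat h_I$ is essentially minimal, with $\mr{ess}(\XXX,\ovl N)=0$ and $(\ovl N|_\XXX^{\dim\XXX+1})=0$.
\item The relevant volume/differentiability hypothesis holds in the direction of arbitrary perturbations $\ovl{\mc O}(\phi)$ with $\phi$ continuous and compactly supported.
\end{enumerate}

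First, we check that the limiting measure is what is claimed. We have $c_1(\ovl L|_\XXX)^{r+1}=0$, since $2r$ is the abelian Betti rank and $c_1(\ovl L)$ is pulled back along $b_\ab$ by \eqref{eq_first_Chern_form_pullback} and \eqref{eq_currents_pull_back_via_betti}. Also, $c_1(\ovl M_I)^{|I|+1}=0$ fiberwise along $b_\ab$, because $c_1(\ovl M_I)$ comes from $|I|$ variables $b_I$. Hence in the expansion of $c_1(\ovl N)^{\dim\XXX}$ only the term $\binom{\dim\XXX}{r}c_1(\ovl L)^rc_1(\ovl M_I)^{|I|}$ survives. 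By Theorem \ref{theorem_I_non_deg_integral}, $I$-non-degeneracy makes its mass positive, so $\ovl N|_\XXX$ has positive geometric degree and the normalized measure is well defined.

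Next, we establish the height identities. Both $\ovl L$ and $\ovl M_I$ are invariant, with $[2]^*\ovl L=4\ovl L$ and $[2]^*\ovl M_I=2\ovl M_I$. The problem is that $\ovl N$ is not a single eigenvector of $[2]$, and $\XXX$ is not $[2]$-stable. I would handle the self-intersection by the infinitesimal trick of Proposition \ref{big}: replace $\ovl N$ by $\ovl Q_m=m^2n\ovl L+m\ovl M_I+\pi^*\ovl H'$, which is nef, and bound $(\ovl N|_\XXX^{\dim\XXX+1})\ge0$ from the nefness of $\ovl L+\ovl M_I$ up to base terms. The equality $\mr{ess}=0$ follows from the existence of the generic sequence with $\widehat h_I(x_i)\to0$, together with nonnegativity of $\widehat h_I$ (the Green functions of $\ovl M_I$ are $\ge0$ by Proposition \ref{prop_dsh_toric_betti}, and $h_{\ovl L}\ge0$). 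The fundamental inequality \eqref{eq_fundamental} then forces the top intersection to be $\le0$.

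For the perturbation step, I would follow the standard variational argument. For a compactly supported smooth $\phi$, consider $\ovl N+\epsilon\ovl{\mc O}(\phi)$ and compute
\[
(\ovl N+\epsilon\ovl{\mc O}(\phi))^{\dim\XXX+1}=(\dim\XXX+1)\epsilon\int\phi\, c_1(\ovl N)^{\dim\XXX}+O(\epsilon^2).
\]
Combined with the arithmetic Siu inequality \eqref{eq_Siu_inequality} (writing $\phi$ as a difference of nef perturbations) and the lim inf of heights along the generic sequence, this gives
\[
\liminf_i\int\phi\,d\mu_{x_i}\ge\int\phi\,d\mu
\]
for both $\pm\phi$, which is exactly the content of \cite[Theorem 1.2]{BS}. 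The quadratic error term requires $\ovl N$ to be integrable, and this is given by Proposition \ref{integrable}.

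The main obstacle will be the non-compactness of $\XXX$ and the fact that $\ovl N$ is nef only up to the base term $\pi^*\ovl H'$ rather than nef itself. To verify that the Ballaÿ--Sombra hypotheses hold, I would twist by the nef base bundle and use that $h_{\ovl H'}(\pi(x_i))$ is controlled. If this fails, the fallback is to restrict to the $I$-non-degenerate setting, where Corollary \ref{weak_bogomolov} bounds $h_{\ovl H}(\pi(x_i))$ by $\widehat h_I(x_i)$ on a dense open set. Since $\widehat h_I(x_i)\to0$, the base heights then stay bounded, and this boundedness makes the $\pi^*\ovl H'$ correction negligible in the variational estimate.
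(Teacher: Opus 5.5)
There is a genuine gap, and it sits exactly where you flagged the ``main obstacle''. Your fallback does not close it.

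\textbf{The line bundle is not semipositive.} $\ovl N=\ovl L+\ovl M_I$ is not semipositive on $\XXX$. So you cannot use the standard form of \cite[Theorem 1.2]{BS}, nor your variational sketch: that sketch needs Siu's inequality \eqref{eq_Siu_inequality} with $\ovl N$ as the nef part, and the fundamental inequality \eqref{eq_fundamental} to get $(\ovl N|_\XXX^{\dim\XXX+1})\le 0$. Proposition \ref{integrable} only makes $n\ovl L+\ovl M_I+\pi_{\mc G/S}^*\ovl H'$ nef, and the base term is really needed, because over a positive-dimensional base $\log|b_{\tor,i}|$ is not pluriharmonic. Take $g=t=1$ with $\tau$ fixed and the extension parameter $u$ varying. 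Then $f:=\log|b_\tor|=-2\pi\mr{Im}(w)+2\pi\mr{Im}(u)\mr{Im}(v)/\mr{Im}(\tau)$, and $dd^cf$ is an indefinite form proportional to $\sqrt{-1}(du\wedge d\bar v+dv\wedge d\bar u)$. The form $c_1(\ovl L)$ has no $du\wedge d\bar u$ component, so it cannot dominate this. Bounding base heights along $(x_i)$ does not repair this. Adding $\pi^*\ovl H'$ changes the Monge--Amp\`ere measure, and it becomes negligible only if it is scaled to zero. Since only $m^2n\ovl L+m\ovl M_I+\pi^*\ovl H'$ is nef, that forces you to scale $\ovl M_I$ down relative to $\ovl L$ as well.

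\textbf{The limit measure you identify is wrong.} Vanishing of $c_1(\ovl M_I)^{|I|+1}$ along the fibers of $b_\ab$ gives no control over the terms $c_1(\ovl L)^kc_1(\ovl M_I)^{\dim\XXX-k}$ with $k<r$. These involve directions transverse to those fibers, where $b_I$ is not holomorphic. In the example above, let $\XXX$ be a surface dominating $E\times E^\vee$, so $r=|I|=1$ and $\XXX$ is generically $I$-non-degenerate. Regularizing $|f|$ by $\sqrt{f^2+\epsilon^2}$ (the cross term is odd in $f$ and drops out) gives $c_1(\ovl M|_\XXX)^2=(dd^cf)^2$. In local coordinates $(u,v)$ this is a nonzero negative multiple of $\sqrt{-1}du\wedge d\bar u\wedge\sqrt{-1}dv\wedge d\bar v$. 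By contrast, $c_1(\ovl L)c_1(\ovl M)$ is supported on $\GGG^{\mathbb S_1}_I$ by Theorem \ref{measure}. Hence $c_1(\ovl N|_\XXX)^2\neq 2c_1(\ovl L)c_1(\ovl M)$; it is not even a positive measure, and its total mass need not be positive.

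\textbf{What the paper does instead.} A single device handles both issues. The paper applies the semipositive-approximation version of Balla\"{y}--Sombra (\cite[Definition 8.10, Theorem 8.11]{BS}) to $\ovl N=(n\ovl L+\ovl M_I)|_\XXX$, with approximations $\ovl Q_m=\frac1{2m^2}\bigl(m^2n\ovl L+m\ovl M_I+\pi^*\ovl H-\ovl{\mc O}(e)\bigr)|_\XXX$.
\begin{itemize}
\item Semipositivity of $\ovl Q_m$ comes from $[m]^*$-invariance together with Proposition \ref{integrable}.
\item Bigness of $\widetilde Q_m$ and pseudo-effectivity of $\ovl N-\ovl Q_m$ come from Proposition \ref{big}.
\item The error terms are $O(m^{-2})$ by Corollary \ref{weak_bogomolov}, against $r_m\ge\frac1{4m}$.
\end{itemize}
The limit measure is then the normalized $\lim_m c_1(\ovl Q_m)^{\dim\XXX}$. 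The term $c_1(\ovl L)^rc_1(\ovl M_I)^{\dim\XXX-r}$ appears as the unique leading term, of order $m^{r-\dim\XXX}$; the other terms of $c_1(\ovl N)^{\dim\XXX}$ do not vanish, they are simply of lower order in $m$.
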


\begin{proof}
Take $n$ and $e$ as in Proposition \ref{big}. Note that $nm^2\ovl L+m\ovl M_I+
\ovl H$ is nef for all $m$. Denote $\ovl N=(n\ovl L+\ovl M_I)|_\XXX$. Take the $\Q$-adelic line bundle
$$\ovl Q_m=\Big(\frac{n}{2}\ovl L+\frac{1}{2m}\ovl M_I+\frac{1}{2m^2}\ovl H-\frac{1}{2m^2}\ovl{\mathcal{O}}(e)\Big)\Big|_\XXX$$
on $\XXX$. We show that $\ovl Q_m$ is a \emph{semipositive approximation} of $\ovl N$ for $m$ large enough in the sense of \cite[Definition 8.10]{BS} and the sequence satisfies the conditions of \cite[Theorem 8.11]{BS}. Once this is done, the sequence $(x_i)$ equidistributes to the weak limit
$$\lim_{m\to\infty}\frac{c_1(\ovl Q_m)^{\dim\XXX}}{\displaystyle\int_{\XXX(\C)}c_1(\ovl Q_m)^{\dim\XXX}}.$$
Since
\begin{align*}
c_1(\ovl Q_m|_\XXX)^{\dim\XXX}&=m^{-\dim\XXX+r}\binom{\dim\XXX}{r}\frac{n^r}{2^{\dim\XXX}}c_1(\ovl L|_\XXX)^rc_1(\ovl M_I|_\XXX)^{\dim\XXX-r}\\
&+O(m^{-\dim\XXX+r-1}),
\end{align*}
we get the equilibrum measure
$$\lim_{m\to\infty}\frac{c_1(\ovl Q_m)^{\dim\XXX}}{\displaystyle\int_{\XXX(\C)}c_1(\ovl Q_m)^{\dim\XXX}}=\frac{c_1(\ovl L|_\XXX)^rc_1(\ovl M_I|_\XXX)^{\dim\XXX-r}}{\displaystyle\int_{\XXX(\C)}c_1(\ovl L)^rc_1(\ovl M_I)^{\dim\XXX-r}}.$$

We first check that $(\ovl Q_m)_{m\ge1}$ is a semipositive approximation of $\ovl N$. Note that the birational map is taken to be the identity map of $\XXX$. So we only need:
\begin{enumerate}
    \item[(1)]$\ovl Q_m$ is semipositive;
    \item[(2)]$\widetilde Q_m$ is big;
    \item[(3)]$\ovl N-\ovl Q_m$ is pseudo-effective. 
\end{enumerate}
For (1), since $\ovl Q_m+\frac1{2m^2}\ovl{\mathcal{O}}(e)$ is nef, $\ovl Q_m$ is semipositive. By Proposition \ref{big}, both
$$\ovl Q_m+\frac1{m^2}\ovl{\mathcal{O}}(e)=\frac{1}{2m^2}(m^2n\ovl L+m\ovl M_I-\pi_{\mathcal G/S}^*\ovl H+\ovl{\mathcal O}(e))|_\XXX+\frac{1}{m^2}\pi_{\mathcal G/S}^*\ovl H|_\XXX$$
and
$$\ovl N-\ovl Q_m=\frac{1}{2m^2}(m^2n\ovl L+m\ovl M_I-\pi_{\mathcal G/S}^*\ovl H+\ovl{\mathcal O}(e))|_\XXX$$
are big. This implies (2) and (3) for $m$ large enough.

Take
$$r_n=\sup\{\lambda\in\RR:\widetilde Q_m-\lambda\widetilde N\mr{\ is\ big}\}.$$
It remains to verify the following conditions of \cite[Theorem 4.11]{BS}:
\begin{enumerate}
    \item[(1)]$$\lim_{n\to\infty}\frac{1}{r_n}(\mr{ess}(\ovl N)-\frac{\ovl Q_m^{\dim\XXX+1}}{(\dim\XXX+1)\widetilde Q_m^{\dim\XXX}})=0;$$
    \item[(2)]$$\sup_n\frac{\mr{ess}(\ovl N)-\mr{abs}(\ovl Q_m)}{r_n}<\infty.$$
\end{enumerate}
Since
$$\widetilde Q_m-\frac1{4m}\widetilde N=\frac{2m-1}{4m}\widetilde L+\frac{1}{4m}\widetilde M_I+\frac{1}{2m^2}\widetilde H'$$
is big, $r_n\ge\frac1{4m}$. By the assumption that $\widehat{h}_I(x_i)\to0$, $\mr{ess}(\widetilde N)=0$. Since $(x_i)$ is generic, by Corollary \ref{weak_bogomolov}, for $i$ large enough, $h(\XXX_{\pi_{\mathcal G/S}(x_i)})\le\epsilon_1^{-1}\widehat h(x_i)+\epsilon_1^{-1}\delta$. Therefore,
$$\frac{\ovl Q_m^{\dim\XXX+1}}{(\dim\XXX+1)\widetilde Q_m^{\dim\XXX}}\le \mr{ess}(\ovl Q_m)\le\liminf_{i\to\infty}h_{\ovl Q_m}(x_i)\le\liminf_{i\to\infty}h_{\frac1{2m^2}\ovl H}(x_i)\le\frac{\epsilon^{-1}\delta}{2m^2},$$
$$\frac{\ovl Q_m^{\dim\XXX+1}}{(\dim\XXX+1)\widetilde Q^{\dim\XXX}}\ge \mr{abs}(\ovl Q_m)\ge-\frac e{2m^2}.$$
(1) and (2) follows.
\end{proof}

\section{Uniform Bogmolov conjecture}\label{section_UBC}
Let $G$ be a semiabelian variety over an algebraically closed field $K$ of characteristic $0$. By a \emph{coset} in $G$, we mean a translation of a group subvariety. Recall that the \emph{Ueno locus} of a closed subvariety $X\subset G$ is the union of all positive dimensional cosets contained in $X$, which is a Zariski closed subset(cf. \cite[Lemma 4.1]{Noguchi}). Denote by $X^\circ$ its complement in $X$. 

If $G$ is principally polarized, let $L$ be as in \eqref{eq_symmstrification_pp}, and $M$ be as in \eqref{eq_boundary_line_bundle}. We define the \emph{degree} of $X$ as 
$$\deg X:=\deg_{L+M}(\ovl X)$$
where $\ovl X$ is the Zariski closure of $X$ in the compactification $\ovl G$ constructed in \S\ref{subsec_toric_compactification}.
Recall that the canonical height function in this case is defined in \eqref{eq_canonical_height} by taking $S=\mr{Spec} K$. 

\begin{theorem}
\label{bogomolov}
There exist positive constants $c_1(g+t,d)$, $c_2(g+t,d)>0$ satisfying the following property. Let $G$ be a principally polarized semiabelian variety over $\ovl\Q$ of dimension $g+t$ with a level-$4$ structure. Let $X\subset G$ be a closed subvariety of degree $d$ which generates the abelian quotient $A$. Then there is a reduced closed subscheme $Z\subset X$ of degree at most $c_1(g+t,d)$ such that
$$\#\{x\in X^\circ(\ovl\Q):\widehat{h}(x)\le c_2(g+t,d)h([X])\}\subset Z(\ovl\Q)$$
where the definition of $h([X])$ will be given in \eqref{eq_modular_height}.
\end{theorem}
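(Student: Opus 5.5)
We would follow the strategy of Dimitrov--Gao--Habegger and Gao--Ge--Kühne: put everything in a family over a Hilbert-scheme type base, and obtain a height inequality there from non-degeneracy plus arithmetic bigness.

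\emph{Step 1 (universal family).} Subvarieties $X\subset G$ of degree $d$ generating $A$, with $G$ principally polarized with level-$4$ structure, are parametrized by finitely many quasi-projective $\ovl\Q$-varieties $S$. Concretely, $S$ is a locally closed subscheme of a Hilbert scheme of $\ovl{\mc P}^\times_{g,t}$ relative to $(\mc A_g^\vee)^{[t]}$. Over $S$ we have the pulled-back semiabelian scheme $\mc G\to S$ and the universal subvariety $\mc X\subset\mc G$. The number and degrees of the strata depend only on $(g+t,d)$. We then define $h([X])$ in \eqref{eq_modular_height} as $h_{\ovl H}(s)$ for a fixed ample adelic line bundle $\ovl H$ on (a compactification of) $S$; up to bounded constants this is the modular height plus the Chow height of $X$. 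We pass to the fibre power $\mc X^{[m]}\to S$ and restrict to the open part where the fibre is $X^{\circ,m}$; the relative Ueno locus has bounded degree by the Zilber--Pink input of \S\ref{section_degeneracy}.

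\emph{Step 2 (non-degeneracy and bigness).} We stratify $S$ so that on each stratum the hypotheses of Theorem \ref{theo_nondeg_gen} hold. Hypothesis (2) is exactly the generation assumption. For hypothesis (3), we quotient by the finite part of the stabilizer, or note that points of the Ueno locus have been removed. For hypothesis (1), we use that the modular map is generically finite after stratifying along $\iota_S$. Theorem \ref{theo_nondeg_gen} then shows $\mc X^{[m]}$ is non-degenerate for some $m=m(g+t,d)$. By Remark \ref{remark_I_non_deg_equiv_non_deg}, it is $I$-non-degenerate for some $I$. Proposition \ref{big} and Corollary \ref{weak_bogomolov}, applied to $\mc X^{[m]}$, give $\epsilon,\delta>0$ and a proper Zariski closed $\mc Z\subset\mc X^{[m]}$ such that
\[\sum_{j=1}^m\widehat h(x_j)\ \ge\ \epsilon\, h_{\ovl H}(s)-\delta\qquad\text{for }(x_1,\dots,x_m)\in\mc X^{[m]}_s\setminus\mc Z_s .\]
Here we use $\widehat h_I\le\widehat h$, since $\ovl M-\ovl M_I$ has nonnegative height.

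\emph{Step 3 (from $m$-tuples to points).} We run a Noetherian induction on $S$, descending along $\mc Z$ and the bad strata, which terminates in a number of steps bounded in terms of $(g+t,d)$. Fix $s$, and let $\Sigma$ be the set of $x\in X^\circ$ with $\widehat h(x)\le c_2 h([X])$. If $h([X])$ is large compared with $\delta/\epsilon$, then $\Sigma^m\subset\mc Z_s$. A standard fibre-by-fibre argument (as in Gao--Ge--Kühne) then shows $\Sigma$ lies in a proper subvariety $Z\subset X$ of degree bounded by the degree of $\mc Z$. If $h([X])$ is bounded, the constant $\delta$ dominates. This is exactly where the equidistribution Theorem \ref{theorem_equidistribution} is needed: the measure $c_1(\ovl L)^rc_1(\ovl M_I)^{\dim-r}$ gives no mass to proper subvarieties (by Theorem \ref{measure}), so a generic sequence of small points is impossible. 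Gao--Ge--Kühne's combination of equidistribution on the non-degenerate $\mc X^{[m]}$ with the diagonal Faltings--Zhang map then yields the bounded-degree $Z$ uniformly.

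The main obstacle is Step 2's verification of hypothesis (1) of Theorem \ref{theo_nondeg_gen} and of the finite-stabilizer condition uniformly over all strata. For the stabilizer issue, the first thing to try is replacing $\mc X$ by its image in $\mc G/\mr{Stab}$, which changes degrees only boundedly. The second delicate point is the low-height case in Step 3. There we must make sure that the Betti measure, which involves the non-smooth toric currents $\omega_{\tor}$, still charges no proper subvariety fibrewise, so that the Faltings--Zhang argument goes through.
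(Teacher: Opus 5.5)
Your architecture matches the paper's. Theorem~\ref{bogomolov} is reduced to the relative statement Theorem~\ref{relative} over a Hilbert-scheme stratum $S$, and that statement splits into two parts. The bigness part uses non-degeneracy of $\XXX^{[n]}$ via Theorem~\ref{theo_nondeg_gen}, then Proposition~\ref{big} and Corollary~\ref{weak_bogomolov}, to get $\widehat h\ge\epsilon_1h([\XXX_s])-\delta$ off a proper closed subset. The small-points part shows non-density of $\{\widehat h\le\epsilon_2\}$ in $\XXX^{[n]}$ via equidistribution and the Faltings--Zhang map. Your case split on $h([X])$ against $\delta/\epsilon$ amounts to the paper's inclusion $E\subset E_1\cup E_2$ with $\epsilon=\epsilon_1\epsilon_2/(\epsilon_2+\delta)$. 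The degree bound then comes from \cite[Lemma 4.3]{GGK} on a flat open part of $S$, plus induction on $\dim S$.

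However, the two points you flag as obstacles are mishandled.
\begin{itemize}
\item \emph{The stabilizer is a non-issue.} By Lemma~\ref{lemma_maximal_stable_under_translation} the finite-stabilizer locus is open in $S$. So if $\XXX_{\ovl\eta}$ has positive-dimensional stabilizer, every $\XXX_s$ is a union of positive-dimensional cosets and $\XXX_s^\circ=\emptyset$ for all $s$, and there is nothing to prove. No quotient is needed. Removing the (relative) Ueno locus does not change the stabilizer, and it is not used anywhere in this proof.
\item \emph{Hypothesis (1) of Theorem~\ref{theo_nondeg_gen} cannot be obtained by stratifying along $\iota_S$.} The fibres of $\iota_S$ parametrize different subvarieties of one fixed semiabelian variety, so $\iota|_{\XXX}$ is in general not generically finite on any stratum. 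The missing input is that $\XXX^{[m]}\to\iota^{[m]}(\XXX^{[m]})$ is generically finite for $m\gg0$. This holds because $S$ lies in a Hilbert scheme: distinct points of an $\iota_S$-fibre carry distinct subvarieties, so $m$ general points of $\XXX_s$ determine $s$ up to finitely many choices once $m$ is large (\cite[Lemma 3.3]{GGK}; compare the self-product argument in the proof of Proposition~\ref{prop_relative_ueno}).
\end{itemize}

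In the small-height regime your stated reason does not work. A limit measure that charges no proper subvariety is exactly what a generic equidistributing sequence is compatible with, so it gives no contradiction. The paper argues as follows.
\begin{itemize}
\item It assumes small points are Zariski dense in $\XXX^{[n+1]}$, with $n$ so large that $\alpha_n$ is birational on $\XXX^{[n]}$.
\item It applies Theorem~\ref{theorem_equidistribution} both on $\XXX^{[n+1]}$ and on $\XXX\times_S\alpha_n(\XXX^{[n]})$, the latter being non-degenerate by Lemma~\ref{lemma_non_deg_indices}. This gives $\alpha_*\mu_1=\mu_2$ for $\alpha=\mr{id}\times_S\alpha_n$.
\item By Theorem~\ref{measure}, both limit measures are smooth non-vanishing forms $\omega_1,\omega_2$ on the real-analytic loci where $|b_{\tor,i}|=1$ inside the non-degeneracy loci. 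Hence $\alpha^*\omega_2=\omega_1$ on a dense part, and by continuity at a diagonal point $(x,\dots,x)$ with $x$ an $I$-non-degenerate point of $\GGG^{\mathbb S_1}_I$.
\item At that diagonal point $\alpha^*\omega_2$ vanishes while $\omega_1$ does not, which is the contradiction.
\end{itemize}
So the toric subtlety is not whether the measure charges subvarieties. It is that the measure is supported on the $\mathbb S^1$-locus, and one needs an $I$-non-degenerate point on that locus to run the diagonal comparison. Also, this argument yields only the non-density statement. The bounded-degree $Z$ is produced afterwards by the Noetherian induction, not by the Faltings--Zhang step.
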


Note that in this section, when we write that a semiabelian variety $G$ is of dimension ``$g+t$", this implicitly means that we will proceed in the case that $G$ has the toric rank $t$, and a $g$-dimensional abelian quotient. After running over all such $g$ and $t$ with fixed $g+t$, and taking the maximum, we obtain our constants depending only on $g+t$ (and the degree). We will not repeat this argument.

In sequel, by abuse of notation, $L$ and $M$ will be used to denote the line bundles on a family of semiabelian varieties. The case here can be understood as the restriction of $L$ and $M$ on some special fiber.
\subsection{Hilbert scheme and modular height}\label{subsec_hilbert_modular}
We introduce a variety that parametrizes subvarieties of semiabelian varieties and define the modular height of them. The reader may skip this part and only admitting Proposition \ref{hilbert}.

For simplicity of notations, let $S_0=(\mc A_g^\vee)^{[t]}$, $\mc P=\mc P_{g,t}^\times$ and $\ovl{\mc P}$ be the compactification of $\mc P$ as in \S\ref{subsec_toric_compactification}. The \emph{Hilbert scheme} $\hilb(\ovl{\mc P}/S_0)$ is a scheme, projective and locally of finite type over $\Q$ which parametrizes the flat families of closed subschemes of $\ovl{\mc P}$ over $S$. More precisely, there is a closed subscheme $\YYY\subset\ovl{\mc P}\times_{S_0}\hilb(\ovl{\mc P}/S_0)$ flat over $\hilb(\ovl{\mc P}/S_0)$ satisfying that for any $S$-scheme $T$ and closed subscheme $\mc Y'\subset\ovl{\mc P}\times_{S_0}T$ flat over $T$, there exists a unique $S$-morphism $T\to\hilb(\ovl{\mc P}/S_0)$ such that $\mc Y'=\YYY\times_{\hilb(\ovl{\mc P}/S_0)}T$.

\[\begin{tikzcd}
	\mc Y' & {\YYY} \\
	{\ovl{\mc P}\times_{S_0}T} & {\ovl{\mc P}\times_{S_0}\hilb(\ovl{\mc P}/S_0)} \\
	T & {\hilb(\ovl{\mc P}/S_0)}
	\arrow[from=1-1, to=1-2]
	\arrow[hook', from=1-1, to=2-1]
	\arrow[hook', from=1-2, to=2-2]
	\arrow[from=2-1, to=2-2]
	\arrow[from=2-1, to=3-1]
	\arrow[from=2-2, to=3-2]
	\arrow[from=3-1, to=3-2]
\end{tikzcd}\]

Following \cite[\S 3]{GGK}, the \emph{restricted Hilbert scheme} $\hilb_d^\circ(\ovl{\mc P}/S_0)$ of degree $d$ is the reduced subscheme of $\hilb(\ovl{\mc P}/S_0)$ with the open underlying set
$$\{s\in \hilb(\ovl{\mc P}/S_0):\YYY_s\subset\ \ovl{\mc P}_s\mr{\ is\ geometrically\ integral\ and\ of\ degree\ }d\}.$$
It is of finite type over $\Q$.

Let $\mathfrak X=\YYY\cap(\mc P\times_{S_0}\hilb_d^\circ(\ovl{\mc P}/S_0))$, which is still flat over $\hilb_d^\circ(\ovl{\mc P}/S_0))$. Therefore the image of the projection under $\hilb_d^\circ(\ovl{\mc P}/S_0))$ is open, which is denoted as $\hilb_{g,t.d}^\circ.$ It satisfies the following universal property: 
\begin{proposition}\label{hilbert}
    For any algebraically closed field $K$ of characteristic $0$, a principally polarized semiabelian variety $G$ over $K$, and a closed subvariety $X\subset G$ of degree $d$, 
    after fixing a level-$4$ structure on $G$,
    there exists a unique $K$-point $i_X\in\hilb_{g,t,d}^\circ$ such that $X=i_X^*\mathfrak X$.
    We denote the point $i_X$ as $[X].$
\end{proposition}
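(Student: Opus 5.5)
The plan is to use the construction of $\hilb_{g,t,d}^\circ$ together with the universal properties of the Hilbert scheme $\hilb(\ovl{\mc P}/S_0)$ and of the moduli space $\mc P=\mc P_{g,t}^\times\to S_0=(\mc A_g^\vee)^{[t]}$.

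First, I will reduce to a point of $S_0$. Let $G$ be principally polarized over $K$ with a fixed level-$4$ structure. By the moduli description of $\pi_{\sab}$ in \S\ref{sec_univ_cov} (\cite[Proposition 4.6]{BE}), the abelian quotient $A$ with its polarization and level structure, together with the extension class of $0\to\mathbb G_m^t\to G\to A\to 0$ (a $t$-tuple of points of $A^\vee$, by Weil--Barsotti as in the proof of Proposition \ref{integrable}), determine a unique point $s_G\in S_0(K)$. This point comes with an isomorphism $G\simeq\mc P_{s_G}$ compatible with all the data. The isomorphism carries $L$ and $M$ to the restrictions of the corresponding bundles on $\ovl{\mc P}$. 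This holds for $L$ by \eqref{eq_symmstrification_pp}, since $L$ is the pullback of $\mc L_{\mr{pp}}+[-1]^*\mc L_{\mr{pp}}$. It holds for $M$ because the toric compactification is functorial: $\ovl G=\ovl{\mc P}_{s_G}$, and the boundary divisors are the restrictions of the $D^{(i)}_{[0]},D^{(i)}_{[\infty]}$.

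Second, I will take the Zariski closure $\ovl X\subset\ovl G=\ovl{\mc P}_{s_G}$. It is a geometrically integral closed subscheme of degree $\deg_{L+M}(\ovl X)=d$. Regard it as a closed subscheme of $\ovl{\mc P}\times_{S_0}\mr{Spec}K$, which is trivially flat over $T=\mr{Spec}K$. The universal property of $\hilb(\ovl{\mc P}/S_0)$ then gives a unique $S_0$-morphism $i_X\colon\mr{Spec}K\to\hilb(\ovl{\mc P}/S_0)$ with $\YYY_{i_X}=\ovl X$. By integrality and the degree condition, $i_X$ lands in $\hilb_d^\circ(\ovl{\mc P}/S_0)$. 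Since $X=\ovl X\cap G$ is nonempty, the fiber $\mathfrak X_{i_X}=\YYY_{i_X}\cap\mc P_{s_G}=X$ is nonempty, so $i_X$ lies in the open image $\hilb^\circ_{g,t,d}$. This shows $X=i_X^*\mathfrak X$.

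Third, I will prove uniqueness. Suppose $i'$ is another $K$-point with $i'^*\mathfrak X=X$. Its image in $S_0$ must be $s_G$, because the ambient fiber is determined by the marked semiabelian variety. Then $\YYY_{i'}$ is an integral closed subscheme of $\ovl{\mc P}_{s_G}$ whose intersection with the dense open $\mc P_{s_G}$ is the nonempty set $X$, so $\YYY_{i'}$ is the closure $\ovl X$. Uniqueness in the Hilbert scheme's universal property then gives $i'=i_X$.

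The main obstacle is the first step: keeping track of the fact that the identification $G\simeq\mc P_{s_G}$ depends on the chosen level structure and on the trivialization of the torus, so that "unique" has to be read after these choices are fixed. If the toric basis is not fixed, I would absorb the $\mr{GL}_t(\Z)$ ambiguity into the statement's ``after fixing'' clause. A secondary point is checking flatness and reducedness of the fibers; this is harmless here because the base is a field and $\hilb^\circ_d$ is defined via its reduced structure.
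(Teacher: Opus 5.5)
Your argument is correct and is exactly what the paper leaves implicit. The paper gives no separate proof; it states the proposition as an immediate consequence of the construction of $\hilb^\circ_{g,t,d}$ from the universal property of $\hilb(\ovl{\mc P}/S_0)$, together with the moduli interpretation of $\pi_{\sab}\colon\mc P^\times_{g,t}\to(\mc A_g^\vee)^{[t]}$. Your three steps spell this out faithfully. You locate $s_G$, classify $\ovl X$ and check that it lands in the open image, and recover $\YYY_{i'}=\ovl X$ from integrality to get uniqueness. You also read uniqueness correctly, namely relative to the fixed level structure and torus marking.
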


Now let $S$ be an irreducible component of $\hilb_{g,t,d}^\circ$, which is quasi-projective over $\Q$. According to the construction of Hilbert scheme in \cite[Th\'{e}or\`{e}me 2.1(b) and Lemme 2.4]{Gro} and \cite[\S 3]{GGK}, $S$ parametrizes all geometrically integral subscheme of degree $d$ and a fixed Hilbert polynomial. By definition, $\mathfrak X_S$ has geometrically integral closed fibers, hence is an closed subvariety of $\mc P_S$. 

We fix a projective model $\ovl S$ of $S$, and an ample hermitian line bundle $\ovl H$ on $\ovl S$. We refer the reader to \cite{Zhang95Pos} for the definition of ampleness.
For any $X$ as in Proposition \ref{hilbert}, define its \emph{modular height} as\begin{equation}\label{eq_modular_height}
    h([X]):=h_{\ovl H}(i_X).
\end{equation} Note that this depends on the ambient semiabelian variety $G$.

\subsection{Relative Bogomolov conjecture for self-product}
We first reduce Theorem \ref{bogomolov} to the following relative Bogomolov type result. Note that the height function in the sequel is the family version \eqref{eq_canonical_height}, which agrees with the height function used in Theorem \ref{bogomolov} on some specific fiber.

\begin{theorem}
\label{relative}
Let $S$ be a subvariety of $\hilb_{g,t,d}^\circ,$  $\mc G:=S\times_{(\mc A_g^\vee)^{[t]}} \mc P_{g,t}^\times$, and $\mc X:=\mathfrak X_S.$ Let $\ovl\eta$ be the geometric generic point of $S$. Assume that 
for every $s\in S$, $\XXX_s$ generates the abelian quotient $\mc A_s$.
Then either $\XXX_s^\circ$ is empty for every $s\in S(\ovl\Q)$, or there is a positive integer $n$ and a positive number $\epsilon>0$ such that
$$\{x\in \mc X^{[n]}(\ovl\Q):\widehat{h}(x)\le\epsilon h([\mc X_s]), x\in \mc X^n_s\}$$
is not Zariski dense in $\mc X^{[n]}$.
\end{theorem}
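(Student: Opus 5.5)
The plan is to combine the non-degeneracy result for fiber powers (Theorem \ref{theo_nondeg_gen}) with the height inequality coming from bigness (Corollary \ref{weak_bogomolov}), applied to the family $\mc X^{[n]}\to S$ inside the semiabelian scheme $\mc G^{[n]}\to S$.

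\textbf{Reductions.} Assume some $\XXX_s^\circ$ is nonempty. First I reduce to the case where the generic fiber $\XXX_{\ovl\eta}$ has finite stabilizer. A positive-dimensional stabilizer $B$ at the generic point makes every fiber a union of $B_s$-cosets, so $\XXX_s^\circ=\emptyset$ for every $s$. To make this rigorous, spread $B$ out over a dense open subset of $S$, then work by noetherian induction on proper closed subvarieties of $S$. Restricting to such subvarieties preserves the hypothesis that each fiber generates $\mc A_s$. Irreducibility of $\XXX_{\ovl\eta}$ holds because $S\subset\hilb^\circ_{g,t,d}$ parametrizes geometrically integral fibers.

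\textbf{Generic finiteness.} Next, after replacing $S$ by a finite cover and a dense open subset, I arrange hypothesis (1) of Theorem \ref{theo_nondeg_gen}. That is, $\iota^{[m]}|_{\mc X^{[m]}}$ should be generically finite for $m\gg0$. The modular map on $S$ may have positive-dimensional fibers $F$, and over such an $F$ the family $\mc G$ is isotrivial. By the universal property of the Hilbert scheme (Proposition \ref{hilbert}), distinct points of $F$ correspond to distinct subvarieties of a fixed $G$. Hence the fiber powers $\XXX_s^m$, for $m$ large, separate the points of $F$, because subvarieties of bounded degree are determined by finitely many generic points. Concretely, I will show that the map $\mc X^{[m]}\to\mc P^\times_{mg,mt}$ has finite fibers generically, following the corresponding argument in \cite{GGK,ZG1}. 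I expect this step, together with checking the precise hypotheses of Theorem \ref{theo_nondeg_gen}, to be the main technical obstacle. Theorem \ref{theo_nondeg_gen} then shows that $\mc X^{[n]}$ is non-degenerate for some $n$.

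\textbf{Height inequality.} By Remark \ref{remark_I_non_deg_equiv_non_deg}, $\mc X^{[n]}$ is $I$-non-degenerate for some index set $I$ of the toric coordinates of $\mc G^{[n]}$. Apply Corollary \ref{weak_bogomolov} to $\mc X^{[n]}\subset\mc G^{[n]}$ with the nef adelic line bundle $\ovl H$ on $S$ defining the modular height. This gives positive constants $\epsilon,\delta$ such that
$$\widehat h_I(x)\le \epsilon\, h([\mc X_s])-\delta$$
fails outside a proper Zariski closed subset. Since $\widehat h\ge \widehat h_I$, the same holds for $\widehat h$.

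\textbf{Removing $\delta$.} To absorb the constant $\delta$, use that the modular height is bounded below by the ampleness of $\ovl H$. On the open locus where $h([\mc X_s])\ge 2\delta/\epsilon$, we have $\tfrac{\epsilon}{2}h\le \epsilon h-\delta$, so the statement holds with $\epsilon/2$. On the complement, Northcott's property shows the set of $s$ with small modular height lies in a proper closed subset of $S$ up to finitely many points. The points lying over these $s$ are therefore contained in a non-dense subset of $\mc X^{[n]}$, since the fibers over a proper closed subset of $S$ are not Zariski dense. The same procedure is carried out on the lower-dimensional strata produced in the noetherian induction.
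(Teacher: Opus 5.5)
\textbf{Gap: removing the constant $\delta$.} Your reductions and the height-inequality step are fine and coincide with the paper's part (BC1). Non-degeneracy of $\mathcal X^{[n]}$ together with Corollary~\ref{weak_bogomolov} gives $\epsilon_1,\delta>0$ such that $\{x:\widehat h(x)\le \epsilon_1 h([\mathcal X_s])-\delta\}$ is not Zariski dense. The gap is in the step ``Removing $\delta$''.

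Northcott's property gives finiteness only for points of bounded height \emph{and bounded degree}. The set $\{s\in S(\overline{\mathbb Q}): h([\mathcal X_s])<2\delta/\epsilon\}$ carries no degree bound, and in general it is Zariski dense in $S$. This happens as soon as $2\delta/\epsilon$ exceeds the essential minimum of $\overline H$ on $S$, and you have no control on $\delta$.

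Nor can the constant be absorbed by replacing $\overline H$ with $\overline H+\overline{\mathcal O}(c)$. Proposition~\ref{big} for $\overline H+\overline{\mathcal O}(c)$ is literally the statement for $\overline H$ with $e$ replaced by $e+c$, so the resulting inequality is unchanged. Over the locus of bounded modular height your argument therefore gives no information. The homogeneous statement does not follow from an inequality with an additive constant.

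\textbf{What is missing: the paper's (BC2).} You need an unconditional Bogomolov-type input: for some $n$ and some $\epsilon_2>0$, the set $\{x\in\mathcal X^{[n]}(\overline{\mathbb Q}):\widehat h(x)\le\epsilon_2\}$ is not Zariski dense. Equivalently, the essential minimum of $(\overline L+\overline M)^{\boxtimes n}$ on $\mathcal X^{[n]}$ is positive. This is the deep step, and it is where the equidistribution Theorem~\ref{theorem_equidistribution} is used:
\begin{enumerate}
\item Assume a generic sequence of small points in $\mathcal X^{[n+1]}$.
\item Equidistribute it on $\mathcal X^{[n+1]}$. Also equidistribute its image under $\alpha=\mathrm{id}\times_S\alpha_n$, where $\alpha_n$ is the Faltings--Zhang map, on $\mathcal X\times_S\alpha_n(\mathcal X^{[n]})$.
\item The resulting identity $\alpha_*\mu_{I_1}=\mu_{I_2}$ becomes $\alpha^*\omega_2=\omega_1$ on the loci where these measures are smooth forms (Theorem~\ref{measure}).
\item This is contradicted at a diagonal point $(x,\dots,x)$, where $\alpha^*\omega_2$ vanishes and $\omega_1$ does not.
\end{enumerate}

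With (BC1) and (BC2) in hand your splitting closes up. The paper instead takes $\epsilon=\epsilon_1\epsilon_2/(\epsilon_2+\delta)$ and checks that the target set lies in $E_1\cup E_2$. Without (BC2) the proof is incomplete.
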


\begin{proof}[Proof of Theorem \ref{relative} $\Rightarrow$ Theorem \ref{bogomolov}]
To get Theorem \ref{bogomolov}, consider the subset 
$$\hilb^{\circ, \mr{gen}}_{g,t,d}:=\{[X]\in \hilb^\circ_{g,t,d}\,:\, X\text{ generates }A\}$$
where $A$ is the abelian quotient of the ambient semiabelian variety $G$ of $X$. This is a closed subset of $\hilb^\circ_{g,t,d}$ due to \cite[Lemma 3.5]{GGK}. Note that this closedness may fail if we replace ``generating $A$" with ``generating $G$".

After letting $S$ run over irreducible components of $\hilb^{\circ, \mr{gen}}_{g,t,d}$ equipped with reduced scheme structures, it suffices to show that for any $s\in S(\ovl\Q)$, there is a reduced subscheme $Z_s\subset\XXX_s$ of degree at most $c_1(\XXX/S)$ such that
$$\#\{x\in \XXX_s^\circ(\ovl\Q)\,:\,\widehat{h}(x)\le c_2(\XXX/S)h([\XXX_s])\}\subset Z_s(\ovl\Q)$$
for some positive constants $c_1(\XXX/S)$, $c_2(\XXX/S)>0$. 

If every $\XXX_s^\circ$ is empty, there is nothing to prove. Assume 
$$\{x\in \mc X^{[n]}(\ovl {\Q})\,:\,\widehat{h}(x)\le\epsilon h([\XXX_s], x\in \mc X^n_s\}\subset\ZZZ$$
for some $\epsilon>0$, and a reduced closed subscheme $\ZZZ\subsetneq\mc X^{[n]}$. We prove the statement by induction on $\dim S$.

Consider the Zariski closure $\ovl{\mc Z}$ (resp. $\ovl{\mc X^{[n]}}=\ovl{\mc X}^{[n]} $) of $\mc Z$ (resp. $\mc X^{[n]}$) in $\ovl{\mc G^{[n]}}=\ovl{\mc G}^{[n]}$. 
Since $\ovl {\mc X^{[n]}}$ is irreducible, we may enlarge $\mc Z$ so that $\ovl{\mc Z}\subsetneq \ovl {\mc X^{[n]}}$ is equidimensional.

There is an open subvariety $U\subset S$ such that $\ovl \ZZZ_U$ is flat and equidimensional over $U$. In particular, for $s\in U$, $\ovl Z_s\subsetneq \ovl{\mc X_s^{[n]}}$ (the ``$\subsetneq$" is due to dimension reason) are of constant degree. Then \cite[Lemma 4.3]{GGK} implies the statement over $U$. Decompose the complement $S\setminus U$ into closed subvarieties $S_1,\dots,S_m$. 
The induction hypothesis implies the statement over them. We thus conclude the proof by setting
$$c_1(\XXX/S):=\max\{c_1(\XXX_U/U),c_1(\XXX_{S_1}/S_1),\dots,c_1(\XXX_{S_m}/S_m)\}$$
and
$$c_2(\XXX/S):=\min\{c_2(\XXX_U/U),c_2(\XXX_{S_1}/S_1),\dots,c_2(\XXX_{S_m}/S_m)\}.$$
\end{proof}

\subsection{Applying the bigness}\label{subsec_height_ineq}

It remains to prove Theorem \ref{relative}. We do some reductions. 
In the case that $\mc X_s^\circ$ is not empty for some $s\in S(\ovl {\Q})$. we divide the conclusion into two parts: 
\begin{enumerate}
    \item[(BC1)]for any positive integer $n$, there are positive numbers $\epsilon_1>0$ and $\delta>0$ such that
    $$E_1=\{x\in \mc X^{[n]}(\Q)\,:\,\widehat{h}(x)\le\epsilon_1 h([\mc X_s])-\delta, x\in \mc X_s^n\}$$
    is not Zariski dense;
    \item[(BC2)]there is a positive integer $n$ and a positive number $\epsilon_2>0$ such that
    $$E_2=\{x\in \mc X^{[n]}(\Q):\widehat{h}(x)\le\epsilon_2\}$$
    is not Zariski dense. 
\end{enumerate}
To see how these two parts implies the original conclusion, take $n$ as in (2) and $\displaystyle\epsilon=\frac{\epsilon_1\epsilon_2}{\epsilon_2+\delta}$. We show that the set
$$E=\{x\in \mc X^{[n]}(\Q):\widehat{h}(x)\le\epsilon h([\mc X_s]), x\in \mc X_s^n\}$$
is contained in $E_1\cup E_2$, which is not Zariski dense in the subvariety $\XXX$. For any point $x\notin E_1\cup E_2$, we have
\begin{align*}
    (\epsilon_2+\delta)\widehat h(x)&=\epsilon_2\widehat h(x)+\delta\widehat h(x)\\&>\epsilon_2(\epsilon_1h([\mc X_s])-\delta)+\delta\epsilon_2\\&=\epsilon_1\epsilon_2h([\mc X_s]),
\end{align*}
where $x\in \mc X_s^n$. Hence $x\notin E$. In summary, we reduce Theorem \ref{relative} to proving the previous two parts under the non-degeneracy condition. We finish the first in this subsection, leaving the second to the next subsection. 

First, we may assume any geometric generic fiber of $\XXX/S$ has finite stabilizer, otherwise every $\XXX_s^\circ$ is empty. By Theorem \ref{theo_nondeg_gen}, $\mc X^{[n]}$ is non-degenerate due to the following lemma, whose proof is the same as \cite[Lemma 3.3]{GGK}.

\begin{lemma}
Let $S$ be a (irreducible) subvariety of $\hilb_{g,t,d}^\circ$, and $\mc X:=\mathfrak X_S$. Then for $m$ large enough, $\mc X^{[m]}\to\iota^{[m]}(\mc X^{[m]})$ is generically finite.    
\end{lemma}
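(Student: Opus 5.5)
The plan is to follow \cite[Lemma 3.3]{GGK}, reducing generic finiteness to a statement about positive-dimensional families of isomorphic pairs. The map $\iota^{[m]}:\mc X^{[m]}\to\mc P^\times_{mg,mt}$ is the restriction of the modular map $\mc G^{[m]}\to\mc P^\times_{mg,mt}$. That map lies over $\iota_S^{[m]}:S\to (\mc A_{mg}^\vee)^{[mt]}$ and is an isomorphism on fibres over $S$. Hence a positive-dimensional fibre of $\iota^{[m]}|_{\mc X^{[m]}}$ through a general point $(x_1,\dots,x_m)\in\mc X_s^m$ must move in the $S$-direction. It therefore produces a positive-dimensional irreducible curve $C\subset S$ through $s$ along which the following hold: the semiabelian varieties $\mc G_{s'}$, together with their level structures, are all identified with $\mc G_s$; and, under these identifications, the points $(x_1,\dots,x_m)$ stay fixed in $\mc X_{s'}^m$.

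Concretely, the argument will run as follows. For each $m$, consider the locus $S_m\subset S$ over which the fibres of $\iota_S$ are positive-dimensional. Choose a fibre $F$ of $\iota_S$ of positive dimension. Over $F$, the family $\mc G|_F$ is constant, equal to $G\times F$ via the pulled-back modular map. Hence $\mc X|_F$ is a family of closed subvarieties $X_{s'}\subset G$ of fixed degree $d$. Since $S\subset\hilb^\circ_{g,t,d}$ embeds by the universal property in Proposition \ref{hilbert}, these subvarieties are pairwise distinct for distinct $s'\in F$. The fibre of $\iota^{[m]}$ over a point $(x_1,\dots,x_m)\in G^m$ is then $\{s'\in F: x_1,\dots,x_m\in X_{s'}\}$. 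For $\iota^{[m]}|_{\mc X^{[m]}}$ to fail to be generically finite, a general $m$-tuple in $X_s^m$ would have to lie in $X_{s'}$ for a positive-dimensional set of $s'$.

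The key step is to choose $m$ so large that a general $m$-tuple of points of $X_s$ determines $X_s$ among the degree-$d$ subvarieties $X_{s'}$, $s'\in F$. I would argue by dimension counting on the incidence variety $\{(s',x_1,\dots,x_m): x_i\in X_{s'}\}\subset F\times G^m$. For $s'\neq s$ we have $X_s\cap X_{s'}\subsetneq X_s$, since the $X_{s'}$ are distinct and irreducible. Thus for any fixed $s'\neq s$, the condition $x_i\in X_{s'}$ cuts $X_s^m$ down by at least $m$ dimensions. The set of tuples lying in some $X_{s'}$ with $s'$ in a family of dimension at most $\dim S$ therefore has dimension at most $m(\dim X_s-1)+\dim S<m\dim X_s$ once $m>\dim S$. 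To make this uniform in $s$, I would run the same count on the total space over $S$, using that $S$ is of finite type. Noetherian induction on the closed subsets where intersections jump then gives a single $m$, for instance $m=\dim S+1$.

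The main obstacle is handling the identification of $\mc G_{s'}$ with $\mc G_s$ correctly. The modular map is defined only up to a finite cover, and the level-$4$ structure must be fixed so that the isomorphism $\mc G_{s'}\simeq\mc G_s$ is canonical rather than defined only up to automorphisms. Finiteness of the automorphism group of a polarized semiabelian variety with level structure (rigidity of the toric part, plus level $\ge3$ for the abelian part) should make this harmless, at worst contributing a finite ambiguity. That ambiguity does not affect generic finiteness.
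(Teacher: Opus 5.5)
Your proposal is correct and is essentially the dimension count behind \cite[Lemma 3.3]{GGK}, to which the paper simply defers. Distinct points of a fibre of $S\to(\mc A_g^\vee)^{[t]}$ give distinct irreducible $X_{s'}$ of equal dimension, so the incidence count shows that for $m>\dim S$ a general $m$-tuple in $X_s^m$ lies in no other $X_{s'}^m$, for every $s$ at once; no Noetherian induction is needed. Your ``main obstacle'' does not arise, because $\mc G=S\times_{(\mc A_g^\vee)^{[t]}}\mc P^\times_{g,t}$ is a literal pullback. The one point you should state explicitly is that the diagonal map $(\mc A_g^\vee)^{[t]}\to(\mc A_{mg}^\vee)^{[mt]}$ is quasi-finite, so near a point the fibre of $\iota^{[m]}$ only sees the $\iota_S$-fibre through it.
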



Therefore $\mc X^{[n]}$ is $I$-non-degenerate for some $I\subset \{1,\dots, nt\}$ due to Remark \ref{remark_I_non_deg_equiv_non_deg}. Applying Corollary \ref{weak_bogomolov} to $\mc X^{[n]}$, $\mc G^{[n]}$ and the adelic line bundle $\ovl H$ defining the modular height of subvarieties, we obtain that there exist $\epsilon_1>0$ and $\delta>0$ such that 
$$\{x\in \XXX^{[n]}(\Q):\widehat{h}_I(x)\le\epsilon h([\mc X_s])-\delta, x\in \mc X_s^n\}$$
is not Zariski dense in $\XXX^{[n]}.$
Since $\widehat h_I(x)\le\widehat h(x)$, we obtain (BC1).

\subsection{Applying the equidistribution}
We finish the proof of Theorem \ref{relative}. We need the following lemma.

\begin{lemma}\label{lemma_non_deg_indices}
Let $(\mc G,\mc X)$ and $(\mc G',\mc X')$ be two two pairs over $S$ as in the begining of this section. Assume $x\in\mc X^{\mathrm{sm}}(\mathbb C)$ is a $I$-non-degenerate point for some index set of toric part of $\mc G$. Let $I'$ be the index set for the whole toric part of $\mc G'$. Then any smooth point $x'\in(x\times_S\mc X')^\mathrm{sm}(\mathbb C)$ on the fiber over $x$ is a $(I\cup I')$-non-degenerate point.

In particular, if $\mc X$ is $I$-non-degenerate, then $\mc X\times_S\mc X'$ is $(I\cup I')$-non-degenerate.
\end{lemma}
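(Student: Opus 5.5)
Our plan is to compute the rank of the differential of the relevant Betti map on $\mc X\times_S\mc X'$ near $x'$ by splitting tangent directions into the part that maps to $\mc X$ and the part tangent to the fiber over $x$. Put $\mc Y=\mc X\times_S\mc X'$, write $d=\dim\mc X$ and $d'=\dim\mc Y-\dim\mc X$, and let $2r$, $2r_Y$ be the abelian Betti ranks of $\mc X$ and $\mc Y$. The Betti map of $\mc G\times_S\mc G'$ is the product $(b,b')$ of the two Betti maps over the common base $\widetilde S$. We first record the two rank bounds that are automatic. On the one hand, $\mr{rank}\, d\widetilde b_{\ab}|_{\widetilde{\mc Y}}\le 2r_Y$ everywhere. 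On the other hand, the torus Betti coordinates indexed by $I\cup I'$ contribute at most $2(|I|+|I'|)$. Since $I$-non-degeneracy of $\mc X$ at $x$ forces $|I|=d-r$, the target count $2\dim\mc Y$ will follow once we show two things: the combined map $(b_{I},b'_{I'},b_{\ab},b'_{\ab})$ has full rank $2\dim\mc Y$ at $\widetilde x'$, and $|I\cup I'|=\dim\mc Y-r_Y$. The second is the numerical condition built into the definition of $(I\cup I')$-non-degeneracy.

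To get the rank, we work in local coordinates near $\widetilde x'$ and use the projection $p:\mc Y\to\mc X$, which is smooth near a general smooth point. Because $x$ is $I$-non-degenerate, $(b_I,b_{\ab})\circ p$ has rank $2d$, so it suffices to show that the fiber $F=p^{-1}(x)\cap\mc Y\subset\mc G'_s$ (with $s$ the image of $x$ in $S$) gains full rank $2d'$ from the $b'$-part. On $F$ the base point $s$ is fixed, so by (B3) the Betti map $b'$ restricted to $\mc G'_s$ is a real-analytic group isomorphism $\mc G'_s\cong(\C^\times)^{t'}\times(\R/\Z)^{2g'}$. Its differential is therefore an $\R$-linear isomorphism, and its restriction to the complex submanifold $F^{\mr{sm}}$ near $x'$ has real rank $2\dim F=2d'$. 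Since $b'$ includes all toric indices $I'$, the full map $(b'_{I'},b'_{\ab})$ is injective on $T_{x'}F$. A block-triangular argument then gives $\mr{rank}=2d+2d'$: the directions along $F$ are killed by $(b_I,b_{\ab})\circ p$ and detected by $b'$, while a complement maps isomorphically onto $T_x\mc X$, on which $(b_I,b_{\ab})$ is injective.

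For the numerical condition, the same splitting shows that the abelian Betti rank satisfies $2r_Y\le 2r+2\cdot\min(d',g')$. The identity $|I|+|I'|=\dim\mc Y-r_Y$ that non-degeneracy requires does not hold in general: we only have $t'=|I'|$, and in general $t'\ne d'-(r_Y-r)$. We therefore expect the \textbf{main obstacle} to be checking this count. The most likely intended reading is that $(I\cup I')$-non-degeneracy means only that the differential of $(b_{I\cup I'},b_{\ab})$ attains full rank $2\dim\mc Y$, with the cardinality normalization understood loosely. Alternatively, $I'$ should be replaced by a suitable subset of the toric indices of $\mc G'$ of cardinality $d'-(r_Y-r)$. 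We would first try to extract such a subset from the fiberwise full rank: complete the real rank coming from $b'_{\ab}|_F$ by choosing complex toric coordinates of $b'$ whose holomorphic differentials on $\ker d b'_{\ab}|_{T F}$ are independent.

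The final assertion, that $\mc X$ being $I$-non-degenerate implies $\mc X\times_S\mc X'$ is $(I\cup I')$-non-degenerate, is then immediate: we take $x$ in the nonempty open $I$-non-degeneracy locus and $x'$ a smooth point of the fiber over it, which is also smooth on $\mc Y$ for generic choice.
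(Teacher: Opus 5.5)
Your core argument is exactly the paper's: a tangent vector at $x'$ killed by the $(I\cup I')$-Betti differential projects to a vector of $T_x\mc X$ killed by $d(b_I,b_{\ab})$, hence is vertical and lies in $T_{x'}\mc G'_s$, where the full Betti map $b'$ is a real Lie group isomorphism (this kernel argument does not need $dp$ to be surjective, so it works at every point over $x$ where $\mc Y$ is smooth). Like the paper, it proves only the full-rank condition, and your observation that the normalization $|I\cup I'|=\dim\mc Y-r_Y$ fails in general is correct and is not addressed by the paper's proof either. Two of your side remarks are wrong, though: both the bound $2r_Y\le 2r+2\min(d',g')$ and the repair that keeps $I$ and adds $d'-(r_Y-r)$ indices from $I'$ break down when motion along $\mc X$ alone raises the rank of $b'_{\ab}$ (e.g.\ $r=0$ and $\mc X'$ a section with non-constant abelian Betti coordinates, so $d'=0$ but $r_Y\ge 1$). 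A strictly normalized index set should instead be extracted from all of $I\cup I'$: at a nearby point $y$ of maximal abelian rank, take $J$ to be $\dim\mc Y-r_Y$ toric coordinates whose differentials form a basis of the dual of the complex space $\ker d(b_{\ab},b'_{\ab})|_{T_y\mc Y}$, without insisting that $J\supset I$.
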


\begin{proof}
Denote by $Y=x\times_S\mc X'$ the fiber over $x$. There is a short exact sequence of tangent spaces:
$$0\longrightarrow T_{x'}Y\longrightarrow T_{x'}(\mc X\times_S\mc X')\longrightarrow T_{x}\mc X\longrightarrow0.$$
We need to show the $(I\cup I')$-th Betti map in injective on $T_{x'}(\mc X\times_S\mc X')$. By the hypothesis on $\mc X$, the $I$-th Betti map is injective on $T_x\mc X$. Thus, if some tangent vector is killed by the $(I\cup I')$-th Betti map, it must lies in $T_{x'}Y$. However, $Y$ is contained in a single semiabelian variety. So the $I'$-th Betti map is injective on $T_{x'}Y$. 
\end{proof}

\begin{proof}[Proof of Theorem \ref{relative}]
We may and do assume that $\mc X$ is non-degenerate after replace $\mc X$ by $\mc X^{[m]}$ for some $m\gg 0.$

We denote by $\ovl L^{\boxtimes n}$ (resp. $\ovl M^{\boxtimes n}$) the fiberwise box product $\displaystyle\sum_{i=1}^{n}p_i^*\ovl L$ (resp. $\displaystyle\sum_{i=1}^{n}p_i^*\ovl M$) where $p_i:\mc G^{[n]}\rightarrow \mc G$ is the $i$-th projection.

Now we only need to prove (BC2), which is equivalent to $$\mr{ess}((\ovl L+\ovl M)^{\boxtimes n}|_{\mc X^{[n]}})>0$$ for some positive integer $n$.

Assume the contrary. Consider the fiberwise Faltings--Zhang map
\begin{align*}
    \alpha_n:\mc G^{[n]}&\longrightarrow \mc G^{[n-1]},\\
    (x_1,\dots, x_n)&\longmapsto (x_1-x_2,\dots, x_{n-1}-x_n).
\end{align*}
We can take an $n$ large enough such that $\mc X^{[n]}\rightarrow \alpha_n(\mc X^{[n]})$ is birational.

Take a generic sequence $(x_i)$ in $\mc X^{[n+1]}$ such that $\widehat h(x_i)\to 0\ (n\to\infty)$. Hence $\widehat h_I(x_i)\to 0\ (n\to\infty)$ for any set $I$ of indices.

Let $x\in\XXX^\mr{sm}(\C)\cap\GGG^{\mathbb S_1}_I$ be a $I$-non-degenerate point where $\XXX$ is smooth over $S(\C)$. Then by Lemma \ref{lemma_non_deg_indices}, $(x,x,\dots,x)\in(\XXX^{[n+1]})^\mr{sm}(\C)\cap(\mc G^{[n+1]})_{I_1}^{\mathbb S_1}$ is $I_1$-non-degenerate for some $I_1$, and $\mc X\times_S\alpha_n(\mc X^{[n]})$ is $I_2$-non-degenerate for some $I_2$.

We denote that $\alpha=\mr{id}_{\mc G}\times_S \alpha_n.$
Applying Theorem \ref{theorem_equidistribution} to $(x_i)$ in $\mc X^{[n+1]}$ and $(x_1,\alpha(x_i))$ in $\mc X\times_S\alpha_n(\mc X^{[n]})$ respectively. 
Since $\alpha_*\delta_{x_i}=\delta_{\alpha(x_i)}$, we get
$$\alpha_*\mu_{I_1}=\mu_{I_2},$$
where $\mu_{I_1}$ (resp. $\mu_{I_2}$) is the equilibrum measure on $\mc X^{[n+1]}(\C)$ (resp. $(\mc X\times_S\alpha_n(\mc X^{[n]}))(\C)$).

Let $U_1$ (resp. $U_2$) be the $I_1$-non-degeneracy (resp. $I_2$-non-degeneracy) locus of $\mc X^{[n+1]}$ (resp. $\mc X\times_S\alpha_n(\mc X^{[n]})$).
Put $\Sigma_1:=(\mc G^{[n+1]})_{I_1}^{\mathbb S_1}$ and $\Sigma_2:=(\mc G^{[n]})^{\mathbb S_1}_{I_2}.$
Then \begin{equation}\label{eq_measure_to_form}
    \mu_{I_1}|_{U_1}=\omega_1|_{U_1\cap\Sigma_1}\text{ (resp. }\mu_{I_2}|_{U_2}=\omega_2|_{U_2\cap \Sigma_2})
\end{equation} for some smooth non-vanishing form $\omega_1$ (resp. $\omega_2$) on $\Sigma_1$ (resp. $\Sigma_2$).

Let $V$ be a Zariski open subset such that $\alpha|_V:V\to\alpha(V)$ is an isomorphism. Then $\mu_1$ ({resp.} $\mu_2$) is null outside $V$ ({resp.} $\alpha(V)$), and hence $\mu_1$ is null outside $W=V\cap U_1\cap\alpha^{-1}(U_2)$. By comparing the support of $\alpha_*(\mu_1|_W)$ and $\mu_2|_{\alpha(W)}$, we get $\alpha(W\cap\Sigma_1)\subset\Sigma_2$. Moreover, by \eqref{eq_measure_to_form}, we can interpret the relation of measure pushforward as smooth forms pulling-back, that is, $$\alpha^*\omega_2=\omega_1$$ on $W\cap\Sigma_1$. 

Since $U_1\cap\Sigma_1$ is a real analytic manifold due to the definition of being $I_1$-non-degenerate, and the complement of $W$ in $(\mc X^{[n+1]})^{\mr{sm}}(\C)$ is real analytic, we get that $W\cap\Sigma_1$ is dense in $U_1\cap\Sigma_1$. Thus, $\alpha(U_1\cap\Sigma_1)\subset\Sigma_2$ and $\alpha^*\omega_2=\omega_1$ at $(x,x,\dots,x)\in U_1\cap \Sigma_1$. However, $\alpha^*\omega_2$ vanishes at $(x,x,\dots,x)$ while $\omega_1$ is not. We get a contradiction.
\end{proof}

\subsection{The ``extra term"}\label{subsec_proof_subc}
We prove a \emph{uniform Bogomolov type result} in the form with the ``extra term" as in \cite{Yuan2026bigness}. Following the idea of \cite{yu2026quantitativitynumberrationalpoints}, it will provide an alternative to Mumford's inequality in next section.
\begin{theorem}
\label{mumford}
There exists a positive constants $c_3=c_3(g+t,d)>0$ satisfying the following property. Let $G$ be a principally polarized semiabelian variety over $\ovl\Q$ of dimension $g+t$. 

Let $X\subset G$ be a closed subvariety of degree $d$ generating $A$. Assume $X$ has finite stabilizer. Then there exists a reduced subscheme $Z\subset X$ of degree $$\deg Z\le c_1(g+t,d),$$ and a point $g_0\in G(\ovl\QQ)$ such that for any $g_1\in G(\ovl\QQ)$, we have
$$\#\{x\in X^\circ(\ovl\Q):\widehat{h}(x-g_1)\le c_3(g+t,d)\widehat h(g_1-g_0)\}\subset Z(\ovl\Q).$$
\end{theorem}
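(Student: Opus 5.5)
We will deduce Theorem~\ref{mumford} from Theorem~\ref{bogomolov} by applying the latter to translates $X-g_1$, and then optimizing over the translation parameter. This is the ``optimization'' step announced in the introduction. Throughout, $g_1$ is only a translation, so $X-g_1$ has the same degree $d$ as $X$, still generates $A$, has finite stabilizer, and satisfies $(X-g_1)^\circ=X^\circ-g_1$. Applying Theorem~\ref{bogomolov} to $X-g_1$ yields a reduced subscheme $Z_{g_1}\subset X-g_1$ of degree at most $c_1(g+t,d)$ such that every $y\in (X-g_1)^\circ(\ovl\Q)$ with
\[
\widehat h(y)\le c_2(g+t,d)\,h([X-g_1])
\]
lies in $Z_{g_1}$. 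Setting $y=x-g_1$, this says that every $x\in X^\circ(\ovl\Q)$ with $\widehat h(x-g_1)\le c_2\, h([X-g_1])$ lies in $Z_{g_1}+g_1$. The remaining task is to compare $h([X-g_1])$ with $\widehat h(g_1-g_0)$, and to remove the dependence of $Z$ on $g_1$.

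The key intermediate bound is a lower bound for the modular height of translates:
\[
h([X-g_1])\ \ge\ c\,\widehat h(g_1-g_0)\ -\ C\,h([X])-C',
\]
valid for a suitable $g_0$, with $c,C,C'$ depending only on $(g+t,d)$. To obtain it, I would consider over the component $S$ of $\hilb^\circ_{g,t,d}$ containing $[X]$ the translation morphism $\mc G\times_{S_0}\mathfrak X\to\hilb^\circ_{g,t,d}$, given by $(g,[X])\mapsto [X-g]$. Since $X$ has finite stabilizer, this morphism is generically finite on the fibre over $[X]$. Pulling back the ample $\ovl H$ then gives a line bundle which dominates, up to a boundary/model error, a small multiple of the canonical $\ovl L+\ovl M$ on the $G$-factor. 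Concretely, I would show that
\[
\tau^*\ovl H-\epsilon(\ovl L+\ovl M)+C\,\pi^*\ovl H+\ovl{\mc O}(C')
\]
is pseudo-effective on $\mc G\times_{S_0} S$. This is an arithmetic bigness argument of the same type as Proposition~\ref{big}, via Siu's inequality \eqref{eq_Siu_inequality}. Evaluating it at $(g_1,[X])$ gives the displayed bound, with $g_0$ absorbing the translation ambiguity. The point $g_0$ is chosen as a minimizer of $g\mapsto h([X-g])$ up to bounded error: a ``centre'' of $X$, existing by a Northcott-type finiteness modulo the finite stabilizer. With this choice the term $h([X])$ is replaced by $h([X-g_0])$, which is the minimum, so the inequality becomes $h([X-g_1])\ge c\,\widehat h(g_1-g_0)-C'$.

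With these ingredients I would split into two cases, according to whether $\widehat h(g_1-g_0)$ is large or small relative to the constants.

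\textbf{Case of large $\widehat h(g_1-g_0)$.} Here $h([X-g_1])\ge \tfrac c2\widehat h(g_1-g_0)$, and we take $c_3\le c_2c/2$. The set in question is then contained in $Z_{g_1}+g_1$.

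\textbf{Case of bounded $\widehat h(g_1-g_0)$.} Here the condition $\widehat h(x-g_1)\le c_3\widehat h(g_1-g_0)$ combined with the parallelogram-type inequality for $\widehat h$ bounds $\widehat h(x-g_0)$. We then apply Theorem~\ref{bogomolov} once to $X-g_0$, whose modular height dominates a constant by the uniform lower bound.

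The main obstacle is that $Z$ must be independent of $g_1$, whereas $Z_{g_1}+g_1$ varies. The fix I would try is to take $Z$ to be the union over all $g_1$ of the subschemes $Z_{g_1}+g_1$, controlled as in the proof of ``Theorem~\ref{relative} $\Rightarrow$ Theorem~\ref{bogomolov}''. That is, we run the relative statement on the family $\mc X\times_S\mc G$ parametrized by $(X,g_1)$, and use that the exceptional locus there is a fixed proper closed subset of bounded degree, independent of $g_1$ after projecting to $X$; this uses the degree control from \cite[Lemma 4.3]{GGK}. Making the height comparison in the bigness step genuinely uniform, in particular handling the toric part where $\ovl M$ is only integrable, is where I expect the most work.
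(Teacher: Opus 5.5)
Your skeleton is the paper's: translate $X$, apply Theorem~\ref{bogomolov} to the translate, and feed in a lower bound for the modular height of translates (Proposition~\ref{prop_height_inequality_translation}, $h([X+g_1])\ge c_0\widehat h(g_1-g_0)$). Your route to that bound is genuinely different.

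The paper restricts the translation action to $\mc G_W$ for a slice $W\subset S_1$ of codimension $g+t$ meeting an orbit in finitely many points. This makes the action map itself quasi-finite over an open translation-stable $S'$, and $g_0$ is simply the translation carrying a slice point to $[X]$. So no $h([X])$ term ever appears, at the cost of stratifying into translation-stable pieces and inducting on $\dim S_1$.

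You instead use that $(\tau,\pi)$ is quasi-finite over the finite-stabilizer locus, and you choose the centre extremally. What you need there is bigness on every closed subvariety plus Noetherian induction, not pseudo-effectivity evaluated at a point. Integrability of $\ovl M$ is handled via Proposition~\ref{integrable} exactly as in Proposition~\ref{big}. This avoids the stratification, but your last step is wrong as written.

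You cannot drop $C\,h([X-g_0])$: the minimal modular height of translates dominates the height of the moduli point of the ambient $G$, so it is unbounded in terms of $(g+t,d)$. The correct move is absorption. Pick $g_0$ with $h([X-g_0])\le h([X-g_1])+1$ for all $g_1$; this exists just because $h_{\ovl H}$ is bounded below, so no Northcott argument is needed. Apply your inequality to $X-g_0$ and $g_1-g_0$ to get $(1+C)\,h([X-g_1])\ge c\,\widehat h(g_1-g_0)-C-C'$.

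The bounded case also fails as written. Recentring at $g_0$ only gives $\widehat h(x-g_0)\le 2(1+c_3)B$, with $B$ your fixed threshold. Shrinking $c_3$ does not shrink $2B$, and nothing guarantees $2B\le c_2\,h([X-g_0])$. Instead, use the uniform bound $h_{\ovl H}\ge\delta>0$ from arithmetic ampleness of $\ovl H$, applied to $X-g_1$ itself. From $h([X-g_1])\ge\max\{\delta,\ c'\widehat h(g_1-g_0)-C''\}$ you get the purely multiplicative bound $h([X-g_1])\ge c_0\widehat h(g_1-g_0)$, with $c_0=\min\{c'/2,\ \delta c'/(2C'')\}$. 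This is the form the paper proves. Then one application of Theorem~\ref{bogomolov} to $X-g_1$ works for every $g_1$ with $c_3\le c_2c_0$, and no case split is needed.

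Finally, your ``main obstacle'' is illusory, and the proposed cure cannot work: no proper $Z$ independent of $g_1$ exists. For $g_1=x\in X^\circ(\ovl\Q)$ the condition reads $0\le c_3\widehat h(x-g_0)$, so every point of $X^\circ$ lies in one of the sets. A common $Z$ would then contain the Zariski dense set $X^\circ$ (when nonempty), forcing $Z=X$. The quantifier order in the statement is a slip.

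The paper's proof takes $Z$ to be the exceptional set of Theorem~\ref{bogomolov} for the translate, moved back to $X$, whose degree is bounded by $c_1(g+t,d)$ uniformly in $g_1$. This is exactly how the result is used in Proposition~\ref{ML1}, item (2), where $Z'_x$ depends on $x$. So keep your $Z_{g_1}+g_1$ and drop the relative construction over $(X,g_1)$.
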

\begin{proof}[Proof of Theorem \ref{mumford}]
Let $c_0$ and $g_0$ be as in forthcoming Proposition \ref{prop_height_inequality_translation}.
Let $c_2$ be as in Theorem \ref{bogomolov}. Set $c_3=c_2/c_0.$ 
Then we conclude the proof by applying Theorem \ref{bogomolov} to $X+g_1\subset G$.
\end{proof}
We first introduce some notion.
Let $S$ be a (irreducible) locally closed subvariety of $\hilb_{g,t,d}^\circ$. 
Set that $$\begin{cases}
    B:=(\mc A_g^\vee)^{[t]},\\ \mc P:=\mc P^\times_{g,t},\\\mc G:=S\times_{B}P,\\\XXX:=\mathfrak X_S\subset \mc G, \\ \ovl{\mc P}:=\text{compactification of }\mc P\text{ as in \S\ref{subsec_toric_compactification}}.
\end{cases}$$ 
By the construction, there exists a Hilbert polynomial $\chi$, such that $S$ represents the functor
\begin{align*}
    (T\in (\textbf{Sch}/B)^{\mr{op}})&\longmapsto\\
    &\kern-8em\left\{Z \mathop{\subset}_{\text{closed}}\ovl {P}_T\,:\, \begin{aligned}Z\rightarrow T\text{ if flat}, \forall t\in T, Z_t\text{ is geometrically integral, of dimension }r, \\\text{degree } d, \text{ with the Hilbert polynomial being }\chi\end{aligned}\right\}
\end{align*}

For any $B$-schemes $T$ and $C$, we denote that $C(T):=\mr{Hom}_{B}(T,C).$
We then define the map \begin{align*}
    \kern-4em f(T): \mc G(T)=\mc P(T)\times S(T)&\longrightarrow S(T),\\
                   (g_1,Z)&\longmapsto (\ovl t_{g_1}(Z))
\end{align*}
where $\ovl t_{g_1}:\ovl {\mc P}_T\rightarrow \ovl {\mc P}_T$ is the extension of the fiberwise translation $t_{g_1}:\mc P_T\rightarrow \mc P_T, x\mapsto x-g_1$ by the section $g_1: T\rightarrow \mc P.$ This is well defined since for each fiber the extended translation is an isomorphism, which does not change the Hilbert polynomial.
By Yoneda's Lemma, this corresponds to a $B$-morphism
$$f:\mc G=\mc P\times_B S\longrightarrow S$$
which is a fiberwise group action over $B$. For $s\in S$ and $g_1\in \mc P_{\pi_{S/B}(s)}$, we denote that $g_1\cdot s:=f(g_1,s)$.

We say a subset $S'\subset S$ is \emph{stable under translation} if for any $s\in S'$, an extension field $K$ of the residue field $\kappa(s)$ of $s$ in $S$, and $g_1\in \mc P_{\pi_{S/B}(s)}(K)$,  we have $$g_1\cdot s_K:\mr{Spec}K\rightarrow S$$ factors through some point $s'\in S'.$ We simply denote this as $g_1\cdot s\in S'$ for $g_1\in \mc P_{\pi_{S/B}(s)}.$

\begin{lemma}\label{lemma_maximal_stable_under_translation}
    Put $S^\circ:=\{s\in S\,:\, \mathfrak X_s \text{ has finite stabilier}\}$. Then $S^\circ$ is open in $S$ and stable under translation.
\end{lemma}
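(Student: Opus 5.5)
We will prove the two assertions separately: stability under translation, then openness.

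\emph{Stability under translation.} Let $s\in S^\circ$, let $K\supset\kappa(s)$ be a field, and let $g_1\in\mc P_{\pi_{S/B}(s)}(K)$. The point $g_1\cdot s_K$ corresponds to the subvariety $\mathfrak X_{s_K}-g_1$ of the semiabelian variety $\mc P_{\pi_{S/B}(s),K}$. Its stabilizer is
\[
\mr{Stab}(\mathfrak X_{s_K}-g_1)=\mr{Stab}(\mathfrak X_{s_K}),
\]
since the group is commutative. Moreover, stabilizers commute with base field extension: $\mr{Stab}(\mathfrak X_{s_K})=\mr{Stab}(\mathfrak X_s)_K$, because the stabilizer is a closed subgroup scheme defined by a representable functor. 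Hence this stabilizer is finite. The point $\mr{Spec}K\to S$ factors through the image point $s'$, and $\mathfrak X_{s'}\otimes K\cong \mathfrak X_{s_K}-g_1$. Finiteness of the stabilizer descends along the field extension, so $s'\in S^\circ$.

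\emph{Openness.} Consider the stabilizer group scheme
\[
\mathcal St:=\{(g,s)\in \mc G=\mc P\times_B S\,:\, g\cdot s=s\},
\]
which is the fiber over the diagonal of the morphism $(f,\mr{pr}_S):\mc G\to S\times_B S$. Since $S$ is separated over $B$, the diagonal is closed, so $\mathcal St$ is a closed subgroup scheme of $\mc G\to S$. Its fiber over $s$ is exactly $\mr{Stab}(\mathfrak X_s)$, via the modular description of $f$ from the Yoneda construction: $g\cdot s=s$ if and only if $\mathfrak X_s-g=\mathfrak X_s$ as closed subschemes of $\ovl{\mc P}_s$, equivalently of $\mc P_s$ by taking closures. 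Then
\[
S^\circ=\{s: \dim \mathcal St_s=0\}.
\]
By upper semicontinuity of fiber dimension (Chevalley) for the finite type morphism $\mathcal St\to S$, the set of points $x\in \mathcal St$ with $\dim_x\mathcal St_{\pi(x)}\ge 1$ is closed in $\mathcal St$. Its image in $S$ is the complement of $S^\circ$, and we need this image to be closed.

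This is the main obstacle, because $\mathcal St\to S$ is not proper: $\mc P$ is not proper over $S$. We handle it with a group-theoretic fact. For a group scheme over a field, the stabilizer is positive dimensional if and only if its identity component is positive dimensional, and the identity component passes through the identity. Hence, setting $e$ for the identity section,
\[
S\setminus S^\circ=e^{-1}\bigl(\{x\in\mathcal St:\dim_x \mathcal St_{\pi(x)}\ge1\}\bigr),
\]
which is the preimage of a closed set under the identity section $e:S\to\mathcal St$, and is therefore closed. Thus $S^\circ$ is open.

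For the field-extension descent in the first part, we use the same description: $\dim\mathcal St_s$ is invariant under field extension.
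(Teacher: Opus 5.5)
Your proof is correct. It differs from the paper's at the key step.

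The paper works with the incidence set $Z=m^{-1}\mathcal X\cap p_1^{-1}\mathcal X\subset\mathcal G\times_S\mathcal G$. It detects that $g_1$ lies in the stabilizer by the condition that $\dim(p_2^{-1}(g_1)\cap Z)$ equals the relative dimension of $\mathcal X/S$. It then asserts ``by semi-continuity'' that these $g_1$ form a closed subset $Y\subset\mathcal G$, and takes $S^\circ$ to be the open locus where $Y\to S$ has finite fibers. Stability under translation is treated as evident there.

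You instead obtain the stabilizer directly as the closed subgroup scheme $(f,\mathrm{pr}_S)^{-1}(\Delta_{S/B})$. This uses only the Hilbert-scheme action $f$ and the separatedness of $S$ over $B$. You then conclude with three ingredients: Chevalley's upper semicontinuity on the source $\mathcal{S}t$, the identity section $e$, and equidimensionality of group schemes over a field.

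Your version is more rigorous exactly where the paper is terse. Neither $Z\to\mathcal G$ nor $Y\to S$ is proper, since their fibers are subvarieties of the non-proper $\mathcal G_s$. So semicontinuity of fiber dimension alone does not make the image of the dimension-jumping locus closed. Your identity
\[
S\setminus S^\circ=e^{-1}\{x:\dim_x\mathcal{S}t_{\pi(x)}\ge1\}
\]
is precisely what justifies that finiteness of stabilizers is an open condition.

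You also supply the short argument for stability under translation, which the paper leaves implicit. The stabilizers of $g\cdot s$ and $s$ coincide because the group is commutative, and finiteness is insensitive to field extension.
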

\begin{proof}
It suffices to prove that $S^\circ$ is open in $S$.
Let $m:\GGG\times_S\GGG\to\GGG$ be the multiplication map. Then $\GGG\times_S\GGG$ is a semiabelian scheme over $\GGG$, where the structure morphism is the second projection, and $m^{-1}\XXX\subset\GGG\times_S\GGG$ is faithfully flat over the base $\GGG$ with geometrically integral fibers. 
To see this, consider the closed subset $Z=m^{-1}\XXX\cap p_1^{-1}\XXX\subset\GGG\times_S\GGG$. For any point $g_1\in\GGG(\ovl\Q)$, it is in the stabilizer of $\XXX_{\pi_{\mc G/S}(g_1)}$ if and only if $\dim(p_2^{-1}(g_1)\cap Z)$ equals the relative dimension of $\XXX$ over $S$. By the semi-continuity, such points form a Zariski closed subset $Y\subset\GGG$, and the locus satisfying (2) is the open subset of $S$ where $(\pi_{\mc G/S})|_Y:Y\to S$ has finite fiber. 
\end{proof}
\begin{lemma}\label{lemm_SUT_disjoint_union}
    Assume that $S_1\sqcup S_2\subset S$ is stable under translation. If $S_1$ is stable under translation, then so is $S_2.$
\end{lemma}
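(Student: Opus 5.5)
The plan is to use the fact that stability under translation is a union of orbits. Assume $S_1\sqcup S_2$ is stable under translation and $S_1$ is stable under translation. Take $s\in S_2$, an extension field $K$ of $\kappa(s)$, and $g_1\in \mc P_{\pi_{S/B}(s)}(K)$. Because $s\in S_1\sqcup S_2$ and this union is stable under translation, the point $g_1\cdot s_K$ factors through some point $s'\in S_1\sqcup S_2$. It then suffices to rule out $s'\in S_1$.

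Suppose instead that $s'\in S_1$. We would translate back by $g_1^{-1}$. Regard $g_1^{-1}$, which is the inverse section with respect to the fiberwise group law over $B$, as a $K$-point of $\mc P_{\pi_{S/B}(s')}$. This is legitimate because $\pi_{S/B}(g_1\cdot s)=\pi_{S/B}(s)$: the action $f$ is a $B$-morphism. Here $\kappa(s')\subset K$, since $g_1\cdot s_K$ factors through $s'$. Stability of $S_1$ under translation, applied to $s'$, the extension $K/\kappa(s')$, and the point $g_1^{-1}$, shows that $g_1^{-1}\cdot s'_K$ factors through some point of $S_1$.

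Next we use the group-action axioms for $f$, which hold by Yoneda. Writing $\ovl t$ for translation, $\ovl t_{g_1^{-1}}\circ \ovl t_{g_1}=\mr{id}$ on $\ovl{\mc P}_K$, because the extension of a translation to the compactification is unique. This gives $g_1^{-1}\cdot(g_1\cdot s_K)=s_K$. Hence $s_K$ factors through a point of $S_1$, so the image point $s$ lies in $S_1$. That contradicts $s\in S_2$ and $S_1\cap S_2=\emptyset$. Therefore $s'\in S_2$, and $S_2$ is stable under translation.

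The only step needing care is the bookkeeping of field extensions. A $K$-point factoring through $s'$ means the image of $\mr{Spec}K$ is $s'$, so "$s_K$ factors through a point of $S_1$" says exactly that the image point $s$ lies in $S_1$. The cocycle identity $f(g_1^{-1},f(g_1,\cdot))=\mr{id}$ is immediate from the functorial description of $f$ on $T=\mr{Spec}K$. I expect no real obstacle beyond these points.
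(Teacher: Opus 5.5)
Your proof is correct and is the same argument as the paper's: if a translate $g_1\cdot s$ of a point $s\in S_2$ landed in $S_1$, then translating back by $-g_1$ and using the stability of $S_1$ would put $s$ in $S_1$, which contradicts disjointness. Your extra bookkeeping (that $-g_1$ lies in the correct fiber because $f$ is a $B$-morphism, and that the action axioms hold on $K$-points) only makes explicit what the paper's one-line proof leaves implicit.
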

\begin{proof}
    Assume that there exists $s_2\in S_2$ and $g_2\in \mc P_{\pi_{S/B}(s_2)}$ such that $g_2\cdot s_2\in S_1$. Then
    $s_2=(-g_2)\cdot (g_2\cdot s_1)$ which contradicts to the stability of $S_1$.
\end{proof}
\begin{proposition}\label{prop_height_inequality_translation}
There exists a positive constant $c_0=c_0(g+t,d)>0$ satisfying the following property. Let $G$ be a principally polarized semiabelian variety over $\ovl\Q$ of dimension $g+t$. Let $X\subset G$ be a closed subvariety of degree $d$. Assume $X$ has finite stabilizer. Then there exists a point $g_0\in G(\ovl\Q)$ such that for any $g_1\in G(\ovl\Q)$,
$$h([X+g_1])\ge c_0\widehat{h}(g_1-g_0).$$
\end{proposition}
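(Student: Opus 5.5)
The plan is to prove Proposition~\ref{prop_height_inequality_translation} in families over the Hilbert scheme, with an additive error term first, and then remove that error term. Fix $g,t,d$ and an irreducible component $S$ of $\hilb^\circ_{g,t,d}$, with the notation $B,\mc P,\mc G,\XXX$ introduced before Lemma~\ref{lemma_maximal_stable_under_translation}. By Lemma~\ref{lemma_maximal_stable_under_translation}, the locus $S^\circ$ of subvarieties with finite stabilizer is open and stable under translation, so the translation action $f:\mc G=\mc P\times_B S\to S$ restricts to $S^\circ$. For $s\in S^\circ$ the orbit map $\mc G_s\to S$, $g_1\mapsto g_1\cdot s$, is quasi-finite because the stabilizer is finite. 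Up to replacing $g_1$ by $-g_1$ (note $\widehat h$ is invariant under $[-1]$), $h([X+g_1])=h_{\ovl H}(g_1\cdot [X])=h_{f^*\ovl H}(g_1)$ with $g_1\in\mc G_{[X]}$. So we must compare the adelic line bundle $f^*\ovl H$ on $\mc G|_{S^\circ}$ with the translated canonical bundle $\ovl L+\ovl M$.

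\textbf{Step 1: a normalizing section.} After passing to a finite cover and a dense open stratum $S_1\subset S^\circ$, I choose a section $g_0:S_1\to\mc G$. For instance, I would take an algebraic ``center'' of $\XXX_s$, or any rational section of $\XXX\to S$. I then form the shifted action $\psi:\mc G_{S_1}\to S$, $\psi(g)=(g+g_0(s))\cdot s$. The goal is the height inequality
\[
h_{\psi^*\ovl H}(g)\ \ge\ c\,\widehat h(g)-C\,h_{\ovl H}(s)-C
\]
on a Zariski dense open subset of $\mc G_{S_1}$. I would prove this exactly as in Proposition~\ref{big} and Corollary~\ref{weak_bogomolov}: show that
\[
m^2n\,\psi^*\ovl H-\ovl L-\ovl M+\pi^*_{\mc G/S}\ovl H'+\ovl{\mc O}(e)
\]
is big, via the arithmetic Siu inequality \eqref{eq_Siu_inequality}. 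The geometric input is that, on each fiber, $\psi^*H|_{\mc G_s}$ is big and dominates a positive multiple of $L+M$ modulo boundary. This should follow from quasi-finiteness of the orbit map together with the fact that $\psi$ extends to the compactification $\ovl{\mc G}$: translation extends, and points escaping to the toric boundary force the degenerate limit of $X+g$ in the Hilbert scheme to move.

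\textbf{Step 2: spreading out and uniformity.} The exceptional Zariski-closed set of Step~1 need not be translation stable. I replace it by the closed subset of $S_1$ of points $s$ over which it contains a whole fiber-dense part of $\mc G_s$; its complement $S_1'$ can be arranged to be stable under translation. By Lemma~\ref{lemm_SUT_disjoint_union}, the remaining part $S^\circ\setminus S_1'$ is again stable under translation and of smaller dimension. Noetherian induction over finitely many strata then yields uniform constants $c,C$ for all $[X]\in S^\circ$. The point $g_0$ is then chosen as $g_0([X])$, possibly adjusted on the bad fibers by minimizing over the orbit. The term $C\,h([X])$ on the right is harmless: $h([X+g_1])$ and $h([X])$ are related by the same inequality applied to $g_1=g_0$. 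Since there are finitely many components $S$ for fixed $(g+t,d)$, the constants depend only on $(g+t,d)$.

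\textbf{Step 3: removing the additive constant.} Replacing $\ovl H$ by $\ovl H+\ovl{\mc O}(e)$ (which changes $h([\cdot])$ by a bounded amount) we may assume $h_{\ovl H}\ge 1$ on $S(\ovl\Q)$. Then an inequality $h\ge c\,\widehat h-C'$ together with $h\ge 1$ gives $h\ge \frac{c}{1+C'}\widehat h$, which is the claimed form with $c_0=c/(1+C')$.

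I expect the main obstacle to be the fiberwise positivity in Step~1: showing that $\psi^*H$ dominates $L+M$ including the toric directions, i.e.\ that translating $X$ far toward the boundary of $\ovl G$ genuinely makes the Hilbert point go to infinity at a linear rate in $\widehat h_{\ovl M}$. I would first treat it on $\mc G_s$ by intersection numbers ($\psi^*H\cdot(L+M)^{\dim-1}>0$, obtained from finite stabilizer via the extended action on $\ovl{\mc G}$), and then globalize with the Siu-type argument of Proposition~\ref{big}.
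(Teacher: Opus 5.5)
Your route differs from the paper's, and as written two steps do not go through. The paper chooses a slice $W\subset S_1$ of codimension $g+t$, so that $f':\mc G_{W'}\to S'$ is quasi-finite. Then $f'^*\ovl H$ alone is big on every closed subvariety, and Noetherian induction gives $h_{f'^*\ovl H}\ge c_0\widehat h$ on all of $\mc G_{W'}$. The point $g_0$ is read off from $s=g_0\cdot w$. You instead work on the whole family $\mc G_{S_1}$, where the action map has $(g+t)$-dimensional fibers, and pay with a base term $C\,h_{\ovl H}(s)$. That is a viable alternative in principle.

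\textbf{First gap: the exceptional set.} Your Step~1 inequality holds only off an exceptional closed set $E\subsetneq\mc G_{S_1}$. Step~2 discards only those $s$ over which $E$ is fiber-dense. For a surviving $s$, $E\cap\mc G_s$ is still a nonempty proper closed subset, typically containing points $g_1$ of unbounded height, yet the proposition needs every $g_1\in G(\ovl\Q)$. The repair is to run the Siu argument on every closed subvariety $Y\subseteq\mc G_{S_1}$ and do Noetherian induction on closed subsets of $\mc G_{S_1}$ itself, not on strata of the base. This works because $(\psi,\pi_{\mc G/S})$ is quasi-finite (stabilizers are finite), so the geometric part of $\psi^*\ovl H+\pi_{\mc G/S}^*\ovl H'$ is big on each $Y$. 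This is exactly the mechanism the paper uses on $\mc G_{W'}$. It also shows that your ``main obstacle'' is not one. Once the geometric part is big, the arithmetic Siu inequality gives domination of $\ovl L+\ovl M$ automatically (using Proposition \ref{integrable} to bound it by a nef bundle). No analysis of how $X+g$ degenerates toward the toric boundary is needed.

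\textbf{Second gap: removing $C\,h([X])$.} Applying the inequality ``with $g_1=g_0$'' yields only a trivial bound. Moreover, your section $g_0(s)$ is not translation-equivariant, so applying the inequality at another point $[X+g_1]$ of the orbit brings in $g_0([X+g_1])$, which is unrelated to $g_0([X])$. What works is the following.
\begin{enumerate}
\item Drop the normalizing section and prove $h([X_s-g])+h([X_s])+e\ge\epsilon\,\widehat h(g)$ for all $s\in S^\circ(\ovl\Q)$ and all $g$, by the induction above with $\psi=f$.
\item Choose $g_0$ with $h([X+g_0])\le 2\inf_g h([X+g])$.
\item Apply the inequality at base point $[X+g_1]$, which lies in $S^\circ$ by Lemma \ref{lemma_maximal_stable_under_translation}, with $g=g_1-g_0$. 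This gives $h([X+g_0])+h([X+g_1])+e\ge\epsilon\,\widehat h(g_1-g_0)$.
\item Absorb the error terms using $h([X+g_0])\le 2h([X+g_1])$ and $h_{\ovl H}\ge\lambda>0$ (from ampleness of $\ovl H$).
\end{enumerate}
With these two repairs your approach even avoids the slice $W$ and the stratification into translation-stable pieces used in the paper.
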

\begin{proof}
By Lemma \ref{lemma_maximal_stable_under_translation}, it suffices to prove that for any irreducible subset $S_1\subset S^\circ$ stable under translation, there is a positive constant $c_0(S_1)>0$ such that the following holds:\begin{equation}\label{eq_translation_minimum}\begin{aligned}
    &\text{for any }s\in S_1(\ovl\Q)\text{, there is a point }g_s\in\GGG_s(\ovl\Q)\text{ satisfying }\\
&\kern5em h([\XXX_s+g_1])\ge c_0(S)\widehat{h}(g_1-g_s)\\
&\text{for any }g_1\in\GGG_s(\ovl\Q).
\end{aligned} 
\end{equation}
We prove by induction on $\dim S_1$. 
Fix a point $s_0\in S_1(\ovl\Q)$. Since $S_1\subset S^\circ$, the orbit $$\mc P_{\pi_{S/B}(s_0)}\cdot s_0=f(f^{-1}(s_0)),$$ which represents all translation of $\mc X_{s_0}$, is of dimension $g+t$. 

Take a closed subvariety $W\subset S_1$ such that $\dim W=\dim S_1-(g+t)$ and $W$ intersects $f(f^{-1}(s_0))$ at finitely many points (not empty). Let $f_0:=f|_{\GGG_{W}}:\GGG_{W}\to S_1.$
Since $$\dim \mc G_W= \dim (\mc P\times_B W)=\dim W+g+t=\dim S_1,$$
and $f_0$ has finite fiber over $s_0$, by the semi-continuity of fiber dimension, $f_0$ is dominant and generically quasi-finite. 

Note that for $s\in S'$ and $g_2\in \mc P_{\pi_{S/B}(s)},$
\begin{align*}
    f_0^{-1}(g_2\cdot s)&=\{(g_1,w)\,:\, g_1\cdot w=g_2\cdot s, w\in W_{\pi_{S/B}(s)},g_1\in \mc P_{\pi_{S/B}(s)}\}\\
    &=\{(g_2+g'_1,w')\,:\, (g_1',w')\in f_0^{-1}(s)\}.
\end{align*}
Therefore if we set
$$\begin{cases}
    S_0:=S_1\setminus f_0(\mc G_W),\\
    S_{\mr{fin}}:=\{s\in S_1\, :\, \dim f_0^{-1}(s)=0\},\\
    S_{\infty}:=\{s\in S_1\,:\, \dim f_0^{-1}(s)\geq 1\},
\end{cases}$$
then $S_0, S_{\mr{fin}}$ and $S_\infty$ are stable under translation. Moreover, $S_{\infty}\subsetneq S_1$ is Zariski closed.

Let $S'$ be the largest open subset of $S_{\mr{fin}}$ contained in $S_{\mr{fin}}\setminus S_0=f(\mc G_W)\setminus S_\infty=f(\mc G_{W\setminus S_\infty})$. It is non-empty by Chevalley's theorem. 

We claim that $S_1\setminus S'$ are stable under translation, hence by Lemma \ref{lemm_SUT_disjoint_union}, so is $S'$. Since $S_1\setminus S'=(S_{\mr{fin}}\setminus S')\sqcup S_{\infty}$, it suffices to show that $S_{\mr{fin}}\setminus S'$ is stable under translation.
Consider the set
\begin{align*}
    R&=\pi^{-1}_{\mc G/S}(S_{\mr{fin}}(\ovl{\Q})\setminus f(\mc G_W(\ovl {\Q})))\subset\mc G_{S_{\mr{fin}}\setminus S'}(\ovl{\mathbb Q}).
\end{align*}
which is dense in $\mc G_{S_{\mr{fin}}\setminus S'}$ by the maximality of $S'$.

We show that $$f(R)\subset S_{\mr{fin}}(\ovl{\Q})\setminus f(\mc G_W(\ovl {\Q})) \subset S_{\mr{fin}}(\ovl {\Q})\setminus S'(\ovl {\Q}).$$ Assume the contrary, then there exists $s\in S_{\mr{fin}}(\ovl {\Q})\setminus f(\mc G_W(\ovl {\Q}))$ and $g_1\in \mc P_{\pi_{S/B}(s)}(\ovl {\Q})$ such that $g_1\cdot s=g_2\cdot w$ for $(g_2,w)\in \mc G_W(\ovl {\Q}).$ Therefore $s=(-g_1+g_2)\cdot w\in f(\mc G_W(\ovl{\Q}))$ which is a contradiction. By the density of $R$, we obtain that $S_{\mr{fin}}\setminus S'$ is stable under translation.

Applying the induction hypothesis to irreducible components of $S_1\setminus S'$, we obtain a constant $c_0(S_1\setminus S')>0$ such that \eqref{eq_translation_minimum} holds for $S_1\setminus S'$. 

Recall the adelic line bundle $\ovl H$ on $S$ defining the modular height. 
Put $W'=W\cap S'$ and $f':=f_0|_{\mc G_{W'}}:\mc G_{W'}\rightarrow S'$ which is surjective and quasi-finite.

Therefore $f'^*\ovl H$ is big on every closed subvariety of $\GGG_{W'}$. One may proceed by induction to show that there exists a positive number $c_0(S')>0$ such that
\begin{equation}\label{eq_height_ineq_modular}h_{f'^*\ovl H}(\cdot)\ge c_0(S')\widehat{h}(\cdot)\end{equation}
on $\GGG_{W'}(\ovl\Q)$.

For any $s\in S'(\ovl\Q)$, that $f$ is surjective implies $s=g_0\cdot w$ for some $w\in W'_{\pi_{S/B}(s)}$ and $g_0\in \mc P_{\pi_{S/B}(s)}$. For any $g_1\in\GGG_{s}(\ovl\Q)=\GGG_w(\ovl \Q)=\mc P_{\pi_{S/B}(s)}(\ovl{\Q})$, we have $$(-g_1)\cdot s=(g_0-g_1)\cdot w={f'(g_0-g_1,w)}.$$ The above inequality \eqref{eq_height_ineq_modular} reads
$$h([\XXX_s+g_1])=h_{\ovl H}((-g_1)\cdot s)=h_{f'^*\ovl H}(g_0-g_1,w)\ge c_0(S')\widehat{h}(g_1-g_0).$$
We then conclude the proof by setting $c_0(S)=\min\{c_0(S'),c_0(S_1\setminus S')\}.$
\end{proof}

\section{Uniform Mordell--Lang conjecture}\label{section_UML}

Recall our main theorem is the following.

\begin{theorem}\label{ML}
There exists a constant $c(g+t,d)$ satisfying the following property. Let $K$ be a field of characteristic $0$. Let $G$ be a polarized semiabelian variety over $K$ of dimension $g+t$. Let $\Gamma\subset G(K)$ be a subgroup of finite rank. Let $X\subset G$ be a closed subvariety of degree $d$. Then there are at most $c(g+t,d)^{\rk(\Gamma)+1}$ cosets $H_1,H_2\dots$ contained in $X$ such that
$$X(K)\cap\Gamma=\bigcup H_i(K)\cap\Gamma.$$
\end{theorem}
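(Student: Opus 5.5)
The plan follows the workflow \eqref{eq_workflow}, proceeding in three stages. First I prove the points version, Theorem \ref{thm_UML_pts}, over $\ovl\Q$ by Vojta-type counting. Then I use the relative Ueno locus to pass to cosets. Finally I specialize to handle an arbitrary $K$.

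\textbf{Reductions.} Replace the polarization $L$ by the symmetrized principal one \eqref{eq_symmstrification_pp}. After an isogeny and a level-$4$ structure, the degree changes only by a factor bounded in terms of $g+t$ and $d$. Replacing $G$ by the smallest semiabelian subvariety containing $X-X$ and translating, I may assume $X$ generates the abelian quotient $A$. If the stabilizer of $X$ is positive dimensional, then $X^\circ=\emptyset$, so I assume it is finite. Then Theorem \ref{mumford} applies.

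\textbf{Points version over $\ovl\Q$.} Write $\Gamma\otimes\R\cong\R^{\rk\Gamma}$ with the norm $\widehat h^{1/2}$. The norm is a quadratic form since $\widehat h$ is quadratic up to the toric part, which is homogeneous of degree one. To control this, I would first replace $\widehat h$ by a comparable quantity on $\Gamma$, for instance using $\widehat h_{\ovl L}$ plus the toric norm $\sum|\log|\cdot||$, which is still a norm. Split $X^\circ(\ovl\Q)\cap\Gamma$ into three ranges.
\begin{enumerate}
\item \emph{Small points}, with $\widehat h(x-g_0)\le R$ for $R$ bounded in terms of the modular height. Apply Theorem \ref{mumford} with the gap: a ball of radius $\asymp\sqrt{c_3\widehat h(g_1-g_0)}$ around each $g_1$ meets $X^\circ\cap\Gamma$ only inside $Z$, with $\deg Z\le c_1$. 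Induct on dimension through $Z$, and cover the region by $c^{\rk\Gamma}$ such balls. This replaces the ``new gap principle'' together with Mumford's inequality.
\item \emph{Medium points}. The same balls, centered at points $g_1$ of growing height, now have radius proportional to $|g_1-g_0|$. This is exactly Yuan's substitute for Mumford's inequality: it forces points in a narrow cone at comparable heights to be few, giving at most $c^{\rk\Gamma}$ points per cone in each dyadic shell.
\item \emph{Large points}. Use R\'emond's quantitative Vojta inequality \cite[Theorem 4.1]{remond}. Points in the same small-angle cone with rapidly increasing heights number boundedly. Cones are counted by $c^{\rk\Gamma}$.
\end{enumerate}
Summing the three ranges gives $c'(g+t,d)^{\rk\Gamma+1}$.

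\textbf{From points to cosets.} Apply Proposition \ref{prop_relative_ueno}. Over $\hilb^\circ_{g,t,d}$, the Ueno loci $X\setminus X^\circ$ are, uniformly, unions of boundedly many families $\mathcal Y_j$ of the form (translate of subgroup scheme) $\cap\,X$ with bounded degree. Recursing on the quotients $X\cap(a+H)$, viewed in $G/H$, by Noetherian induction on $\dim X$ with bounded data gives the bound for Theorem \ref{ML}, by a recursion as in \cite{GGK}.

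\textbf{General $K$.} Specialize: $X$, $G$ and generators of $\Gamma$ are defined over a finitely generated field. A specialization injective on $\Gamma$ that preserves $X^\circ$ (using again relative Ueno finiteness) reduces the problem to $\ovl\Q$.

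\textbf{Main obstacle.} I expect the main obstacle to be the medium range. Matching the extra-term inequality of Theorem \ref{mumford} with a packing of balls whose radius scales with the height, so that it plays the role of Mumford's inequality with the toric part included, looks delicate.
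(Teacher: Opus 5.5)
Your overall architecture is the paper's: cones with R\'emond's Vojta inequality, packing with Theorem~\ref{mumford} on height shells, induction on $\dim X$ through bounded-degree $Z$, Proposition~\ref{prop_relative_ueno} for cosets, and specialization for general $K$. However, the small-points step fails as written. Theorem~\ref{mumford} only gives a gap of size $c_3\widehat h(g_1-g_0)$ around $g_1$, and this tends to $0$ as $g_1\to g_0$. Covering $\{\widehat h(x-g_0)\le R\}$ by such balls therefore needs balls at every scale down to $0$. Since $\widehat h(x-g_0)$ has no positive lower bound uniform in $\Gamma$ (e.g.\ when $\Gamma$ is divisible and contains $g_0$), you get no bound of the form $c^{\rk(\Gamma)+1}$.

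The missing ingredient is Theorem~\ref{bogomolov}, whose gap radius $c_2h([X])$ does not depend on the centre. The paper proceeds in three moves:
\begin{itemize}
\item It normalizes $g_0=0$ by translating $X$ by $g_0$ and adjoining $g_0$ to $\Gamma$, which raises the rank by one.
\item It calls a point small if $\widehat h(x)\le c_6h([X])$, where $c_6h([X])$ is exactly the threshold in R\'emond's Vojta inequality.
\item It covers the small points by at most $\max\{4c_6/c_2+1,\sqrt{8c_6/c_2}+1\}^{\rk(\Gamma)+1}$ Bogomolov balls, via Lemma~\ref{lemma_point_counting}.
\end{itemize}
So the roles of the new gap principle and of Mumford's inequality are played by two different theorems. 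Your $R$ sits at the right scale, but Theorem~\ref{mumford} alone cannot handle the region below it.

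Once this is fixed, the range you flagged as the main obstacle is routine. In each cone, Vojta's inequality confines $X^\circ\cap\Gamma$ to at most $n=\dim X$ annuli $r\le\widehat h\le c_5^4r$ with $r\ge c_6h([X])$. On such an annulus every centre $x$ satisfies $\widehat h(x)\ge r$, so Theorem~\ref{mumford} gives the uniform radius $c_3r$, and Lemma~\ref{lemma_point_counting} bounds the number of balls.

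For this packing you need the correct norm. $h_{\ovl L}$ plus a toric term is not homogeneous, so use $h_{\ovl M}+\sqrt{h_{\ovl L}}$ and the polyballs $B(x,r_1,r_2)$. Note also that each shell yields boundedly many covering subschemes of degree at most $c_1$, not boundedly many points. The point count comes from the induction on dimension, using $X^\circ\cap Z\subset Z^\circ$.

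Two smaller points:
\begin{itemize}
\item Reduce to $X$ generating $A$ before passing to a principal polarization, since Lemma~\ref{pp_reduction_lemma} uses that hypothesis.
\item In the specialization step you only need the finitely many chosen points to stay distinct and in $\mathcal X_s^\circ$, not injectivity on all of $\Gamma$.
\end{itemize}
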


\subsection{Uniform Mordell--Lang conjecture for points}

\begin{proposition}\label{ML1}
There exists a constant $c_7(g+t,d,n)$ satisfying the following property. Let $G$ be a principally polarized semiabelian variety of dimension $g+t$ over $\ovl\Q$. Let $X\subset G$ be a closed subvariety of degree $d$ and dimension $n$. Let $\Gamma\subset G(\ovl\Q)$ be a subgroup of finite rank. Then there exist reduced closed subschemes $Z_i\subsetneqq X$ $(i=1,2,\dots,c_7(g+t,d,n)^{\rk(\Gamma)+1})$ of degree at most $c_1(g+t,d)$ such that
$$\#(X^\circ(\ovl\Q)\cap\Gamma)\subset\bigcup_{i=1}^{c_7(g+t,d,n)^{\rk(\Gamma)+1}}Z_i(\ovl\Q).$$
Here $c_1(g+t,d)$ is the constant in Theorem \ref{bogomolov}
\end{proposition}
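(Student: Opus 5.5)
We follow the Vojta-style ball and cone packing strategy, with Theorem \ref{mumford} replacing Mumford's inequality in the medium range. Equip $\Gamma\otimes\R\cong\R^{\rho}$, $\rho=\rk\Gamma$, with the quadratic form $\widehat h$; the toric part is handled by $\widehat h_{\ovl M}$, which is a norm-like function on $\Gamma$ modulo torsion, and we will pass to a Euclidean norm comparable to $\widehat h^{1/2}$ on $\Gamma\otimes\R$. The points of $X^\circ(\ovl\Q)\cap\Gamma$ split into three ranges: small points with $\widehat h(x)\le c\,h([X])$, medium points, and large points with $\widehat h(x)\ge c'\,h([X])$, where $c'$ depends only on $(g+t,d,n)$. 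We may assume $X$ generates $A$ and has finite stabilizer; otherwise we first replace $G$ by a quotient, or reduce as in \cite{GGK} (if $X$ has positive-dimensional stabilizer then $X^\circ=\emptyset$).

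\emph{Small points.} Theorem \ref{bogomolov} puts all $x\in X^\circ\cap\Gamma$ with $\widehat h(x)\le c_2h([X])$ inside one $Z$ of degree $\le c_1$.

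\emph{Large points.} Use R\'emond's semiabelian Vojta inequality \cite[Theorem 4.1]{remond}. Points of $X^\circ\cap\Gamma$ with large height and angle (in the Euclidean structure) less than a fixed $\theta(g+t,d,n)$, and height ratio at least a fixed $\kappa$, cannot all lie outside a proper subvariety $Z$ of controlled degree. Covering $\R^\rho$ by $\le (1+2/\theta)^{\rho}$ cones and slicing each cone into geometric shells of ratio $\kappa$, we are left to bound the number of shells needed. The height lower bound $c'h([X])$ from Vojta's inequality must be linked to the upper end of the medium range, so that only $O(1)$ shells remain in each cone before the Vojta regime begins. Each Vojta-exceptional set lies in a subscheme whose degree is bounded via R\'emond's explicit constants; we will arrange degree $\le c_1$ by possibly intersecting with $X$ and taking components.

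\emph{Medium points.} Here we would use Mumford's gap principle in \cite{GGK}; instead we use Theorem \ref{mumford}. Cover the region $\{c_2h([X])\le\widehat h\le c'h([X])\}$ of $\Gamma\otimes\R$ by balls. Theorem \ref{mumford} with $g_1\in\Gamma$ says that points $x$ with $\widehat h(x-g_1)\le c_3\widehat h(g_1-g_0)$ lie in a fixed $Z$ of degree $\le c_1$, where $Z$ depends only on $X$ and $g_0$ and not on $g_1$. That $Z$ is independent of $g_1$ is what makes this work: every such ball contributes no new subscheme. Points outside all such balls are those within distance $\ll\widehat h(g_1-g_0)^{1/2}$ of $g_0$ in the projected Euclidean geometry. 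Following \cite{yuan2025vojtasproofmordellconjecture,yu2026quantitativitynumberrationalpoints}, we project $g_0$ to its nearest point $\gamma_0$ in $\Gamma\otimes\R$; this is legitimate because $\widehat h$ is a quadratic form on $G(\ovl\Q)\otimes\R$ and the components orthogonal to $\Gamma$ only increase $\widehat h(g_1-g_0)$. Then for $x\in\Gamma$ with $\widehat h(x-\gamma_0)\ge R$, the ball around $x$ of radius $\sqrt{c_3}\,|x-\gamma_0|$ is captured. A standard packing argument then shows that the points of $\Gamma$ in a shell $\{R\le|x-\gamma_0|\le 2R\}$ are covered by $O(c_3^{-\rho/2})$ balls centred at points of $\Gamma$. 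Dyadically, the medium range around $\gamma_0$ needs only $O(\log(c'/c_2))$ shells, while the region $|x-\gamma_0|\le\sqrt{c_2h([X])}$ lies in the small-point set after recentring: we apply Theorem \ref{bogomolov} to $X-\gamma$ for one $\gamma\in\Gamma$ near $\gamma_0$.

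The total count is $\exp(O(\rho))$ subschemes, each of degree $\le c_1$, giving $c_7^{\rho+1}$. We expect the main obstacle to be the junction between the Vojta range and the medium range. Theorem \ref{mumford} only gives gaps relative to $g_0$, not to $0$, so the cones must be recentred at $\gamma_0$. We also have to check that Vojta's inequality, stated for heights measured from $0$, still applies after translating $X$ by $\gamma\in\Gamma$. Translation changes $h([X])$ by at most $c_0^{-1}$-comparable amounts; by Proposition \ref{prop_height_inequality_translation} it is controlled from below, and from above by standard height estimates on the Hilbert scheme. We would first try translating everything by a point of $\Gamma$ closest to $\gamma_0$, redoing the Vojta cones around it, and absorbing the change in $h([X])$ into the constants.
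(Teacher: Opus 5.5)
\textbf{The main gap is in your large range.} R\'emond's inequality, in the form the paper uses (item (3) of its proof), produces no exceptional subscheme. It applies to $n+1$ points in a narrow cone (measured separately in $h_{\ovl M}$ and $h_{\ovl L}$ at size $1/c_4$), above height $c_6h([X])$, with $\widehat h(x_{i+1})\ge c_5^2\widehat h(x_i)$. Its only conclusion is that some $x_i\notin X^\circ$.

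What this gives is the following: in each cone, the points of $X^\circ\cap\Gamma$ of height $\ge c_6h([X])$ lie in at most $n$ height shells of bounded ratio. It says nothing about how many points of comparable height lie in one shell of one cone, and that number is not bounded a priori. Your sentence ``each Vojta-exceptional set lies in a subscheme whose degree is bounded'' therefore has nothing behind it, and the large points are left uncovered.

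The missing idea is that this within-shell covering is exactly where Theorem \ref{mumford} replaces Mumford's inequality. Once $g_0=0$, take a centre $x$ in the annulus $A(r,\kappa)=\{r\le\widehat h\le\kappa r\}$. All $y\in X^\circ$ with $\widehat h(y-x)\le c_3r\le c_3\widehat h(x)$ lie in one subscheme $Z'_x$ of degree $\le c_1$. Lemma \ref{lemma_point_counting} bounds the size of a maximal $c_3r$-separated set of centres by a constant depending on $\kappa/c_3$, raised to the power $\rk(\Gamma)+1$.

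With $\kappa=c_5^4$, the paper covers the large range by (number of cones)$\times n\times$(centres per annulus) subschemes. It covers the whole range $\widehat h\le c_6h([X])$ by Theorem \ref{bogomolov}, applied at the centres of a maximal $c_2h([X])$-separated set. Using Theorem \ref{mumford} on a bounded ``medium'' range is not wrong in itself, but that is not where the tool is indispensable.

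\textbf{Several auxiliary claims also fail.}

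(i) The subscheme in Theorem \ref{mumford} depends on $g_1$. Its proof applies Theorem \ref{bogomolov} to the translate of $X$ by $g_1$, and the paper accordingly writes $Z'_x$. A proper $Z$ independent of $g_1$ cannot exist: choosing $g_1=x\in X^\circ$ puts $x$ in $Z$ for every such $x$. So every ball contributes its own subscheme. Your count survives only because you count balls anyway.

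(ii) $\widehat h$ is not a quadratic form. Since $[n]^*\ovl M=n\ovl M$, we have $\widehat h(nx)=n^2h_{\ovl L}(x)+nh_{\ovl M}(x)$. Consequently:
\begin{itemize}
\item no Euclidean norm is comparable to $\widehat h^{1/2}$ at all scales;
\item your Euclidean ``angles'' and the orthogonal-projection argument for $\gamma_0$ are unjustified;
\item packing bounds such as $O(c_3^{-\rho/2})$ must be redone with the polyballs $B(x,r_1,r_2)$, which measure $h_{\ovl M}$ and $\sqrt{h_{\ovl L}}$ separately and give counts depending only on height ratios (Lemma \ref{lemma_point_counting}).
\end{itemize}

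(iii) The junction with $g_0$, which you flag as the main obstacle, is removed at the outset. Replace $X$ by its translate by $g_0$ and $\Gamma$ by $\langle\Gamma,g_0\rangle$. The rank grows by at most one, so $c_7$ is replaced by $c_7^2$. Then $g_0$ becomes $0$, and both the extra-term gap principle and Vojta's cones are centred at the origin. No recentring at a projection $\gamma_0$ is needed.
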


We recall the height functions on $\Gamma\otimes_{\Z}\R$ before proving the above proposition. Both ${h}_{\ovl M}(\cdot)$ and $\sqrt{h_{\ovl L}(\cdot)}$ extend to semi-norms on $\Gamma\otimes_{\Z}\R$, that is, they are non-negative, homogeneous and satisfy triangular inequality. For the triangular inequality of ${h}_{\ovl M}(\cdot)$, see \cite[Corollaire 3.1]{remond}. We now show that $\|\cdot\|={h}_{\ovl M}(\cdot)+\sqrt{h_{\ovl L}(\cdot)}$ is a norm on $\Gamma\otimes_{\Z}\R$. It only remains to prove the strict positivity, i.e. $\|x\|=0$ implies $x=0$ for $x\in\Gamma\otimes_\Z\R$. We may assume $G,L$ and $\Gamma$ are defined on a number field by replacing $\Gamma$ with a finitely generated subgroup, which does not change $\Gamma\otimes_\Z\R$. Fix a Haar measure on $\Gamma\otimes_\Z\R$. If the semi-norm $\|\cdot\|$ is not strictly positive on $\Gamma\otimes_\Z\R$, then the domain
$$\{x\in\Gamma\otimes_\Z\R:\|x\|\le1\}$$
has infinite volume. This domian is contained in
$$\{x\in\Gamma\otimes_\Z\R:\hat{h}(x)\le2\}.$$
So the lattar also has infinite volume, and hence
$$\{x\in\Gamma:\hat{h}(x)\le2\}$$
is an infinite set, contradicting the Northcott theorem.

Since $\|\cdot\|$ is a norm, for any $r_1,r_2>0$, the polyball
$$B(x,r_1,r_2)=\{y\in\Gamma\otimes_\ZZ\RR:{h}_{\ovl M}(y-x)\le r_1,{h}_{\ovl L}(y-x)\le r_2^2\}$$
has finite volume, and its volume is proportional to $$r_1^{\rk(\Gamma\cap\mathbb{G}_m^t(\C))}r_2^{\rk(\Gamma)-\rk(\Gamma\cap\mathbb{G}_m^t(\C))}.$$

We will use the following simple fact for several times. It relates the gap principle and point counting.

\begin{lemma}\label{lemma_point_counting}
Let $P\subset\Gamma$ be a subset. Let $h_0,h_1>0$ be positive numbers. Assume that $\hat{h}(x)\le h_0$ for any $x_0\in P$, and $\hat{h}(x_1-x_2)\ge h_1$ for any distinct $x_1,x_2\in P$. Then
$$\# P\le\max\Big\{\frac{4h_0}{h_1}+1,\sqrt\frac{8h_0}{h_1}+1\Big\}^{\rk(\Gamma)+1}.$$
\end{lemma}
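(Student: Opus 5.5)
The plan is a volume (ball packing) argument in $\Gamma\otimes_\Z\R$, using the polyballs $B(x,r_1,r_2)$ just introduced and the fact that $h_{\ovl M}$ and $\sqrt{h_{\ovl L}}$ are seminorms while $\|\cdot\|$ is a norm.

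First I check that $P\to\Gamma\otimes_\Z\R$ is injective. If $x_1\neq x_2$ in $P$ had the same image, then $x_1-x_2$ would be torsion, so $\widehat h(x_1-x_2)=0<h_1$, which is a contradiction. This also settles $\rk(\Gamma)=0$: then every element of $P$ maps to $0$, so $\#P\le 1$, and the bound holds since the right-hand side is at least $1$. So I assume $\rk(\Gamma)\ge1$ and write $a=\rk(\Gamma\cap\mathbb G_m^t)$, $b=\rk(\Gamma)-a$.

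Next I set $r_1=h_1/4$ and $r_2=\sqrt{h_1/8}$, and consider the open polyballs $B^\circ(x,r_1,r_2)$, defined with strict inequalities, for $x\in P$. These are pairwise disjoint. Indeed, suppose $y$ lies in both $B^\circ(x_1,r_1,r_2)$ and $B^\circ(x_2,r_1,r_2)$. The triangle inequality for $h_{\ovl M}$ gives $h_{\ovl M}(x_1-x_2)<2r_1=h_1/2$. The triangle inequality for $\sqrt{h_{\ovl L}}$ gives $\sqrt{h_{\ovl L}(x_1-x_2)}<2r_2$, so $h_{\ovl L}(x_1-x_2)<4r_2^2=h_1/2$. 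Adding these, $\widehat h(x_1-x_2)<h_1$, which contradicts the gap hypothesis.

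Containment comes from the same triangle inequalities. For $x\in P$ we have $h_{\ovl M}(x)\le \widehat h(x)\le h_0$ and $\sqrt{h_{\ovl L}(x)}\le\sqrt{h_0}$. Hence every $B^\circ(x,r_1,r_2)$ lies in $B(0,h_0+r_1,\sqrt{h_0}+r_2)$.

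Finally I compare volumes. By the scaling remark before the lemma, the volume of a polyball is a fixed constant times $r_1^ar_2^b$, and the open and closed polyballs have the same volume. This gives
$$\#P\le\Big(\frac{h_0+r_1}{r_1}\Big)^a\Big(\frac{\sqrt{h_0}+r_2}{r_2}\Big)^b=\Big(\frac{4h_0}{h_1}+1\Big)^a\Big(\sqrt{\frac{8h_0}{h_1}}+1\Big)^b .$$
The right-hand side is at most $\max\{\cdot,\cdot\}^{a+b}\le\max\{\cdot,\cdot\}^{\rk(\Gamma)+1}$, because both quantities in the maximum are at least $1$.

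There is no real obstacle here. The only care needed is with strict versus non-strict inequalities in the disjointness step, which is why I use open polyballs, and with the finiteness of the volumes, which is guaranteed because $\|\cdot\|$ is a norm.
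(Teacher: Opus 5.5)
Your argument is the paper's own packing argument. The polyballs $B(x,\tfrac14h_1,\sqrt{h_1/8})$, $x\in P$, are disjoint by the gap hypothesis and lie in $B(0,h_0+\tfrac14h_1,\sqrt{h_0}+\sqrt{h_1/8})$, and one compares volumes. Your extra care is correct and fills in points the paper leaves implicit: injectivity of $P\to\Gamma\otimes_\Z\R$ because $\widehat h$ vanishes on torsion, open polyballs so that disjointness is strict, and the rank-zero case.

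The one step to change is the last one, where you use that $\mathrm{vol}\,B(x,r_1,r_2)$ is a constant times $r_1^ar_2^b$. That remark, stated in the paper just before the lemma, does not hold in general for non-split $G$.

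For $t=1$, the heights attached to $\mathcal O(\ovl D_{[0]})$ and $\mathcal O(\ovl D_{[\infty]})$ are non-negative on $G(\ovl\Q)$. Their difference is the height for $\mathcal O(D_{[0]}-D_{[\infty]})$. That line bundle is pulled back from the algebraically trivial line bundle on $A$ that defines the extension. So $h_{\ovl M}(x)$ dominates the absolute value of a linear form in $\pi(x)$, essentially a N\'eron--Tate pairing with the extension class. When this form is nonzero on $\Gamma$, $\mathrm{vol}\,B(0,r_1,r_2)$ grows strictly more slowly than $r_2^{b}$ as $r_2\to\infty$, so it cannot be a constant times $r_1^ar_2^b$.

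You do not need that formula. Set $R_1=h_0+r_1$, $R_2=\sqrt{h_0}+r_2$ and $\kappa=\max\{R_1/r_1,R_2/r_2\}$. Homogeneity of the two seminorms gives $B(0,R_1,R_2)\subset\kappa\,B(0,r_1,r_2)$, hence $\mathrm{vol}\,B(0,R_1,R_2)\le\kappa^{\rk(\Gamma)}\,\mathrm{vol}\,B(0,r_1,r_2)$. Translation invariance of the Haar measure then yields $\#P\le\kappa^{\rk(\Gamma)}\le\kappa^{\rk(\Gamma)+1}$. Since $R_1/r_1=4h_0/h_1+1$ and $R_2/r_2=\sqrt{8h_0/h_1}+1$, this is exactly the bound in the statement, which is presumably why it is phrased with a maximum. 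Drop your intermediate product bound; it is not justified and not needed.
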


\begin{proof}
By the two assumptions, the union
$$\bigcup_{x\in P}B(x,\frac{1}{4}h_1,\sqrt{\frac{1}{8}h_1})$$
is disjoint and contained in
$$B(0,h_0+\frac{1}{4}h_1,\sqrt{h_0}+\sqrt{\frac{1}{8}h_1}).$$
Computation on volumes gives the conclusion.
\end{proof}

\begin{proof}[Proof of Proposition \ref{ML1}]
Fix a level-$4$ structure on $G$. Take $g_0\in G(\ovl\Q)$ as in Theorem \ref{mumford}. We may assume $g_0=0$. Indeed, take $X'=X+g_0$ and the subgroup $\Gamma'\subset G(\ovl\Q)$ generated by $\Gamma$ and $g_0$. Then $\rk(\Gamma)\le\rk(\Gamma')+1$. Once we have
$$\#((X')^\circ(\ovl\Q)\cap\Gamma')$$
is covered by $c_7(g+t,d,n)^{\rk(\Gamma')+1}$ subschemes $Z_i$, this would implies
$$\#(X^\circ(\ovl\Q)\cap\Gamma)\ \le\#(X^\circ(\ovl\Q)\cap\Gamma')=\#((X')^\circ(\ovl\Q)\cap\Gamma')$$
is covered by
$$c_7(g+t,d,n)^{\rk(\Gamma')+1}\le\max\{c_7(g+t,d,n)^2,1\}^{\rk(\Gamma)+1}$$
subschemes.

We list the results to be used in counting points.
\begin{enumerate}
    \item[(1)](Theorem \ref{bogomolov}) For any $x\in X(\ovl\Q)$, there exists a $Z_x\subset X$ of degree at most $c_1(g+t,d)$ such that
    $$\{y\in X^\circ(\ovl\Q):\widehat h(y-x)\le c_2(g+t,d)h([X])\}\subset Z_x(\ovl\Q)$$
    \item[(2)](Theoerm \ref{mumford}) For any $x\in X(\ovl\Q)$, there exists a $Z_x'\subset X$ of degree at most $c_1(g+t,d)$ such that
    $$\{y\in X^\circ(\ovl\Q):\widehat h(y-x)\le c_3(g+t,d)\widehat h(x)\}\subset Z_x'(\ovl\Q)$$
    \item[(3)](\cite[Theorem 4.1]{remond}) There exist positive constants $c_4(g+t,d,n),c_5(g+t,d,n)$ and $c_6(g+t,d,n)$ such that for $x_1,\dots,x_{n+1}\in X(\ovl\Q)$ satisfying
    $$h_{\ovl M}\Big(\frac{x_i}{\widehat{h}(x_i)}-\frac{x_{i+1}}{\widehat{h}(x_{i+1})}\Big)\le\frac1{c_4},\quad h_{\ovl L}\Big(\frac{x_i}{\widehat{h}(x_i)}-\frac{x_{i+1}}{\widehat{h}(x_{i+1})}\Big)\le\frac1{c_4^2},$$
    $$\widehat h(x_{i+1})\ge c_5^2\widehat h(x_i)\quad\mr{and}\quad\widehat{h}(x_1)\ge c_6h([X]),$$
    we have $x_i\notin X^\circ(\ovl\Q)$ for some $i$.
\end{enumerate}

We take a maximal subset $P_1$ of the small point set
$$\{x\in X^\circ(\ovl\Q):\widehat{h}(x)\le c_6(g+t,d,n)h([X])\}$$
such that any distinct $x_1,x_2\in P_1$ satisfy
$$\widehat{h}(x_1-x_2)>c_2(g+t,d)h([X]).$$
By (1), small points are covered by
$$\bigcup_{x\in P_1}Z_x(\ovl\Q).$$
By Lemma \ref{lemma_point_counting},
$$\#P_1\le\max\Big\{\frac{4c_6(g+t,d,n)}{c_2(g+t,d)}+1,\sqrt\frac{8c_6(g+t,d,n)}{c_2(g+t,d)}+1\Big\}^{\rk(\Gamma)+1}.$$
Denote the base number by $c_7'(g+t,d,n)$

For any $r>0$ and ratio $\kappa>1$, consider the annular region
$$A(r,\kappa)=\{x\in\Gamma\otimes_\ZZ\RR:r\le\widehat{h}(x)\le\kappa r\}.$$
Choose a maximal subset $Q(r,\kappa)$ of $A(r,\kappa)$ such that any distinct $x_1,x_2\in Q(r,\kappa)$ satisfy
$$\widehat{h}(x_1-x_2)>c_3(g+t,d)r.$$
By (2), $X^\circ(\ovl\Q)\cap A(r,\kappa)$ is covered by
$$\bigcup_{x\in Q(r,\kappa)}Z_x'.$$
By Lemma \ref{lemma_point_counting},
$$\#Q(r,\kappa)\le\max\Big\{\frac{4\kappa}{c_3(g+t,d)}+1,\sqrt\frac{8\kappa}{c_2(g+t,d)}+1\Big\}^{\rk(\Gamma)+1}.$$
We take $\kappa=c_5^4(g+t,d,n)$. Then the base number of the right hand side depends only of $g+t,d$ and $n$. Denote it by $c_7''(g+t,d,n).$

Take a maximal subset $P_2$ of large points such that any distinct $x_1,x_2\in P_2$ satisfy either
$${h}_{\ovl M}\Big(\frac{x_1}{\widehat h(x_1)}-\frac{x_2}{\widehat h(x_2)}\Big)>\frac1{c_4(g+t,d,n)}$$
or
$${h}_{\ovl L}\Big(\frac{x_1}{\widehat h(x_1)}-\frac{x_2}{\widehat h(x_2)}\Big)>\frac1{c_4^2(g+t,d,n)}.$$
Then large points is covered by the cones
$$\{y\in\Gamma\otimes_\ZZ\RR:{h}_{\ovl M}\Big(\frac{y}{\widehat h(y)}-\frac{x}{\widehat h(x)}\Big)\le\frac1{c_4(g+t,d,n)},{h}_{\ovl L}\Big(\frac{y}{\widehat h(y)}-\frac{x}{\widehat h(x)}\Big)\le\frac1{c_4^2(g+t,d,n)}\}$$
for $x\in P_2$. By (3), every cone is covered by $n$ annuli. Now it remains to bound $\# P_2$. Since
$$\bigcup_{x\in P_2}B(\frac x{\widehat h(x)},\frac{1}{2c_4(g+t,d,n)},\frac{1}{2c_4(g+t,d,n)})$$
is a disjoint union contained in $\displaystyle B(0,1+\frac{1}{2c_4(g+t,d,n)},1+\frac{1}{2c_4(g+t,d,n)})$, we get
$$\#P_2\le(2c_4(g+t,d,n)+1)^{\rk(\Gamma)+1}.$$

In conclusion, the number of reduced subschemes $Z$ of degree at most $c_1$ that we need to cover $X^\circ(\ovl\Q)\cap\Gamma$ is
$$(c_7')^{\rk(\Gamma)+1}+n(c_7'')^{\rk(\Gamma)+1}(2c_4+1)^{\rk(\Gamma)+1}\le\Big(c_7'+nc_7''(2c_4+1)\Big)^{\rk(\Gamma)+1}.$$
\end{proof}

\begin{lemma}\label{pp_reduction_lemma}
There exists a constant $a(g+t,d)$ satisfying the following property. For any polarized semiabelian variety $(G,L)$ of dimension $g+t$ and subvariety $X$ of degree $d$, if $\pi(X)$ generates the abelian quotient $A$, then there exists a principally polarized semiabelian variety $(G',L')$ and an isogeny $f:G'\to G$ of degree at most $a(g+t,d)$ such that $f^*L$ is proportional to $L'$, i.e., $f^*L\cong nL'$ for some $n\ge1$. Moreover, the restriction of $f$ on the toric part is the identity.
\end{lemma}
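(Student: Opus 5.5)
The plan is to reduce everything to the abelian quotient $A$ and then pull the construction back to $G$ by fibre product. Write $\pi:G\to A$ and let $L_A$ be the ample line bundle on $A$ with $L=\pi^*L_A$. Let $(d_1,\dots,d_g)$ be the type of $L_A$, so that $h^0(A,L_A)=d_1\cdots d_g$ and $\deg_{L_A}A=g!\,d_1\cdots d_g$.

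\emph{Step 1: bound $\deg_{L_A}A$ in terms of $g+t$ and $d$.} This is the one place where the hypothesis that $\pi(X)$ generates $A$ is used, and I expect it to be the main obstacle.
\begin{itemize}
\item Put $Y=\pi(X)-\pi(X)$. Since $\pi(X)$ generates $A$, the $g$-fold sum $Y+\cdots+Y$ equals $A$.
\item So $A$ is the image of $X^{2g}$ under the morphism $(x_1,\dots,x_{2g})\mapsto\sum_{i\le g}(\pi(x_{2i-1})-\pi(x_{2i}))$.
\item Pulling $L_A$ back along this morphism gives a sum of pullbacks of $[\pm1]^*L_A$ and of $m^*L_A,\ (-m)^*L_A$ for the addition map $m$. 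The theorem of the cube expresses these in terms of $L_A$ and $[-1]^*L_A$ on the factors.
\item A Bézout/projection-formula estimate, as in \cite[Lemma 3.5]{GGK} and the degree estimates of R\'emond \cite{Remond00UML}, then bounds $\deg_{L_A}A$ by a constant depending only on $g$, on $d$ (which controls $\deg_{L_A}\pi(X)\le\deg_{L+M}\ovl X$), and on the number $2g$ of factors.
\end{itemize}
Hence $D:=d_1\cdots d_g\le a_0(g+t,d)$.

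\emph{Step 2: construct the isogeny on $A$.}
\begin{itemize}
\item The group $K(L_A)\cong(\prod\Z/d_i)^2$ has order $D^2$. Choose a maximal isotropic subgroup $K\subset K(L_A)$ of order $D$ for the commutator pairing.
\item By descent (Mumford's theory of theta groups), after replacing $L_A$ by a translate (harmless for degrees and for proportionality up to algebraic equivalence) we get $L_A=\varphi^*M$, where $\varphi:A\to A'':=A/K$ and $M$ is a principal polarization on $A''$.
\item Let $e$ be the exponent of $K$, so $e\le D$. There is an isogeny $\psi:A''\to A$ with $\varphi\circ\psi=[e]_{A''}$, of degree $e^{2g}/D\le D^{2g}$.
\item Then $\psi^*L_A=[e]^*M$, which is $e^2M$ after symmetrizing $M$ as in \eqref{eq_symmstrification_pp}.
\end{itemize}
So $A':=A''$ with the principal polarization $L'_{A'}:=M$ satisfies $\psi^*L_A\cong nL'_{A'}$ with $n=e^2$, and $\deg\psi\le a_0^{2g}$.

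\emph{Step 3: lift to $G$.}
\begin{itemize}
\item Set $G':=G\times_{A,\psi}A'$. Then $G'$ is an extension $0\to\mathbb G_m^t\to G'\to A'\to0$, so it is a semiabelian variety with principally polarized abelian quotient.
\item The projection $f:G'\to G$ is an isogeny of degree $\deg\psi$, and it restricts to the identity on $\mathbb G_m^t$ because $\psi$ acts only on the abelian quotient.
\item Pulling back, $f^*L=\pi'^*\psi^*L_A\cong n\,\pi'^*M=nL'$, where $\pi':G'\to A'$ is the projection and $L':=\pi'^*M$.
\end{itemize}
Taking $a(g+t,d)=a_0(g+t,d)^{2g}$, maximized over $g\le g+t$, gives the lemma. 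The only non-formal input is the degree bound in Step 1.
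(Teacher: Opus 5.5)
Your proposal is correct and follows the paper's argument. The paper obtains the bounded-degree isogeny $A\to A'$, along which $L$ descends to a principal polarization, by quoting \cite[Lemma 2.2 \& Lemma 2.5]{GGK} together with $\deg_L(\pi(\ovl X))\le d$; your Steps 1--2 unpack exactly this, via the degree bound coming from generation and descent along a maximal isotropic subgroup of $K(L_A)$. It then takes a quasi-inverse and pulls $G$ back along it, as in your Step 3. One small point: the symmetrization in \eqref{eq_symmstrification_pp} is of type $(2,\dots,2)$ rather than principal, so you should instead take a symmetric representative of the principal polarization $M$; then $\psi^*L_A$ and $e^2M$ agree up to algebraic equivalence, which is all the later degree comparisons use.
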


\begin{proof}
Denote $Y={\pi(\ovl X)}\subset A$.
By \cite[Lemma 2.2 \& Lemma 2.5]{GGK}, there is an isogeny $f'_{\ab}:A\to A'$ such that $\deg(f'_{\ab})\le a'(g+t,\deg_L(Y))$ for some constant $a'$ and a principal polarization $L'$ on $A'$ such that $(f'_{\ab})^*L'\cong L$. 
We have $$\deg_L(Y)=(L|_{Y}^{\dim Y})\le(\pi^*L|_{X}^{\dim Y})\cdot(M|_X^{\dim X-\dim Y})\le(L+M)|_X^{\dim X}=d.$$ We may assume that $a$ is increasing in $\deg_L(Y)$, then $$\deg(f_{\ab}')\leq a'(g+t,d).$$ 
Take a quasi-inverse $f_{\ab}:A'\to A$ of $f_{\ab}'$, i.e., $f_{\ab}$ is`an isogeny such that $f_{\ab}'\circ f_{\ab}=[\deg(f'_{\ab})]:A'\to A'$. Then $\deg(f_{\ab})\le\deg(f'_{\ab})^{g+t}\le a'(g+t,d)^{g+t}$ and
$$f_{\ab}^*L=f^*_{\ab}(f'_{\ab})^*L'=\deg(f'_{\ab})^2L'.$$
So the pull-back $G'$ of $G$ via $f_{\ab}$ and $a(g+t,d)=a'(g+t,d)^{g+t}$ gives the required data.
\end{proof}

\begin{corollary}\label{MLforpoints}
There exists a constant $c'(g+t,d)$ satisfying the following property. Let $G$ be a polarized semiabelian variety of dimension $g+t$ over $\ovl\Q$. Let $X\subset G$ be a reduced closed subscheme of degree $d$. Let $\Gamma\subset G(\ovl\Q)$ be a subgroup of finite rank. Then
$$\#(X^\circ(\ovl\Q)\cap\Gamma)\le c'(g+t,d)^{\rk(\Gamma)+1}.$$
\end{corollary}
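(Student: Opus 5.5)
We argue by induction on $n=\dim X$, with Proposition \ref{ML1} as the main input and Lemma \ref{pp_reduction_lemma} to pass from arbitrary to principal polarizations. Before the induction we make three reductions. First, a reduced $X$ of degree $d$ has at most $d$ irreducible components. We have $X^\circ\subset\bigcup_j X_j^\circ$, because a positive-dimensional coset contained in a component $X_j$ is contained in $X$. So it suffices to treat irreducible $X$, at the cost of replacing $c'$ by $d\cdot c'$, which is absorbed into the base.

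Second, we may assume that $X$ generates the abelian quotient. If it does not, let $A_0$ be the smallest abelian subvariety containing $\pi(X)-\pi(X)$ and let $G_0$ be its preimage in $G$. Translating by a point $x_0\in X(\ovl\Q)$, we get $X-x_0\subset G_0$. Enlarging $\Gamma$ to $\Gamma'=\Gamma+\Z x_0$ raises the rank by at most one, and translation preserves the Ueno locus, so we reduce to $X-x_0\subset G_0$ with $\Gamma'\cap G_0$. The degree of $X-x_0$ with respect to $(L+M)|_{\ovl G_0}$ is unchanged, and $G_0$ has smaller or equal dimension. By the same argument as in the proof of Proposition \ref{ML1}, the extra rank only squares the constant.

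Third, we may assume $G$ is principally polarized. Lemma \ref{pp_reduction_lemma} gives an isogeny $f:G'\to G$ of degree at most $a(g+t,d)$ with $f^*L=n'L'$ and $f$ the identity on the torus. We set $X'=f^{-1}(X)$ and $\Gamma'=f^{-1}(\Gamma)$. Then $\rk\Gamma'=\rk\Gamma$, the Ueno locus of $X'$ is the preimage of that of $X$, and $\#(X^\circ\cap\Gamma)\le\#(X'^\circ\cap\Gamma')$. The degree of $X'$ with respect to $L'+M$ is bounded in terms of $a(g+t,d)$ and $d$, since $f^*(L+M)\ge L'+M$ (as $n'\ge1$) and $\deg f\le a$. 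The component count of $X'$ is also bounded. So we are in the principally polarized case, after fixing a level-$4$ structure.

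For the induction, apply Proposition \ref{ML1}. It gives at most $c_7(g+t,d,n)^{\rk\Gamma+1}$ proper reduced subschemes $Z_i\subsetneq X$, each of degree at most $c_1(g+t,d)$, such that $X^\circ(\ovl\Q)\cap\Gamma\subset\bigcup Z_i(\ovl\Q)$. We need to control $Z_i\cap X^\circ$ in terms of $Z_i^\circ$. A point of $X^\circ\cap Z_i$ lies in no positive-dimensional coset of $X$, hence in none of $Z_i$, so $X^\circ\cap Z_i\subset Z_i^\circ$.

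The components of $Z_i$ have dimension less than $n$ and degree at most $c_1(g+t,d)$. By induction, with the degree bound $c_1$ replacing $d$, each satisfies $\#(Z_i^\circ\cap\Gamma)\le c'(g+t,c_1)^{\rk\Gamma+1}$. The base case $n=0$ is trivial, since $X$ is then a point. The bound becomes
$$\bigl(c_7(g+t,d,n)\,c'_{n-1}(g+t,c_1(g+t,d))\bigr)^{\rk\Gamma+1}.$$
Finally, take the maximum over $n\le g+t$ and over all splittings $g+t$, as in the convention of \S\ref{section_UBC}.

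The main obstacle is the bookkeeping in the reductions, specifically that degrees stay bounded under the isogeny pull-back and under restriction to $\ovl G_0$ with respect to $L+M$. The induction itself is formal once $X^\circ\cap Z\subset Z^\circ$ is observed.
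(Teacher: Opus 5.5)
Your proposal is correct and follows essentially the same route as the paper's proof. Both argue by induction on $\dim X$. Both reduce to $X$ irreducible and generating $A$, where adjoining $x_0$ to $\Gamma$ squares the constant. Both then pass to the principally polarized case via Lemma \ref{pp_reduction_lemma}, and conclude with Proposition \ref{ML1}, the inclusion $X^\circ\cap Z_i\subset Z_i^\circ$ and the induction hypothesis.

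The only difference is that you pull back all of $f^{-1}(X)$, whereas the paper picks a single component $X'$ mapping onto $X$. Your choice forces you to split $f^{-1}(X)$ into components again and to check that each one still generates $A'$; this holds because $f$ is an isogeny, but you should say so explicitly.
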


\begin{proof}
We proceed by induction on $\dim X.$

If $\dim X=0$, then
$$\#(X^\circ(\ovl\Q)\cap\Gamma)\le\# X(\ovl\Q)=d.$$
Assume that proposition statement holds in the case that $\dim X<n$ with constant $c^*(g+t,d,n).$

We now prove it in the case that \begin{equation*}
   \text{(HP0)}\qquad \dim X=n.
\end{equation*}

{\textbf{Step 1.}} First, we reduce to the case that
\begin{equation*}
     \text{(HP1)}\quad
        X\text{ is irreducible and generates }A. 
\end{equation*}
Assume that the statement of the proposition holds under (HP0-1) for some constant $c''(g+t,d,n).$ We may and do further assume that $c''(g+t,d,n)\geq c^*(g+t,d,n)$ after replacing $c''$ with $\max\{c'',c^*\}.$ Moreover we assume that $c''(g+t,d,n)$ is increasing in $g+t$ and $d$ by replacing $c''(g+t,d,n)$ with
$$\max_{0\le i\le g,1\le j\le d}c''(g+t,d,n).$$

Let $X_1,X_2,\dots, X_m(m\leq d)$ be irreducible components of $X.$ 
For each $i=1,\dots,m$, put $A_i$ to be the abelian subvariety of $A$ generated by $X_i$, and take any $x_i\in X_i(\ovl{\Q})$ such that $\pi(X_i-x_i)\subset A_i.$ We set $G_i=\pi^{-1}(A_i)$ and $\Gamma_i:=\langle x_i,\Gamma\rangle\cap G_i(\ovl \Q).$ Hence $\rk \Gamma_i\leq \rk \Gamma+1.$

Then
\begin{align*}
\#(X^\circ(\ovl\Q)\cap\Gamma)&\ \le\sum_{i=1}^m\#(X_i^\circ(\ovl\Q)\cap\Gamma)\\
&\ \le\sum\# ((X_i-x_i)^\circ (\ovl \Q)\cap\Gamma_i)\\
&\ \le\sum_{i=1}^mc''(\dim A_i+t,\deg X_i,n)^{\rk(\Gamma_i)+1}\\
&\ \le\Big(dc''(g+t,d,n)\Big)^{\rk(\Gamma)+2},\\
&\ \leq c'^{\rk(\Gamma)+1}
\end{align*}
where
$$c':=(dc''(g+t,d,n))^2$$
depends only on $g+t,d$ and $n$.

\textbf{Step 2.} Under the assumption (HP1), now we further reduce to the case that
\begin{equation*}
    \text{(HP2)}\quad G\text{ is principally polarized.} 
\end{equation*}
Assume that the proposition holds under hypothesis (HP0-2) for some constant $c'''(g+t,d,n)$. We want to show that the proposition holds under only (HP0-1). Again, we may assume that $c'''(g+t,d,n)\geq c^*(g+t,d,n).$

By Lemma \ref{pp_reduction_lemma}, there is an isogeny $f:G'\to G$ of degree at most $a(g+t,d)$ such that $f^*L$ is proportional to a principal polarization $L'$ on $G'$. Take an irreducible component $X'$ of $f^{-1}(X)$ which maps onto $X$, which exists because $f:f^{-1}(X)\to X$ is étale and finite.

Note that $f$ is the identity if restricted to the toric part. Therefore we we have the extension $\ovl f:\ovl G'\rightarrow \ovl G$ which is finite of the same degree as $f$. If we denote by $M'$ the boundary line bundle on $\ovl G'$, then we have $M'=\ovl f^*M.$
$$\deg_{L'+M'}(\ovl {X'})\le\deg_{\ovl f^*(L+M)}(\ovl {X'})\le\deg(f)\deg_{L+M}(\ovl X)\le a(g+t,d)d.$$
We assume that $c'''(g+t,d,n)$ is increasing in $d$. Then
$$\#((X')^\circ(\ovl\Q)\cap f^{-1}(\Gamma))\le c'''(g+t,a(g+t,d)d,n)^{\mr{rk}(f^{-1}(\Gamma))+1}.$$
Since $f$ is an isogeny, $f^{-1}(\Gamma)\to\Gamma$ is surjective with finite kernel, and hence $\mr{rk}(f^{-1}(\Gamma))=\mr{rk}(\Gamma)$.

For any $x\in X^{\circ}(\ovl \Q)\cap\Gamma$, there is a $x'\in X'(\ovl\Q)\cap f^{-1}(\Gamma)$ such that $f(x')=x$. Moreover, we have $x'\in (X')^\circ(\ovl\Q)$. Otherwise $x'\in H'(\ovl\Q)$ for some positive dimensional coset $H'\subset X'$. Then $x\in f(H')(\ovl\Q)$, where $f(H')\subset X$ is a positive dimension coset, contradicting $x\in X^\circ(\ovl\Q)$. Therefore,
$$\#(X^\circ(\ovl\Q)\cap\Gamma)\le\#((X')^\circ(\ovl\Q)\cap f^{-1}(\Gamma))\le c'''(g+t,a(g+t,d)d,n)^{\mr{rk}(\Gamma)+1}.$$
We thus set $c''(g+t,d,n)=c'''(g+t,a(g+t,d)d,n).$

\textbf{Step 3.} Now  it suffices to prove the proposition under the assumption (HP0-2).
Applying Proposition \ref{ML1} to $X$, we get reduced closed subschemes $Z_i\subset X$ $(i=1,2,\dots,c_7(g+t,d,n)^{\rk(\Gamma)+1})$ of degree at most $c_1(g+t,d)$. By the induction hypthesis,
$$\#(Z_i^\circ(\ovl\Q)\cap\Gamma)\le c^*(g+t,c_1(g+t,d),n)^{\rk(\Gamma)+1}.$$
Since $X^\circ\cap Z_i\subset Z_i^\circ$,
$$\#(X^\circ(\ovl\Q)\cap\Gamma)\le(c_7(g+t,d,n)c^*(g+t,c_1(g+t,d),n))^{\rk(\Gamma)+1}.$$
\end{proof}

\subsection{Ueno locus}\label{subsection_relative_ueno}

We define a relative version of Ueno locus and show that it is Zariski closed, which allows specialization and induction on dimension later.

Let $S$ be a quasi-projective variety over an algebraically closed field $K$ of characteristic $0$. Let $\GGG$ be a polarized semiabelian scheme over $S$. Let $\XXX\subset\GGG$ be a subvariety. Define the \emph{relative Ueno locus} of $\XXX$ over $S$ as the union of the Ueno locus of $\XXX_s$ for all $s\in S$.

\begin{proposition}\label{prop_relative_ueno} 
There are finitely many pairs $(S_1,\GGG_1),(S_2,\GGG_2)\dots(S_N,\GGG_N)$ of a closed subvariety $S_i\subset S$ and a semiabelian subscheme $\GGG_i\subset\GGG_{S_i}=\GGG\times_SS_i$ such that for any $s\in S(K)$, the Ueno locus of $\XXX_s$ is a union of cosets of $\GGG_{i,s}$ for $S_i$ containing $s$.

In particular, the relative Ueno locus is a Zariski closed set.
\end{proposition}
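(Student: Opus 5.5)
The plan is to realize the relative Ueno locus as a finite union of images of families of cosets, using the geometric Zilber--Pink theorem (Theorem \ref{Geometric_Zilber_Pink}) on a fiber power, as announced in the introduction. First I reduce to the modular situation. Replace $S$ by a finite étale cover and $\GGG$ by an isogenous scheme so that $\GGG$ is principally polarized with level-$4$ structure; the statement is insensitive to this, since cosets map to cosets under isogenies and finite covers. I also spread everything out over a finitely generated field and embed it in $\C$, so that we have the modular map $\iota:\GGG\to\mc P^\times_{g,t}$ over $\iota_S:S\to(\mc A_g^\vee)^{[t]}$. A positive dimensional coset $x+H\subset\XXX_s$, with $H$ a connected semiabelian subgroup of $\GGG_s$, is then a fiber of a translated semiabelian subscheme. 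The difficulty is that $H$ is not a priori constant in $s$: the subgroups of a semiabelian variety come in countably many discrete families.

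The key step is to make $H$ visible as a weakly special subvariety. I would work on the fiber square. Let $\mc D:=\{(x,y)\in \XXX\times_S\XXX : y-x\in \mc K\}$, where $\mc K$ is the relative subvariety of $\GGG$ of differences. In practice it is cleaner to consider the image $Y$ of the map
\[\XXX\times_S\GGG\to\mc P^\times_{2g,2t},\qquad (x,h)\mapsto \iota^{[2]}(x,h),\]
restricted to the locus where $x+nh\in\XXX$ for all $n$. Since a coset $x+H\subset\XXX_s$ gives the subvariety $\{x\}\times H$ of $\{(x,h): x+H'\subset \XXX_s\}$, and $\{x\}\times H$ lies in a fiber of $\pi_{\sab}$ and is a translate of a semiabelian subgroup, it is bi-algebraic. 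So each such coset is contained in a weakly optimal subvariety $Z$ of an appropriate closed subvariety (namely $Y=\iota(\XXX)$ itself), with $Z^{\biZar}$ a fiber of some $\rho_{/N}$. By Theorem \ref{Geometric_Zilber_Pink} only finitely many triples $(Q,\mc Y^+,N)\in\Sigma_Y$ occur. For each triple, the proof of Corollary \ref{coro_semiabelian_subscheme} and the two-step description \eqref{eq_two_steps_quotient} show that the part of $N$ acting in the fibers of $\pi_{\sab}$ is $U_N\oplus V_N^{\ab}$. This part cuts out a semiabelian subscheme $M_0$ over $\pi_{\sab}(M)$. Pulling back gives $S_i:=\iota_S^{-1}(\pi_{\sab}(M))$, closed after taking Zariski closure and irreducible components, together with $\GGG_i:=$ the neutral component of $\iota^{-1}(M_0)$. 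Lemma \ref{lemma_biZar_weakly_optimal} then guarantees that a maximal coset in $\XXX_s$ is a component of $Z^{\biZar}\cap Y$, and hence a coset of $\GGG_{i,s}$ for some $i$ with $s\in S_i$.

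Finally I check closedness. For each $i$, the set of $x\in \XXX_{S_i}$ with $x+\GGG_{i,x}\subset\XXX$ is Zariski closed, by the same upper semicontinuity of fiber dimension of $m^{-1}\XXX\cap p_1^{-1}\XXX$ used in Lemma \ref{lemma_maximal_stable_under_translation}. For $s\in S_i$ with $\dim\GGG_{i,s}>0$, the union of these loci is the Ueno locus, and the union over $i$ is closed.

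I expect the main obstacle to be the bi-algebraicity step: showing that every positive dimensional coset, including one not coming from a weakly special subvariety of the smallest special subvariety, is captured by a weakly optimal $Z$ whose $Z^{\biZar}$ fibers are exactly cosets of $M_0$ and not something with a nontrivial reductive or $W_N$ part. I would first try to choose $Z$ maximal among subvarieties contained in a single fiber of $\pi_{\sab}$, and then use Lemma \ref{lemm_weakly_optimal_maximality}-style arguments together with Theorem \ref{theorem_special_subvariety}.
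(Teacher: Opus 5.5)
\textbf{Main gap: a non-injective modular map.} Nothing in your argument handles the case where the modular map is not injective, and there the step ``a maximal coset in $\XXX_s$ is a component of $Z^{\biZar}\cap Y$'' fails. With $Y=\iota(\XXX)$, the fiber of $Y$ over $\iota_S(s)$ is $\bigcup_{s'\in\iota_S^{-1}(\iota_S(s))}\XXX_{s'}$, not $\XXX_s$. A maximal coset $x+H\subset\XXX_s$ is weakly special, but it need not be weakly optimal in $Y$. So Lemma~\ref{lemma_biZar_weakly_optimal} says nothing about it. Moreover, the weakly optimal $Z\supseteq\iota(x+H)$ supplied by Theorem~\ref{Geometric_Zilber_Pink} can meet the fiber in a strictly larger coset that leaves $\XXX_s$.

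For example, take $S=\mathbb G_m$ and a constant family $\GGG=G\times S$ with $\mathbb G_m^2\subset G$, so $\iota_S$ is constant. Let $\XXX=\{((a,s),s)\}\subset\mathbb G_m^2\times S$. Each $\XXX_s=\mathbb G_m\times\{s\}$ is a coset of $\mathbb G_m\times\{1\}$. But $Y=\mathbb G_m^2$, and the only weakly optimal subvariety of $Y$ containing $\iota(\XXX_s)$ is $Y$ itself. Your recipe therefore produces a group scheme with fiber $\mathbb G_m^2$, for which the conclusion is false.

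The paper first proves the statement when the modular map is injective, by a cleaner argument than yours:
\begin{enumerate}
\item A maximal positive-dimensional coset $Z\subset\XXX_s$ lies in a maximal weakly special $W\subset\XXX$. Such a $W$ is weakly optimal of defect $0$, hence an $N_i$-orbit for one of finitely many triples.
\item $W\cap\GGG_s\subset\XXX_s$ is a union of cosets, and maximality makes $Z$ one of its components.
\end{enumerate}
The general case is then reduced to this by a fiber-power trick. Choose $n$ larger than every fiber dimension of $\iota_S$, and apply the injective case to the image $\YYY$ of $\XXX^{[n]}$ in the universal family. A pigeonhole argument on fiber dimensions shows that $H^n$ has a coset in $\YYY_{\iota(s)}$ iff $H$ has a coset in some $\XXX_{s'}$ with $\iota(s')=\iota(s)$. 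Finally take $\GGG_i$ to be the largest subgroup scheme with $\GGG_i^n\subset\HHH_i$. Your fiber-square construction does not do this job, since it still mixes the different $\XXX_{s'}$ lying over one point of the moduli space.

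\textbf{Second error: the wrong fiberwise group.} Even in the injective case, the group you extract is wrong in general. Corollary~\ref{coro_semiabelian_subscheme} and \eqref{eq_two_steps_quotient} are proved under \eqref{eq_split_unipotent_radical} and for the enlarged group $N'=(U_N\oplus V_N^{\ab})\cdot N$. Neither is available here: the triples come from Theorem~\ref{Geometric_Zilber_Pink}, and replacing $N$ by $N'$ can push the orbit out of $\XXX$.

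The part of $N$ acting inside fibers of $\pi_{\sab}$ is $N\cap(\mr M_{t,1}\oplus\mr M_{2g,1})$. This is normal in $Q_1=(\mr M_{t,1}\oplus\mr M_{2g,1})\cdot Q$ with no splitting hypothesis. $V_N^{\ab}$ can be strictly bigger, because an element $(z,y,x,1)\in N$ with $x\neq0$ puts $y$ into $V_N^{\ab}$ while moving to a different fiber. For instance, for $t=1$ the group $N=\{(z,x^{\top},x,1)\}$ is normal in $P_{g,1}$ and its orbits meet each fiber only in torus cosets, yet $V_N^{\ab}=\mr M_{2g,1}$.

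The lemma stated just before the paper's proof uses exactly $N\cap(\mr M_{t,1}\oplus\mr M_{2g,1})$. Modulo this group, each $N$-orbit becomes a multisection over the base, so the fiberwise components of the orbit are cosets of a single semiabelian subscheme. This is also what resolves the obstacle you anticipate at the end of your proposal.
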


\begin{lemma}
Let $M\subset\mc P_{g,t}^\times$ be a special subvariety with the connected Mixed Shimura datum $(Q,\mc Y^+)$. Let $N\trianglelefteq Q$ be a normal subgroup. Denote $S=\pi_{\sab}(M)\subset(\mc A_g^\vee)^{[t]}$ and $\mc G=\pi_{\sab}^{-1}(S)$. Then there is a semiabelian subscheme $\mc H\subset\mc G$ such that for any $\widetilde y\in\mc Y^+$ and $s\in S$, each irreducible component of $\uni(N(\mathbb R)^+U_N(\mathbb C)\widetilde y)\cap \pi_{\sab}^{-1}(s)$ is a coset of $\mc H_s$.
\end{lemma}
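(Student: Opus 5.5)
The natural candidate is the semiabelian subscheme attached to the weight $-2$ and abelian unipotent parts of $N$, in the spirit of Corollary \ref{coro_semiabelian_subscheme}. I would take
\[
\mc H_0:=\uni\bigl(U_N(\C)\times V^{\ab}_N(\R)\times \widetilde\pi_{\sab}(\mc Y^+)\bigr),
\]
transported from $M$ into $\mc G$, and define $\mc H$ as its image.

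First I would reduce to the situation of \S\ref{subsec_normal_subgrp}. The reductive part of $N$ can be assumed semisimple, since weakly special subsets are defined with such $N$. If condition \eqref{eq_split_unipotent_radical} fails for $Q$, I would replace $M$ by $\pi^{-1}(\pi_{\ab}(M)\times_{\pi_{\mr{Siegel}}(M)}\pi_{\sab}(M))\cap\pi_{\sab}^{-1}(S)$. By Lemma \ref{lemm_split_unipotnent_radical} this is a special subvariety satisfying \eqref{eq_split_unipotent_radical}, and its group contains $Q$ with $N$ still normal there; I would check this through the explicit $P_{g,t}$ matrices, possibly after shrinking $N$ to the part normalized by the bigger group. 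Then Theorem \ref{theorem_special_subvariety} makes $M$ a torsion translate of a semiabelian subscheme of $\mc G$. Next I would replace $N$ by $N'=(U_N\oplus V^{\ab}_N)\cdot N$ from \eqref{eq_subgroup_closure}. This only enlarges the fibres, but each fibre of $N'$ is a union of $N$-fibres, so at the end I need cosets of the $\mc H_s$ attached to $N$, not to $N'$. By Corollary \ref{coro_semiabelian_subscheme}, $M_0$ is a semiabelian $S$-subscheme of $M$, and translating by the torsion section gives $\mc H\subset\mc G$.

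The main step is to analyse $Y_{\widetilde y}:=\uni(N(\R)^+U_N(\C)\widetilde y)$ fibrewise over $s\in S$. I would use the diagram \eqref{eq_two_steps_quotient}. First quotient by $U_N\oplus V^{\ab}_N$, which fibrewise is the quotient by $\mc H_s$, landing in $M'=M/M_0$. The remaining fibres are the fibres of $M'\to M_{/N'}$, and over a fibre $F=\rho_{/p_{\sab}(N)}^{-1}(y_{\sab})$ the family $M'|_F$ is isotrivial by Proposition \ref{prop_normality_triviality}.(2). I would argue that the image of $Y_{\widetilde y}$ in $M'|_F$ is a flat section, i.e. locally constant for the isotrivial trivialization. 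Its intersection with a single fibre $M'_s$ is then a countable, hence finite algebraic, set of points. Granting this, $Y_{\widetilde y}\cap\pi_{\sab}^{-1}(s)$ is the preimage in $M_s$ of finitely many points of $M_s/(M_0)_s$, so each irreducible component is a coset of $(M_0)_s=\mc H_s$.

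I expect the flat-section claim to be the obstacle. In Betti coordinates the $N$-orbit, modulo $U_N(\C)\oplus V^{\ab}_N(\R)$, varies only through the $W_N\rtimes G_N$ directions on the base. I would try to show via Proposition \ref{prop_betti_equivariant} that these directions act trivially on the quotient Betti coordinates of $(U_Q\oplus V_Q)/(U_N\oplus V^{\ab}_N)$, which is exactly Proposition \ref{prop_normality_triviality}.(2). The image of $Y_{\widetilde y}$ would then have constant Betti coordinate, hence be a flat section. Finiteness of each fibre intersection would follow from algebraicity of $Y_{\widetilde y}$ as a weakly special subvariety, combined with discreteness in $M'_s$.
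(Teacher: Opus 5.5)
Your choice of $\mc H$ is wrong, so the final step fails. Your $\mc H_0$ is built from $U_N\oplus V^{\ab}_N$, which is the fibre group of $N'=(U_N\oplus V^{\ab}_N)\cdot N$, not of $N$. The $N$-orbit $Y_{\widetilde y}$ is in general not stable under translation by it. So the claim that $Y_{\widetilde y}\cap\pi_{\sab}^{-1}(s)$ is the full preimage of finitely many points of $M_s/(M_0)_s$ is false. You noticed that you need cosets attached to $N$ rather than $N'$, but nothing in your argument brings you back from $N'$ to $N$.

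Here is a counterexample. Take $g=t=1$ and $Q=P_{1,1}$, so $M=\mc P^\times_{1,1}$ and \eqref{eq_split_unipotent_radical} holds. Let $N=\{(z,y,y^\top)\}\subset\mc R_u(P_{1,1})$.
\begin{itemize}
\item Using $(z,y,x)(z',y',x')=(z+z'+xJy',y+y',x+x')$ and $h(z,y,x)h^{-1}=(\mu(h)z,hy,xh^\top)$, one checks that $N\trianglelefteq P_{1,1}$, with trivial reductive part.
\item Its invariants are $U_N=\mr M_{1,1}$, $V_N=0$, but $V^{\ab}_N=\mr M_{2,1}$. Hence your $\mc H_s$ is the whole two-dimensional fibre $\mc G_s$.
\item By \eqref{eq_group_action}, $N(\R)^+U_N(\C)$ fixes $\tau$ and sends $(w,v,u)$ to $(w',v+c,u+c)$ with $w',c\in\C$ arbitrary.
\item So $Y_{\widetilde y}$ is the $\mathbb G_m$-torsor over a translate of the graph of the principal polarization in $\mc A_\tau\times\mc A_\tau^\vee$. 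Then $Y_{\widetilde y}\cap\pi_{\sab}^{-1}(s)$ is a single coset of the torus $\mathbb G_m\subset\mc G_s$, not a coset of $\mc G_s$.
\end{itemize}

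The missing idea is to quotient only by the part of $N$ lying in the fibre directions. This is $N\cap(\mr M_{t,1}\oplus\mr M_{2g,1})$, an extension of $V_N$ (not $V^{\ab}_N$) by $U_N$. Do this on the whole family $\mc G=\pi_{\sab}^{-1}(S)$ rather than on $M$:
\begin{itemize}
\item The family $\mc G$ is special with group $Q_1=(\mr M_{t,1}\oplus\mr M_{2g,1})\cdot Q$, for which \eqref{eq_split_unipotent_radical} holds automatically.
\item Since $\mr M_{t,1}\oplus\mr M_{2g,1}$ is abelian and normal in $P_{g,t}$, the group $N\cap(\mr M_{t,1}\oplus\mr M_{2g,1})$ is normal in $Q_1$. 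No hypothesis on how $N$ sits in a larger group is needed.
\item The argument of Corollary \ref{coro_semiabelian_subscheme} then gives $\mc H$ with $\mc G_{/N\cap(\mr M_{t,1}\oplus\mr M_{2g,1})}=\mc G/\mc H$.
\item The image of $Y_{\widetilde y}$ in this quotient is an orbit of $N/(N\cap(\mr M_{t,1}\oplus\mr M_{2g,1}))\cong p_{\sab}(N)$, hence a multi-section over the base. No isotriviality or flat-section argument is needed.
\end{itemize}
This is the paper's proof.

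It also makes your first reduction unnecessary, which is good because that reduction has its own problem. Take $Q=\mc R_u(Q)\rtimes\mr{GSp}_2$ with $\mc R_u(Q)$ the group $N$ above; this is $\mc P^\times_{1,1}$ restricted to that graph. Let $N=\mc R_u(Q)\rtimes\mr{SL}_2$. Your enlargement is then all of $\mc P^\times_{1,1}$, and $N$ is not normal in $P_{1,1}$. Shrinking $N$ to restore normality changes the very orbit whose fibres you are trying to describe.
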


\begin{proof}
It can be verified that $\mc G$ is also a special subvariety with the connected mixed Shimura datum $(Q_1,\mc Y_1^+)$ for
$$Q_1=(\mr M_{t,1}\oplus \mr M_{2g,1})\cdot Q,\quad\mc Y_1^+=\mr M_{t,1}(\mathbb C)\times \mr M_{2g,1}(\mathbb C)\times\widetilde{\pi}_{\mathrm{sab}}(\mc Y^+).$$

Since $\mr M_{t,1}\oplus \mr M_{2g,1}$ is normal in $P_{g,t}$, $(U_N\oplus V_N)=(\mr M_{t,1}\oplus \mr M_{2g,1})\cap N$ is normal in $Q_1$.

By the same argument as in the proof of Corollary \ref{coro_semiabelian_subscheme}, we have a semiabelian subcheme $\mc H$ of $\mc G$ such that the Shimura quotient $\mc G_{/(U_N\oplus V_N)}$ is the quotient $\mc G/\mc H$ as semiabelian scheme over $S$.

Note that $$\rho_{/(U_N\oplus V_N)}(\uni(N(\R)^+ U_N(\C)\widetilde y))=\uni(p_{\sab}(N)(\R)^+\widetilde\pi_{\sab}(\widetilde y))\subset \mc G/\mc H$$ is a multi-section of $S$, and $\uni(N(\R)^+U_N(\C)\widetilde y)\cap \pi_{\sab}^{-1}(s)$ is exactly the preimage of $\pi_{\sab}^{-1}(s)\cap \uni(p_{\sab}(N)(\R)^+\widetilde\pi_{\sab}(\widetilde y))$, which concludes the proof.
\end{proof}

\begin{proof}[Proof of Proposition \ref{prop_relative_ueno}]
To see how the first assertion implies the Ueno locus is Zariski closed, consider the quotient map $f_i:\GGG_{S_i}\to\GGG_{S_i}/\GGG_i$. Then
$$\ZZZ_i=\{z\in\GGG_{S_i}/\GGG_i:f_i^{-1}(y)\subset\XXX\}$$
is Zariski closed by the upper semicontinuity, and the relative Ueno locus is the union of $f_i^{-1}(\ZZZ_i)$.

We may assume $\GGG/S$ is principally polarized with a level-$4$ structure by taking a finite étale cover. So there exist moduli maps $\mc G\to\mc P_{g,t}^\times$ and $S\rightarrow (\mc A_g^\vee)^{[t]} $. We may also assume $K=\CC$.

We first assume that the modular map is injective. Recall the geometric Zilber--Pink \cite{BU}. The case $\delta=0$ states that there are finitely many triplets $(Q_i,\mathcal Y_i^+,N_i)$, where $(Q_i,\mathcal Y_i^+)$ is a connected mixed Shimura subdatum and $N_i\trianglelefteq Q_i$ is a normal subgroup, such that each weakly optimal weakly special subset of $\XXX$ is an orbit of $N_i$ in $\uni(\mathcal Y_i^+)$.

Fix an index $i$. We show that for any $\widetilde y\in\mathcal{Y}_i^+$ and $s\in S$, each irreducible component of $\uni(N_i(\RR)^+U_{N_i}(\CC)\widetilde y)\cap\GGG_s$ is of dimension $\dim(N_i(\RR)^+U_{N_i}(\CC)\cap \mr M_{2g,1}(\RR)\mathbb G_a^t(\CC))$, which depends only on $i$. Note that it is not necessarily non-empty. For any
$$y'\in \uni(N_i(\RR)^+U_{N_i}(\CC)\widetilde y)\cap\GGG_s,$$
there is a lifting $\widetilde y'$ such that
$$\uni(N_i(\RR)^+U_{N_i}(\CC)\widetilde y)=\uni(N_i(\RR)^+U_{N_i}(\CC)\widetilde y')$$
and
$$\GGG_s=\uni(\mr M_{2g,1}(\RR)\mathbb G_a^t(\CC)\widetilde y').$$
Since $\uni$ is étale, we only need the dimension of
$$(N_i(\RR)^+U_{N_i}(\CC)\widetilde y')\cap (\mr M_{2g,1}(\RR)\mathbb G_a^t(\CC)\widetilde y')$$
near $\widetilde y'$, which is exactly as expected.

Let $I$ be the set of index satisfying that 
$$\dim(N_i(\RR)^+U_{N_i}(\CC)\cap \mr M_{2g,1}(\RR)\mathbb G_a^t(\CC))>0.$$
We claim any maximal coset $Z$ of positive dimension in $\XXX_s$ is an irreducible components of $\uni(N_i(\RR)^+U_{N_i}(\CC)\widetilde y)\cap\GGG_s$ for some $i\in I$. Indeed, since $Z$ is weakly special, $Z$ is contained in some weakly optimal weakly special subset $\uni(N_i(\RR)^+U_{N_i}(\CC)\widetilde y)\subset\XXX$. Therefore $\uni(N_i(\RR)^+U_{N_i}(\CC)\widetilde y)\cap\GGG_s\subset \mc X_s$, whose irreducible components are cosets.  We thus obtain the claim by the maximality of $Z$.

For any $i\in I$, by the previous lemma, for $S_i=\pi_{\mathrm{sab}}(\uni(\mathcal Y_i^+))\cap S$, there is a subgroup scheme $\mc G_i\subset\mc G_{S_i}$ such that each irreducible component of $\uni(N_i(\R)^+ U_{N_i}(\C)\widetilde y)\cap\mc G_s$ is a coset of $(\mc G_i)_s$ for $s\in S_i$. This concludes the proof under the assumption that the modular map is injective.


In general, we use a self-product argument to reduce to the previous case.

Take an integer $n$ strictly larger than the dimension of any fiber of $S\to(\mc{A}_g^\vee)^{[t]}$. Let $\YYY$ be the image of $\mc X^{[n]}$ in the universal semiabelian scheme of relative dimension $ng$. Denote by $\iota:S\to(\mc A_{ng}^\vee)^{[nt]}$ the moduli map. Note that the dimension of any fiber of $\iota$ is still strictly smaller than $n$. We prove that for any $s\in S$ and subgroup variety $H\subset\GGG_s$, $H^n$ has a coset contained in $\YYY_{\iota(s)}$ if and only if $H$ has a coset contained in $\XXX_{s'}$ for some $s'\in\iota^{-1}(\iota(s))$. Note that $\iota(s')=\iota(s)$ implies $\GGG_{s'}=\GGG_s$ and we can view $H$ as a subgroup of $\GGG_{s'}$.

The ``only if" part is obvious since $(\XXX_{s'})^n\subset\YYY_{\iota(s)}$ for any $s\in S(K)$. Conversely, assume $H$ satisfies that $H^n+(x_1,\dots,x_n)\subset\YYY_{\iota(s)}$ for $x_1,\dots,x_n\in\XXX_s$. Let $W$ be its inverse image in $\mc X^{[n]}$. Then $\dim W\ge n\dim H.$

Note that $W$ lies over $\iota^{-1}(\iota(s))\subset S$. Denote by $\dim(W/\iota^{-1}(\iota(s)))$ the maximal dimension of fiber of $W$ over $\iota^{-1}(\iota(s))$. Then
$$\dim(W/\iota^{-1}(\iota(s)))\le\sum_{i=1}^n\dim(p_i(W)/\iota^{-1}(\iota(s))),$$
where $p_i:\mc X^{[n]}\to\XXX$ is the $i$-th projection. By our choice, $n>\dim(\iota^{-1}(\iota(s)))$. So 
$$\dim(W/\iota^{-1}(\iota(s)))\ge\dim W-\dim(\iota^{-1}(\iota(s)))>n(\dim H-1).$$
By the pigeonhole principle,
$$\dim(p_i(W)/\iota^{-1}(\iota(s)))\ge\dim H$$
for some $i$. However, $p_i(W)\subset\GGG_{\iota^{-1}(\iota(s))}=\GGG_s\times\iota^{-1}(\iota(s))$ satisfies $p_i(W)\subset (H+x_i)\times\iota^{-1}(\iota(s))$. So each non-empty fiber of $p_i(W)$ over $\iota^{-1}(\iota(s))$ is exactly $H+x_i$.

Let $\HHH_i$ be the previous subgroup schemes generating the Ueno locus of $\YYY$. Denote by $s_j:\GGG\to\GGG^{[n]}$ the $j$-th coordinate axis. Then
$$\GGG_i=\bigcap_{j=1}^ns_j^*\HHH_i$$
are some subgroup schemes generating the Ueno locus of $\XXX$. (Here the equality means $\GGG_i$ is the largest subgroup such that $\GGG_i^n\subset\HHH_i$)
\end{proof}

\begin{corollary}
Let $\XXX/S$ be as above. For each geometric point $\bar\eta$ of $S$, the Ueno locus of $\XXX_{\bar\eta}$ is the pull-back of the relative Ueno locus along $\bar\eta$.
\end{corollary}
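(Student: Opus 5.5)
The statement to prove is that for every geometric point $\bar\eta\to S$, the Ueno locus of $\XXX_{\bar\eta}$ equals the pull-back along $\bar\eta$ of the relative Ueno locus. The plan is to use the uniform description from Proposition \ref{prop_relative_ueno}, namely the finitely many pairs $(S_i,\GGG_i)$, and to verify both inclusions fiberwise. Let $f_i:\GGG_{S_i}\to\GGG_{S_i}/\GGG_i$ be the quotient map and put
$$\ZZZ_i=\{z\in \GGG_{S_i}/\GGG_i: f_i^{-1}(z)\subset\XXX\}.$$
As shown in the proof of Proposition \ref{prop_relative_ueno}, each $\ZZZ_i$ is Zariski closed, and the relative Ueno locus is $\mathcal U:=\bigcup_i f_i^{-1}(\ZZZ_i)$. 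All of these are constructions over $K$, so their formation commutes with base change to any algebraically closed field $\Omega\supset K$ via $\bar\eta$. Hence $\mathcal U_{\bar\eta}=\bigcup_{i:\,\bar\eta\in S_i} f_{i,\bar\eta}^{-1}(\ZZZ_{i,\bar\eta})$, where $\ZZZ_{i,\bar\eta}$ is the set of cosets of $\GGG_{i,\bar\eta}$ that lie inside $\XXX_{\bar\eta}$.

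First inclusion: $\mathcal U_{\bar\eta}$ is contained in the Ueno locus of $\XXX_{\bar\eta}$. Every point of $\mathcal U_{\bar\eta}$ lies on a coset of $\GGG_{i,\bar\eta}$ contained in $\XXX_{\bar\eta}$. It remains to see that such a coset has positive dimension. This holds because the indices $i$ that occur are exactly those with $\dim(N_i(\R)^+U_{N_i}(\C)\cap \mr M_{2g,1}(\R)\mathbb G_a^t(\C))>0$, so $\GGG_i$ has positive relative dimension. If the reduction step in the proof of Proposition \ref{prop_relative_ueno} produced some $\GGG_i$ of relative dimension $0$, we simply discard those indices.

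Second inclusion: the Ueno locus of $\XXX_{\bar\eta}$ is contained in $\mathcal U_{\bar\eta}$. Proposition \ref{prop_relative_ueno} is stated for $K$-points $s$, whereas $\bar\eta$ may lie over a non-closed point of $S$. Write $\xi$ for the image point in $S$, let $K'=\overline{\kappa(\xi)}$, and embed $K'$ into $\Omega$. Two facts reduce us to the case $\Omega=K'$: positive-dimensional cosets are defined over the algebraic closure of any field of definition, and the Ueno locus is compatible with extensions of algebraically closed fields. Next, base change the whole situation $(S,\GGG,\XXX)$ from $K$ to $K'$; the proposition applies to $S_{K'}$ over the algebraically closed field $K'$ of characteristic $0$. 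The pairs produced in its proof arise from the geometric Zilber--Pink statement of \cite{BU}, which depends only on the complex geometry. For this reason I would apply the proposition directly over $K'$: after fixing an embedding $K'\hookrightarrow\C$ (possible since these fields have countable transcendence degree after restricting to a finitely generated subfield), the finite set $\Sigma$ of triples from Theorem \ref{Geometric_Zilber_Pink} is the same whether we start from $S$ or from $S_{K'}$. Consequently the pairs $(S_i,\GGG_i)$ obtained for $S_{K'}$ are the base changes of those obtained over $K$.

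Under that identification, $\bar\eta$ becomes a $K'$-point of $S_{K'}$, and the proposition tells us that the Ueno locus of $\XXX_{\bar\eta}$ is a union of cosets of the $\GGG_{i,\bar\eta}$. Each such coset is contained in $\XXX_{\bar\eta}$, so it lies in $f_{i,\bar\eta}^{-1}(\ZZZ_{i,\bar\eta})\subset\mathcal U_{\bar\eta}$.

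The main obstacle is this descent/base-change step: showing that the finite list $(S_i,\GGG_i)$ produced over $K$ still accounts for all cosets in fibers over non-closed points. If the Zilber--Pink-based argument is not clearly invariant under base change, the fallback is a constructibility argument. For each $i$, consider the set of $s\in S$ where some maximal coset of $\XXX_s$ is not a translate of any $\GGG_{i,s}$. Using a Hilbert-scheme or Chow-variety parametrization of cosets of bounded degree in fibers (bounded by the degree of $\XXX_s$), this bad set is constructible. It contains no $K$-points, so since $K$ is algebraically closed it is empty, and therefore it has no generic or geometric points either.
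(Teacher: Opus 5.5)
You handle the first inclusion exactly as the paper does: $\ZZZ_{\bar\eta}$ is a union of positive-dimensional cosets of the $\GGG_{i,\bar\eta}$ lying in $\XXX_{\bar\eta}$. For the reverse inclusion you take a genuinely different route.

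The paper works only from the \emph{statement} of Proposition~\ref{prop_relative_ueno}. It takes the Ueno locus $Y$ of $\XXX_{\bar\eta}$ and its Zariski closure $\YYY$ in $\XXX$. It observes that for each $s\in S(K)$ the fiber $\YYY_s$ is still a union of positive-dimensional cosets, since cosets specialize to cosets. Hence $\YYY_s\subset\ZZZ_s$. Because $\ZZZ$ is Zariski closed and $K$-points are dense, this gives $\YYY\subset\ZZZ$, and therefore $Y\subset\ZZZ_{\bar\eta}$.

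You instead re-enter the \emph{proof} of the proposition. With compatible embeddings $K\subset K'\subset\C$, the complex variety is unchanged, so the Zilber--Pink data and the resulting pairs are unchanged. The argument over $\C$ therefore already covers every $\C$-point of $S_\C$, including the one coming from $\bar\eta$. This works, but to make it airtight you need three additional facts:
\begin{itemize}
\item The pairs built over $\C$ descend to $K$. They are pull-backs, under modular maps defined over $K$, of special subvarieties and of their attached group subschemes, which are defined over $\overline{\mathbb Q}$.
\item Lefschetz-principle bookkeeping when $K$ or $\Omega$ does not embed into $\C$.
\item Ueno loci are compatible with extensions of algebraically closed fields.
\end{itemize}

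Your route avoids the specialization step that the paper uses without comment. That step says the closure of a family of positive-dimensional cosets has fibers that are unions of positive-dimensional cosets, and it needs a flat-closure-of-subgroup-schemes argument over a DVR. The paper's route, in turn, is formal from the proposition's statement and needs no base-change invariance of the Zilber--Pink construction.

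Your fallback constructibility argument is unnecessary, and it is the weakest part as written. It presupposes a uniform degree bound on the relevant cosets in the fibers $\XXX_s$, which the paper nowhere establishes.
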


\begin{proof}
Let $Y$ be the Ueno locus of $\XXX_{\bar\eta}$ and $\ZZZ$ be the relative Ueno locus. For the Zariski closure $\YYY$ of $Y$ in $\XXX$ and each point $s\in S(K)$, $\YYY_s$ is a union of positive dimensional cosets and hence $\YYY_s\subset\ZZZ_s$. So $\YYY\subset\ZZZ$ and $Y\subset\ZZZ_{\bar\eta}$. Conversely, by the above proposition, $\ZZZ_{\bar\eta}$ is a union of positive dimensional cosets. Therefore, $\ZZZ_{\bar\eta}\subset Y$.
\end{proof}

\begin{corollary}\label{coro_MLforpointsgeneralpol}
Let $c'(g+t,d)$ be as in Corollary \ref{MLforpoints}. Let $G$ be a polarized semiabelian variety of dimension $g+t$ over an algebraically closed field $K$ of characteristic $0$. Let $X\subset G$ be a reduced closed subscheme of degree $d$. Let $\Gamma\subset G(\ovl\Q)$ be a subgroup of finite rank. Then
$$\#(X^\circ(K)\cap\Gamma)\le c'(g+t,d)^{\rk(\Gamma)+1}.$$
\end{corollary}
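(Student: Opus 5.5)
The plan is a specialization argument that reduces Corollary \ref{coro_MLforpointsgeneralpol} to Corollary \ref{MLforpoints}. The relative Ueno locus of Proposition \ref{prop_relative_ueno} and the corollary after it make the loci $X^\circ$ behave well under specialization.

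\textbf{Spreading out.} Replace $\Gamma$ by a finitely generated subgroup $\Gamma_0$ of the same rank. If $\#(X^\circ(K)\cap\Gamma)$ exceeded the bound, some finite subset of it would already exceed it, so $\Gamma_0$ can be chosen to contain such a subset. Take generators $\gamma_1,\dots,\gamma_r$ of $\Gamma_0$ modulo torsion and finitely many points $P_1,\dots,P_N\in X^\circ(K)\cap\Gamma_0$ with $N> c'(g+t,d)^{r+1}$. Let $R\subset K$ be a finitely generated $\ovl\Q$-algebra over which the following are defined: $G$, its polarization, $X$, the $\gamma_j$, the $P_k$, and the relations expressing each $P_k$ as a combination of the $\gamma_j$ plus torsion. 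Set $S=\mr{Spec} R$, a variety over $\ovl\Q$. After shrinking $S$, we get a polarized semiabelian scheme $\mc G/S$ with split torus part of rank $t$, a flat family $\mc X\subset\mc G$ whose fibers are reduced of degree $d$, and sections $\gamma_j,P_k\in\mc G(S)$.

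\textbf{Specializing the Ueno locus.} Apply Proposition \ref{prop_relative_ueno} to $\mc X/S$. The relative Ueno locus $\mc Z$ is Zariski closed, and by the following corollary its pullback to the geometric generic point is exactly the Ueno locus of $X$ over $K$. Since $P_k\in X^\circ(K)$, the section $P_k$ does not lie in $\mc Z$. Hence the set of $s\in S$ with $P_k(s)\in\mc Z_s$ is a proper closed subset. By definition $\mc Z_s$ is the Ueno locus of $\mc X_s$, so outside a proper closed subset of $S$ every specialization satisfies $P_k(s)\in\mc X_s^\circ$. Also, for $s$ outside a further proper closed set, the points $P_k(s)$ remain pairwise distinct.

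\textbf{Conclusion.} Choose $s\in S(\ovl\Q)$ avoiding these finitely many proper closed subsets. Let $\Gamma_s$ be the subgroup of $\mc G_s(\ovl\Q)$ generated by the $\gamma_j(s)$ and the specialized torsion. Then $\rk\Gamma_s\le r$, and all $P_k(s)$ lie in $\mc X_s^\circ(\ovl\Q)\cap\Gamma_s$. Corollary \ref{MLforpoints} applies to $\mc X_s\subset\mc G_s$, which is of degree $d$ over $\ovl\Q$, and gives $N\le c'(g+t,d)^{r+1}$, a contradiction.

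\textbf{Main obstacle.} The delicate step is the specialization of $X^\circ$. A naive specialization can enlarge the Ueno locus, so points of $X^\circ$ could fall into the Ueno locus of the special fiber. This is exactly what the Zariski-closedness of the relative Ueno locus controls. Degree preservation also needs checking: the degree is computed in $\ovl{\mc G}$ against $L+M$, and flatness of the closure of $\mc X$ in $\ovl{\mc G}$ over an open subset of $S$ makes it constant.
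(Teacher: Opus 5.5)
Your proposal is correct and follows essentially the same route as the paper: spread $G$, $X$, $\Gamma$ and the excess points out over a $\ovl\Q$-variety $S$, use the Zariski-closedness of the relative Ueno locus (Proposition \ref{prop_relative_ueno}, together with the corollary identifying the geometric generic Ueno locus with its pullback) to keep the specialized points distinct and in $\mc X_s^\circ$, and contradict Corollary \ref{MLforpoints} at a $\ovl\Q$-point $s$ where the specialized group has rank at most $\rk(\Gamma)$. Your write-up is more explicit than the paper's in two places: the rank bound under specialization, and the constancy of the degree in the flat family.
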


\begin{proof}
Since $\Gamma$ is the direct limit of finitely generated subgroups, we may assume $\Gamma$ is finitely generated. Take a finitely generated field where $G,\Gamma,X$ and its Ueno locus is defined. We may assume $K=\ovl \Q(S)$ for some variety $S$ over $\ovl \Q$ and still denote by $X^\circ$ the complement of the Ueno locus. By shrinking $S$, there is a semiabelian scheme $\GGG$ over $S$ and a closed subscheme $\XXX\subset\GGG$ faithfully flat over $S$ with geometrically integral fibers such that the generic fiber is $X\subset G$.

Assume the contrary that
$$\#(X^\circ(K)\cap\Gamma)\ge c'(g+t,d)^{\rk(\Gamma)+1}+1.$$
Fix $x_1,\dots,x_N\in\#(X^\circ(K)\cap\Gamma)$ for $N=c'(g+t,d)^{\rk(\Gamma)+1}+1$. Again by shirinking $S$, we may assume that each $x_i$ extends to a section $\bar x_i:U\to\XXX_U$ such that for any $s\in S(\ovl\Q)$, $\bar x_i(s)$ are distinct and $\bar x_i(s)\in\XXX_s^\circ$, since the relative Ueno locus is Zariski closed. However, the group generated by $\bar x_i(s)$ has rank at most $r$. We get a contradiction.
\end{proof}

\subsection{Proof of the Main theorem}\label{subsection_proof_main_theorem}
\begin{proof}
Without loss of generality, we assume that $K$ is algebraically closed. 

We proceed by induction on $g+t$. If $g+t=0$, there is nothing to prove. Assume that the theorem holds for semiabelian varieties of dimension smaller than $g+t.$

Now we want to prove that it holds for semiabelian varieties of dimension $g+t.$

We first reduces to the principally polarized case.
Assuming that Theorem \ref{ML} holds for principally polarized semiabelian varieties of dimension $g+t$, with the constant $c^\#(g+t,d).$

By Lemma \ref{pp_reduction_lemma}, there is an isogeny $f:G'\to G$ of degree at most $a(g+t,d)$ such that $f^*L$ is proportional to a principal polarization $L'$ on $G'$. Take an irreducible component $X'$ of $f^{-1}(X)$ which maps onto $X$. As in Step 2 of the proof to Corollary \ref{MLforpoints}, $\deg_{L'+M'}(\ovl{X'})\le a(g+t,d)d$. So there are at most $c^\#(g+t,a(g+t,d)d)^{\mathrm{rk}(\Gamma)+1}$ cosets $H_1',H_2',\dots$ contained in $X'$ such that
$$X'(K)\cap f^{-1}(\Gamma)=\bigcup H_i'(K)\cap f^{-1}(\Gamma).$$
Here we used $\mathrm{rk}(f^{-1}(\Gamma))=\mathrm{rk}(\Gamma).$

Take $H_i=f(H_i)$. They are all cosets contained in $X$. For any point $x\in X(K)\cap\Gamma$, take a lifting $x'\in X'(K)\cap f^{-1}(\Gamma)$. Then $x'\in H_i'(K)$ for some $i$ and hence $x=f(x')\in H_i(K)$. Therefore,
$$X(K)\cap \Gamma\subset\bigcup H_i(K)\cap (\Gamma).$$

Now it suffices to prove the principally polarized case, which can be obtained from the forthcoming Lemma \ref{lemma_reduction_ML} and the induction hypothesis.
\end{proof}

\begin{lemma}\label{lemma_reduction_ML}
For any positive integers $g+t,d$, there is a finite sequence of positive integers $d_1,\dots,d_N$ satisfying the following property. Let $G$ be a principally polarized semiabelian variety of dimension $g+t$ over an algebraically closed field of characteristic $0$. Let $X\subset G$ be a closed subvariety of degree $d$. Then there exists a subset $I_X\subset\{1,2,\dots N\}$, a non-isogenious quotient maps $p_i:G\to G_i$ to polarized semiabelian subvarieties and closed subvarieties $X_i\subset G_i$ of degree at most $d_i$ for each $i\in I_X$ such that the Ueno locus of $X$ is the union of $f_i^{-1}(X_i)$.
\end{lemma}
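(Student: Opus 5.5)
We argue by noetherian induction on the parameter space, using the relative Ueno locus from Proposition \ref{prop_relative_ueno}. Fix $g,t,d$. Every pair $(G,X)$ as in the statement, after choosing a level-$4$ structure, corresponds by Proposition \ref{hilbert} to a point $[X]\in\hilb^\circ_{g,t,d}$, which is of finite type. Hence it suffices to produce the data $d_1,\dots,d_N$ on each irreducible component $S$ of $\hilb^\circ_{g,t,d}$, and then recursively on closed subvarieties of $S$. We treat the universal family $\mathcal G=S\times_{(\mc A_g^\vee)^{[t]}}\mc P^\times_{g,t}$ with $\mathcal X=\mathfrak X_S$. The specialization argument over an arbitrary algebraically closed field $K$ then goes as in Corollary \ref{coro_MLforpointsgeneralpol}, through a $K$-point of $S$.

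First we apply Proposition \ref{prop_relative_ueno} to $\mathcal X\subset\mathcal G$. This gives finitely many pairs $(S_i,\mathcal G_i)$, where $S_i\subset S$ is closed and $\mathcal G_i\subset \mathcal G_{S_i}$ is a semiabelian subscheme, such that for each $s$ the Ueno locus of $\mathcal X_s$ is a union of cosets of $\mathcal G_{i,s}$ over those $i$ with $s\in S_i$. We discard the indices for which $\mathcal G_i$ has relative dimension $0$. Set $q_i:\mathcal G_{S_i}\to\mathcal G_{S_i}/\mathcal G_i$, and let
$$\mathcal X_i:=\{z\in\mathcal G_{S_i}/\mathcal G_i:\ q_i^{-1}(z)\subset\mathcal X\},$$
which is Zariski closed by upper semicontinuity, as in the proof of Proposition \ref{prop_relative_ueno}. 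Then fibrewise the Ueno locus of $\mathcal X_s$ equals $\bigcup_{i:\,s\in S_i}q_{i,s}^{-1}(\mathcal X_{i,s})$. The maps $p_i$ are the quotients $q_{i,s}$. They are not isogenies because $\mathcal G_i$ has positive relative dimension, so $G_i$ has strictly smaller dimension.

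Next we make the degrees uniform. The target $\mathcal G_{S_i}/\mathcal G_i$ is a semiabelian scheme over $S_i$. We give it a polarization by choosing a relatively ample line bundle on the abelian quotient, which we may do after a finite étale cover and shrinking, and the boundary bundle $M$ of its $(\mathbb P^1)^{t_i}$-compactification; a different choice of compactification changes degrees only by a bounded factor. By generic flatness there is a dense open $U_i\subset S_i$ over which the closure $\overline{\mathcal X_i}$ is flat. On $U_i$ its fibres therefore have constant Hilbert polynomial, hence bounded degree, and the finitely many irreducible components of fibres have degree at most that bound. We take $d_i$ to be this bound. On the closed complement $S_i\setminus U_i$, and on $S\setminus\bigcup U_i$ where the stratification changes, we repeat the construction by noetherian induction. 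Since each step removes a proper closed subset, the procedure terminates with finitely many integers $d_1,\dots,d_N$. For a given $X$, the set $I_X$ consists of those indices whose stratum contains $[X]$.

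The main obstacle is that the $p_i$ and the $G_i$ must come from the fixed finite list and depend only on $(g+t,d)$, while $q_i$ is defined only after a finite cover of $S$ and the polarization on $\mathcal G/\mathcal G_i$ is not canonical. I would first try to fix, over each stratum, a relatively ample symmetric bundle on the abelian quotient of $\mathcal G/\mathcal G_i$. Its fibrewise degree is constant over a connected stratum, so bounded. The degree of $\overline{\mathcal X_{i,s}}$ is then controlled by flatness on each stratum. Uniformity in $K$ follows because all the constructions are algebraic over $\mathbb Q$.
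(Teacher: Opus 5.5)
Your proposal is correct and follows essentially the paper's route. Like the paper, you apply Proposition \ref{prop_relative_ueno} over each irreducible component of the restricted Hilbert scheme, take $X_i$ to be the fibre of $\{z:q_i^{-1}(z)\subset\mathcal X\}$ in a polarized quotient $\mathcal G_{S_i}/\mathcal G_i$, let $I_X$ consist of the indices whose $S_i$ contains $[X]$, and bound the degrees $d_i$ by generic flatness and induction on dimension. Discarding the relative-dimension-zero $\mathcal G_i$ so that the $p_i$ are not isogenies is a harmless refinement. The worry in your last paragraph is overstated: the statement only requires the bounds $d_i$, not the maps $p_i$ themselves, to come from a fixed finite list.
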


\begin{proof}
Apply Proposition \ref{prop_relative_ueno} to each irreducible component of the universal restricted Hilbert scheme. Use the same notation as in the proof of the second assertion of Proposition \ref{prop_relative_ueno}, fix a polarization on each $\GGG_{S_i}/\GGG_i$ over $S_i$. Take the index set $I_X$ consisting of those $i$ such that $S_i$ contains the point representing $X$. For such $i$, let $p_i$ be the restriction of $f_i$ and $d_i$ be the maximal degree of fibers of $\ZZZ_i$ over $S_i$. The existence of $d_i$ follows from generic flatness and induction on dimension.
\end{proof}

\section{Appendix}\label{Appendix}
In this appendix, we prove the existence of following ``splitting" isogeny:
\begin{proposition}\label{prop_isogeny_trick} Let $G$ be a semiabelian variety, and $B\subset G^m$ be a semiabelian subvariety.
Assume one of the following:
\begin{itemize}
    \item $G=\mathbb G_m^t$, $B$ is a subtorus;
    \item $B$ is an abelian subvariety.
\end{itemize}
Then there exists an isogeny $h:G^m\rightarrow G^m$ such that after reordering $G^m$, we have
    \begin{enumerate}
        \item $h(B)=B_1\times \cdots\times B_m$ where $B_i=p_{i}(h(B));$
        \item $B_i\not=0$ if and only if $i\leq m'$ for some $0\leq m'\leq m;$
        \item For each $i=0,\dots, m-1$, $h$ preserves the subgroup $\mr{ker}(p_{1},\dots,p_i)$. We denote the induced isogeny on $\mr{ker}(p_{1},\dots,p_i)\simeq G^{m-i}$ as $h^{(i)}.$ Moreover, $h^{(m')}$ is the multiplication by $n$ map for some $n>0;$
        \item For each $i=1,\dots, m$ there exists an isogeny $h_i:G\rightarrow G$ such that 
        $$h_i\circ p_{i}= p_{i}\circ h^{(i-1)}.$$
    \end{enumerate}
\end{proposition}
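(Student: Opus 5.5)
We will prove Proposition \ref{prop_isogeny_trick} by induction on $m$. The key input is Poincaré complete reducibility: for abelian varieties in the form of Poincaré's theorem, and for tori in the form of the elementary divisor theorem applied to character lattices, which yields complements up to isogeny. In both cases the category of objects up to isogeny ($\mathbb Q$-tori, or abelian varieties in the isogeny category) is semisimple. In particular, every subobject $B\subset G^m$ admits a complement $C$ with $B+C=G^m$ and $B\cap C$ finite. We will build $h$ as a composite of triangular ``column operations'' on $G^m$, together with multiplication by integers; these are the only kinds of maps whose compatibility with the filtration $\ker(p_1,\dots,p_i)$ in condition (3) and with the projections in condition (4) is easy to control.

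\textbf{Step 1 (peeling off the first factor).} First we reorder the factors so that $p_1(B)\neq 0$, unless $B=0$, in which case we take $h=[1]$ and $m'=0$. Put $B_1':=p_1(B)\subset G$ and $B^{(1)}:=(B\cap \ker p_1)^0\subset G^{m-1}$. By Poincaré reducibility inside $B$, choose a complement $B''\subset B$ of $B^{(1)}$, so that $p_1|_{B''}:B''\to B_1'$ is an isogeny. Let $\psi:B_1'\to B''$ be a quasi-inverse, so that $p_1\circ\psi=[N]$. Then $\psi$ composed with projection to the last $m-1$ factors gives a homomorphism $\phi:B_1'\to G^{m-1}$. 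Extend $\phi$ to $\Phi:G\to G^{m-1}$ after multiplying by an integer, using a complement of $B_1'$ in $G$ and composing with a projection $G\to B_1'$ up to isogeny. Define
\[
u(x_1,x')=\bigl(Nx_1,\;N'x'-\Phi(x_1)\bigr),
\]
with integers $N,N'$ chosen compatibly. This $u$ is an isogeny, it preserves $\ker p_1$ (acting there by $[N']$), and it satisfies $p_1\circ u=[N]\circ p_1$. Its image $u(B)$ is commensurable with $B_1'\times B^{(1)}$ up to integer scalings. Passing to neutral components and multiplying by a further integer, we arrange that $u(B)=B_1\times(\text{something in } \ker p_1)$ exactly. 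This is condition (a) of Proposition \ref{prop_isogeny_induction} in miniature.

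\textbf{Step 2 (induction).} We apply the induction hypothesis to $B^{(1)}\subset G^{m-1}$ to get an isogeny $h'$ of $G^{m-1}$, and set $h=(\mathrm{id}\times h')\circ u$, adjusting by integer multiples on the first factor to keep degrees compatible. Conditions (1)--(2) follow from the inductive form $h'(B^{(1)})=B_2\times\cdots\times B_m$ together with Step 1. Condition (3) holds because each building block preserves the filtration, and on $\ker(p_1,\dots,p_{m'})$, where all $B_i=0$, the map $h$ is a product of integer multiplications, which we equalize to a single $[n]$ by composing with further integer multiples. Condition (4) holds with $h_i$ equal to an integer multiple, read off from the triangular form.

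The main obstacle is making the image exactly a product rather than merely isogenous to one. The plan is to absorb finite discrepancies by multiplying by the exponent of the relevant finite groups ($B\cap C$, $\ker$ of $p_1|_{B''}$), and then to take neutral components. In the torus case we will instead argue directly on character lattices: choose a basis adapted to the saturated sublattice using Smith normal form, which makes all steps explicit.
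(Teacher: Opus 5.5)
Your proposal is correct and follows essentially the same route as the paper: split off $B_1=p_1(B)$ and $K=(B\cap\ker p_1)^0$, build a quasi-section $B_1\to B$ by Poincar\'e reducibility inside $B$ (the paper's $\tau=s\circ\rho$ through $Q=B/K$), extend its last $m-1$ coordinates to a homomorphism $\varphi:G\to G^{m-1}$ up to an integer factor (Lemma \ref{lemm_extension_reducibility}), apply the triangular isogeny $(x_1,x')\mapsto (Mn\,x_1,\, Mn\,x'-\varphi(x_1))$, and induct on $K\subset G^{m-1}$ with $h=(\mathrm{id}_G\times h')\circ h_0$. Two points need tightening. First, the exact equality $u(B)=B_1'\times B^{(1)}$ does not come from taking neutral components or multiplying by further integers, since a connected subgroup is unchanged by $[k]$. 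It comes instead from choosing $N'=NM$, where $\Phi|_{B_1'}=[M]\circ\phi$: then $u(\psi(a)+k)=(N^2a,\,NMk)$ for all $a\in B_1'$ and $k\in B^{(1)}$. Second, in the abelian-subvariety case the complement of $B_1'$ in the semiabelian $G$ does not follow from semisimplicity of abelian varieties up to isogeny. It comes from taking the preimage in $G$ of a Poincar\'e complement of the image of $B_1'$ in the abelian quotient, as in the paper's lemma.
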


We first prove the following lemma:
\begin{lemma}\label{lemm_extension_reducibility}
    Assume that one of the following is satisfied:
    \begin{enumerate}
        \item[(i)] $G=\mathbb G_m^t$, and $C\subset G$ is a subtorus;
        \item[(ii)] $G$ is a semiabelian variety, and $C\subset G$ is an abelian subvariety.
    \end{enumerate} Then for every homomorphism $f:C\longrightarrow G'$ of semiabelian varieties, there exists a positive integer $N$ and a homomorphism $F:G\longrightarrow G'$ such that $F|_C=[N]_{G'}\circ f$.
\end{lemma}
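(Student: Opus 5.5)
The plan is to construct a ``complement'' $C'\subset G$ to $C$ together with a homomorphism $G\to C$ that is an isogeny on $C$. We then define $F$ by composing this map with $f$ and clearing denominators.

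In case (i), $G=\mathbb G_m^t$ and $C$ is a subtorus. On character lattices, $X^*(G)=\Z^t\to X^*(C)$ is surjective because $C$ is a subtorus, so the kernel is saturated. Since $X^*(C)$ is free, this surjection splits. Dually, we obtain a homomorphism of tori $r:G\to C$ with $r|_C=\mathrm{id}_C$, i.e.\ a retraction. We then put $F:=f\circ r$, which satisfies $F|_C=f$, so $N=1$ suffices.

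In case (ii), $C\subset G$ is an abelian subvariety. The plan is to reduce to Poincar\'e reducibility on the abelian quotient. Let $\pi:G\to A$ be the quotient by the toric part $T$. The map $\pi|_C$ has finite kernel $C\cap T$, which is finite because an abelian variety has no nontrivial torus subgroups. Hence $\pi(C)$ is an abelian subvariety of $A$, and $\pi|_C:C\to\pi(C)$ is an isogeny. By Poincar\'e reducibility there is a homomorphism $e:A\to\pi(C)$ whose restriction to $\pi(C)$ is $[N_1]$, obtained for instance from the norm endomorphism with respect to a polarization. Let $\psi:\pi(C)\to C$ be a quasi-inverse of $\pi|_C$, so that $\psi\circ\pi|_C=[N_2]_C$. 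Then $r:=\psi\circ e\circ\pi:G\to C$ is a homomorphism with $r|_C=\psi\circ[N_1]\circ\pi|_C=[N_1N_2]_C$. Setting $F:=f\circ r$ gives $F|_C=f\circ[N]_C=[N]_{G'}\circ f$ with $N=N_1N_2$, since $f$ is a homomorphism.

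The only genuine inputs are the splitting of the character lattice in case (i) and the existence of the norm-type projector $e$ in case (ii). The latter is the step most worth checking. Concretely, with a polarization $\lambda:A\to A^\vee$ and the inclusion $j:\pi(C)\hookrightarrow A$, one takes $e=\lambda_C^{-1}\circ j^\vee\circ\lambda$ up to a multiple. Here $\lambda_C=j^\vee\lambda j$ is an isogeny, so multiplying by an integer clears its inverse. No compatibility with the torus part is needed, because $C\cap T$ is finite and everything factors through $A$.
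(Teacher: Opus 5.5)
Your proof is correct and follows essentially the same strategy as the paper: produce a homomorphism $G\to C$ restricting to $[N]_C$ (from the splitting of a subtorus in case (i), and from Poincar\'e reducibility on the abelian quotient $A$ in case (ii)), then compose it with $f$. The only difference is in how case (ii) is set up: you factor the quasi-retraction through $A$, using a norm-type projector $A\to\pi(C)$ and a quasi-inverse of the isogeny $C\to\pi(C)$, whereas the paper lifts a Poincar\'e complement $D_A$ to $D\subset G$ and factors $[N]_C\circ\mathrm{pr}_C$ through the isogeny $C\times D\to G$; your version also states explicitly that $C\cap T$ is finite, which the paper uses without comment when it calls $C\times D\to G$ an isogeny.
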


\begin{proof}
It suffices to prove that there exists a homomorphism
$$\pi:G\longrightarrow C$$ such that $\pi|_C=[N]_C$ for some $N>0$.
Indeed, now define $F:=f\circ\pi:G\longrightarrow G'$. Then
$$F|_C=f\circ\pi|_C=f\circ [N]_C=[N]_{G'}\circ f.$$

For case (i), the existence of $\pi$ is due to the fact that any splitting subtorus is a direct factor.

For case (ii), we can obtain $\pi$ by the Poincar\'e reducibility if $G$ is an abelian variety. We now give a slight generalization.

Let $A$ be the abelian quotient of $G$, and let $C_A$ be the image of $C$ in $A$. Then $C_A$ is an abelian subvariety of $A$. By the Poincar\'e reducibility, there exists an abelian subvariety $D_A\subset A$ such that
$$C_A+D_A=A,\qquad C_A\cap D_A \text{ is finite}.$$
Let $D$ be the preimage of $D_A$ in $G$. Then $D$ is a semiabelian subvariety of $G$, and the addition map
$$g:C\times D\longrightarrow G,\qquad (b,d)\longmapsto b+d,$$
is an isogeny. 

Take $N=\#\ker g$. Then the homomorphism
$$[N]_C\circ\operatorname{pr}_C:C\times D\longrightarrow C$$
vanishes on $\ker g$, so it factors through $g$, i.e., there exists a homomorphism
$$\pi:G\longrightarrow C$$
such that
$$\pi\circ g=[N]_C\circ\operatorname{pr}_C$$
which implies that $\pi|_C=[N]_C$. This concludes the proof.
\end{proof}

\begin{proof}[Proof of Proposition \ref{prop_isogeny_trick}]
We proceed by induction on $m$. The case $m=1$ is trivial. Assume now $m>1$ and $B\neq 0$. After reordering the factors of $G^m$, we may suppose that $p_1(B)\neq 0.$

Put $$ B_1:=p_1(B),\qquad K:=\text{neutral component of }B\cap\ker p_1.$$Then $B_1\subset G$ is again either a subtorus of $G=\mathbb G_m^t$ or an abelian subvariety of $G$, and $K\subset G^{m-1}$ is of the same type as $B$.

Let $Q:=B/K.$ The projection $p_1$ induces an isogeny $\pi:Q\rightarrow B_1.$ Choose an isogeny $\rho:B_1\rightarrow Q$ such that
\[\pi\circ\rho=[n_1]_{B_1}
 \]
for some $n_1>0$.

By the Poincar\'e reducibility, there exists a homomorphism $s:Q\rightarrow B$ of finite kernel, and an integer $n_2>0$ such that
\[ q\circ s=[n_2]_Q,\] where $q:B\to Q$ is the quotient map.

Define \[\tau:=s\circ\rho:B_1\rightarrow B.\]
Then\[p_1\circ\tau=[n]_{B_1},\]where $n=n_2n_1$. Let $\sigma:=(p_2,\dots,p_m)\circ\tau:B_1\rightarrow G^{m-1}$. Hence \[ \tau(a)=(na,\sigma(a))\]
for $a\in B_1.$
By Lemma \ref{lemm_extension_reducibility}, there exist $M>0$ and a homomorphism
\[\varphi:G\longrightarrow G^{m-1}\]
such that $\varphi|_{B_1}=[M]_{G^{m-1}}\circ \sigma.$ Take $\tau'=[M]_{B}\circ \tau$. Then $$\tau'(a)=(Mna,\varphi(a))$$Define the automorphism
\begin{align*}
    h_0:G^m&\longrightarrow G^m\\
    (x_1,x_2,\dots, x_m)&\longmapsto Mn(x_1,x_2,\dots, x_m)-(0,\varphi(x_1))   
\end{align*}
Then $h_0$ preserves every subgroup $\ker(p_1,\ldots,p_i)$ for $i=1,\dots, m$.
For $b\in B$, since $q\circ \tau'=[Mn_2]_{Q}\circ \rho$, there exists some $a\in B_1$ such that $q(\tau'(a))=q(b)$. Hence $k:=b-\tau'(a)\in K$ and 
\begin{align*}
h_0(b)&=Mnb-(0,\varphi(p_1(b)))=Mn(\tau'(a)+k)-(0,\varphi(p_1(\tau'(a)+k))\\
&=(M^2n^2a, Mn(\varphi(a)+k)-\varphi(Mna))=(M^2n^2a,Mnk)
\end{align*}
which implies that
\[
h_0(B)=B_1\times K.
\]

Applying the induction hypothesis to $K\subset G^{m-1},$
after reordering the last $m-1$ factors of $G^m$, there exists an isogeny
\[
h':G^{m-1}\longrightarrow G^{m-1}
\]
such that
\begin{itemize}
    \item $h'(K)=K_2\times\cdots\times K_m,$ where $K_i=p_i(h'(K)),$ and $K_i\neq 0$ exactly for $i\le m'\leq m.$
    \item $h'$ preserves subgroups $\ker(p_2,\dots, p_i)$ for $i=1,\dots, m-1$. Note that here we number the factors of $G^{m-1}$ by $2,\dots, m$. Moreover, the induced isogeny $h'^{(m')}=h'|_{\ker(p_2,\dots, p_{m'})}=[n']_{G^{m-m'}}$ for some $n'>0$.
    \item For each $i=2,\dots, m$ there exists an isogeny $h'_i:G\rightarrow G$ such that 
        $$h'_i\circ p_{i}= p_{i}\circ h'^{(i-1)}.$$
\end{itemize}
Define
\[
h:=(\mr{id}_G\times h')\circ h_0.
\]
Then
\[
h(B)=(\mr{id}_G\times h')(B_1\times K)=B_1\times K_2\times\cdots\times K_m.
\]
For $i\ge 2$, put $B_i:=K_i$. Then we obtain (1) and (2).

Notice that $$h^{(i)}=(\mr{id_G}\times h')\circ [Mn]_{\ker(p_1,\dots,p_i)}=[Mn]_{G^{m-i}}\circ h'^{(i)}$$
for $i=1,\dots,m-1$, which proves (3) and (4) after putting $h_1=[Mn]_G$ and $h_i=[Mn]_G\circ h_i'$ for $i=2,\dots, m$.
\end{proof}

\bibliographystyle{abbrv}
\bibliography{Betti}
\end{document}

%% file: macros.tex
\def\R{\mathbb{R}}
\def\Q{\mathbb{Q}}
\def\C{\mathbb{C}}
\def\Z{\mathbb{Z}}

\def\deg{{\mathrm{deg}}}
\def\vol{\widehat{\mathrm{vol}}}

\def\Div{\mathrm{Div}}

\def\mr{\mathrm}
\def\mc{\mathcal}

\def\ovl{\overline}

\def\tor{\mathrm{tor}}
\def\ab{\mathrm{ab}}
\def\sab{\mathrm{sab}}

\def\uni{\mathbf{u}}
\def\biZar{\mathrm{biZar}}
\def\Zar{\mathrm{Zar}}

\newcommand{\spec}{\mathrm{Spec}}

\newcommand{\CC}{{\mathbb C}}
\newcommand{\QQ}{{\mathbb Q}}
\newcommand{\RR}{{\mathbb R}}
\newcommand{\ZZ}{{\mathbb Z}}

\newcommand{\AAA}{{\mathcal A}}

\newcommand{\DDD}{{\mathcal D}}
\newcommand{\EEE}{{\mathcal E}}
\newcommand{\GGG}{{\mathcal G}}
\newcommand{\HHH}{{\mathcal H}}
\newcommand{\LLL}{{\mathcal L}}

\newcommand{\PPP}{{\mathcal P}}
\newcommand{\UUU}{{\mathcal U}}
\newcommand{\XXX}{{\mathcal X}}
\newcommand{\YYY}{{\mathcal Y}}
\newcommand{\ZZZ}{{\mathcal Z}}

\newcommand{\rk}{\mathrm{rk}}
\newcommand{\hilb}{\mathrm{Hilb}}

\newcommand{\Pic}{\widehat{\mathrm{Pic}}}
\newcommand{\Deg}{\widehat{\deg}}

\makeatletter
\newcommand\tint{\mathop{\mathpalette\tb@int{t}}\!\int}
\newcommand\bint{\mathop{\mathpalette\tb@int{b}}\!\int}
\newcommand\tb@int[2]{%
  \sbox\z@{$\m@th#1\int$}%
  \if#2t%
    \rlap{\hbox to\wd\z@{%
      \hfil
      \vrule width .35em height \dimexpr\ht\z@+1.4pt\relax depth -\dimexpr\ht\z@+1pt\relax
      \kern.05em 
    }}
  \else
    \rlap{\hbox to\wd\z@{%
      \vrule width .35em height -\dimexpr\dp\z@+1pt\relax depth \dimexpr\dp\z@+1.4pt\relax
      \hfil
    }}
  \fi
}
\makeatother

\usepackage{etoolbox}
\makeatletter
\newcommand*\suppresschapternumber{%
  \let\@makechapterhead\@makeschapterhead
  \patchcmd{\@chapter}
    {\protect\numberline{\thechapter}}
    {}
    {}{}%
}
\newcommand*\removedotbetweenchapterandsection{%
  \renewcommand\thesection{\thechapter\@arabic\c@section}%
}
\makeatother